\documentclass[11pt]{amsart}
\usepackage[margin=1.05in]{geometry}
\usepackage{amsmath,amssymb,amsthm,mathtools,mathrsfs}
\usepackage{enumitem,booktabs,longtable,array}
\usepackage{xcolor}
\usepackage{tikz-cd}
\usepackage[expansion=false]{microtype}
\usepackage[colorlinks=true,linkcolor=blue!50!black,citecolor=green!40!black,urlcolor=blue!60!black]{hyperref}
\usepackage[capitalise,noabbrev]{cleveref}
\usepackage{url}

\numberwithin{equation}{section}
\theoremstyle{plain}
\newtheorem{theorem}{Theorem}[section]
\newtheorem{proposition}[theorem]{Proposition}
\newtheorem{lemma}[theorem]{Lemma}
\newtheorem{corollary}[theorem]{Corollary}
\newtheorem{conjecture}[theorem]{Conjecture}

\newtheorem{mainthm}{Theorem}

\newtheorem{maincor}[mainthm]{Corollary}
\theoremstyle{definition}
\newtheorem{definition}[theorem]{Definition}
\newtheorem{example}[theorem]{Example}
\newtheorem{question}[theorem]{Question}
\theoremstyle{remark}
\newtheorem{remark}[theorem]{Remark}

\crefname{citedthm}{Theorem}{Theorems}
\crefname{mainthm}{Theorem}{Theorems}
\crefname{mainconj}{Conjecture}{Conjectures}
\crefname{maincor}{Corollary}{Corollaries}
\crefname{conjecture}{Conjecture}{Conjectures}
\crefname{question}{Question}{Questions}

\newcommand{\Hilb}{\operatorname{Hilb}}
\newcommand{\Quot}{\operatorname{Quot}}
\newcommand{\col}{\operatorname{col}}
\newcommand{\rk}{\operatorname{rk}}
\newcommand{\Spec}{\operatorname{Spec}}
\newcommand{\ord}{\operatorname{ord}}

\newcommand{\Gr}{\operatorname{Gr}}
\newcommand{\GL}{\operatorname{GL}}
\newcommand{\Bot}{\operatorname{Bot}}
\newcommand{\Nor}{\operatorname{Nor}}

\newcommand{\LL}{\mathbb{L}}
\newcommand{\FF}{\mathbb{F}}
\newcommand{\CC}{\mathbb{C}}
\newcommand{\NN}{\mathbb{N}}
\newcommand{\ZZ}{\mathbb{Z}}
\newcommand{\QQ}{\mathbb{Q}}
\newcommand{\PP}{\mathbb{P}}
\newcommand{\AAA}{\mathbb{A}}
\newcommand{\GG}{\mathbb{G}}
\newcommand{\mm}{\mathfrak{m}}
\newcommand{\cc}{\mathfrak{c}}
\newcommand{\Zs}{\mathsf{Z}}
\newcommand{\cA}{\mathcal{A}}
\newcommand{\cE}{\mathcal{E}}

\newcommand{\bfA}{\mathbf a}
\newcommand{\Mat}{\mathrm{Mat}}
\newcommand{\kk}{\Bbbk}
\newcommand{\Kvar}[1][\kk]{K_0(\mathrm{Var}_{#1})}
\newcommand{\qbin}[3]{\genfrac{[}{]}{0pt}{}{#1}{#2}_{#3}}
\newcommand{\qb}[2]{\qbin{#1}{#2}{Q\LL}}
\newcommand{\pbin}[2]{\qbin{#1}{#2}{q}}
\newcommand{\poch}[2]{(#1;Q\LL)_{#2}}

\begin{document}

\title[Sheaves on nodal singularities I]{Sheaves on nodal singularities I}

\author{Yifeng Huang}
\address{Dept.\ of Mathematics, Emory University}
\email{yifeng.huang@emory.edu}

\author{Ruofan Jiang}
\address{Dept.\ of Mathematics, University of California, Berkeley}
\email{ruofanjiang@berkeley.edu}

\author{Alexei Oblomkov}
\address{Dept.\ of Mathematics and Statistics, University of Massachusetts, Amherst}
\email{oblomkov@umass.edu}

\date{October 7, 2026. Draft.}
\subjclass[2020]{14C05, 14H20, 57K18, 05A30}
\keywords{Punctual Hilbert scheme, punctual Quot scheme, non-reduced plane curve, node, colored HOMFLY homology}

\begin{abstract}
We study two families of moduli spaces of sheaves attached to the node, namely the punctual Hilbert schemes of the non-reduced plane curve $x^uy^v=0$, and the punctual Quot schemes of the module $\kk[[x]]^u\oplus\kk[[y]]^v$ over the reduced node $xy=0$. We compute their motivic generating functions for all $u,v$ and over every field. Both are specializations of a single rational function whose numerator counts matrices by rank, and they are related by the substitution $\LL\mapsto Q\LL$. This proves the Hilb--Quot correspondence for the node with independent ranks on the two branches. The main geometric ingredient is an elementary modification of an ideal along a branch of the curve, whose fibres are affine spaces of dimension equal to a contact length. We also construct stratifications of the Hilbert schemes into products of affine spaces and tori, and we formulate a branchwise Hilb--Quot--HOMFLY conjecture, which we test on the colored Hopf link.
\end{abstract}

\maketitle
\tableofcontents
\section{Introduction}\label{sec:intro}

This paper is about two moduli spaces of sheaves supported at the node, namely ideals of finite colength in the local ring of a non-reduced nodal curve, and submodules of finite colength of a module over the reduced node. We recall the context in \cref{subsec:context}, state the main formulas in \cref{subsec:formula}, and explain the proofs in \cref{subsec:strategy,subsec:geometry-intro}. The central geometric idea is the elementary modification along a branch, described in \cref{subsec:strategy}. \Cref{subsec:intro-conj} explains the conjectural link-theoretic context.

\subsection{Hilbert schemes and Quot schemes of plane curve singularities}\label{subsec:context}
Let us introduce the two families of moduli spaces studied in the paper, the punctual Hilbert schemes of plane curve germs and the punctual Quot schemes of their normalization modules, recall the Hilb--Quot conjecture that relates them, and describe the fat node $C_{u,v}=\{x^uy^v=0\}$ together with the results previously known about it.

Let $\kk$ be a field, $R=\kk[[x,y]]$ and $\mm=(x,y)$. For $g\in\mm$ the \emph{punctual Hilbert scheme} of the plane curve germ $C=\{g=0\}$ parametrizes the ideals of colength $n$ of the local ring $\mathcal O_{C,0}=R/(g)$, equivalently
\[
\Hilb_n(C,0)=\{\,I\subseteq R\ \text{an ideal}:\ \dim_{\kk}R/I=n,\ g\in I\,\}.
\]
For reduced $C$ these spaces are classical. They are related to compactified Jacobians by the Macdonald-type formulas of Maulik--Yun and Migliorini--Shende \cite{MY,MS}, and their motivic generating functions are rational \cite{BRV}.

By the conjecture of Oblomkov and Shende \cite{OS}, refined by Oblomkov, Rasmussen and Shende \cite{ORS}, their cohomology should compute the HOMFLY-PT homology of the link of the singularity. The Euler-characteristic version was proved by Maulik \cite{Maulik}. Affine pavings, which give the Betti numbers, are known in several families \cite{LS,Piontkowski,GMO}.

Alongside the Hilbert scheme of the germ stand the punctual Quot schemes of its normalization. Let $g$ be reduced with irreducible factors $g_1,\dots,g_\ell$, let $R_g=R/(g)$, and let $\widetilde R_g=\prod_i\widetilde R_i$, $\widetilde R_i\simeq\kk[[u_i]]$, be the normalization. For a vector of ranks $\bfA=(a_1,\dots,a_\ell)\in\ZZ_{\ge0}^\ell$ put
\[
E_{\bfA}=\bigoplus_{i=1}^{\ell}\widetilde R_i^{\,a_i},
\]
an $R_g$-module, and let $\Quot^{R_g}_m(E_{\bfA})$ be the scheme of $R_g$-submodules of colength $m$ of $E_{\bfA}$, with its reduced structure. For $\bfA=(1,\dots,1)$ this is the local model of the compactified Jacobian.

Kivinen and Trinh \cite{KT} conjectured that for every reduced germ the motivic generating functions of $\Hilb_n(C,0)$ and of $\Quot^{R_g}_m(\widetilde R_g)$ agree after the single substitution $\LL\mapsto Q\LL$, $\LL=[\AAA^1]$ being the class of the affine line and $Q$ the colength variable.

In \cite{HJO} the Quot side was raised to arbitrary rank, $\Quot^{R_g}_m(\widetilde R_g^{\,r})$, and it was proposed that the matching object on the Hilbert side is the punctual Hilbert scheme of the \emph{thickened} curve $\{g^r=0\}$. The single substitution $\LL\mapsto Q\LL$ should relate the two for every $r$. The present paper allows the rank to vary from branch to branch, comparing $\Quot^{R_g}_m(E_{\bfA})$ with the Hilbert schemes of $\{\prod_ig_i^{a_i}=0\}$, and proves the resulting identity in the simplest multibranch case, the node, in every pair of ranks.

For non-reduced curves, a component of multiplicity $m$ should correspond to a link component colored by the symmetric power $\mathrm{Sym}^m$ \cite[\S1.2]{Luan}, \cite{Cautis,HM}. The simplest example is the \emph{fat line} $\{y^v=0\}$. Bejleri computed its motivic Hilbert zeta function \cite{Bej} to be
\begin{equation}\label{eq:bejleri}
\sum_{n\ge0}[\Hilb_n(\{y^v=0\},0)]\,Q^n=\prod_{i=1}^{v}\frac1{1-Q^i\LL^{i-1}} .
\end{equation}
The simplest singular example is the \emph{fat node}
\[
C_{u,v}=\{x^uy^v=0\},\qquad u,v\ge0,
\]
a node whose two branches carry multiplicities $u$ and $v$. Its link is the Hopf link with components colored by $\mathrm{Sym}^u$ and $\mathrm{Sym}^v$, and its Quot-side counterpart is the module
\[
E_{u,v}=\kk[[x]]^u\oplus\kk[[y]]^v\qquad\text{over}\qquad R_{xy}:=\kk[[x,y]]/(xy),
\]
where $\kk[[x]]=R_{xy}/(y)$ and $\kk[[y]]=R_{xy}/(x)$ are the two branches of the normalization.

In a recent paper \cite{Luan}, Luan introduced filtrations of ideals adapted to $C_{u,v}$ and a non-reduced valuation. Luan obtained the following results.
\begin{itemize}[leftmargin=2em]
\item Affine pavings and closed formulas for $u\in\{1,2\}$ \cite[Thms.~1.2, 1.4]{Luan}.
\item Stratifications into pieces $\GG_m^{a}\times\AAA^{b}$, and transfer-matrix formulas for the
virtual Poincar\'e polynomials, for $u\in\{v,v-1,v-2\}$ \cite[Thms.~1.5--1.7]{Luan}.
\item The colored ORS conjecture for the $(\mathrm{Sym}^v,\mathrm{Sym}^1)$-colored Hopf link
\cite[Cor.~1.3]{Luan}.
\end{itemize}
Luan points out that the general case leads to a rapidly growing case analysis \cite[p.~39]{Luan}. The purpose of this paper is to treat all $(u,v)$, on both sides.

\subsection{One rational function, two specializations}\label{subsec:formula}
Now we can state the main results. Both generating functions of the paper are specializations of a single rational function $F_{u,v}(q,Q)$ whose numerator counts $u\times v$ matrices by rank. The Hilbert series of $C_{u,v}$ is its value at $q=Q\LL$, and the Quot series of $E_{u,v}$ is its value at $q=\LL$ (\cref{thmA}). We also state the refinement by contact length (\cref{thmB}) and record the basic features of the numerator.

Throughout we use the variable $Q$ for the colength and $\LL$ for the class of the affine line, together with the standard $q$-notation
\begin{equation}\label{eq:qnotation}
\begin{gathered}
(Q;q)_n:=\prod_{i=0}^{n-1}(1-Qq^{i}),\quad n\in\NN\cup\{\infty\},\qquad\qbin{n}{k}{q}:=\frac{(q;q)_n}{(q;q)_k\,(q;q)_{n-k}}\in\NN[q],\\
(Q;q)_{n_1,\dots,n_l}:=(Q;q)_{n_1}\cdots(Q;q)_{n_l}.
\end{gathered}
\end{equation}
Here $q$ and $Q$ are formal variables. In the generating functions below, $Q$ is the colength variable and $q$ is specialized to $\LL$ or to $Q\LL$; the same letter $q$ denotes the cardinality of a finite field when point counts are taken, since the counting measure sends $\LL$ to $q$. Thus $1/(Q;Q\LL)_v$ is Bejleri's Hilbert zeta function \eqref{eq:bejleri} of the fat line $\{y^v=0\}$ \cite{Bej}. For integers $u,v,m\ge0$ let
\[
R_m(u,v;q):=\prod_{i=0}^{m-1}\frac{(q^u-q^i)(q^v-q^i)}{q^m-q^i},
\]
a polynomial in $q$; for a prime power $q$ it is the number of $u\times v$ matrices of rank $m$ over $\FF_q$ (Landsberg \cite{Landsberg}, see \cref{lem:rank}). Everything in this paper is governed by one rational function in the two independent variables $q,Q$,
\begin{equation}\label{eq:F}
C_{u,v}(q,Q):=\sum_{m=0}^{\min(u,v)}R_m(u,v;q)\,Q^m,\qquad F_{u,v}(q,Q):=\frac{C_{u,v}(q,Q)}{(Q;q)_{u,v}} .
\end{equation}
For a prime power $q$, $C_{u,v}(q,Q)$ is the generating polynomial of $u\times v$ matrices over $\FF_q$, weighted by rank, and by \cref{lem:rank}
\begin{equation}\label{eq:NP}
C_{u,v}(q,Q)=\sum_{m=0}^{\min(u,v)}(-1)^m\,q^{\binom m2}\,(q;q)_m\qbin{u}{m}{q}\qbin{v}{m}{q}\,Q^m .
\end{equation}

\begin{mainthm}[\Cref{thm:main,thm:torus-all,thm:node-main}]\label{thmA}
Let $u,v\ge0$ and let $\kk$ be any field. The following identities hold in $\Kvar{}[[Q]]$.
\begin{enumerate}[label=(\alph*)]
\item (Hilbert side)
\begin{equation}\label{eq:main}
\sum_{n\ge0}[\Hilb_n(C_{u,v},0)]\,Q^n=F_{u,v}(Q\LL,Q)=\frac{C_{u,v}(Q\LL,Q)}{(Q;Q\LL)_{u,v}}.
\end{equation}
\item (Quot side)
\begin{equation}\label{eq:main-quot}
\sum_{m\ge0}[\Quot^{R_{xy}}_m(E_{u,v})]\,Q^m=F_{u,v}(\LL,Q)=\frac{C_{u,v}(\LL,Q)}{(Q;\LL)_{u,v}}.
\end{equation}
\item Consequently the two series are related by the substitution $\LL\mapsto Q\LL$, that is,
\[
\sum_{n\ge0}[\Hilb_n(C_{u,v},0)]\,Q^n=\Big(\sum_{m\ge0}[\Quot^{R_{xy}}_m(E_{u,v})]\,Q^m\Big)\Big|_{\LL\mapsto Q\LL}.
\]
\end{enumerate}
In particular the numbers of $\FF_q$-points are obtained by setting $\LL=q$. For $\kk=\CC$ the E-polynomials are obtained by setting $\LL=ab$, and the virtual Poincar\'e polynomials by setting $\LL=T^2$.
\end{mainthm}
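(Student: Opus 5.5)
Part (c) is a formal consequence of (a) and (b), because both sides are specializations of the same rational function $F_{u,v}(q,Q)$. The work is therefore in (a) and (b). For each I would set up a recursion in the ranks $(u,v)$ that strips off one branch multiplicity at a time, and show that $F_{u,v}$ satisfies the same recursion with the same initial values. The base cases are $v=0$ (or $u=0$). There the curve is the fat line $\{x^u=0\}$, so Bejleri's formula \eqref{eq:bejleri} gives $1/(Q;Q\LL)_u=F_{u,0}(Q\LL,Q)$. On the Quot side the module is $\kk[[x]]^u$ over $\kk[[x]]$. Stratifying submodules by Hermite normal form (flags and Schubert cells) gives $1/(Q;\LL)_u$. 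I would take these as routine.

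For the inductive step on the Hilbert side I would use the elementary modification along a branch announced in the introduction. Given an ideal $I\ni x^uy^v$ of finite colength, consider $I':=(I:y)$, or equivalently the ideal obtained by dividing off one factor of $y$ along the $x$-axis branch. Then $I'$ contains $x^uy^{v-1}$ and lies in $\Hilb_{n-\ell}(C_{u,v-1},0)$, where $\ell=\dim I'/I$ in the appropriate sense is a contact length. The claim to prove is that the map $I\mapsto I'$ has fibres that are affine spaces $\AAA^{d}$, with $d$ determined by $\ell$ and by discrete invariants of $I'$. To see this I would describe the fibre as the space of choices of an ideal $I$ with $yI'\subseteq I\subseteq I'$ and prescribed quotient. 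This is a linear-algebra problem in $I'/yI'$, which is a module over $\kk[[x]]/(x^u)$-type rings. There the choices form a Schubert-type cell. Summing over the fibres then gives a functional equation relating the refined series of $C_{u,v}$ to that of $C_{u,v-1}$, with a contact-length variable recording $\ell$. This is presumably the refinement of \cref{thmB}.

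The Quot side is parallel. For a submodule $M\subseteq E_{u,v}$, intersecting with or projecting to the last $\kk[[y]]$ summand reduces $E_{u,v}$ to $E_{u,v-1}$, and the fibres are again affine spaces whose dimension is a contact length. The same recursion results, but with $q=\LL$ in place of $q=Q\LL$. The reason is that on the Hilbert side every step down in colength also multiplies the affine-fibre dimension by $Q$-weights, which produces exactly the shift $\LL\mapsto Q\LL$.

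The algebraic verification is that $F_{u,v}$ satisfies the recursion. For this I would use \eqref{eq:NP} together with the $q$-Pascal identity $\qbin{v}{m}{q}=\qbin{v-1}{m}{q}+q^{v-m}\qbin{v-1}{m-1}{q}$. These give a two-term relation expressing $C_{u,v}$ in terms of $C_{u,v-1}$, and $C_{u-1,v-1}$ via the rank-counting recursion (a $u\times v$ matrix's last column is in or out of the span of the others). That relation should match the geometric recursion once it is refined by contact length.

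I expect the main obstacle to be the fibre computation: proving that the fibres of the elementary modification are genuinely affine spaces, uniformly over every field, and of a dimension that depends only on the contact length and the rank data. This requires a normal form for $I'/yI'$ that is compatible with both branches simultaneously. I would first try to choose generators of $I'$ adapted to the $x$-adic filtration on the $y$-branch, and show that the fibre is a torsor under a vector group. If a uniform paving fails, I would fall back to a stratification into $\GG_m^a\times\AAA^b$ pieces, which still suffices for the identities in $\Kvar{}$.
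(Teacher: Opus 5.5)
\textbf{There is a genuine gap: your induction has the wrong base case, and the Quot-side recursion is unsupported.}

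\emph{The Hilbert side.} The fibre computation you single out as the main obstacle is the easy part. The ideals $I$ with $(I:y)=J$ and $j_y(I)=a$ are exactly $yJ+R(x^a+yc)$, where $\bar c\in R/J$ solves the single linear equation $y\bar c=-\bar x^{a}$. This set is an affine space of dimension $j_y(J)$ when $a\ge j_y(J)$ and empty otherwise, and it globalizes to an affine bundle over strata of constant $j_y$ (the mirror of \cref{thm:A,prop:family}).

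The problem is the base of the induction. Summing over these fibres gives a recursion that closes only for the series refined by $j_y$, namely $G^y_{u,v}(Q,w)=G^y_{u,v-1}(Q,Q\LL w)/(1-Qw)$. Iterating it $v$ times lands on the fat line $\{x^u=0\}$ with its contact distribution evaluated at $w=(Q\LL)^v$:
\[
\Zs_{u,v}(Q)=\frac{g_u\big(Q,(Q\LL)^v\big)}{(Q;Q\LL)_v}.
\]
Bejleri's formula is only the value $g_u(Q,1)$, so it is a base case only when $v=0$. The polynomial $g_u(Q,z)$ is precisely the unknown. Your chain is Luan's vertical filtration, and this is exactly where it stalls for $u\ge3$.

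Nor can you alternate and peel the other branch to lower $u$. That requires the distribution of $j_x$, which along the $y$-chain jumps by $0$ or $1$ according to whether $(I:y^i)$ is split or glued (\cref{prop:jump}), and your recursion does not record this. For the same reason your proposed algebraic check, a relation among $C_{u,v}$, $C_{u,v-1}$ and $C_{u-1,v-1}$, has nothing geometric to match. The unrefined series satisfy no closed recursion coming from the modification.

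\emph{What is missing.} The paper's key step is the bootstrap (\cref{thm:C}). Peeling the same curve along both branches gives
\[
\frac{g_v\big(Q,(Q\LL)^u\big)}{(Q;Q\LL)_u}=\frac{g_u\big(Q,(Q\LL)^v\big)}{(Q;Q\LL)_v}.
\]
By induction on $v$ this fixes $g_v$ at the points $z=1,Q\LL,\dots,(Q\LL)^{v-1}$. Together with $g_v(Q,0)=1$ and $\deg_z g_v\le v$, interpolation determines $g_v$. One then verifies the closed form $\sum_k\qb{v}{k}(Qz)^k/\poch{Q(Q\LL)^{v-k}}{k}$ by the $q$-binomial theorem.

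Interpolation divides by powers of $\LL$, which is a zero divisor in $\Kvar$. So this only gives the identity up to $\LL$-torsion, and exactly for point counts. The identity in $\Kvar$ needs a further stratification of the fat line by rows and jump vectors. In each fibre of $I\mapsto(I:y)$ the split ideals form an affine hyperplane, the whole fibre, or nothing (\cref{prop:fibre}). This yields pieces $\AAA^\delta\times\GG_m^\tau$, hence coefficients in $\ZZ[\LL]$, and point counts then pin down the polynomial. Your fallback to $\GG_m^a\times\AAA^b$ pieces points in this direction, but without the split/glued mechanism that produces them.

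\emph{The Quot side.} Your parallel recursion is also unsupported. If you split off the last $\kk[[y]]$ summand, the fibre over $L'=\pi(L)$ with $L\cap\kk[[y]]e_v=y^c\kk[[y]]e_v$ is $\mathrm{Hom}_{\kk[[y]]}(L'/xL',\kk[[y]]/y^c)$. Its dimension depends on the whole $\kk[[y]]$-module structure of $L'/xL'$, not on a contact length. The heuristic about $Q$-weights does not produce a functional equation; a geometric explanation of $\LL\mapsto Q\LL$ is in fact listed as open in the paper.

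The paper instead uses a direct factorization. With $M,N$ the projections of $L$, one has $xM\oplus yN\subseteq L\subseteq M\oplus N$. So $L$ corresponds to a lattice pair, contributing $1/(Q;\LL)_{u,v}$, together with a subspace of $\kk^u\oplus\kk^v$ surjecting onto both summands. The variety of such subspaces of codimension $k$ is a $\GL_k$-bundle over $\Gr(u-k,u)\times\Gr(v-k,v)$, with class $R_k(u,v;\LL)$.
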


Part (c) follows from (a) and (b), since both sides are specializations of the same rational function and the coefficients of the Quot series are polynomials in $\LL$ (\cref{subsec:hilb-quot}). It is the branchwise Hilb--Quot identity of \cref{conj:main} below for the node, in every pair of ranks. For $u=v=1$ it is the conjecture of Kivinen--Trinh \cite{KT} for the node, and for $u=v$ it is the thickened Hilb--Quot conjecture of \cite[Conj.~1.12]{HJO}. For equal ranks, part (b) belongs to the quadratic-order calculations of \cite{HJ}. See also \cite[Prop.~6.1]{CH}. The proof given here is self-contained and allows independent branch ranks.

Part (a) is the Hilbert-side prediction of the Quot calculation (b). Once (b) is known, the identity $\LL\mapsto Q\LL$ \emph{predicts} $F_{u,v}(Q\LL,Q)$ for the Hilbert series, and Luan's formulas confirm the prediction for $\min(u,v)\le2$ \cite[Thms.~1.2, 1.4]{Luan}. The proof of (a) given here is uniform in $(u,v)$ and independent of \cite{Luan}.

We write $\Zs_{u,v}(Q)$ for the Hilbert series in (a) and $\Zs^{\Quot}_{u,v}(Q)$ for the Quot series in (b). The formula is symmetric in $u$ and $v$. Its numerator has the following features.
\begin{itemize}[leftmargin=2em]
\item Its degree in $Q$ is $uv+\min(u,v)$.
\item At $Q=1$ it equals $C_{u,v}(\LL,1)=\LL^{uv}$, which counts all $u\times v$ matrices.
\item For $u=0$ it gives \eqref{eq:bejleri}, and for $u,v\to\infty$ it gives the
Ellingsrud--Str{\o}mme formula for $\Hilb_n(\AAA^2,0)$ \cite{ES}.
\item For $u\le2$ it specializes to Luan's formulas \cite[Thms.~1.2, 1.4]{Luan} (\cref{ex:u1,ex:g2}).
\item The coefficients of $F_{u,v}(Q\LL,Q)$ lie in $\NN[\LL]$ (\cref{cor:positivity}).
\item On the Quot side the numerator $C_{u,v}(\LL,Q)$ has an unconditional geometric meaning, as the
generating polynomial of the \emph{extension fibres} of $E_{u,v}$, the submodules $L$ with $\widetilde R L=E_{u,v}$. The fibre of colength $k$ is the variety of codimension-$k$ subspaces of $\kk^u\oplus\kk^v$ surjecting onto both summands (\cref{sec:quot}). 
\end{itemize}
The denominator is a product over the two branches.

The factors $1/(Q;Q\LL)_u$ and $1/(Q;Q\LL)_v$ are the Bejleri zeta functions \eqref{eq:bejleri} \cite{Bej} of the fat branches $\{x^u=0\}$ and $\{y^v=0\}$, and in the proof each branch contributes its factor one multiplicity at a time (\cref{thm:B}). On the Quot side the denominator $(Q;\LL)_{u,v}$ is the product of the lattice zeta functions of the two branches (\cref{lem:dvr}). The numerator is the part of the answer in which the two branches interact. For example
\[
C_{1,v}(Q\LL,Q)=1-Q+Q^{v+1}\LL^v,\qquad C_{2,2}(Q\LL,Q)=1-(1-Q\LL)(1+Q\LL)^2Q+Q\LL(1-Q\LL)(1-Q^2\LL^2)Q^2 .
\]
The first case outside the ranges of \cite[Thms.~1.2--1.7]{Luan} is $(u,v)=(3,6)$.

The refined statement keeps track of the \emph{contact length} $j_x(I)=\dim_{\kk}R/(I+(x))$, the length of the intersection of $\Spec R/I$ with the branch $\{x=0\}$. Let $\Hilb_n(C_{u,v},0)_{j_x=j}\subseteq\Hilb_n(C_{u,v},0)$ be the locally closed subvariety of ideals $I$ with $j_x(I)=j$.

\begin{mainthm}[\Cref{thm:refined,thm:torus-all}]\label{thmB}
For all $u,v\ge0$ and every field $\kk$, in $\Kvar{}[z][[Q]]$,
\[
\sum_{n,j\ge0}[\Hilb_n(C_{u,v},0)_{j_x=j}]\,Q^nz^j=\frac{1}{\poch{Qz}{u}}\sum_{k=0}^{v}\qb{v}{k}\frac{(Q(Q\LL)^uz)^k}{\poch{Q(Q\LL)^{v-k}}{k}} .
\]
In particular the contact strata $\Hilb_n(C_{0,v},0)_{j_x=k}=\{I\in\Hilb_n(\{y^v=0\},0):\ I+(x)=(x,y^k)\}$ of the fat line satisfy
\[
\sum_n[\Hilb_n(C_{0,v},0)_{j_x=k}]\,Q^n=Q^k\qb{v}{k}\prod_{i=v-k}^{v-1}\frac1{1-Q(Q\LL)^{i}} .
\]
\end{mainthm}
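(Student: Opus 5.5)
The plan is an induction on $u$, using the elementary modification along the branch $\{x=0\}$ described in the introduction; the base case is the fat line $u=0$.

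\emph{Modification step.} For an ideal $I\ni x^uy^v$ of colength $n$ with $u\ge1$, set $I'=(I:x)$. Then $x^{u-1}y^v\in I'$, so $I'\in\Hilb_{n'}(C_{u-1,v},0)$. The exact sequence
\[
0\to R/I'\xrightarrow{\cdot x}R/I\to R/(I+(x))\to0
\]
gives $n=n'+j_x(I)$. I expect to prove that the map $I\mapsto I'$, restricted to ideals with $j_x(I)=j$, is a Zariski-locally trivial fibration onto a locally closed locus of $I'$. That locus should be cut out by conditions on $j_x(I')$ relative to $j$: necessarily $I'+(x)\supseteq I+(x)$, so $j_x(I')\le j$. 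The fibres should be affine spaces whose dimension is a contact length, e.g.\ $j_x(I')$ or $j-j_x(I')$.

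Concretely, $I$ is recovered from $I'$ by choosing lifts. One has $I=xI'+J$, where $J$ is generated by elements $y^{j}+x(\cdots)$ modulo $xI'$, with the constraint that $J$ lands in $I'$. The space of such choices is an affine space modulo $xI'$, of dimension $\dim I'/(xI'+\ldots)$ restricted to the branch, i.e.\ $j_x(I')$. The exact identification of this fibre dimension is the step I expect to be the main obstacle. It requires a normal-form argument in $R/(xI')$ showing that the choices are free and that the condition $y^j\in I+(x)$ imposes no further relations, and it must work over an arbitrary field, so only linear algebra and no torus action may be used.

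\emph{Recursion and induction.} Granting the fibration, the refined series $Z_{u,v}(Q,z)=\sum_{n,j}[\Hilb_n(C_{u,v},0)_{j_x=j}]Q^nz^j$ satisfies a functional equation expressing $Z_{u,v}$ through $Z_{u-1,v}$. For a given $I'$ with $j_x(I')=j'$, the new $j$ ranges over $j\ge j'$, each contributing $\LL^{(\text{fibre dim})}Q^{n'+j}z^j$. This is a $q$-difference equation: $Z_{u,v}(Q,z)$ is obtained from $Z_{u-1,v}(Q,Qz\LL)$-type substitutions, multiplied by a geometric factor $1/(1-Qz)$. The base case $u=0$ is the second display of \cref{thmB}. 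I would prove it separately by the same modification applied to the fat line, or by Bejleri's stratification refined by $I+(x)=(x,y^k)$, obtaining $Q^k\qb{v}{k}\prod_{i=v-k}^{v-1}(1-Q(Q\LL)^i)^{-1}$.

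The induction step then reduces to checking that the claimed right-hand side satisfies the functional equation. Each $u$ contributes one factor $1/(1-Qz(Q\LL)^{u-1})$ to $\poch{Qz}{u}$ and one factor $(Q\LL)$ to $(Q(Q\LL)^uz)^k$. This is a routine termwise $q$-series verification, done summand by summand in $k$.

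Finally, setting $z=1$ should recover \cref{thmA}(a). This requires the $q$-Chu--Vandermonde/Heine-type identity
\[
\sum_k\qb{v}{k}\frac{(Q(Q\LL)^u)^k}{\poch{Q(Q\LL)^{v-k}}{k}}=\frac{C_{u,v}(Q\LL,Q)}{\poch{Q}{v}},
\]
which I would prove by clearing denominators and comparing with \eqref{eq:NP}. Doing so also serves as a consistency check on the fibre dimension chosen in the modification step.
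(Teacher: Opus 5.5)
Your reduction from $u$ to $u-1$ is the paper's $x$-chain. The fibre dimension you flag as the main obstacle is actually the easy step. With $J=(I:x)$ and $j=j_x(I)$ one has $xJ\subseteq I\subseteq J$ and $I=xJ+R\,(y^j+xc)$ for a single tail $\bar c\in R/J$ solving $x\bar c=-\bar y^{\,j}$. This equation is solvable iff $j\ge j_x(J)$, and its solutions then form a torsor under $(J:x)/J$, of dimension $j_x(J)$. Moreover $x^uy^v\in I$ iff $x^{u-1}y^v\in J$ for $u\ge1$, independently of the tail. So the first display follows from the second by substitution.

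The genuine gap is the base case $u=0$, i.e.\ the second display. It carries the entire content of the theorem, you dispose of it in one sentence, and neither of your suggestions works as stated.

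\emph{The $x$-modification does not close on the fat line.} For $u=0$ the condition $y^v\in I$ is not determined by $J$; it becomes the extra condition $y^{v-j}c\in J$ on the tail. The fibre is then no longer governed by $j_x(J)$. For $v=2$, $j=1$ it is $\AAA^1$ over $J=(y,x^2)$ and empty over $J=(x+\gamma y,y^2)$, $\gamma\neq0$, although both have colength $2$ and contact length $1$.

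\emph{Bejleri's cells do not carry constant contact length.} They are the fibres of the other chain $I\mapsto(I:y)$, and $j_x$ is not constant on them: on $\{(x+\gamma y,y^2)\}$ it equals $2$ at $\gamma=0$ and $1$ otherwise. Refining them by $j_x$ requires the split/glued analysis of \cref{sec:shadow}: $j_x$ jumps by $0$ or $1$ along the $y$-chain, and in each fibre the split ideals form an affine hyperplane, the whole fibre, or nothing. This produces pieces $\AAA^\delta\times\GG_m^\tau$ whose sum is a signed expansion. Matching it directly with $Q^k\qb{v}{k}\prod_{i=v-k}^{v-1}(1-Q(Q\LL)^i)^{-1}$ is a combinatorial identity you would still have to prove; the paper itself only matches the two through point counts.

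The missing idea is the bootstrap. Peeling the same curve $C_{u,v}$ along either branch gives $(Q;Q\LL)_v\,g_v(Q,(Q\LL)^u)=(Q;Q\LL)_u\,g_u(Q,(Q\LL)^v)$, where $g_v$ is the contact distribution of $\{y^v=0\}$. By induction on $v$ this prescribes $g_v$ at the $v$ points $z=(Q\LL)^u$, $0\le u<v$. Together with $g_v(Q,0)=1$ and $\deg_z g_v\le v$, these values determine $g_v$ by Vandermonde interpolation. The closed form is then checked to satisfy these conditions by the $q$-binomial theorem (\cref{prop:symmetric}).

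The interpolation divides by differences of powers of $Q\LL$, so it determines $g_v$ only in $\Kvar{}[\LL^{-1}][[Q]][z]$. Since $\LL$ is a zero divisor in $\Kvar{}$, a point your plan never addresses, the identity in $\Kvar{}$ needs one more input. The stratification by the pieces above shows that every coefficient of $g_v$ is a polynomial in $\LL$ independent of $\kk$. Comparing point counts over all $\FF_q$ then gives the closed form exactly (\cref{cor:gv-pieces}).
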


\subsection{How the formulas are proved}\label{subsec:strategy}
The Quot side reduces to a relative form of Goursat's lemma \cite{Goursat}. On the Hilbert side we explain in detail the elementary modification along a branch, which is the main geometric ingredient of the paper, and the bootstrap, which determines the one remaining unknown, the \textit{contact distribution} of a fat line.
\subsubsection{The Quot side}
The Quot side is relatively easy based on the extension-fibre argument established in our previous work \cite{HJ,HJO}, which applies with unequal ranks on the branches without change. This is one reason for bringing the Quot side into the picture of Hilbert schemes versus colored link homology: it is proved in two pages, for any reduced germ with split normalization, and it explains why the rank counts $R_k(u,v;q)$ appear at all, whereas on the Hilbert side the rank expansion is so far only an identity of generating functions. A submodule $L\subseteq E_{u,v}$ of finite colength projects onto lattices $M\subseteq\kk[[x]]^u$ and $N\subseteq\kk[[y]]^v$, and since $xy=0$,
\[
xM\oplus yN\subseteq L\subseteq M\oplus N .
\]
So $L$ is the inverse image of a subspace of $(M/xM)\oplus(N/yN)\cong\kk^u\oplus\kk^v$ surjecting onto both summands, a relative form of Goursat's lemma \cite{Goursat}. These subspaces are classified by their two kernels and the graph of an isomorphism between the two quotients, which gives the rank-generating polynomial $C_{u,v}$, and the lattices contribute the two Bejleri-type factors \cite{Bej} $1/(Q;\LL)_u$ and $1/(Q;\LL)_v$ (\cref{sec:quot}).

The same argument, for any reduced germ with split normalization, expresses the Quot series of $E_{\bfA}$ as a polynomial in $Q$ with coefficients the classes of the extension fibres, divided by $\prod_i(Q;\LL)_{a_i}$ (\cref{prop:extension}).

\subsubsection{The Hilb side} The Hilbert side has two ingredients. The first is geometric, the elementary modification along a branch, and it carries all the geometry. The second is formal, the bootstrap, which exploits the symmetry between elementary modifications along two branches, and reduces the computation to a formal manipulation of polynomials.\\

\noindent\emph{Elementary modifications along a branch.}
This is the main geometric ingredient of the paper, and the rest of the Hilbert-side argument is built on it. Let $I$ be an ideal of finite colength and $D=\{x=0\}$ one branch of the node. The story begins with the dynamics between the following three subschemes: 
$$ Z=\Spec R/I,\qquad Z\setminus D=\Spec R/(I:x),\qquad Z\cap D=\Spec R/(I+(x)).$$  
One can make sense of the ``thickness'' of the above schemes as the colength of the corresponding ideals (or $\kk$-dimension of the corresponding rings). In particular, the ``thickness'' of $Z\cap D$ is the \emph{contact length} $j_x(I):=\dim_{\kk}R/(I+(x))$.

Multiplication by $x$ gives an exact sequence
\[
0\longrightarrow R/(I:x)\xrightarrow{\ \cdot x\ }R/I\longrightarrow R/\big(I+(x)\big)\longrightarrow0 .
\]
It can be understood geometrically as follows: The passage $I\mapsto(I:x)$ ``peels off'' from $Z$ one layer along the branch $D$. The layer being peeled off is $Z\cap D$, and what remains is $Z\setminus D$. After peeling, the ``thickness'' of $Z$ drops by exactly $j_x(I)$.

The peeling operation is compatible with the curve. For $f\in R$ one has $xf\in I$ if and only if $f\in(I:x)$, so $x^uy^v\in I$ if and only if $x^{u-1}y^v\in(I:x)$. Thus peeling $Z$ along $D$ maps punctual subschemes of $C_{u,v}$ to punctual subschemes of $C_{u-1,v}$. It lowers the multiplicity of the fat branch $\{x^u=0\}$ by one and leaves the other branch untouched.

The operation can be inverted explicitly, and its fibres are as simple as one could hope. Fix an ideal $J$ and an integer $j$. If $(I:x)=J$ and $I+(x)=(x,y^j)$, then $xJ\subseteq I\subseteq J$, the ideal $I$ contains an element of the form $y^j+xc$, and in fact
\[
I=xJ+R\,(y^j+xc).
\]
So $I$ is determined by its \emph{tail} $\bar c\in R/J$.

The only condition on the tail is $y^j+xc\in J$, i.e.\ $X\bar c=-\bar y^{\,j}$, where $X$ is multiplication by $x$ on $R/J$. This is an inhomogeneous linear equation. It is solvable if and only if $j\ge j_x(J)$, and then its solutions form a torsor under $\ker X=(J:x)/J$, a vector space of dimension $j_x(J)$. Hence we obtain the following statement (\cref{thm:A}):\\[5pt]
\begin{quote}
\emph{For every ideal $J$, the ideals $I$ with $(I:x)=J$ and contact length $j$ form an affine space of dimension $j_x(J)$ if $j\ge j_x(J)$, and there are none if $j<j_x(J)$.}\\[5pt]
\end{quote}

The dimension of the affine space depends on $J$ only through its contact length. 
Because the tail is cut out by a linear equation on the module $R/J$, the description persists in families: over any family of ideals $J$ with constant contact length, the ideals $I$ form a Zariski-locally trivial affine bundle (\cref{prop:family}). So classes in $\Kvar$ are simply multiplied by $\LL^{j_x(J)}$, and the recursion closes on the pair (colength, contact length). 

To exploit this, let $G^x_{u,v}(Q,z)=\sum_{n,j}[\Hilb_n(C_{u,v},0)_{j_x=j}]\,Q^nz^j$ be the generating function refined by the contact length, in the notation of \cref{thmB}. Then the above statement gives
\[
[\Hilb_n(C_{u,v},0)_{j_x=j}]=\sum_{j'\le j}\LL^{j'}\,[\Hilb_{n-j}(C_{u-1,v},0)_{j_x=j'}].
\]
Plugging back to $G^x_{u,v}(Q,z)$ and summing over $j\ge j'$ gives the factor $\LL^{j'}(Qz)^{j'}/(1-Qz)$. Therefore
\[
G^x_{u,v}(Q,z)=\frac{G^x_{u-1,v}(Q,Q\LL z)}{1-Qz},\qquad u\ge1,\ v\ge0,
\]
and peeling the branch $\{x=0\}$ $u$ times reduces $C_{u,v}$ to the fat line $\{y^v=0\}$. Explicitly,
\begin{equation}\label{eq:xchain-intro}
\Zs_{u,v}(Q)=\frac{g_v(Q,(Q\LL)^u)}{(Q;Q\LL)_u},\qquad g_v(Q,z):=\sum_{I\ni y^v}Q^{\col I}z^{j_x(I)}\qquad\text{(\cref{thm:B})}.
\end{equation}
Each modification contributes one factor $(1-Q(Q\LL)^i)^{-1}$ of Bejleri's series $1/(Q;Q\LL)_u$ \cite{Bej}, and the entire interaction between the two branches is concentrated in the \textit{contact distribution} $g_v$ of the other branch, evaluated at $z=(Q\LL)^u$.

Nothing here uses the special form of the second branch. For an arbitrary germ $\{g=0\}$, adding a fat branch $\{x^u=0\}$ multiplies the zeta function by $1/(Q;Q\LL)_u$ and evaluates the contact-refined zeta function of $g$ at $(Q\LL)^u$ (\cref{cor:F}).

The same modification along the other branch, $I\mapsto(I:y)$, is Luan's vertical filtration \cite[Lem.~5.1]{Luan}. It reduces $C_{u,v}$ to the fat line $\{x^u=0\}$, whose contact distribution is hard to compute directly for $u\ge3$ (\cref{subsec:whyhard}). Rather than computing it, we combine the two chains in the bootstrap below. In \cref{sec:shadow} we analyse how the contact length with one branch changes along the chain for the other branch. This refinement of the elementary modification is what lifts the formula from point counts to $\Kvar$.\\

\noindent\emph{The bootstrap.} The contact distribution $g_v(Q,z)$ of the fat line is a polynomial in $z$ of degree $\le v$ with $g_v(Q,0)=1$. The curve can be peeled along either branch, so \eqref{eq:xchain-intro} and its mirror give
\[
\frac{g_v(Q,(Q\LL)^u)}{(Q;Q\LL)_u}=\Zs_{u,v}(Q)=\frac{g_u(Q,(Q\LL)^v)}{(Q;Q\LL)_v}\qquad\text{for all }u,v.
\]
By induction on $v$, the right side for $u<v$ gives the values of $g_v(Q,z)$ at the $v$ distinct special points $z=1,Q\LL,\dots,(Q\LL)^{v-1}$. Together with $g_v(Q,0)=1$, interpolation then determines $g_v$, once $\LL$ is inverted (\cref{thm:C}). We found the closed form by solving these interpolation problems for $v\le6$ and recognizing the Gaussian binomials in the formula
\[
g_v(Q,z)=\frac1{(Q;Q\LL)_v}\sum_{k=0}^{v}\qb{v}{k}\poch{Q}{v-k}\,(Qz)^k .
\]

We then proved that it satisfies the functional equation, by the $q$-binomial theorem. The symmetric form of the result is exactly the rank count of \cref{thmA} (\cref{thm:D}). This proves \cref{thmA}(a) for point counts over every finite field, and in $\Kvar{}[\LL^{-1}][[Q]]$; the identity in $\Kvar{}[[Q]]$ itself needs the geometry of \cref{sec:shadow}. The identity behind the symmetric form, read in the independent variables $(q,Q)$, is
\[
C_{u,v}(q,Q)=\sum_{k=0}^{v}\pbin{v}{k}(Q;q)_{v-k}\,Q^kq^{uk},
\]
and it is also the reason for the positivity properties of $F_{u,v}$ discussed in \cref{sec:positivity}.

\subsection{The geometry behind the Hilbert-scheme formula}\label{subsec:geometry-intro}
The lift rests on a dichotomy for the image of an ideal in the reduced node (\textit{split} or \textit{glued}, see definition below) which controls how the contact length changes along the second chain, and it produces stratifications of the Hilbert schemes into pieces $\AAA^\delta\times\GG_m^\tau$ (\cref{thmC}).

To lift \cref{thmA}(a) to $\Kvar$, where $\LL$ is a zero divisor \cite{Borisov}, we study how the contact length varies along the chain $I\mapsto(I:y)$ for the other branch (Luan's vertical filtration \cite[Def.~4.1]{Luan}). Write $j_y(I)=\dim R/(I+(y))$ for the contact length with the other branch. The image of $I$ in the reduced node $\{xy=0\}$ is of one of two kinds (\cref{lem:node}), namely
\[
I+(xy)=(xy,\,x^{j_y(I)},\,y^{j_x(I)})\quad\textit{(split)},\qquad\text{or}\qquad
I+(xy)=(xy,\,x^{j_y(I)}+\gamma\,y^{j_x(I)}),\ \gamma\ne0\quad\textit{(glued)}.
\]
\begin{mainthm}[\Cref{prop:jump,prop:fibre,thm:torus-fatline,thm:torus-all}]\label{thmC}
~\begin{enumerate}[label=(\alph*)]
\item For every ideal $I\ne R$ of finite colength, the contact jumps along the two chains agree, i.e., $j_x(I)-j_x\big((I:y)\big)=j_y(I)-j_y\big((I:x)\big)$, and this number is $1$ if $I$ is split and $0$ if $I$ is glued.
\item Along the chain $I\mapsto(I:y)$, the split ideals in a fibre form either an affine
hyperplane, or the whole fibre, or nothing. In the first case the gluing parameter $\gamma$ is an affine-linear coordinate on the complement of the hyperplane. In the last case $\gamma$ is inherited from $(I:y)$.
\item Consequently every $\Hilb_n(C_{u,v},0)$ is stratified by the contact lengths of $(I:x^ay^b)$ (equivalently, by the ranks of certain operators $X^aY^b$ on $R/I$), into locally closed pieces isomorphic to $\AAA^\delta\times\GG_m^\tau$, on which the contact length is constant. This gives \cref{thmA}(a) in $\Kvar$.
\end{enumerate}
\end{mainthm}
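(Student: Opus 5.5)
Part (a) comes first, via length bookkeeping with the exact sequences from the elementary modification. Multiplication by $x$ and by $y$ give
\[
\col I=\col(I:x)+j_x(I)=\col(I:y)+j_y(I).
\]
Applying the same identities to $(I:x)$ and $(I:y)$, and using $((I:x):y)=((I:y):x)=(I:xy)$, we get
\[
\col(I:x)=\col(I:xy)+j_y(I:x),\qquad \col(I:y)=\col(I:xy)+j_x(I:y).
\]
Subtracting yields $j_x(I)-j_x(I:y)=j_y(I)-j_y(I:x)$. Both sides also equal $\col I-\col(I:y)-\col(I:x)+\col(I:xy)$, which is the length of the kernel/cokernel defect of $R/(I:xy)\to R/(I:x)\oplus R/(I:y)$. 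It remains to identify this number with $1$ or $0$ according to the dichotomy of \cref{lem:node}. Set $\bar I=I+(xy)$; then $j_x$ and $j_y$ depend only on $\bar I$. I would compute $(I:y)+(x)$ through $\bar I$. In the split case $y^{j_x-1}\cdot y\in I+(x)$ lifts, modulo $x$, to an element of $(I:y)$, so the contact drops by exactly one. In the glued case the only generator reaching $y^{j_x}$ is $x^{j_y}+\gamma y^{j_x}$. A computation modulo $x$ shows that the contact is unchanged, because no element $y^{j_x-1}+x(\dots)$ lies in $(I:y)$ when $\gamma\neq0$. The careful part here is the inequality $j_x(I:y)\ge j_x(I)-1$, which follows from $y(I:y)\subseteq I$.

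For (b), I would use the explicit fibre description of \cref{thm:A}, with the roles of $x$ and $y$ swapped. With $J=(I:y)$ fixed and $j=j_y(I)$, we have $I=yJ+R(x^j+yc)$, where the tail $\bar c$ ranges over an affine space, a torsor under $(J:y)/J$. I would compute $I+(xy)$ in terms of $\bar c$: modulo $xy$, the element $x^j+yc$ becomes $x^j+y\,c(0,y)$. Whether $I$ is split or glued is then decided by whether the constant-along-$y$ part of the tail produces a term $\gamma y^{j_x(I)}$. By (a), $j_x(I)\in\{j_x(J),\,j_x(J)+1\}$, so the relevant coefficient is a single affine-linear functional of $\bar c$, namely the coefficient of $y^{j_x(J)-1}$ after reducing $yc$ modulo $J+(x)$.

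Three cases arise. If this functional is nonconstant, the split locus is its zero hyperplane and $\gamma$ is that functional, so $\gamma$ is an affine-linear coordinate on the complement. If the functional is identically zero, the whole fibre is split. If it is a nonzero constant, no ideal in the fibre is split. In that last case I expect $\gamma$ to be forced equal to the gluing parameter of $J$, and I would check this by comparing $J+(xy)$ with $I+(xy)$.

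For (c), I would iterate both chains $I\mapsto(I:x)$ and $I\mapsto(I:y)$ and stratify by the contact data of all $(I:x^ay^b)$. Inductively, each stratum is built over a stratum of $\Hilb(C_{u,v-1})$ as a Zariski-locally trivial bundle, using \cref{prop:family}. By (b), each fibre is an affine space, or a hyperplane, or a hyperplane complement. The complement is $\AAA^{d-1}\times\GG_m$ with $\gamma$ as the $\GG_m$-coordinate. The main obstacle is making this trivialization global: the hyperplane complement must split as a product uniformly over the base stratum, not merely fibrewise. I would attempt this by showing that the linear functional of (b) is constant on the strata cut out by the ranks of $X^aY^b$, and then choosing tail coordinates adapted to it, as in \cref{prop:family}. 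Summing the resulting classes $\LL^\delta(\LL-1)^\tau$ gives the contact-refined recursion in $\Kvar$ without inverting $\LL$. This recursion, combined with the bootstrap and the interpolation identity already established, yields \cref{thmA}(a) in $\Kvar$.
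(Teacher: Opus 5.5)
Your part (a) is correct and is essentially the paper's argument. The colength bookkeeping is the identity $n-\rk X-\rk Y+\rk XY$ of \cref{prop:jump}, and the bound $j_x((I:y))\ge j_x(I)-1$ is the inclusion $(I:y)+(x)\subseteq((I+(x)):y)=(x,y^{j_x(I)-1})$. One slip: in the split case the hypothesis you lift is $y^{j_x(I)}\in I+(xy)$, since membership in $I+(x)$ holds for every $I$.

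In (b) you correctly find the affine-linear function $\ell$ with $I_{\bar c}+(xy)=(xy,\,y^{j'+1},\,x^a+\ell(\bar c)y^{j'})$. This gives the trichotomy and $\gamma=\ell$ off the hyperplane. What is missing is the rule deciding \emph{which} case occurs over a given $J$, and this is the real content of \cref{prop:fibre}.
\begin{itemize}
\item The linear part of $\ell$ is the map $(J:y)/J\to R/(J+(x))$. Its image $((J:y)+(x))/(J+(x))$ has dimension $j_x(J)-j_x((J:y))$, so by (a) applied to $J$ it is nonzero exactly when $J$ is split.
\item When $J$ is glued, $\ell$ is constant. From $x^a+\ell y^{j'}\in J+(xy)$ and $y^{j'}\notin J+(xy)$ one gets $\ell=0$ if $a>j_y(J)$, because then $x^a\in xJ+(xy)$. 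If $a=j_y(J)$ one gets $\ell=\gamma(J)$.
\end{itemize}
Without the first fact you cannot conclude that $(I:y)$ is glued in the ``nothing split'' case. So in your write-up the inheritance of $\gamma$ remains only an expectation.

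For (c) this gap is essential, not cosmetic. The inductive construction of pieces $\AAA^\delta\times\GG_m^\tau$ needs the fibre type to be constant over an entire base stratum. Then $I\mapsto(I:y)$ maps each piece onto the whole base piece, with fibre $\AAA^{a_1-1}$, $\AAA^{a_1-1}\times\GG_m$ or $\AAA^{a_1}$ (\cref{thm:torus-fatline}(b)). Otherwise, over a glued base stratum $\Sigma$, the split ideals form a torsor over the unknown closed set $\{\ell=0\}\subseteq\Sigma$, and its class is out of reach. The fibre rule supplies this uniformity: over the piece with rows $a'$ and jump vector $\varepsilon'$, the type is determined by $\varepsilon_1$ and by whether $a_0>a_1$. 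Your proposed remedy, that ``the linear functional is constant on the rank strata'', is not the right statement and is not argued.

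The organization also has to change. The paper runs the $y$-chain only on the fat line $\{y^v=0\}$, where it terminates at $R$, stratifying by rows and jump vectors. It then runs the $x$-chain over these pieces with \cref{prop:family}, which only needs $j_x$ constant on the base. Your plan passes from $C_{u,v}$ to $C_{u,v-1}$ by $(I:y)$ while fixing the contact data of all $(I:x^ay^b)$. That would require controlling $j_x((I:x^a))$, $a\ge1$, inside a $y$-fibre, which (b) does not do.

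Two smaller points:
\begin{itemize}
\item The classes only need Zariski-local triviality. The global isomorphism with $\AAA^\delta\times\GG_m^\tau$ additionally uses that vector bundles on such bases are trivial (Swan's theorem) and that $H^1$ vanishes on affine schemes.
\item The lift of \cref{thmA}(a) to classes comes from the piecewise sums being polynomials in $\LL$ independent of $\kk$. These are then identified through the point counts of \cref{thm:main}, not through the bootstrap after inverting $\LL$.
\end{itemize}
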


The coefficients of $F_{u,v}(Q\LL,Q)$ lie in $\NN[\LL]$ (\cref{cor:positivity}), whereas the pieces of (c) have classes $\LL^\delta(\LL-1)^\tau$, so that the stratification gives a signed expansion of $[\Hilb_n(C_{u,v},0)]$, different from the termwise positive expansion of \cref{thmB}. Whether these varieties admit affine pavings, and whether their cohomology is pure, so that the coefficients of \cref{thmA}(a) at $\LL=T^2$ are Betti numbers, is not addressed here. For $\min(u,v)\le2$ both hold by Luan's affine pavings \cite[Thms.~1.2, 1.4]{Luan}.

\subsection{The branchwise conjecture and the colored Hopf link}\label{subsec:intro-conj}
Next we describe the conjectural link-theoretic context, namely the branchwise Hilb--Quot--HOMFLY conjecture for reduced plane curve germs with independent ranks on the branches, the prediction it gives for the colored Hopf link (\cref{corD}), and the normalization questions raised by links, which we settle for the node (\cref{thmE}).

Let $g=g_1\cdots g_\ell$ be reduced, let $L_g$ be its link, and color the component of $L_g$ belonging to $g_i$ by the one-row partition $(a_i)$. Let $Z^{\Hilb}_{g;\bfA}(q,Q)$ and $Z^{\Quot}_{E_{\bfA}}(q,Q)$ be the generating series of the compactly supported virtual weight polynomials of $\Hilb_n(\{\prod_ig_i^{a_i}=0\},0)$ and of $\Quot^{R_g}_m(E_{\bfA})$, with $W_c(\AAA^1;q)=q$ (\cref{subsec:geometric-conventions}), and let $B^{L_g}_{\bfA}(Q,T)$ be the bottom Hochschild series of the unreduced row-colored HOMFLY homology of $L_g$, normalized as in \cref{subsec:bottom-normalization}. Put $(Q;q)_{\bfA}:=(Q;q)_{a_1,\dots,a_\ell}$. The three-way statement proposed in \cref{sec:conjecture} is
\begin{equation}\label{eq:threeway-intro}
(Q;qQ)_{\bfA}\,Z^{\Hilb}_{g;\bfA}(q,Q)
=(Q;qQ)_{\bfA}\,Z^{\Quot}_{E_{\bfA}}(qQ,Q)
=(Q;qQ)_{\bfA}\,B^{L_g}_{\bfA}\big(Q,(qQ)^{-1}\big),
\end{equation}
all three being polynomials in $q$ and $Q$ (\cref{conj:main}).

The Hilbert--link equality is the colored Oblomkov--Rasmussen--Shende prediction in the form used by Luan \cite[Conj.~1.1]{Luan}. The Hilbert--Quot equality is new, and \cref{thmA}(c) proves it, motivically, for the node. Equal ranks recover \cite[Conj.~1.12]{HJO}. All ranks equal to one recover the usual Hilb--Quot and bottom-row ORS predictions \cite{KT,ORS}. For $\bfA$ with $\ell=1$ the reduced/unreduced relation of \cite{GGS} turns the right side into the reduced quadruply graded framework of \cite{HJO} (\cref{subsec:knot-convention}).

For the node, the link side gives the following prediction.

\begin{maincor}[\Cref{cor:hopf,prop:gnsss}]\label{corD}
Assume the colored ORS conjecture for $C_{u,v}$ in the normalization of \cite[Cor.~1.3]{Luan}, in its virtual-weight form \eqref{eq:rawthreeway}. Then, up to an overall monomial factor, the $a=0$ part of the Poincar\'e series of the $(\mathrm{Sym}^u,\mathrm{Sym}^v)$-colored Khovanov--Rozansky homology of the Hopf link is
\[
F_{u,v}(T^{-1},Q)=\frac{\sum_{m}Q^m\,R_m(u,v;T^{-1})}{\prod_{i=0}^{u-1}(1-QT^{-i})\,\prod_{i=0}^{v-1}(1-QT^{-i})} .
\]
For $u=1$ this is the formula of Hogancamp--Mellit and Conners \cite{HM,Conners} quoted in \cite[Cor.~2.2]{Luan}. For all $u,v$ it coincides with the bottom Hochschild specialization of the independently colored Hopf-link expression proposed by Gukov, Nawata, Saberi, Sto\v{s}i\'c and Su{\l}kowski \cite[Eq.~(7.8)]{GNSSS}.
\end{maincor}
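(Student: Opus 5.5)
Under the assumed colored ORS conjecture in its virtual-weight form \eqref{eq:rawthreeway}, the bottom Hochschild series $B^{\mathrm{Hopf}}_{u,v}(Q,T)$ of the $(\mathrm{Sym}^u,\mathrm{Sym}^v)$-colored Hopf link equals, up to the normalizing monomial, the Hilbert series $Z^{\Hilb}_{xy;(u,v)}(q,Q)$ after the change of variables $T=(qQ)^{-1}$. The plan is to read $B$ off from \cref{thmA}(a) by inverting this substitution.

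\emph{Step 1: the Hilbert side as a weight polynomial.} By \cref{thmA}(a), applied through the virtual weight polynomial (a motivic measure with $W_c(\AAA^1;q)=q$), we get $Z^{\Hilb}_{xy;(u,v)}(q,Q)=F_{u,v}(qQ,Q)$. Since $F_{u,v}(\,\cdot\,,Q)$ is a rational function in two independent variables, this is simply $F_{u,v}$ with its first argument set to $qQ$.

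\emph{Step 2: invert the substitution.} The conjectural identity reads $B(Q,(qQ)^{-1})=F_{u,v}(qQ,Q)$, up to monomial. Set $T=(qQ)^{-1}$. Because $q$ and $Q$ are independent, $T$ and $Q$ are independent as well, so the identity holds as rational functions in $(Q,T)$. This gives $B(Q,T)=F_{u,v}(T^{-1},Q)$ up to a monomial.

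\emph{Step 3: expand the formula.} Using \eqref{eq:F} and the definition $(Q;T^{-1})_{u,v}=\prod_{i<u}(1-QT^{-i})\prod_{i<v}(1-QT^{-i})$, we obtain exactly the displayed expression. The $a=0$ part is by definition the bottom Hochschild degree, which is what $B$ encodes in the normalization of \cref{subsec:bottom-normalization}.

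\emph{Step 4: the comparisons.} For $u=1$, the numerator is $R_0+R_1(1,v;T^{-1})Q=1+(T^{-v}-1)Q$. Together with the denominator $(1-Q)\prod_{i<v}(1-QT^{-i})$, this should match the Hogancamp--Mellit/Conners formula in \cite[Cor.~2.2]{Luan} after identifying their variables; I would check this by direct rewriting. For general $(u,v)$, I would specialize \cite[Eq.~(7.8)]{GNSSS} to $a=0$. That expression is a sum over $k$ of $q$-binomials, and I would rewrite it with the identity $C_{u,v}(q,Q)=\sum_k\pbin{v}{k}(Q;q)_{v-k}Q^kq^{uk}$ (the identity behind \cref{thm:D}), or with \eqref{eq:NP}, at $q=T^{-1}$.

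The main obstacle is Step 4's bookkeeping. The conventions of GNSSS (the variables $a,q,t$, framing, and reduced versus unreduced) must be matched to $(Q,T)$, and the overall monomial must be tracked. The $q$-series identity itself follows from the $q$-binomial theorem once the variables are aligned.
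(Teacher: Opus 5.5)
Your proposal is correct and follows the paper's route. Your Steps 1--3 are \cref{cor:hopf}: the class $[\Hilb_n(C_{u,v},0)]=M_n(\LL)$ is pushed through the weight realization and then substituted via $Q\LL=T^{-1}$. The $u=1$ check is the one-line rewriting $\frac{1-Q+QT^{-v}}{1-Q}=1+\frac{QT^{-v}}{1-Q}$ of \cref{ex:hopf}(1).

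For \cref{prop:gnsss}, the bookkeeping you defer is short. Every Pochhammer factor involving the Hochschild variable has constant term one, so the bottom part is obtained by dropping those factors. The dictionary $r_1=v$, $r_2=u$, $Q=\mathsf q^2$, $q=\mathsf q^2t_c^2$ then turns the remaining sum over $k\le u$ into \eqref{eq:positive-partial}, which is your $q$-binomial expansion with $u$ and $v$ exchanged and divided by $(Q;q)_u$.
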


Thus the Quot calculation gives a geometric derivation, uniform in both colors, of an existing link-theoretic prediction. The identification of that prediction with rigorously constructed row-projector homology remains open for two nontrivial colors (\cref{sec:hopf}).

The link interpretation raises a normalization question that is invisible for knots. The full numerator $C_{u,v}$ has negative coefficients whenever $u,v>0$, although the series $F_{u,v}$ is coefficientwise nonnegative. We formulate two further conjectures for arbitrary row-colored algebraic links. The fully normalized bottom series should be a polynomial (\cref{conj:full-normalization}), and the series normalized at a single component should be coefficientwise nonnegative (\cref{conj:component-positivity}). The latter asks for a reduced homological model which is in general infinite-dimensional for links (\cref{question:reduced-row}). For the node both are verified, in the following sharp form.

\begin{mainthm}[\Cref{prop:minimal,prop:positive-max}]\label{thmE}
Let $1\le u\le v$. The reduced denominator of $F_{u,v}$ in $\QQ(q)(Q)$, normalized to have constant term one in $Q$, is exactly $(Q;q)_{u,v}$. A product $A$ of factors $1-q^jQ$, with multiplicities at most those in $(Q;q)_{u,v}$ and constant term one, satisfies $AF_{u,v}\in\NN[q][[Q]]$ if and only if $A\mid (Q;q)_v$. Thus $(Q;q)_v$ is the unique largest factor of the natural denominator that can be cleared while preserving coefficientwise positivity. Clearing it leaves an infinite positive series, whereas clearing $(Q;q)_{u,v}$ leaves the signed polynomial $C_{u,v}$.
\end{mainthm}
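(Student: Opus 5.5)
The plan rests on the two rank expansions of the numerator coming from \cref{thm:D} and its mirror (swap $u$ and $v$):
\[
C_{u,v}(q,Q)=\sum_{k=0}^{v}\pbin{v}{k}(Q;q)_{v-k}\,Q^kq^{uk}=\sum_{k=0}^{u}\pbin{u}{k}(Q;q)_{u-k}\,Q^kq^{vk}.
\]
Dividing the first form by $(Q;q)_v$, and using $(Q;q)_v=(Q;q)_{v-k}(Qq^{v-k};q)_k$, gives
\[
(Q;q)_vF_{u,v}=\frac{C_{u,v}}{(Q;q)_u}=\sum_{k=0}^{u}\pbin{u}{k}\frac{Q^kq^{vk}}{(Qq^{u-k};q)_k},
\]
where the second equality uses the mirror form. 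Every summand lies in $\NN[q][[Q]]$.

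\emph{Sufficiency.} If $A\mid(Q;q)_v$, then
\[
AF_{u,v}=\frac{(Q;q)_vF_{u,v}}{(Q;q)_v/A}.
\]
The denominator $(Q;q)_v/A$ is a product of factors $1-q^jQ$, so its inverse is a positive geometric series. Hence $AF_{u,v}\in\NN[q][[Q]]$. The same computation shows that clearing $(Q;q)_v$ leaves an infinite positive series, and clearing all of $(Q;q)_{u,v}$ leaves $C_{u,v}$. The latter has negative coefficients, for instance the coefficient $R_1(u,v;q)$ of $Q$ has value $-1$ at $q=0$.

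\emph{Reduced denominator.} I would show that $C_{u,v}(q,Q)$ does not vanish at $Q=q^{-j}$ for any $0\le j<v$. This shows no factor of $(Q;q)_{u,v}$ cancels; it suffices since $F_{u,v}\in\QQ(q)(Q)$ has only these possible poles. The vanishing order of $C_{u,v}$ must be compared with the multiplicity of $1-q^jQ$, which is $2$ for $j<u$ and $1$ for $u\le j<v$. So it is enough to check $C_{u,v}(q,q^{-j})\neq0$.

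Substituting $Q=q^{-j}$ into the first expansion kills every term with $v-k>j$, since $(q^{-j};q)_{v-k}$ then contains the factor $1-q^0$. For $j\ge u$ the mirror expansion has no such cancellation. In both cases I would compare the extreme $q$-degrees of the surviving terms, expecting a single term to dominate (the one with $k$ minimal), so that the value is a nonzero Laurent polynomial.

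\emph{Necessity.} Since multiplicities in $A$ are bounded by those in $(Q;q)_{u,v}$, if $A\nmid(Q;q)_v$ then $(1-q^jQ)^2\mid A$ for some $j<u$. I then want a negative coefficient in $AF_{u,v}$.

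For $j=0$ I specialize at $q=0$, which preserves coefficientwise nonnegativity. There $R_1(u,v;0)=-1$ and $R_m(u,v;0)=0$ for $m\ge2$, so $C_{u,v}(0,Q)=1-Q$ and $F_{u,v}(0,Q)=1/(1-Q)$. The only factors surviving at $q=0$ are those with $j=0$, so $A(0,Q)F_{u,v}(0,Q)=1-Q$, which has a negative coefficient.

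For general $j$ I apply the substitution $Q\mapsto Qq^{-j}$. It preserves the sign of each monomial coefficient and turns $1-q^jQ$ into $1-Q$. I then take the lowest-order part in $q$, uniformly by weighting $Q^n$ with $q^{jn}$, and hope to reduce again to the pattern $(1-Q)^2\cdot\frac{1-Q}{(1-Q)^2}$. This is the step I expect to be the main obstacle, because the other factors $1-q^{i-j}Q$ with $i<j$ blow up rather than tend to $1$. I would first try to control them by the Gaussian-binomial expansion above, isolating the term $k=j$ as carrying the lowest $q$-degree. Failing that, I would exhibit an explicit negative coefficient of $Q^{n}$ at its lowest power of $q$ directly from the product of $A$ with the series.
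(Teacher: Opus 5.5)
The genuine gap is the necessity direction for $1\le j<u$, which you leave open. Your specialization at $q=0$ only detects a doubled factor $1-Q$. For instance, with $u=v=2$ and $A=(1-qQ)^2$ one gets $A(0,Q)F_{2,2}(0,Q)=1/(1-Q)$, which is positive, although $AF_{2,2}$ is not. The substitution $Q\mapsto Qq^{-j}$ is not needed.

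The missing observation is that the coefficient of $Q^1$ already decides everything; your $q=0$ computation is just its $q^0$-part. Write $E=(Q;q)_{u,v}/A=\prod_j(1-q^jQ)^{e_j}$ with $e_j\ge0$. Then $AF_{u,v}=C_{u,v}/E$ and $1/E=1+Q\sum_je_jq^j+O(Q^2)$. Since $[Q]\,C_{u,v}=R_1(u,v;q)=(q^v-1)(1+q+\dots+q^{u-1})$,
\[
[Q]\,(AF_{u,v})=(q^v-1)(1+q+\dots+q^{u-1})+\sum_je_jq^j .
\]
As $v\ge u$, the part $q^v(1+\dots+q^{u-1})$ lives in degrees $\ge v\ge u$. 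So for $j<u$ the monomial $q^j$ has coefficient $e_j-1$. If $A\nmid(Q;q)_v$, then $(1-q^jQ)^2\mid A$ for some $j<u$, so $e_j=0$ and that coefficient is $-1$. This is the paper's argument, and it treats all $j<u$ at once.

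The rest of your plan is sound. Sufficiency is exactly as in the paper. For the reduced denominator, your expectation is correct if ``extreme'' means lowest degree:
\begin{itemize}
\item For $j<u$, the surviving terms $k=v-j,\dots,v$ of the first expansion at $Q=q^{-j}$ have lowest monomials $(-1)^{v-k}q^{L(k)}$, with $L(k+1)-L(k)=u+1-v+k\ge2$.
\item For $j\ge u$, all terms of the mirror expansion survive, and their lowest exponents increase in $k$ by $v-u+k+1\ge1$.
\end{itemize}
In both cases the term with minimal $k$ gives an uncancelled lowest monomial, so $C_{u,v}(q,q^{-j})\neq0$.

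The paper avoids this bookkeeping. For real $q>1$ every $R_m(u,v;q)$ in the rank expansion is positive and $R_0=1$, so $C_{u,v}(q,q^{-j})>0$.

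Note also that the infiniteness of $(Q;q)_vF_{u,v}=C_{u,v}/(Q;q)_u$ comes from this denominator statement, not from the positivity computation you cite for it.
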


\subsection{Further results}
Below we list further results of the paper that are not needed for the main theorems, together with the higher-rank generalization stated in the last section.

\begin{itemize}[leftmargin=2em]
\item The elementary modification works for any germ. For an arbitrary $g\in\mm$, adding the fat
branch $\{x^u=0\}$ multiplies the zeta function by $1/(Q;Q\LL)_u$ and evaluates the contact-refined zeta function of $g$ at $z=(Q\LL)^u$ (\cref{cor:F}). This holds without any transversality assumption. For example it gives the zeta function of $x^u(x-y^m)$.
\item The Euler-characteristic specialization of \cref{thmA} is an identity for partitions
avoiding a box (\cref{cor:euler}). Its positive form has a bijective proof, but its rank form does not yet.
\item $\Hilb_n(C_{u,v},0)=\Hilb_n(\AAA^2,0)$ exactly for $n\le u+v$, and the first difference
$(\LL-1)\LL^{u+v-1}$ comes from curvilinear ideals (\cref{prop:threshold}). This occurs much earlier than the torus-fixed points detect, namely at $n=(u+1)(v+1)$.
\item For a general reduced germ with split normalization, the reduced denominator of the
Quot series of $E_{\bfA}$ over $\QQ(q^{1/2})$ is exactly $(Q;q)_{\bfA}$ (\cref{cor:general-denominator}). If \cref{conj:main} holds, this makes every one-component normalization of the bottom series of a multibranched algebraic link an infinite series.
\item The Hilbert-side formula has a generalization to the punctual Quot schemes of the free module of
rank $r$ over the fat node, in which the numerator becomes a plethystic evaluation of the modified Macdonald polynomial of the rectangle $(r^v)$ (\cref{thm:higher-rank}). It will be proved in the sequel \cite{FN2}.
\end{itemize}

\subsection{Organization}
\Cref{sec:quot} treats the Quot side, which is the easier of the two and uses only the preliminaries of \cref{sec:prelim}. \Cref{sec:modification,sec:fatline,sec:shadow} form the proof of the Hilbert-side formula. \Cref{sec:conjecture,sec:hopf}, which concern link homology, are not used in the proofs of the geometric results.

\Cref{sec:prelim} collects preliminaries on ideals and modules, Hilbert schemes, the torus action and $q$-calculus, including the rational function \eqref{eq:F}. \Cref{sec:quot} proves \cref{thmA}(b), namely the extension-fibre factorization of \cite{HJ} for a general germ with split normalization and unequal ranks, and the mixed-rank node, and derives \cref{thmA}(c). \Cref{sec:modification} contains the main geometric ingredient, the elementary modification along a branch, with the structure of its fibres, its version in families, and the $x$-chain. \Cref{sec:fatline} proves the bootstrap, the closed form for the fat line, and \cref{thmA}(a) for point counts.

\Cref{sec:consequences} collects consequences of the Hilbert-scheme formula. \Cref{sec:shadow} constructs the torus stratifications and proves \cref{thmC}, and \cref{thmA}(a) and \cref{thmB} in $\Kvar$.

\Cref{sec:positivity} proves \cref{thmE}. \Cref{sec:conjecture} states the branchwise Hilb--Quot--HOMFLY conjecture and the two normalization conjectures, with the evidence supplied by the node. \Cref{sec:hopf} discusses the colored Hopf link. Finally, \cref{sec:questions} states a generalization of the Hilbert-side formula to Quot schemes of free modules of arbitrary rank over the fat node, to be proved in the sequel \cite{FN2}, and lists open questions.

\subsection{Conventions}\label{subsec:conventions}
The following conventions are used throughout: the ring $R$ and colengths, varieties and reduced structures, partitions and monomial ideals, the variables of the generating functions, and the facts about the Grothendieck ring of varieties that we use.

Throughout, $\kk$ is a field, $R=\kk[[x,y]]$ and $\col(I)=\dim_{\kk}R/I$. A \emph{variety} is a reduced separated scheme of finite type over $\kk$. Locally closed subsets of schemes are given their reduced structure, and classes in $\Kvar$ depend only on the reduced structure.

A partition $\lambda$ is drawn with its boxes $(a,b)\in\NN^2$ standing for the monomials $x^ay^b$. Its rows run in the $x$-direction, row $b$ consisting of the boxes $(a,b)$ with $0\le a<\lambda_b$, and $I_\lambda$ is the monomial ideal whose standard monomials are the boxes of $\lambda$. The virtual-weight variable of \cref{subsec:geometric-conventions} is again $q$, with $W_c(\AAA^1;q)=q$, and $p(n)$ is the number of partitions of $n$.

\emph{The Grothendieck ring.} We work motivically. Here $\Kvar$ is the Grothendieck ring of varieties over $\kk$ and $\LL=[\AAA^1]$. We use the following standard facts. If a variety $Z$ is a finite disjoint union of locally closed subvarieties $Z_i$, then $[Z]=\sum_i[Z_i]$. If $Z\to S$ is Zariski-locally trivial with fibre $F$, then $[Z]=[F][S]$. In particular $[Z]=\LL^r[S]$ for an affine bundle of rank $r$, and $[\AAA^\delta\times\GG_m^\tau]=\LL^\delta(\LL-1)^\tau$.

The class $\LL$ is a zero divisor in $\Kvar$ \cite{Borisov}, so identities in $\Kvar$ cannot in general be obtained by cancelling powers of $\LL$. For $\kk=\FF_q$, counting $\FF_q$-points defines a ring homomorphism $\Kvar[\FF_q]\to\ZZ$ with $\LL\mapsto q$. In \cref{sec:fatline,sec:shadow} it is used to identify polynomials in $\LL$. Other realizations of classes, such as the Euler characteristic (\cref{cor:euler}) and the weight polynomials used for the comparison with link homology (\cref{sec:conjecture,sec:hopf}), only serve to draw consequences.

\emph{Variables.} The colength variable is $Q$ throughout, on the Hilbert side, on the Quot side and on the link side. It is the variable $z$ of \cite{HJO,KT} and the variable $Q$ of \cite{Luan}. The first variable $q$ of \eqref{eq:F} is a formal variable, which we specialize on the fly: the Hilbert side sits at $q=Q\LL$, the Quot side at $q=\LL$, and this is the whole content of the substitution $\LL\mapsto Q\LL$. All $q$-identities are proved for the formal variables $q,Q$ and then specialized. The letter $q$ also denotes the cardinality of a finite field, the counting measure being $\LL\mapsto q$; no confusion arises, since in every formula the formal variable has been specialized to $\LL$ or $Q\LL$ before points are counted.

On the link side we use the variables $(Q,T)$ of \cite{Luan}, with $q=T^{-1}$ (\cref{subsec:bottom-normalization}). In the Quot-side sections \ref{sec:quot}--\ref{sec:conjecture} the two ranks of the node are still written $u,v$, so that the module $E_{u,v}$ and the polynomials $C_{u,v}$, $F_{u,v}$ carry the same indices as the curve $C_{u,v}$. For a germ with $\ell$ branches the ranks are $\bfA=(a_1,\dots,a_\ell)$.

\subsection*{Acknowledgment} The authors thank fruitful discussions with Eugene Gorsky, Yuze Luan, Martin Olsson, Ken Ono, Vivek Shende, Sug Woo Shin, Peisheng Yu, Zhiwei Yun, and Peng Zhou.  AO research was partially supported by  NSF grant DMS-2200798. The authors acknowledge that the completion of the proof of \Cref{thm:A}
 is partly assisted by generative AIs, such as ChatGPT-5.6 Sol.  We also used generative AIs to improve exposition and to search references.

\section{Preliminaries}\label{sec:prelim}

In this section we collect the algebraic and geometric preliminaries. The most important notion is the \emph{contact length} $j_x(I)=\col(I+(x))$ of an ideal with the branch $\{x=0\}$ (\cref{def:contact}). It is the dimension of the fibres of the elementary modification of \cref{sec:modification}, and the whole Hilbert-side computation is a bookkeeping of colength and contact length.

The dictionary between ideals of finite colength and cyclic modules with commuting nilpotent operators (\cref{prop:adhm,lem:rankdict}) translates colengths and contact lengths of colon ideals into ranks of operators. It is also the language in which families of ideals are constructed (\cref{lem:points}). We then define the punctual Hilbert schemes as reduced projective varieties, record the torus action, and collect the $q$-calculus used in \cref{sec:fatline,sec:quot,sec:positivity}, including the node rational function \eqref{eq:F}, which governs all the generating functions of the paper.

Throughout, $\kk$ is a field, $R=\kk[[x,y]]$, $\mm=(x,y)$, and $\col(I)=\dim_{\kk}R/I$. For an ideal $I$ of finite colength we write $V=V(I)=R/I$, and $X,Y$ for the operators of multiplication by $x$ and $y$ on $V$.

\subsection{Ideals of finite colength}
Let us collect elementary facts about ideals of finite colength in $R=\kk[[x,y]]$. They contain a power of the maximal ideal, colength is additive along colon ideals, and the contact lengths with the two branches have several equivalent descriptions. The contact length $j_x(I)$ introduced here is the central invariant of the paper.

\begin{lemma}\label{lem:mn}
Let $I\subseteq R$ be an ideal with $\col(I)=n<\infty$. Then $\mm^n\subseteq I$.
\end{lemma}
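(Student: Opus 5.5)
The plan is the standard descending-chain argument for the powers of $\mm$ modulo $I$. Set $V=R/I$, so $\dim_\kk V=n$, and let $V_k$ be the image of $\mm^k$ in $V$, that is, $V_k=(\mm^k+I)/I$. Then
\[
V=V_0\supseteq V_1\supseteq V_2\supseteq\cdots .
\]
The key claim is that once two consecutive terms agree, the chain is constant from then on and in fact equals zero. Granting this, the dimensions $\dim V_k$ must drop strictly at every step until they reach $0$. Since $\dim V_0=n$, it follows that $V_n=0$, which is exactly $\mm^n\subseteq I$.

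To prove the claim, suppose $V_k=V_{k+1}$. This means $\mm^k\subseteq\mm^{k+1}+I$, so $\mm^k+I=\mm\,(\mm^k+I)+I$. Put $M=(\mm^k+I)/I$, a finitely generated module over the local ring $R$. The equality says $M=\mm M$, and Nakayama's lemma gives $M=0$. Hence $V_k=0$. This step is valid because $R=\kk[[x,y]]$ is a Noetherian local ring, so $\mm^k+I$ is finitely generated.

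A small extra check is needed: $\mm^k+I$ might be all of $R$. This can only happen for $k=0$, and then $V_0=V$. The equality $V_0=V_1$ means $\mm+I=R$, so $I\not\subseteq\mm$ and $I=R$, which forces $n=0$; in that case the statement $\mm^0=R\subseteq I$ is consistent. When $n\ge1$ we have $I\subseteq\mm$, so $V_1\subsetneq V_0$ and the strict decrease starts from the first step.

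The main (mild) obstacle is making sure Nakayama is applied to the quotient module rather than to the ideal itself, which is why I pass to $M=(\mm^k+I)/I$. An alternative route is to note that $V$ is an Artinian local ring with nilpotent maximal ideal, but the counting argument above gives the explicit exponent $n$ directly.
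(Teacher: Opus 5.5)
Your proposal is correct and is essentially the paper's proof: the chain $(\mm^k+I)/I$, Nakayama's lemma forcing strict decrease until it reaches zero, and a dimension count from $\dim_\kk R/I=n$ giving $\mm^n\subseteq I$. The separate check about $\mm^k+I=R$ is unnecessary (and the claim that this happens only for $k=0$ fails when $I=R$), since Nakayama applied to $M=(\mm^k+I)/I$ already covers that case.
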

\begin{proof}
Put $M_k=(\mm^k+I)/I\subseteq R/I$. Then $M_0\supseteq M_1\supseteq\cdots$ and $M_{k+1}=\mm M_k$. If $M_{k+1}=M_k$, Nakayama's lemma applied to the finitely generated $R$-module $M_k$ gives $M_k=0$. So the chain decreases strictly until it reaches $0$. Each strict step lowers the dimension by at least one and $\dim M_0=n$, so $M_n=0$, i.e.\ $\mm^n\subseteq I$.
\end{proof}

In particular every ideal of finite colength is $\mm$-primary, and it is generated by polynomials. Conversely, an ideal $I\supseteq\mm^N$ is determined by the subspace $I/\mm^N$ of the finite-dimensional space $R/\mm^N=\kk[x,y]/(x,y)^N$.

\begin{lemma}[colon and sum]\label{lem:colon}
Let $I\subseteq R$ be an ideal and $0\ne f\in R$. Then, in $\NN\cup\{\infty\}$,
\[
\col(I)=\col(I:f)+\col\big(I+(f)\big).
\]
\end{lemma}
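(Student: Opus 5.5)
The plan is to use the short exact sequence of $R$-modules given by multiplication by $f$:
\[
0\longrightarrow R/(I:f)\xrightarrow{\ \cdot f\ } R/I\longrightarrow R/\big(I+(f)\big)\longrightarrow 0 .
\]
First I will check exactness. The map $R/(I:f)\to R/I$, $\bar r\mapsto \overline{fr}$, is well defined and injective, because $fr\in I$ exactly when $r\in(I:f)$. Its image is $(fR+I)/I$. The cokernel is therefore $R/(fR+I)=R/(I+(f))$. All of this is formal, so it holds for any ideal $I$ and any $f$.

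Next, when all three quotients are finite-dimensional, additivity of $\kk$-dimension in short exact sequences gives the formula directly. The equality in $\NN\cup\{\infty\}$ also needs the infinite cases. If $\col(I)<\infty$, then the submodule $R/(I:f)$ and the quotient $R/(I+(f))$ are both finite-dimensional, and the identity is ordinary additivity.

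If $\col(I)=\infty$, I will use the general fact that for a short exact sequence $0\to A\to B\to C\to0$ of vector spaces, $\dim B=\dim A+\dim C$ as cardinals. In particular, if $\dim B=\infty$ then at least one of $A$ and $C$ is infinite-dimensional, since $B$ is an extension of $C$ by $A$. Hence the right-hand side is $\infty$ too.

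The only point that needs care is this infinite case, and it is handled by that dimension count. The hypothesis $f\ne0$ is not really needed; if $f=0$ the formula still holds trivially, with $(I:0)=R$.
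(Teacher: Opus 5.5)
Your proposal is correct and follows the same route as the paper: the short exact sequence $0\to R/(I:f)\xrightarrow{\cdot f}R/I\to R/(I+(f))\to0$, with injectivity coming from the definition of $(I:f)$, and additivity of dimension, including the infinite cases. Your separate treatment of $\col(I)=\infty$ and your remark that $f\ne0$ is not needed are both correct, and they only spell out what the paper states in one line.
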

\begin{proof}
Multiplication by $f$ induces an exact sequence of $R$-modules
\[
0\longrightarrow R/(I:f)\xrightarrow{\ \cdot f\ }R/I\longrightarrow R/\big(I+(f)\big)\longrightarrow0 .
\]
The map $r\mapsto fr$ is well defined on $R/(I:f)$ and injective by the definition $(I:f)=\{r:fr\in I\}$. Its image is $(fR+I)/I$, whose cokernel is $R/(I+(f))$. Dimension is additive in short exact sequences of vector spaces, also when some terms are infinite-dimensional.
\end{proof}

\begin{definition}[contact lengths]\label{def:contact}
For an ideal $I$ of finite colength and $f\in\mm$ put
\[
j_f(I):=\col\big(I+(f)\big),
\]
the \emph{contact length} of $I$ with the branch $\{f=0\}$. We only use $f=x$ and $f=y$. Since $R/(x)\cong\kk[[y]]$ and $R/(y)\cong\kk[[x]]$ are discrete valuation rings, $I+(x)=(x,y^{j_x(I)})$ and $I+(y)=(y,x^{j_y(I)})$.
\end{definition}

\begin{lemma}\label{lem:contact}
Let $I$ have finite colength $n$.
\begin{enumerate}[label=(\alph*)]
\item $j_x(I)=n-\rk X=\dim_{\kk}(I:x)/I$, which is the number of Jordan blocks of the nilpotent operator
$X$. Similarly $j_y(I)=n-\rk Y$.
\item $j_x(I)$ is the length of the scheme-theoretic intersection of $\Spec R/I$ with the line
$\{x=0\}$.
\item $j_x(I)=0$ if and only if $I=R$. If $y^v\in I$, then $j_x(I)\le v$.
\item For every $m\ge0$ one has $y^m\in I+(x)$ if and only if $m\ge j_x(I)$.
\end{enumerate}
\end{lemma}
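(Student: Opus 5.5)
Each of the four parts follows from \cref{lem:colon} with $f=x$, together with the fact that $R/(x)\cong\kk[[y]]$ is a discrete valuation ring (\cref{def:contact}). We take the parts in order, since (b)--(d) all reduce to the description $I+(x)=(x,y^{j_x(I)})$.

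For (a), we identify $R/(I+(x))$ with $V/XV$, the cokernel of $X$ on $V=R/I$. This gives
\[
j_x(I)=\dim V/XV=n-\rk X .
\]
The kernel of $X$ is $\{r\in R:xr\in I\}/I=(I:x)/I$. By rank--nullity its dimension is also $n-\rk X$; equivalently, this is \cref{lem:colon}, since $\col(I:x)=n-j_x(I)$ and therefore
\[
\dim(I:x)/I=\col(I:x)-\col(I)\,\text{ up to sign}\,=j_x(I).
\]
The operator $X$ is nilpotent, because $x^n\in\mm^n\subseteq I$ by \cref{lem:mn}. For a nilpotent operator, $\dim\ker X$ equals the number of Jordan blocks, since each block contributes exactly one kernel vector. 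The statement for $j_y$ is the same argument with the roles of $x$ and $y$ exchanged.

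For (b), the scheme-theoretic intersection of $\Spec R/I$ with $\{x=0\}$ is $\Spec R/(I+(x))$. Its length is $\dim_\kk R/(I+(x))=j_x(I)$ by definition, so there is nothing further to prove.

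For (c), if $I=R$ then $I+(x)=R$ and $j_x(I)=0$. Conversely, suppose $j_x(I)=0$, so that $I+(x)=R$. Since $R$ is local and $x\in\mm$, the ideal $I$ cannot lie in $\mm$, because otherwise $I+(x)\subseteq\mm$. Hence $I$ contains a unit and $I=R$. For the second claim, if $y^v\in I$ then $I+(x)\supseteq(x,y^v)$. This gives $j_x(I)\le\col(x,y^v)=v$.

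For (d), the image of $I+(x)$ in the DVR $R/(x)=\kk[[y]]$ is $(y^{j})$ with $j=j_x(I)$. Indeed, every ideal of $\kk[[y]]$ has the form $(y^j)$, and its colength is $j$. Since $x\in I+(x)$, an element $r\in R$ lies in $I+(x)$ exactly when its image lies in $(y^{j})$. Applying this to $r=y^m$, we get $y^m\in I+(x)$ if and only if $m\ge j$.

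There is no serious obstacle. The step needing the most care is (a): one has to justify that $X$ is nilpotent, which comes from \cref{lem:mn}, and keep the identification of $(I:x)/I$ with $\ker X$ precise.
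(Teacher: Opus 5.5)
Your proposal is correct and follows the paper's proof essentially verbatim: (a) comes from the kernel and cokernel of $X$ on $V$, rank--nullity, and nilpotency via \cref{lem:mn}, and (b)--(d) come from $I+(x)=(x,y^{j_x(I)})$. One small slip: in your alternative argument for (a) the identity is $\dim_\kk(I:x)/I=\col(I)-\col(I:x)=j_x(I)$ exactly, with no ``up to sign''.
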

\begin{proof}
(a) $X$ is an endomorphism of the $n$-dimensional space $V$ with kernel $(I:x)/I$ and cokernel $R/(I+(x))$. Kernel and cokernel have the same dimension $n-\rk X$. The operator is nilpotent because $x^n\in I$ (\cref{lem:mn}), and the kernel of a nilpotent operator has dimension equal to the number of its Jordan blocks.

(b) $R/(I+(x))=(R/I)\otimes_RR/(x)$ is the coordinate ring of the intersection. (c) If $I\ne R$, then $I+(x)\subseteq\mm$, so $j_x(I)\ge1$. If $y^v\in I$, then $(x,y^v)\subseteq I+(x)$, so $j_x(I)\le v$. (d) This holds because $I+(x)=(x,y^{j_x(I)})$.
\end{proof}

\subsection{Modules and ranks}
Next we translate ideals of finite colength into linear algebra. An ideal of colength $n$ is the same as a pair of commuting nilpotent operators on an $n$-dimensional space together with a cyclic vector, and the colengths and contact lengths of colon ideals are ranks of monomials in these operators (\cref{lem:rankdict}). This dictionary is used to construct families of ideals and to define the strata of \cref{sec:shadow}.

\begin{proposition}[ideals as cyclic modules]\label{prop:adhm}
Let $K$ be a field and $n\ge0$. Consider triples $(X,Y,e)$ with $X,Y$ commuting nilpotent endomorphisms of $K^n$ and $e\in K^n$ \emph{cyclic}, i.e.\ $K[X,Y]e=K^n$.
\begin{enumerate}[label=(\alph*)]
\item The map $(X,Y,e)\mapsto I=\{f\in K[[x,y]]:f(X,Y)e=0\}$ induces a bijection between
$GL_n(K)$-orbits of such triples and ideals of $K[[x,y]]$ of colength $n$. The action of $GL_n(K)$ on triples is free.
\item For $f\in K[[x,y]]$ one has $f\in I$ if and only if $f(X,Y)=0$.
\item A pair $(X,Y)$ of commuting nilpotent endomorphisms admits a cyclic vector if and only if
$\rk[X\,|\,Y]=n-1$, where $[X\,|\,Y]$ is the $n\times2n$ matrix with blocks $X$ and $Y$. In that case the cyclic vectors are exactly the vectors outside the hyperplane $XK^n+YK^n$.
\end{enumerate}
\end{proposition}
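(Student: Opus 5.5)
The plan is to prove (b) first, since (a) reduces to it, and then prove (c) by a direct rank computation.

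For (b), fix a triple $(X,Y,e)$ and consider the evaluation map $\phi\colon K[[x,y]]\to K^n$, $f\mapsto f(X,Y)e$. Because $X,Y$ are commuting nilpotents, only finitely many terms of a power series contribute, so $\phi$ is well defined. It is a homomorphism of $K[[x,y]]$-modules, where $x,y$ act on $K^n$ by $X,Y$. Cyclicity of $e$ makes $\phi$ surjective, and $I=\ker\phi$ by definition, so $K[[x,y]]/I\cong K^n$ and $I$ has colength $n$. If $f\in I$ and $v=g(X,Y)e$ is arbitrary, then
\[
f(X,Y)v=g(X,Y)f(X,Y)e=0,
\]
so $f(X,Y)=0$. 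The converse is trivial, which proves (b).

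For (a), the map is constant on orbits because $I$ is unchanged when we replace $(X,Y,e)$ by $(gXg^{-1},gYg^{-1},ge)$. For injectivity on orbits, suppose two triples give the same $I$. The two maps $\phi$ and $\phi'$ induce isomorphisms $R/I\cong K^n$, and composing them gives $g\in GL_n(K)$ with $g\,\phi(f)=\phi'(f)$ for all $f$. This $g$ intertwines the actions of $x,y$, hence conjugates $X,Y$ to $X',Y'$, and it sends $e=\phi(1)$ to $e'$. For surjectivity, given $I$ of colength $n$, choose a basis of $R/I$. Let $X,Y$ be the matrices of multiplication by $x,y$ and $e$ the coordinate vector of the image of $1$. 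These are nilpotent by \cref{lem:mn}, $e$ is cyclic, and the associated ideal is $I$. For freeness, if $g$ fixes $X,Y,e$, then $g$ fixes every $f(X,Y)e$, and these span $K^n$, so $g=1$.

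For (c), the column space of $[X\,|\,Y]$ is $W=XK^n+YK^n$, so $\rk[X\,|\,Y]=\dim W$. Let $A=K[X,Y]$. This is a finite-dimensional local commutative algebra whose maximal ideal $\mathfrak n$ is generated by $X,Y$, since both are nilpotent. Then $W=\mathfrak nK^n$. By Nakayama's lemma, $v_1,\dots,v_k$ generate $K^n$ over $A$ if and only if their images span $K^n/\mathfrak nK^n$. Hence $K^n$ is cyclic if and only if $\dim K^n/W\le1$, and a vector $e$ is cyclic if and only if $e\notin W$. If $n\ge1$, then $W\ne K^n$, again by Nakayama or simply by nilpotency, so cyclicity is equivalent to $\dim W=n-1$. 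The case $n=0$ is degenerate and should be excluded or read with the convention that the rank is $0$.

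I expect no real obstacle. The only point needing care is checking that power-series evaluation is well defined and multiplicative, which follows from nilpotency.
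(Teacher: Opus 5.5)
Your proposal is correct and follows the paper's proof essentially step for step: the evaluation map $f\mapsto f(X,Y)e$ with kernel $I$, cyclicity to upgrade $f(X,Y)e=0$ to $f(X,Y)=0$, and Nakayama with $\mathfrak{m}K^n=XK^n+YK^n$. Proving (b) first, working over the local ring $K[X,Y]$ instead of $K[[x,y]]$, and writing the intertwiner explicitly as $g=\bar\phi'\circ\bar\phi^{-1}$ are cosmetic differences.

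Your caveat about $n=0$ is well taken and slightly sharper than the paper, which calls that case trivial. There $e=0$ is cyclic while $\rk[X\,|\,Y]=0\neq n-1$, so (c) must be read for $n\ge1$. Likewise, your sentence ``$e$ is cyclic iff $e\notin W$'' holds only once $\dim K^n/W=1$ is known, which is exactly the case you need.
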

\begin{proof}
Since $X$ and $Y$ are commuting nilpotent operators, $f(X,Y)$ is defined for every power series $f$, because only finitely many monomials act non-trivially.

(a) The map $\phi:K[[x,y]]\to K^n$, $f\mapsto f(X,Y)e$, is $K[[x,y]]$-linear for the module structure in which $x,y$ act by $X,Y$. It is surjective because $e$ is cyclic, and its kernel is $I$. Hence $\col I=n$. Conversely, given $I$ of colength $n$, a basis of $R/I$ gives a triple consisting of the matrices of multiplication by $x$ and $y$ and the coordinates of $\bar1$. Two bases give triples in the same orbit, and the triple returns $I$. If $g\in GL_n(K)$ fixes $(X,Y,e)$, then $g$ commutes with $X,Y$ and fixes $e$, so it fixes every $f(X,Y)e$, i.e.\ $g=1$.

(b) If $f\in I$, then $f(X,Y)h(X,Y)e=h(X,Y)f(X,Y)e=0$ for all $h$, so $f(X,Y)=0$ by cyclicity. The converse is clear.

(c) $K^n$ is a module over the local ring $K[[x,y]]$ with $\mm K^n=XK^n+YK^n$. By Nakayama's lemma, $e$ generates $K^n$ if and only if $e\notin\mm K^n$, and a generator exists if and only if $\dim K^n/\mm K^n=1$. Since $X,Y$ are nilpotent and $n\ge1$, $\mm K^n\ne K^n$, and $\dim\mm K^n=\rk[X\,|\,Y]$. For $n=0$ the statement is trivial.
\end{proof}

\begin{lemma}[rank dictionary]\label{lem:rankdict}
Let $I$ have finite colength and $k\ge0$. Multiplication by $y^k$ induces an isomorphism of $R$-modules $R/(I:y^k)\cong Y^kV$, and multiplication by $x^k$ an isomorphism $R/(I:x^k)\cong X^kV$. Consequently $\col(I:y^k)=\rk Y^k$, $\col(I:x^k)=\rk X^k$, and
\begin{equation}\label{eq:rankdict}
\begin{aligned}
j_y\big((I:y^k)\big)&=\rk Y^k-\rk Y^{k+1}, & j_x\big((I:y^k)\big)&=\rk Y^k-\rk XY^k,\\
j_x\big((I:x^k)\big)&=\rk X^k-\rk X^{k+1}, & j_y\big((I:x^k)\big)&=\rk X^k-\rk X^kY.
\end{aligned}
\end{equation}
Moreover $\col\big(I+(xy)\big)=\col(I)-\rk XY$.
\end{lemma}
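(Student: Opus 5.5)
The plan is to deduce everything from the short exact sequence of \cref{lem:colon} and the contact-length description of \cref{lem:contact}(a), applied to the module $V=R/I$.

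\emph{The isomorphisms.} Consider the $R$-linear map $R\to V$, $r\mapsto y^k r+I$. Its image is $(y^kR+I)/I=Y^kV$, and its kernel is $\{r: y^k r\in I\}=(I:y^k)$. So it induces an $R$-module isomorphism $R/(I:y^k)\cong Y^kV$. The same argument with $x^k$ gives $R/(I:x^k)\cong X^kV$. Taking dimensions yields $\col(I:y^k)=\dim Y^kV=\rk Y^k$, and similarly $\col(I:x^k)=\rk X^k$.

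\emph{Contact lengths.} The isomorphism is $R$-linear, so multiplication by $x$ (resp.\ $y$) on $R/(I:y^k)$ corresponds to the restriction of $X$ (resp.\ $Y$) to the subspace $W=Y^kV$. By \cref{lem:contact}(a), applied to the ideal $(I:y^k)$ of colength $\dim W$, we get
\[
j_x\big((I:y^k)\big)=\dim W-\rk(X|_W)=\rk Y^k-\rk XY^k,
\]
because $X(Y^kV)=XY^kV$. Likewise $j_y\big((I:y^k)\big)=\rk Y^k-\rk Y^{k+1}$. The two formulas for $(I:x^k)$ follow by symmetry, using $Y(X^kV)=X^kYV$ since $X$ and $Y$ commute. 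Alternatively, one can directly identify the quotient $R/((I:y^k)+(x))$ with $W/XW$, because the cokernel of multiplication by $x$ on $R/J$ is $R/(J+(x))$.

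\emph{The last claim.} Apply \cref{lem:colon} with $f=xy$:
\[
\col(I+(xy))=\col(I)-\col(I:xy).
\]
The same image/kernel argument as above, with $xy$ in place of $y^k$, gives $\col(I:xy)=\dim XYV=\rk XY$. If $xy$ lies in $I$, the colon is $R$, both sides are consistent, and the formula still holds. Note that \cref{lem:colon} requires $f\neq 0$, which holds here since $xy\neq 0$.

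There is no real obstacle. The only point needing care is that the induced operators on $R/(I:y^k)$ are exactly the restrictions of $X$ and $Y$ to $Y^kV$. This follows from $R$-linearity together with the commutativity of $X$ and $Y$.
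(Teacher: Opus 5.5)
Your proposal is correct and follows essentially the same route as the paper: identify $R/(I:y^k)$ with $Y^kV$ through the kernel and image of $f\mapsto \overline{y^kf}$, and read off the contact lengths as the dimensions of the cokernels of the restricted operators. Your treatment of the last claim via \cref{lem:colon} with $f=xy$ is just the exact-sequence form of the paper's one-line identification $R/(I+(xy))=V/XYV$.
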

\begin{proof}
The $R$-linear map $R\to V$, $f\mapsto\overline{y^kf}$, has kernel $(I:y^k)$ and image $y^kV=Y^kV$, on which $x,y$ act by the restrictions of $X,Y$. For the module $M=Y^kV$ one has $j_y((I:y^k))=\dim M/yM=\rk Y^k-\rk Y^{k+1}$ and $j_x((I:y^k))=\dim M/xM=\rk Y^k-\rk XY^k$. The case of $x^k$ is symmetric. Finally $R/(I+(xy))=V/XYV$.
\end{proof}

\subsection{Hilbert schemes}\label{subsec:hilb}Let us  define the punctual Hilbert schemes $\Hilb_n(\{g=0\},0)$ as reduced closed subschemes of a Grassmannian, describe their functor of points in terms of cyclic modules, and record elementary facts about reduced schemes that are used to show that set-theoretic bijections between families of ideals are isomorphisms of varieties.

Let $W_n=\kk[x,y]/(x,y)^n$, a vector space of dimension $\binom{n+1}2$, and let $\mathrm{Gr}_n$ be the Grassmannian of $n$-dimensional quotients of $W_n$. For a $\kk$-scheme $S$, a quotient $q:W_n\otimes\mathcal O_S\to\mathcal F$, with $\mathcal F$ locally free of rank $n$, is called \emph{stable} if its kernel is stable under multiplication by $x$ and by $y$. This is a closed condition. It is the vanishing of the two composites $\ker q\to W_n\otimes\mathcal O_S\xrightarrow{x,\,y}W_n\otimes\mathcal O_S\to\mathcal F$. Let $G_n\subseteq\mathrm{Gr}_n$ be the closed subscheme of stable quotients, and for $g\in R$ let $G_n(g)\subseteq G_n$ be the closed subscheme where moreover $q(\bar g)=0$, $\bar g$ being the class of $g$ in $W_n$. These are projective $\kk$-schemes.

\begin{lemma}\label{lem:points}
\begin{enumerate}[label=(\alph*)]
\item For every field extension $K/\kk$, $G_n(g)(K)$ is the set of ideals of $K[[x,y]]$ of colength
$n$ containing $g$.
\item For a $\kk$-scheme $S$, giving a morphism $S\to G_n$ is equivalent to giving, up to
isomorphism, a locally free sheaf $\mathcal F$ of rank $n$ on $S$ with commuting endomorphisms $X,Y$ and a section $e$, such that every monomial of degree $n$ in $X,Y$ vanishes and $e$ generates $\mathcal F$ as an $\mathcal O_S[X,Y]$-module.
\end{enumerate}
\end{lemma}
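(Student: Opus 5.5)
For (a), a $K$-point of $G_n(g)$ is a quotient $q:W_n\otimes K\to F$ of dimension $n$ whose kernel $U$ is stable under $x$ and $y$ and contains $\bar g$. I would send it to the preimage $I$ of $U$ under $R_K=K[[x,y]]\to W_n\otimes K$. Since $x,y$ act on $W_n$ as multiplication modulo $\mm^n$, stability of $U$ makes $I$ an ideal. It contains $\mm^n$, has colength $n$, and contains $g$.

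For the inverse, start from an ideal $I$ of colength $n$ containing $g$. By \cref{lem:mn} we have $\mm^n\subseteq I$, so $I$ is the preimage of $I/\mm^n\subseteq W_n\otimes K$. This subspace has codimension $n$ and is stable under $x$ and $y$, and it contains $\bar g$. The two constructions are mutually inverse because $\mm^n$ lies in the kernel of $R_K\to W_n\otimes K$. I would also note that $W_n\otimes K=K[x,y]/(x,y)^n=R_K/\mm^n$.

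For (b), a morphism $S\to G_n$ is by definition a stable quotient $q:W_n\otimes\mathcal O_S\to\mathcal F$. The plan is as follows.
\begin{enumerate}[label=(\roman*)]
\item Define $X,Y$ on $\mathcal F$ as the maps induced by multiplication by $x,y$ on $W_n\otimes\mathcal O_S$. They are well defined exactly because $\ker q$ is stable.
\item These maps commute, and every degree-$n$ monomial in them vanishes, since that monomial already kills $W_n$.
\item Put $e=q(\bar1)$. Then $q(\bar f)=f(X,Y)e$, and surjectivity of $q$ means $e$ generates $\mathcal F$ over $\mathcal O_S[X,Y]$.
\end{enumerate}
Conversely, given $(\mathcal F,X,Y,e)$ with the stated properties, define
\[
q(\bar f)=f(X,Y)e .
\]
This is well defined on $W_n$ because degree-$n$ monomials in $X,Y$ vanish. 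It is surjective by the generation hypothesis. Its kernel is stable because $q(x\bar f)=Xq(\bar f)$, and similarly for $y$. An isomorphism of data $(\mathcal F,X,Y,e)\cong(\mathcal F',X',Y',e')$ corresponds exactly to equality of the quotients, as in the usual Grassmannian functor of points: the isomorphism is forced to be $q(\bar f)\mapsto q'(\bar f)$. The two constructions are inverse to each other.

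The only mildly delicate point is the generation condition in families. Surjectivity of $q$ is a statement about the sheaf $\mathcal F$, not about fibres, and it is literally equivalent to $e$ generating $\mathcal F$ as an $\mathcal O_S[X,Y]$-module, since the image of $q$ is $\mathcal O_S[X,Y]e$. So no Nakayama argument is needed there.

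Everything else is routine bookkeeping with the functor of points of the Grassmannian.
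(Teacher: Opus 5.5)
Your proposal is correct and follows the paper's proof essentially step for step: in (a) you identify stable codimension-$n$ subspaces of $W_n\otimes K$ with ideals of $K[[x,y]]/\mm^n$ and use \cref{lem:mn} to see that every ideal of colength $n$ contains $\mm^n$, and in (b) you pass between stable quotients and cyclic data via the operators induced by multiplication by $x,y$ and the map $q(\bar f)=f(X,Y)e$. Your extra remarks, that an isomorphism of data is forced to be $q(\bar f)\mapsto q'(\bar f)$ and that generation by $e$ is literally surjectivity of $q$, are accurate and fill in the step the paper summarizes as ``the two constructions are mutually inverse.''
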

\begin{proof}
(a) A $K$-point of $\mathrm{Gr}_n$ is a subspace of $W_n\otimes K$ of codimension $n$. It is stable if and only if it is an ideal of $K[x,y]/(x,y)^n$. Such ideals correspond to the ideals of $K[[x,y]]$ that contain $\mm^n$ and have colength $n$, and by \cref{lem:mn} these are all ideals of colength $n$.

(b) Given a stable quotient, multiplication by $x$ and $y$ descends to commuting endomorphisms of $\mathcal F$, and $e=q(\bar1)$ generates. Conversely, given $(\mathcal F,X,Y,e)$, the map $q(f)=f(X,Y)e$ is defined on $W_n\otimes\mathcal O_S$ because degree-$n$ monomials act by zero. It is surjective because $e$ generates, and its kernel is stable because $q$ intertwines $x,y$ with $X,Y$. The two constructions are mutually inverse.
\end{proof}

\begin{definition}\label{def:hilb}
For $g\in\mm$ we put $\Hilb_n(\{g=0\},0):=G_n(g)_{\mathrm{red}}$ and $\Hilb_n(\AAA^2,0):=(G_n)_{\mathrm{red}}$. Locally closed subsets of these schemes are given their reduced induced structures. We write $C_{u,v}=\{x^uy^v=0\}$.
\end{definition}

This agrees with \cite[Def.~3.1]{Luan}. Since only reduced structures enter, the various natural scheme structures on punctual Hilbert schemes play no role, and by \cref{lem:points}(a) the set of $K$-points of $\Hilb_n(\{g=0\},0)$ is the set of ideals of $K[[x,y]]$ of colength $n$ containing $g$, for every field $K\supseteq\kk$. On $G_n$ we have the \emph{tautological bundle} $\mathcal T$ of rank $n$, with fibre $R/I$ over $I$, together with its endomorphisms $X,Y$ and section $e$. For a monomial $M$ in $X,Y$ the function $I\mapsto\rk M$ is lower semicontinuous, so the sets $\{\rk M=r\}$ are locally closed.

We shall use two elementary facts about reduced schemes repeatedly.

\begin{lemma}\label{lem:reduced}
Let $S$ be a reduced scheme of finite type over $\kk$ and $Y$ a separated $\kk$-scheme.
\begin{enumerate}[label=(\alph*)]
\item If $f,g:S\to Y$ are morphisms such that $f\circ s=g\circ s$ for every field $K$ and every
$s\in S(K)$, then $f=g$.
\item If $f:S\to Y$ is a morphism whose image lies in a locally closed subset $Z\subseteq Y$, then
$f$ factors uniquely through $Z$ with its reduced induced structure.
\item A section of a locally free sheaf on $S$ that vanishes at every point is zero.
\end{enumerate}
\end{lemma}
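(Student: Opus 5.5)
The plan is to treat the three parts separately, reducing each to standard facts about reduced schemes of finite type over $\kk$.

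For (a), I would use that $Y$ is separated. Then the equalizer of $f$ and $g$ is a closed subscheme $E\subseteq S$, the pullback of the diagonal $\Delta_Y\subseteq Y\times Y$ under $(f,g)$. The hypothesis, applied to $s:\Spec\kappa(p)\to S$ for every point $p\in S$ (taking $K=\kappa(p)$), shows that $(f,g)\circ s$ factors through $\Delta_Y$. Hence every point of $S$ lies in $E$, so $E$ is a closed subscheme with the same underlying space as $S$. Since $S$ is reduced, a closed subscheme containing all points is all of $S$: its ideal sheaf consists of functions vanishing at every point, hence of nilpotents, hence is zero. So $E=S$, which means $f=g$.

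For (b), I would write $Z=U\cap\bar Z$ with $U$ open and $\bar Z$ closed. First, $f$ factors through the open subscheme $U$, because its image lies in $U$. So we may assume $Z$ is closed with reduced structure, defined by an ideal sheaf $\mathcal I$. The pullback $f^{-1}\mathcal I\cdot\mathcal O_S$ consists of local functions vanishing at every point of $S$, since the image lies in $Z$. As $S$ is reduced, such functions are zero, so $f^\#$ kills $\mathcal I$ and $f$ factors through $V(\mathcal I)=Z$. Uniqueness holds because $Z\to Y$ is a monomorphism, being an immersion.

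For (c), I would work locally, where the sheaf is $\mathcal O_S^r$ and a section is an $r$-tuple of functions on an open affine. Vanishing at every point means each coordinate function lies in every prime ideal. Such a function lies in the nilradical, which is zero because $S$ is reduced.

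There is no real obstacle here. The only point needing care is in (a): one must check that the $K$-points $\Spec\kappa(p)\to S$ suffice to detect the underlying set of the equalizer, which is immediate.
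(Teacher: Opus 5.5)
Your proposal is correct and follows the paper's proof essentially step for step. In (a) the equalizer is closed by separatedness and contains every point via $\Spec\kappa(p)\to S$, so reducedness forces it to be all of $S$; (b) reduces to the closed case; and (c) is the nilradical argument. The differences are cosmetic: the paper proves (c) first and invokes it in (a), and it cites Hartshorne, Ex.~II.3.11(d), for (b), whereas you spell out that $f^\#$ kills the ideal sheaf and that uniqueness holds because immersions are monomorphisms.
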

\begin{proof}
(c) Locally the section is a tuple of functions vanishing at every point. Such functions are nilpotent, hence zero on a reduced scheme.

(a) The equalizer of $f$ and $g$ is a closed subscheme of $S$ (as $Y$ is separated) through which every point $\operatorname{Spec}\kappa(s)\to S$ factors. So it contains every point of $S$, and by (c), applied to its ideal sheaf locally, it equals $S$.

(b) Restrict to an open subscheme in which $Z$ is closed. A morphism from a reduced scheme into a scheme factors through the reduced induced structure on any closed subset containing its image \cite[Ex.~II.3.11(d)]{Hartshorne}.
\end{proof}

\subsection{The torus action}\label{subsec:torus}
The two-dimensional torus acts on the Hilbert schemes of $C_{u,v}$, with the monomial ideals as fixed points. This is used for the Euler-characteristic statements of \cref{sec:consequences}.

The torus $\mathbf T=\GG_m^2$ acts on $R$ by $(\alpha,\beta)\cdot x=\alpha x$, $(\alpha,\beta)\cdot y=\beta y$, hence on $W_n$ and on $G_n$. Since $x^uy^v$ is an eigenvector, $\mathbf T$ preserves $\Hilb_n(C_{u,v},0)$. For a partition $\lambda$ we write $I_\lambda$ for the monomial ideal whose standard monomials are the boxes of $\lambda$ (\cref{subsec:conventions}).

\begin{lemma}\label{lem:fixed}
Let $\kk$ be infinite. An ideal $I$ of finite colength is $\mathbf T$-invariant if and only if it is monomial, $I=I_\lambda$ with $|\lambda|=\col(I)$. Moreover $x^uy^v\in I_\lambda$ if and only if $(u,v)\notin\lambda$.
\end{lemma}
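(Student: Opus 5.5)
The plan is to argue directly with the torus action on $R$. Monomial ideals are clearly invariant, since each generator $x^ay^b$ is an eigenvector; so the content is the converse. Let $I$ be $\mathbf T$-invariant of colength $n$. By \cref{lem:mn}, $\mm^n\subseteq I$, so $I$ is determined by the subspace $I/\mm^n\subseteq W_n=\kk[x,y]/(x,y)^n$. This subspace is stable under the induced action of $\mathbf T(\kk)=(\kk^\times)^2$. On $W_n$ the action is diagonal in the monomial basis, with characters $(\alpha,\beta)\mapsto\alpha^a\beta^b$ for $a+b<n$. These characters are pairwise distinct as functions on $(\kk^\times)^2$ once $\kk$ is infinite. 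A finite set of distinct characters is linearly independent, so we may choose $(\alpha,\beta)$ separating the finitely many weights occurring.

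The key step is that a subspace $U\subseteq W_n$ stable under all $t\in\mathbf T(\kk)$ is spanned by the monomials it contains. For $f=\sum c_{ab}x^ay^b\in U$, the vectors $t\cdot f=\sum c_{ab}\chi_{ab}(t)x^ay^b$ lie in $U$ for all $t$. Since the $\chi_{ab}$ with $c_{ab}\neq 0$ are distinct, a Vandermonde argument extracts each $c_{ab}x^ay^b$ as a linear combination of finitely many $t_i\cdot f$. Concretely, pick $t$ with the values $\chi_{ab}(t)$ pairwise distinct; this is possible over an infinite field because the differences of characters are nonzero Laurent polynomials, so their product does not vanish identically. Then use $t^0,\dots,t^{N}$. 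Hence every monomial occurring in $f$ lies in $U$, and lifting back, $I$ is generated by $\mm^n$ together with monomials, i.e.\ $I$ is monomial. A monomial ideal of finite colength has as standard monomials a set closed under passing to divisors, i.e.\ a partition $\lambda$. Then $I=I_\lambda$ and $|\lambda|=\col(I)$ because the standard monomials form a basis of $R/I$.

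For the last claim, note that $x^uy^v\in I_\lambda$ iff the monomial $x^uy^v$ is not standard, by the definition of a monomial ideal via its standard monomials. This is the same as $(u,v)\notin\lambda$.

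The only mildly delicate point is the choice of $t$ with distinct character values; this is where infiniteness of $\kk$ is used, since over $\FF_q$ the characters $\alpha^a$ and $\alpha^{a+q-1}$ coincide. Everything else is routine linear algebra.
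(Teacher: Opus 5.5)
Your proof is correct and follows the paper's argument: reduce via \cref{lem:mn} to the $\mathbf T$-stable subspace $I/\mm^n$ of $W_n$, whose monomial characters are pairwise distinct, and conclude that this subspace is spanned by the monomials it contains, after which the partition $\lambda$ and the box criterion are immediate. The paper states the middle step in one line, and your justification of it is correct: you pick a single $t$ that separates the finitely many characters and solve the Vandermonde system in $t^0,\dots,t^N$. (The appeal to linear independence of characters just before that does not by itself produce a separating $t$, but your later argument that the product of the nonzero Laurent polynomials $\chi_{ab}-\chi_{a'b'}$ does not vanish identically does.)
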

\begin{proof}
By \cref{lem:mn}, $I$ is determined by the subspace $I/\mm^n$ of the $\mathbf T$-representation $W_n$, in which the monomials span pairwise distinct characters. A $\mathbf T$-stable subspace of a representation with multiplicity-free characters is, over an infinite field, spanned by the character vectors it contains. So $I$ is monomial. The standard monomials of a monomial ideal form a Young diagram, and $x^uy^v\in I_\lambda$ exactly when $x^uy^v$ is not standard.
\end{proof}

\subsection{$q$-calculus}\label{subsec:qcalc}
The $q$-series identities used in the paper are the $q$-Pascal rules, the finite $q$-binomial theorem and its limit, a trinomial identity for Gaussian binomials, and Landsberg's count \cite{Landsberg} of matrices of given rank over a finite field. They are the only algebraic input of the bootstrap (\cref{sec:fatline}) and of the positivity statements (\cref{sec:positivity}).

We work with the formal variables $q,Q$ of \eqref{eq:qnotation}. The Gaussian binomials satisfy $\qbin{n}{k}{q}=\qbin{n}{n-k}{q}$, $\qbin{n}{0}{q}=\qbin{n}{n}{q}=1$, and we put $\qbin{n}{k}{q}=0$ for $k<0$ or $k>n$.

\begin{lemma}[$q$-Pascal rules]\label{lem:pascal}
For $1\le k\le n$,
\[
\qbin{n+1}{k}{q}=\qbin{n}{k}{q}+q^{\,n+1-k}\qbin{n}{k-1}{q}=q^{k}\qbin{n}{k}{q}+\qbin{n}{k-1}{q}.
\]
In particular $\qbin{n}{k}{q}\in\NN[q]$ is monic of degree $k(n-k)$.
\end{lemma}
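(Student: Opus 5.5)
The plan is to derive both Pascal rules from the definition $\qbin{n}{k}{q}=(q;q)_n/((q;q)_k(q;q)_{n-k})$ by clearing the common denominator, and then to read off integrality, positivity, monicity and the degree by induction on $n$.

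\emph{First rule.} Divide by $\qbin{n}{k}{q}$. The two ratios are
\[
\qbin{n+1}{k}{q}\Big/\qbin{n}{k}{q}=\frac{1-q^{n+1}}{1-q^{n+1-k}},\qquad
\qbin{n}{k-1}{q}\Big/\qbin{n}{k}{q}=\frac{1-q^{k}}{1-q^{n-k+1}}.
\]
The first rule is then equivalent to
\[
1-q^{n+1}=(1-q^{n+1-k})+q^{n+1-k}(1-q^k),
\]
which holds identically. For $k=n$ the factor $(q;q)_{n-k}=(q;q)_0=1$ causes no trouble, so no case split is needed.

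\emph{Second rule.} One option is to apply the symmetry $\qbin{n}{k}{q}=\qbin{n}{n-k}{q}$ to the first rule with $k$ replaced by $n+1-k$. The other is to verify directly that $1-q^{n+1}=q^k(1-q^{n+1-k})+(1-q^k)$, after dividing by $\qbin{n}{k-1}{q}$ in the same way. Both are one-line checks.

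\emph{Integrality and degree.} Argue by induction on $n$, with base cases $\qbin{n}{0}{q}=\qbin{n}{n}{q}=1$. In the first rule, $\qbin{n}{k}{q}$ has degree $k(n-k)$ and $q^{n+1-k}\qbin{n}{k-1}{q}$ has degree $n+1-k+(k-1)(n-k+1)=k(n+1-k)$. The second term therefore strictly dominates whenever $k(n+1-k)>k(n-k)$, i.e.\ for $k\ge1$. (If $k=n+1$ is outside the range, this is covered by the conventions.) All coefficients are nonnegative integers, so no cancellation can occur. Hence $\qbin{n+1}{k}{q}\in\NN[q]$, and its leading coefficient equals that of $\qbin{n}{k-1}{q}$, which is $1$. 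Its degree is $k(n+1-k)$, as required.

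The only delicate point is the bookkeeping at the boundary $k=n+1$ of the induction, where the terms with $k>n$ vanish by the convention $\qbin{n}{k}{q}=0$. There is no real obstacle.
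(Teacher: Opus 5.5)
Your proposal is correct and follows the paper's proof. The paper also verifies the first rule by clearing the common denominator $(q;q)_k(q;q)_{n+1-k}$, and your division by $\qbin{n}{k}{q}$ is the same computation. It treats the second rule as analogous and obtains positivity, monicity and the degree by induction on $n$; your degree comparison $k(n+1-k)>k(n-k)$ fills in that induction. Your remark about $k=n+1$ is unnecessary: at level $n+1$ the endpoints $k=0$ and $k=n+1$ are base cases, and the rules are only used for $1\le k\le n$.
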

\begin{proof}
Over the common denominator $(q;q)_{k}(q;q)_{n+1-k}$, the first right-hand side is
\[
\frac{(q;q)_{n}}{(q;q)_{k}(q;q)_{n+1-k}}\big((1-q^{n+1-k})+q^{n+1-k}(1-q^{k})\big)
=\frac{(q;q)_{n+1}}{(q;q)_{k}(q;q)_{n+1-k}}.
\]
The second identity is analogous. Positivity, the degree and monicity follow by induction on $n$.
\end{proof}

\begin{lemma}[finite $q$-binomial theorem]\label{lem:qbinomial}
For $m\ge0$, in $\ZZ[q,Q]$,
\[
(Q;q)_{m}=\prod_{i=0}^{m-1}(1-Qq^i)=\sum_{j=0}^{m}\qbin{m}{j}{q}(-1)^jq^{\binom j2}Q^j .
\]
\end{lemma}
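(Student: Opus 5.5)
The plan is to argue by induction on $m$, using the first $q$-Pascal rule of \cref{lem:pascal}. For $m=0$ both sides equal $1$, since the empty product is $1$ and the only summand is $j=0$. Assume the identity holds for $m$. Write
\[
(Q;q)_{m+1}=(Q;q)_m\,(1-Qq^m)=(Q;q)_m-q^mQ\,(Q;q)_m,
\]
and substitute the inductive expansion into both terms. The coefficient of $(-1)^jQ^j$ on the right is then
\[
q^{\binom j2}\qbin{m}{j}{q}+q^m q^{\binom{j-1}2}\qbin{m}{j-1}{q}.
\]

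The key step is to make this coefficient match $q^{\binom j2}\qbin{m+1}{j}{q}$. Factoring out $q^{\binom j2}$ and using $\binom{j-1}2=\binom j2-(j-1)$, the second term becomes $q^{m+1-j}\qbin{m}{j-1}{q}$. The first $q$-Pascal rule, $\qbin{m+1}{j}{q}=\qbin{m}{j}{q}+q^{m+1-j}\qbin{m}{j-1}{q}$, then closes the induction for $1\le j\le m$.

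The edge cases $j=0$ and $j=m+1$ follow from the convention $\qbin{n}{k}{q}=0$ outside $0\le k\le n$. For $j=0$ the coefficient is $1$. For $j=m+1$ it is $q^mq^{\binom m2}=q^{\binom{m+1}2}$.

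The only real obstacle is the exponent bookkeeping. It has to produce exactly the power $q^{m+1-j}$ appearing in the first form of the Pascal rule, rather than $q^j$ from the second form. That is why I would expand by multiplying by the last factor $(1-Qq^m)$ and not the first.
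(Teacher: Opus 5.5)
Your proof is correct and is essentially the paper's proof: induction on $m$ via $(Q;q)_{m+1}=(Q;q)_m(1-Qq^m)$, the exponent identity $\binom{j-1}{2}+m=\binom j2+m+1-j$, and the first $q$-Pascal rule of \cref{lem:pascal}. Your explicit treatment of the boundary cases $j=0$ and $j=m+1$ is a small addition, since \cref{lem:pascal} is only stated for $1\le k\le n$.
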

\begin{proof}
Induction on $m$, using $(Q;q)_{m+1}=(Q;q)_{m}(1-Qq^{m})$. Since $\binom{j-1}{2}+m=\binom j2+m+1-j$, the coefficient of $Q^j$ in the product of the right-hand side with $1-Qq^m$ is
\[
(-1)^jq^{\binom j2}\qbin{m}{j}{q}+(-1)^{j}q^{m+\binom{j-1}2}\qbin{m}{j-1}{q}
=(-1)^jq^{\binom j2}\Big(\qbin{m}{j}{q}+q^{m+1-j}\qbin{m}{j-1}{q}\Big),
\]
and by \cref{lem:pascal} the bracket is $\qbin{m+1}{j}{q}$. See also \cite[Ch.~3]{Andrews}.
\end{proof}

\begin{corollary}[Euler]\label{cor:euler-id}
At $q=Q\LL$, coefficientwise in $Q$,
\[
(Q;q)_{\infty}=\sum_{m\ge0}\frac{(-1)^mq^{\binom m2}Q^m}{(q;q)_{m}} .
\]
\end{corollary}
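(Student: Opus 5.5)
We derive the corollary from the finite $q$-binomial theorem (\cref{lem:qbinomial}) by letting $m\to\infty$, working coefficientwise in $Q$ after the specialization $q=Q\LL$. The point of specializing is that $q$ is then divisible by $Q$, so every factor $1-Qq^i$ with $i\ge1$ is $1-Q^{i+1}\LL^i$, which is $\equiv1$ modulo $Q^{i+1}$. Hence the infinite product $(Q;q)_\infty=\prod_{i\ge0}(1-Qq^i)$ converges $Q$-adically in $\ZZ[\LL][[Q]]$, and modulo $Q^N$ it agrees with $(Q;q)_m$ for every $m\ge N$.

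\emph{Right-hand side.} First we check that each term makes sense. The denominator $(q;q)_m=\prod_{i=1}^m(1-Q^i\LL^i)$ has constant term $1$, so it is invertible in $\ZZ[\LL][[Q]]$. The numerator $q^{\binom m2}Q^m=\LL^{\binom m2}Q^{\binom m2+m}$ has $Q$-adic order $\binom{m+1}2$, which tends to infinity. So the series converges coefficientwise.

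\emph{Matching the two sides.} Fix $N$ and take $m\ge N$. In \cref{lem:qbinomial}, the $j$-th term $\qbin{m}{j}{q}(-1)^jq^{\binom j2}Q^j$ has $Q$-order at least $\binom{j+1}2$. Therefore modulo $Q^N$ only the terms with $\binom{j+1}2<N$ contribute, and these are finitely many, all with $j<N$. For each fixed $j$ we compare $\qbin{m}{j}{q}$ with its limit:
\[
\qbin{m}{j}{q}=\frac{\prod_{i=m-j+1}^{m}(1-q^i)}{(q;q)_j},
\]
and $q^i=Q^i\LL^i\equiv0$ modulo $Q^N$ once $i\ge N$. So for $m\ge N+j$ the numerator is $\equiv1$, and $\qbin{m}{j}{q}\equiv1/(q;q)_j$ modulo $Q^N$. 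Taking $m\ge 2N$ makes this hold for all relevant $j<N$ simultaneously, and then both sides agree modulo $Q^N$. Since $N$ was arbitrary, the identity holds coefficientwise.

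The only delicate point is this uniform truncation, namely choosing $m$ large relative to $N$ so that both the product and the Gaussian binomials have stabilized modulo $Q^N$. This is where the specialization $q=Q\LL$ is essential: for a formal $q$ alone, $q^i$ need not be $Q$-adically small and the limit would not be justified.
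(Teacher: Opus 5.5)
Your proof is correct and follows the paper's argument: both pass to the limit $m\to\infty$ in the finite $q$-binomial theorem (\cref{lem:qbinomial}), using that at $q=Q\LL$ the power $q^{m-j+1}$ has $Q$-order $m-j+1$, so that $\qbin{m}{j}{q}\to1/(q;q)_j$ and $(Q;q)_m\to(Q;q)_\infty$ coefficientwise in $Q$. Your explicit truncation argument spells out the uniformity that the paper leaves implicit: modulo $Q^N$ only the terms with $\binom{j+1}2<N$ contribute, and any $m\ge 2N$ suffices.
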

\begin{proof}
For fixed $j$ we have $\qbin{m}{j}{q}=(q^{m-j+1};q)_{j}/(q;q)_{j}$, which tends to $1/(q;q)_{j}$ as $m\to\infty$, coefficientwise in $Q$, because $q^{m-j+1}$ has $Q$-order $m-j+1$. Similarly $(Q;q)_{m}\to(Q;q)_{\infty}$. Pass to the limit in \cref{lem:qbinomial}.
\end{proof}

\begin{lemma}\label{lem:trinomial}
For $k,i\ge0$ with $k+i\le v$,
\[
\qbin{v}{k}{q}\qbin{v-k}{i}{q}=\qbin{v}{k+i}{q}\qbin{k+i}{k}{q}=\frac{(q;q)_{v}}{(q;q)_{k}(q;q)_{i}(q;q)_{v-k-i}}.
\]
\end{lemma}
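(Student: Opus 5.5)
The plan is to verify both equalities directly from the definition $\qbin{n}{k}{q}=(q;q)_n/\big((q;q)_k(q;q)_{n-k}\big)$ in the field of rational functions $\QQ(q)$. All factors $(q;q)_m$ are nonzero there, so no polynomial identities are needed. Both sides are polynomials in $q$ by \cref{lem:pascal}, so an equality in $\QQ(q)$ gives equality in $\ZZ[q]$.

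First, expand the left side:
\[
\qbin{v}{k}{q}\qbin{v-k}{i}{q}=\frac{(q;q)_v}{(q;q)_k(q;q)_{v-k}}\cdot\frac{(q;q)_{v-k}}{(q;q)_i(q;q)_{v-k-i}}.
\]
The factor $(q;q)_{v-k}$ cancels, and what remains is the trinomial expression $(q;q)_v/\big((q;q)_k(q;q)_i(q;q)_{v-k-i}\big)$.

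Second, expand the middle term the same way:
\[
\qbin{v}{k+i}{q}\qbin{k+i}{k}{q}=\frac{(q;q)_v}{(q;q)_{k+i}(q;q)_{v-k-i}}\cdot\frac{(q;q)_{k+i}}{(q;q)_k(q;q)_{i}}.
\]
Here $(q;q)_{k+i}$ cancels and gives the same trinomial expression.

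The hypothesis $k+i\le v$ ensures that all indices are nonnegative and that the convention $\qbin{n}{k}{q}=0$ outside $0\le k\le n$ never applies. There is no real obstacle. The only point to state explicitly is that we work in $\QQ(q)$, where division by $(q;q)_m$ is legitimate.
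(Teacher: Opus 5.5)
Your proposal is correct: cancelling $(q;q)_{v-k}$ in the first product and $(q;q)_{k+i}$ in the second, in $\QQ(q)$, is exactly the definitional verification the paper has in mind. The paper states this lemma without proof, treating it as immediate from the definition of the Gaussian binomials.
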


\begin{lemma}[matrices of given rank; Landsberg {\cite{Landsberg}}]\label{lem:rank}
For a prime power $q$ and $0\le m\le\min(u,v)$,
\begin{align*}
R_m(u,v;q)&:=\#\{M\in\Mat_{u\times v}(\FF_q):\rk M=m\}\\
&=\prod_{i=0}^{m-1}\frac{(q^u-q^i)(q^v-q^i)}{q^m-q^i}
=(-1)^m q^{\binom m2}(q;q)_m\qbin{u}{m}{q}\qbin{v}{m}{q}.
\end{align*}
\end{lemma}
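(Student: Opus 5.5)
We count $u\times v$ matrices of rank $m$ over $\FF_q$ by counting ordered tuples and dividing by a group order; the sign-adjusted form then follows by algebra.

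\emph{Row-space counting.} A matrix $M\in\Mat_{u\times v}(\FF_q)$ of rank $m$ is determined by its column space $W\subseteq\FF_q^u$ (dimension $m$) together with a surjective linear map $\FF_q^v\to W$. The number of $m$-dimensional subspaces is $\pbin{u}{m}$. The number of surjections $\FF_q^v\to\FF_q^m$ equals the number of injections $\FF_q^m\to\FF_q^v$ (by transpose), i.e.\ the number of ordered $m$-tuples of linearly independent vectors, $\prod_{i=0}^{m-1}(q^v-q^i)$. Hence
\[
R_m(u,v;q)=\pbin{u}{m}\prod_{i=0}^{m-1}(q^v-q^i).
\]
Writing $\pbin{u}{m}=\prod_{i=0}^{m-1}(q^u-q^i)/\prod_{i=0}^{m-1}(q^m-q^i)$ (ordered bases of $m$-subspaces of $\FF_q^u$ divided by $|\GL_m(\FF_q)|$) gives the first product formula.

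\emph{The signed form.} For the second equality, factor $q^i$ out of each term: $\prod_{i<m}(q^a-q^i)=q^{\binom m2}\prod_{i<m}(q^{a-i}-1)=(-1)^mq^{\binom m2}(q^{a-m+1};q)_m$. Then
\[
(q^{a-m+1};q)_m=\frac{(q;q)_a}{(q;q)_{a-m}}=(q;q)_m\pbin{a}{m}.
\]
Applying this with $a=u$, $a=v$, $a=m$ (where the last gives $(-1)^mq^{\binom m2}(q;q)_m$), the quotient becomes
\[
\frac{(-1)^mq^{\binom m2}(q;q)_m\pbin{u}{m}\cdot(-1)^mq^{\binom m2}(q;q)_m\pbin{v}{m}}{(-1)^mq^{\binom m2}(q;q)_m},
\]
which is the claimed expression. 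This is an identity of rational functions, so it holds formally in $q$ as well, which is what \eqref{eq:NP} uses.

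There is no real obstacle. The only point needing care is the bookkeeping of signs and of the powers $q^{\binom m2}$ in the last step, together with the edge case $m=0$, where all products are empty and both sides equal $1$.
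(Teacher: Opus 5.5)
Your proof is correct and follows the paper's argument. You determine a rank-$m$ matrix by its image (one of $\pbin{u}{m}$ subspaces) together with a surjection onto it, counted as $\prod_{i<m}(q^v-q^i)$, and then pass to the Pochhammer form; the only differences are that you rewrite all three products symmetrically rather than just the surjection count, and that your heading says ``row-space'' where you actually use the column space, both of which are harmless.
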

\begin{proof}
A rank-$m$ map $\FF_q^v\to\FF_q^u$ is determined by its image $W$, which is one of the $\qbin{u}{m}{q}=\prod_{i=0}^{m-1}\frac{q^u-q^i}{q^m-q^i}$ subspaces of dimension $m$, together with a surjection $\FF_q^v\to W$, i.e.\ an $m\times v$ matrix of rank $m$. There are
\[
\prod_{i=0}^{m-1}(q^v-q^i)=(-1)^mq^{\binom m2}\frac{(q;q)_v}{(q;q)_{v-m}}
=(-1)^mq^{\binom m2}(q;q)_m\qbin{v}{m}{q}
\]
of these. See also \cite[\S1.7]{Morrison}.
\end{proof}

We regard $R_m(u,v;q)$ as a polynomial in $q$. Summing over $m$ counts all matrices.

\begin{corollary}\label{cor:allmatrices}
$\sum_{m=0}^{\min(u,v)}R_m(u,v;q)=q^{uv}$ as polynomials in $q$.
\end{corollary}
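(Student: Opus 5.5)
The plan is to count all $u\times v$ matrices over $\FF_q$ by partitioning them according to rank, and then to pass from prime powers to an identity of polynomials. For a prime power $q$ the sets $\{M\in\Mat_{u\times v}(\FF_q):\rk M=m\}$, for $0\le m\le\min(u,v)$, partition $\Mat_{u\times v}(\FF_q)$. This set has $q^{uv}$ elements. By \cref{lem:rank} the $m$-th piece has $R_m(u,v;q)$ elements, so
\[
\sum_{m=0}^{\min(u,v)}R_m(u,v;q)=q^{uv}
\]
holds for every prime power $q$.

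To upgrade this to an identity in $\ZZ[q]$, I first check that each $R_m(u,v;q)$ is a polynomial in $q$. The product formula alone does not show this, since it has denominators $q^m-q^i$. Instead I use the second expression in \cref{lem:rank},
\[
R_m(u,v;q)=(-1)^mq^{\binom m2}(q;q)_m\qbin{u}{m}{q}\qbin{v}{m}{q}.
\]
Here $(q;q)_m$ is a polynomial by definition, and the Gaussian binomials lie in $\NN[q]$ by \cref{lem:pascal}. The two expressions agree as rational functions: the factors $q^u-q^i$ and $q^m-q^i$ in the product combine into $\qbin{u}{m}{q}$, and $\prod_{i<m}(q^v-q^i)$ was rewritten in the proof of \cref{lem:rank}.

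Both sides of the displayed identity are therefore polynomials in $q$ that agree at infinitely many integers, namely all prime powers. Hence their difference has infinitely many roots and vanishes identically, which gives the identity in $\ZZ[q]$.

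The only mildly delicate point is this polynomiality check, which ensures the statement is an equality of polynomials rather than of rational functions. It is handled by the Gaussian-binomial form above, so I expect no real obstacle.
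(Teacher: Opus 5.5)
Your proposal is correct and follows the paper's proof. You partition $\Mat_{u\times v}(\FF_q)$ by rank, apply \cref{lem:rank} at every prime power, and conclude because two polynomials that agree at infinitely many points are equal; your explicit check that $R_m(u,v;q)\in\ZZ[q]$ via the Gaussian-binomial form is a detail the paper leaves implicit, and you handle it correctly.
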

\begin{proof}
Both sides agree at every prime power $q$, hence at infinitely many points.
\end{proof}

\subsection{The node rational function}\label{subsec:nodefunction}
Let $q,Q$ be independent variables, and recall from \eqref{eq:F}
\[
C_{u,v}(q,Q)=\sum_{m=0}^{\min(u,v)}R_m(u,v;q)\,Q^m\in\ZZ[q,Q],\qquad F_{u,v}(q,Q)=\frac{C_{u,v}(q,Q)}{(Q;q)_{u,v}}\in\ZZ[q][[Q]].
\]
By \cref{lem:rank}, $C_{u,v}=C_{v,u}$, $C_{u,v}(q,0)=1$, and $C_{u,v}(q,Q)$ has the expansion \eqref{eq:NP} in Gaussian binomials. The two specializations that occur in this paper are
\begin{equation}\label{eq:two-specializations}
\begin{aligned}
F_{u,v}(Q\LL,Q)&=\frac{C_{u,v}(Q\LL,Q)}{(Q;Q\LL)_{u,v}}\qquad(\text{Hilbert side}),\\
F_{u,v}(\LL,Q)&=\frac{C_{u,v}(\LL,Q)}{(Q;\LL)_{u,v}}\qquad(\text{Quot side}),
\end{aligned}
\end{equation}
the first being \cref{thmA}(a) and the second \cref{thmA}(b). On the link side the same function is evaluated at $q=T^{-1}$ (\cref{sec:hopf}). All $q$-identities below are proved for the formal variables $q,Q$ and then specialized. Since $\LL$ is a free variable, $Q\LL$ and $Q$ are algebraically independent, so an identity in $\ZZ[q,Q]$ specializes to an identity in $\ZZ[\LL,Q]$ at $q=Q\LL$, and conversely.

\section{The Quot side and the Hilb--Quot identity}\label{sec:quot}

We begin with the Quot side, that is, with submodules of finite colength of the normalization module $E_{u,v}=\kk[[x]]^u\oplus\kk[[y]]^v$ over the reduced node. It is much easier than the Hilbert side. The extension-fibre argument of \cite{HJ,HJO} applies without change when the ranks on the two branches differ, so this section mainly cites \cite{HJ} and records the differences, and it holds for any reduced germ with split normalization. It also explains why the rank counts $R_k(u,v;\LL)$ appear at all, as the classes of the extension fibres; the Hilbert side has no such explanation yet.

In \cref{subsec:extension} we recall, for any reduced complete local ring with split normalization and any ranks on the branches, that the Quot series is a polynomial in $Q$, the generating function of the \emph{extension fibres} (the submodules that generate $E$ over the normalization), divided by a product of lattice zeta functions, one for each branch (\cref{prop:extension}). For the node the extension fibres are the subspaces of $\kk^u\oplus\kk^v$ surjecting onto both summands, a relative form of Goursat's lemma \cite{Goursat}, and their classes are the numbers of matrices of given rank (\cref{lem:goursat,thm:node-main}); this proves \cref{thmA}(b). Together with the Hilbert-side formula, proved in \cref{sec:modification,sec:fatline,sec:shadow}, it gives \cref{thmA}(c) (\cref{subsec:hilb-quot}).

The significance is twofold. The numerator of \cref{thmA} acquires an unconditional geometric meaning on the Quot side, and the substitution $\LL\mapsto Q\LL$ is seen to exchange two different geometric decompositions of the same rational function. Throughout this section the colength variable is again $Q$. The ranks of the node are written $u,v$ as before, and the ranks of a germ with $\ell$ branches are $\bfA=(a_1,\dots,a_\ell)$.

\subsection{Normalization modules and extension fibres}\label{subsec:extension}
Throughout this subsection the ring is an arbitrary reduced complete local ring with split normalization, and the ranks on the branches are arbitrary. We define the Quot schemes of the normalization modules and recall from \cite{HJ} the motivic zeta function of lattices over a discrete valuation ring (\cref{lem:dvr}) and the extension-fibre factorization (\cref{prop:extension}), with unequal ranks on the branches. Neither requires a plane embedding.

Let $R$ be a reduced complete local $\kk$-algebra with residue field $\kk$ whose finite normalization is split,
\[
\widetilde R=\prod_{i=1}^{\ell}\widetilde R_i,\qquad \widetilde R_i=\kk[[u_i]],
\]
and for $\bfA\in\ZZ_{\ge0}^{\ell}$ let $E=E_{\bfA}=\bigoplus_i\widetilde R_i^{\,a_i}$, an $R$-module through $R\to\widetilde R$. An $R$-submodule $L\subseteq E$ of finite colength $m$ contains $\mm_R^mE$, by the argument of \cref{lem:mn} applied to the chain $(\mm_R^kE+L)/L$. Since $R\subseteq\widetilde R$ is finite, there is an ideal
\[
\cc=\prod_iu_i^{c_i}\widetilde R_i\subseteq R,\qquad c_i\ge1,
\]
of $\widetilde R$ contained in $R$ (any such ideal inside the conductor), and $E'_m:=\cc\,\mm_R^mE$ is an $\widetilde R$-submodule of $E$ of finite colength contained in every $L$ of colength $m$.

\begin{definition}\label{def:quot}
$\Quot^R_m(E)$ is the closed subscheme of the Grassmannian of codimension-$m$ subspaces of the finite-dimensional space $E/E'_m$ on which the subspace is stable under $R$, taken with its reduced structure. Its $K$-points, for every field $K\supseteq\kk$, are the submodules of colength $m$ of $E_K=\bigoplus_iK[[u_i]]^{a_i}$ over the complete local ring $R_K=R\,\widehat\otimes_\kk K$, by the argument of \cref{lem:points}(a). For the node, $R_K=K[[x,y]]/(xy)$. We write
\[
\Zs^{\Quot}_E(Q):=\sum_{m\ge0}[\Quot^R_m(E)]\,Q^m\in\Kvar{}[[Q]].
\]
\end{definition}

Put $c(\bfA)=\sum_ia_ic_i=\dim_\kk E/\cc E$, and for $k\ge0$ let
\[
\cE_k(E):=\{L\subseteq E:\ L\text{ an $R$-submodule},\ \widetilde RL=E,\ \dim_\kk(E/L)=k\},
\]
the \emph{extension fibre} of colength $k$. Here $\widetilde RL$ is the $\widetilde R$-submodule generated by $L$. Since $\widetilde R=\prod_i\widetilde R_i$, it is $\bigoplus_i\widetilde R_i\,\mathrm{pr}_i(L)$.

\begin{lemma}[lattices in a discrete valuation ring]\label{lem:dvr}
Let $A=\kk[[u]]$ and $r\ge0$. For $\lambda\in\NN^r$ let $\Sigma_\lambda\subseteq\Quot^A_{|\lambda|}(A^r)$ be the set of lattices $M\subseteq A^r$ admitting a basis of the form
\begin{equation}\label{eq:triangular}
b_i=u^{\lambda_i}e_i+\sum_{j>i}h_{ji}(u)\,e_j,\qquad h_{ji}\in\kk[u],\ \deg h_{ji}<\lambda_j,\qquad 1\le i\le r.
\end{equation}
\begin{enumerate}[label=(\alph*)]
\item Every lattice lies in exactly one $\Sigma_\lambda$, and its basis \eqref{eq:triangular} is unique.
\item $\Sigma_\lambda$ is locally closed, and the coefficients of the $h_{ji}$ are regular functions on it which define an isomorphism $\Sigma_\lambda\cong\AAA^{\sum_j(j-1)\lambda_j}$. The $b_i$ are sections of the universal lattice $\mathcal M$ over $\Sigma_\lambda$, and for every $c\ge0$ the map $e_i\mapsto b_i$ is an isomorphism of bundles $(A/u^c)^r\otimes\mathcal O_{\Sigma_\lambda}\cong\mathcal M/u^c\mathcal M$, whose fibre at $M$ is the isomorphism $(A/u^c)^r\to M/u^cM$ given by the basis $b_1,\dots,b_r$.
\item Consequently
\[
\sum_{m\ge0}[\Quot^A_m(A^r)]\,Q^m=\sum_{\lambda\in\NN^r}\LL^{\sum_j(j-1)\lambda_j}Q^{\sum_j\lambda_j}=\frac1{(Q;\LL)_r}.
\]
\end{enumerate}
\end{lemma}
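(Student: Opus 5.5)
The plan is to establish the Hermite normal form for lattices in $A^r$ over $A=\kk[[u]]$, then show it varies algebraically, and finally sum the resulting cell classes.

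For (a), I proceed by induction on $r$ via the projection $\pi:A^r\to A$ onto the first coordinate. This coordinate is paired with the pivot $b_1=u^{\lambda_1}e_1+\dots$, since the basis is lower triangular with $b_i$ having leading term in $e_i$ and tail in $e_{j}$, $j>i$. Given a finite-colength lattice $M$, the image $\pi(M)$ is a nonzero ideal $u^{\lambda_1}A$. The kernel $M':=M\cap(0\oplus A^{r-1})$ is a finite-colength lattice in $A^{r-1}$. By induction, $M'$ has a unique reduced basis $b_2,\dots,b_r$ with exponents $\lambda_2,\dots,\lambda_r$. Pick any $m\in M$ with $\pi(m)=u^{\lambda_1}$. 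Then $m$ is determined modulo $M'$, and reducing its tail modulo $M'$ by the triangular basis $b_2,\dots,b_r$ (division with remainder, working from $e_2$ down to $e_r$) gives a unique representative whose $e_j$-coefficient has degree $<\lambda_j$. Uniqueness holds because two such representatives differ by an element of $M'$ whose coordinates have degrees $<\lambda_j$, and triangularity forces that element to be zero. The colength is then $\sum\lambda_i$.

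For (b), I identify the data concretely. Points of $\Sigma_\lambda$ correspond bijectively to tuples of coefficients of the $h_{ji}$, and there are $\sum_j(j-1)\lambda_j$ of them, since $h_{ji}$ exists for each $i<j$. The inverse morphism $\AAA^{N}\to\Quot^A_{|\lambda|}(A^r)$ is given by the family of submodules spanned by the $b_i$ together with $u^cA^r$ for large $c$. This family is flat because the quotient has the monomials $u^ke_i$, $k<\lambda_i$, as a free basis over the base. The family has constant type $\lambda$, so its image is $\Sigma_\lambda$.

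The main obstacle is showing that $\Sigma_\lambda$ is locally closed and that the coefficients are regular functions on it, not merely constructible. I would describe $\Sigma_\lambda$ by rank conditions. The pivot exponents are read off from the dimensions of $(M\cap(0^{i-1}\oplus A^{r-i+1})+u^cA^r)/u^cA^r$ and their images under the $i$-th coordinate mod $u^k$. Equivalently, $\Sigma_\lambda$ is a Schubert-type cell for the flag of subspaces $u^kA^{\ge i}$ in $A^r/u^cA^r$, and such cells are locally closed. On the reduced scheme $\Sigma_\lambda$, the condition that the quotient have the constant basis $\{u^ke_i:k<\lambda_i\}$ is open. The reduced basis vector $b_i$ is then obtained by expressing $u^{\lambda_i}e_i$ in that basis modulo $M$, so its coefficients are regular functions. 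Thus the morphism $\AAA^N\to\Sigma_\lambda$ has a regular inverse, and it is an isomorphism. By \cref{lem:reduced}, it suffices to check that the two composites agree on field points, which is (a).

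The bundle statement also follows: $e_i\mapsto b_i$ gives a map $(A/u^c)^r\otimes\mathcal O\to\mathcal M/u^c\mathcal M$. It is surjective on fibres because the $b_i$ generate $M$, and it is a map of locally free sheaves of equal rank $rc$, so it is an isomorphism.

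For (c), I stratify $\Quot^A_m(A^r)$ by $\lambda$ with $|\lambda|=m$ and sum the classes, giving $\prod_{j=1}^r\sum_{\lambda_j\ge0}(Q\LL^{j-1})^{\lambda_j}=\prod_{j=0}^{r-1}(1-Q\LL^j)^{-1}=1/(Q;\LL)_r$.
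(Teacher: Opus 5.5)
Your proposal is correct and follows essentially the same route as the paper. The shared steps are: the triangular normal form for the flag $\bigoplus_{j\ge i}Ae_j$ obtained by division with remainder, local closedness of $\Sigma_\lambda$ as a level set of the dimensions of $M\cap\bigoplus_{j\ge i}Ae_j$ modulo $u^c$, the inverse morphism from $\AAA^{\sum_j(j-1)\lambda_j}$ checked on field points via \cref{lem:reduced}, fibrewise bijectivity for the trivialization, and summation over $\lambda$. Your regularity argument for the $h_{ji}$, which expands $u^{\lambda_i}e_i$ in the constant frame $\{u^ke_i\}_{k<\lambda_i}$ of the quotient bundle, is a tidy variant of the paper's division with remainder on local sections; only say explicitly that this frame is a frame at every point of $\Sigma_\lambda$ (your flatness computation already shows this), so the open condition holds on all of $\Sigma_\lambda$.
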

\begin{proof}
The $\Sigma_\lambda$ are Schubert cells of the affine Grassmannian of $A$, for the flag $V_1\supseteq V_2\supseteq\cdots$ with $V_i=\bigoplus_{j\ge i}Ae_j$, with the bases \eqref{eq:triangular} as normal forms, exactly as for the cells of \cite[\S5.1]{HJ}, and the formula in (c) is \cite[(3.8)]{HJ}. We only indicate the mechanism. The image of $M\cap V_i$ in $V_i/V_{i+1}\cong A$ is $u^{\lambda_i}A$, which defines $\lambda$, and any lift of $u^{\lambda_i}$ to $M\cap V_i$ is turned into the unique $b_i$ by division with remainder of its coordinates $e_j$, $j>i$, by $u^{\lambda_j}$. This procedure is linear in the coefficients, so it also applies to local sections of the universal lattice; this makes the $b_i$ regular sections and the $h_{ji}$ regular functions on $\Sigma_\lambda$, which is locally closed as the common level set of the upper semicontinuous functions $\bar M\mapsto\dim(\bar M\cap\bar V_i)$ in $(A/u^m)^r$. Conversely, arbitrary coefficients $h_{ji}$ define a free submodule with basis \eqref{eq:triangular}, of colength $|\lambda|$, which gives the inverse morphism $\AAA^{\sum_j(j-1)\lambda_j}\to\Sigma_\lambda$ (\cref{lem:reduced}). The trivialization in (b) holds because $e_i\mapsto b_i$ is a map of locally free sheaves that is bijective on every fibre. Summing $\LL^{\sum_j(j-1)\lambda_j}Q^{|\lambda|}$ over $\lambda$ gives $\prod_{j=0}^{r-1}(1-\LL^jQ)^{-1}$.
\end{proof}

\begin{proposition}[extension-fibre factorization]\label{prop:extension}
Each $\cE_k(E)$ is a locally closed subvariety of $\Quot^R_k(E)$, empty for $k>c(\bfA)$, and
\begin{equation}\label{eq:extension-factorization}
\Zs^{\Quot}_E(Q)=\frac{\displaystyle\sum_{k=0}^{c(\bfA)}[\cE_k(E)]\,Q^k}{\displaystyle\prod_i(Q;\LL)_{a_i}} .
\end{equation}
Consequently $\mathcal N^R_E(Q):=\prod_i(Q;\LL)_{a_i}\,\Zs^{\Quot}_E(Q)\in\Kvar{}[Q]$.
\end{proposition}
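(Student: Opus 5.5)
The plan is to stratify $\Quot^R_m(E)$ by the $\widetilde R$-saturation $M:=\widetilde RL=\bigoplus_i\widetilde R_i\,\mathrm{pr}_i(L)$. For a submodule $L$ of finite colength, $M$ is a product of lattices $M_i\subseteq\widetilde R_i^{a_i}$. The colength is additive:
\[
\dim E/L=\sum_i\dim \widetilde R_i^{a_i}/M_i+\dim M/L .
\]
For each tuple $\lambda=(\lambda^{(1)},\dots,\lambda^{(\ell)})$ of Schubert indices, let $\Sigma_\lambda=\prod_i\Sigma_{\lambda^{(i)}}$ be the product of the cells of \cref{lem:dvr}. Over $\Sigma_\lambda$ the locus of $L$ with $\widetilde RL=M$ and $\dim M/L=k$ should be identified with $\Sigma_\lambda\times\cE_k(E)$. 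Granting this, the factorization follows by summing: $\sum_\lambda[\Sigma_\lambda]Q^{|\lambda|}=\prod_i(Q;\LL)_{a_i}^{-1}$ by \cref{lem:dvr}(c).

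The trivialization proceeds as follows. A point of $\Sigma_\lambda$ gives the basis $b^{(i)}_1,\dots,b^{(i)}_{a_i}$ of each $M_i$. These bases define an $\widetilde R$-module isomorphism $\phi_M:E\to M$, $e^{(i)}_j\mapsto b^{(i)}_j$. Since $R\to\widetilde R$ and $\phi_M$ is $\widetilde R$-linear, $\phi_M$ is also $R$-linear. It therefore gives a bijection between submodules $L'\subseteq E$ with $\widetilde RL'=E$ and submodules $L\subseteq M$ with $\widetilde RL=M$, preserving colength. Each extension fibre is bounded: $\widetilde RL=E$ forces $\cc E=\cc\widetilde R L\subseteq L$ because $\cc\widetilde R=\cc\subseteq R$. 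Hence $\cE_k(E)$ lives in subspaces of $E/\cc E$ of dimension $c(\bfA)$, so it is empty for $k>c(\bfA)$. Every such $L$ similarly contains $\cc M$, and $\phi_M$ induces $E/\cc E\cong M/\cc M$.

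By \cref{lem:dvr}(b), with $c=c_i$ on each branch, this isomorphism is a trivialization of bundles $(E/\cc E)\otimes\mathcal O_{\Sigma_\lambda}\cong\mathcal M/\cc\mathcal M$. The map $(M,L')\mapsto\phi_M(L')$ is therefore a morphism $\Sigma_\lambda\times\cE_k(E)\to\Quot^R_{|\lambda|+k}(E)$. Its inverse $L\mapsto(\widetilde RL,\phi_{\widetilde RL}^{-1}L)$ is also a morphism, and \cref{lem:reduced} upgrades the pointwise bijection to an isomorphism onto its image.

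Local closedness comes next. The condition $\widetilde RL=E$ is equivalent to the surjectivity of $L\to\bigoplus_i(\widetilde R_i/u_i)^{a_i}$, a rank condition on the tautological bundle, which is open. Hence $\cE_k(E)$ is open in $\Quot^R_k(E)$, in particular locally closed. The strata $\{L:\widetilde RL\in\Sigma_\lambda\}$ are locally closed because $L\mapsto\mathrm{pr}_i(L)$ is described by the ranks of the projections of $L$ modulo a fixed $u_i^N$, composed with the locally closed Schubert conditions.

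The main obstacle is making the map $L\mapsto\widetilde RL$ a morphism on families and checking that the strata are finite in each colength. Finiteness holds because $|\lambda|\le m$. For the morphism, I would first bound everything inside $E/E'_m$, using $E'_m=\cc\mm_R^mE\subseteq L$. I would then realize $\widetilde RL/E'_m$ as the image of $\widetilde R\otimes L$, which has constant rank on each stratum, so it defines a map to the Grassmannian there. Once that is done, the class computation is $[\Quot^R_m(E)]=\sum_{|\lambda|+k=m}[\Sigma_\lambda][\cE_k(E)]$, which gives \eqref{eq:extension-factorization}. The polynomiality of $\mathcal N^R_E$ follows because $\cE_k(E)=\emptyset$ for $k>c(\bfA)$.
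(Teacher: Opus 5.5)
Your strategy is the paper's: stratify by $M=\widetilde RL$ and by the cells $\Sigma_\lambda$, use the triangular bases of \cref{lem:dvr}(b) to identify $\mathcal M/\cc\mathcal M$ with $(E/\cc E)\otimes\mathcal O$, construct the two inverse morphisms between each stratum and $\Sigma_\lambda\times\cE_k(E)$, upgrade with \cref{lem:reduced}, and sum using \cref{lem:dvr}(c).

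There is, however, one false step, in your local-closedness argument. You assert that $\widetilde RL=E$ is equivalent to the surjectivity of $L\to\bigoplus_i(\widetilde R_i/u_i)^{a_i}$. But $R$ acts on this space only through its residue field $\kk$, so the image of $L$ there is merely a $\kk$-subspace. By Nakayama over each local ring $\widetilde R_i$, the condition $\widetilde RL=E$ says only that the projection of this subspace onto each summand $(\widetilde R_i/u_i)^{a_i}$ is onto. Surjectivity onto the whole direct sum is strictly stronger as soon as $\ell\ge2$. For the node, $\cc E$ is the kernel of $E\to\kk^u\oplus\kk^v$ and $\cc E\subseteq L$, so your condition forces $L=E$. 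It would therefore give $\cE_k(E_{u,v})=\varnothing$ for all $k\ge1$. In fact $\cE_k(E_{u,v})=G_{u,v;k}$, which consists of subspaces surjecting onto both summands but not onto $\kk^u\oplus\kk^v$.

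The repair is cheap. Require instead that each of the $\ell$ composites $\mathcal L\to(\widetilde R_i/u_i)^{a_i}\otimes\mathcal O$ from the tautological subbundle be surjective. This is a finite intersection of open conditions, so $\cE_k(E)$ is still open in $\Quot^R_k(E)$. The paper phrases this instead as a closed ($R$-stability) plus open (generation) condition in the Grassmannian of $E/\cc E$. In your description of the strata, also use $\widetilde R_i\,\mathrm{pr}_i(L)$ rather than $\mathrm{pr}_i(L)$. The latter need not be an $\widetilde R_i$-module when $R\to\widetilde R_i$ is not onto. Your later treatment of $L\mapsto\widetilde RL$ as the image of $\widetilde R\otimes L$, of constant rank on each stratum, already handles this correctly. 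With these corrections your argument is complete and matches the paper's.
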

\begin{proof}
This is the extension-fibre argument of \cite[Thm.~4.4 and Cor.~5.3]{HJ}, where all the $a_i$ are equal; unequal ranks change nothing, and we only indicate the steps. If $\widetilde RL=E$, then $\cc E=\cc\widetilde RL\subseteq L$, since $\cc\subseteq R$. So $\cE_k(E)$ is the locus in the Grassmannian of codimension-$k$ subspaces of $E/\cc E$ cut out by $R$-stability, a closed condition, and by the condition that the subspace generates $E/\cc E$ over $\widetilde R$, an open condition which is equivalent to $\widetilde RL=E$ by Nakayama's lemma, since $\cc$ lies in the radical of $\widetilde R$. This proves the first assertion.

For an arbitrary $R$-submodule $L\subseteq E$ of colength $m$ put $M:=\widetilde RL=\bigoplus_iM_i$, a product of lattices $M_i\subseteq\widetilde R_i^{\,a_i}$. Then $L$ lies in the extension fibre of $M$ and $\col L=\col M+\col(M/L)$. Stratify $\Quot^R_m(E)$ by $k:=\col(M/L)$ and by the cells $\Sigma_\lambda=\prod_i\Sigma_{\lambda^{(i)}}$ of \cref{lem:dvr} containing $M$. On each stratum $Z_{\lambda,k}$ the map $L\mapsto M$ is a morphism to $\Sigma_\lambda$, because $\widetilde R\mathcal L$ is the image of a map of locally free sheaves of constant rank, and the triangular bases of \cref{lem:dvr}(b) identify the bundle $\mathcal M/\cc\mathcal M$ with the constant bundle $(E/\cc E)\otimes\mathcal O$, as $\widetilde R$-modules and hence as $R$-modules. Under this identification $L\mapsto(M,L/\cc M)$ is a morphism $Z_{\lambda,k}\to\Sigma_\lambda\times\cE_k(E)$, and $(M,L_0)\mapsto$ the inverse image of $L_0$ in $M$ is a morphism in the other direction. They are inverse bijections on points over every field, since $L_0$ generates $M/\cc M$ over $\widetilde R$ exactly when $\widetilde RL=M$, so they are inverse isomorphisms by \cref{lem:reduced}. Hence
\[
[\Quot^R_m(E)]=\sum_{|\lambda|+k=m}[\Sigma_\lambda]\,[\cE_k(E)],
\]
and multiplying generating series, with \cref{lem:dvr}(c) for each branch, gives \eqref{eq:extension-factorization}.
\end{proof}

\Cref{prop:extension} proves polynomiality in $Q$ over the Grothendieck ring. It does not say that the coefficients $[\cE_k(E)]$ are polynomials in $\LL$, and the extension fibres are in general not proper.

\subsection{The mixed-rank node}\label{subsec:node}
For the node, the extension fibres of $E_{u,v}$ are the subspaces of $\kk^u\oplus\kk^v$ surjecting onto both summands, which we classify by a relative form of Goursat's lemma \cite{Goursat} (\cref{lem:goursat}). Their classes are the numbers of matrices of given rank, and this proves \cref{thmA}(b) (\cref{thm:node-main}).

Let $R=R_{xy}=\kk[[x,y]]/(xy)$, $A=\kk[[x]]=R/(y)$, $B=\kk[[y]]=R/(x)$, and $E=E_{u,v}=A^u\oplus B^v$. Here $x$ acts on $B^v$ by zero and $y$ acts on $A^u$ by zero. The normalization is $\widetilde R=A\times B$, and $\cc=\mm=(x)\oplus(y)$ is an ideal of $\widetilde R$ contained in $R$, with $c_1=c_2=1$. So $c(u,v)=u+v$ and $E/\cc E=\kk^u\oplus\kk^v$, on which $R$ acts through $R/\mm=\kk$.

\begin{lemma}[subdirect residue spaces]\label{lem:goursat}
Let $U,V$ be vector spaces of dimensions $u,v$. The variety $G_{u,v;k}$ of codimension-$k$ subspaces $S\subseteq U\oplus V$ whose projections onto both summands are surjective is empty unless $0\le k\le\min(u,v)$. Otherwise it is a Zariski-locally trivial $\GL_k$-bundle over $\Gr(u-k,U)\times\Gr(v-k,V)$. In particular
\begin{equation}\label{eq:goursat-class}
[G_{u,v;k}]=\qbin{u}{k}{\LL}\,\qbin{v}{k}{\LL}\,\LL^{\binom k2}\prod_{j=1}^{k}(\LL^j-1)=R_k(u,v;\LL).
\end{equation}
\end{lemma}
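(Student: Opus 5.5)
The plan is to run Goursat's lemma in families. Let $S\subseteq U\oplus V$ have codimension $k$, and suppose both projections $p_U(S)=U$ and $p_V(S)=V$ are onto. Put $K_U:=S\cap U$ and $K_V:=S\cap V$. Then $S$ induces an isomorphism $\phi:U/K_U\xrightarrow{\sim}V/K_V$ whose graph is $S/(K_U\oplus K_V)$.

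First we count dimensions. We have $\dim S=\dim U+\dim K_V=u+\dim K_V$, and also $\dim S=v+\dim K_U$. Since $\dim S=u+v-k$, this forces $\dim K_V=v-k$ and $\dim K_U=u-k$. In particular $0\le k\le\min(u,v)$, and otherwise $G_{u,v;k}$ is empty.

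Conversely, the data $(K_U,K_V,\phi)$ with $\dim K_U=u-k$, $\dim K_V=v-k$ and $\phi$ an isomorphism give back $S=\{(a,b):\phi(\bar a)=\bar b\}$. This $S$ has both projections surjective, it has codimension $k$, and these two constructions are mutually inverse. So on points over every field we have a bijection of $G_{u,v;k}$ with the total space of the $\mathrm{Isom}$-bundle $\mathrm{Isom}(\mathcal U/\mathcal K_U,\mathcal V/\mathcal K_V)$ over $\Gr(u-k,U)\times\Gr(v-k,V)$. Here $\mathcal K_U,\mathcal K_V$ are the tautological subbundles.

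Second, we upgrade this bijection to an isomorphism of varieties. The map $S\mapsto(K_U,K_V)$ is a morphism on $G_{u,v;k}$: on this locus $K_U$ is the kernel of the composite $\mathcal S\to U\oplus V\to V$, a map of locally free sheaves of constant rank $v$, so the kernel is a subbundle. Likewise $\phi$ is obtained as a morphism, since $\mathcal S/(\mathcal K_U\oplus\mathcal K_V)\to\mathcal U/\mathcal K_U$ and $\mathcal S/(\mathcal K_U\oplus\mathcal K_V)\to\mathcal V/\mathcal K_V$ are isomorphisms of bundles. In the other direction the graph construction is a morphism into the Grassmannian, because it is the kernel of the surjection $U\oplus V\to V/K_V$, $(a,b)\mapsto \phi(\bar a)-\bar b$. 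By \cref{lem:reduced} these are inverse isomorphisms.

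Third, the $\mathrm{Isom}$-bundle is Zariski-locally trivial with fibre $\GL_k$. Over the standard affine opens of the two Grassmannians, both quotient bundles $\mathcal U/\mathcal K_U$ and $\mathcal V/\mathcal K_V$ are trivial of rank $k$, and the bundle becomes the product with $\GL_k$.

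Finally, the class formula. We use the standard classes $[\Gr(u-k,U)]=\qbin{u}{k}{\LL}$ (from the Schubert cell decomposition) and $[\GL_k]=\prod_{i=0}^{k-1}(\LL^k-\LL^i)=\LL^{\binom k2}\prod_{j=1}^k(\LL^j-1)$. Their product is the middle expression of \eqref{eq:goursat-class}. Rewriting $\prod_{j=1}^k(\LL^j-1)=(-1)^k(\LL;\LL)_k$ turns it into $(-1)^k\LL^{\binom k2}(\LL;\LL)_k\qbin{u}{k}{\LL}\qbin{v}{k}{\LL}$, which is $R_k(u,v;\LL)$ by the polynomial identity of \cref{lem:rank}, valid for formal $q$.

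The only step requiring care is the second one, verifying that kernels and the induced isomorphism vary algebraically. Everything else is a classical count.
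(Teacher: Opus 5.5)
Your proposal is correct and follows the paper's argument. Both classify $S$ by the two kernels $S\cap U$, $S\cap V$ together with the graph of an isomorphism $U/(S\cap U)\to V/(S\cap V)$, observe that these kernels are subbundles on $G_{u,v;k}$, get local triviality of the resulting $\mathrm{Isom}$-bundle over $\Gr(u-k,U)\times\Gr(v-k,V)$ by trivializing the universal rank-$k$ quotients, and compute the class from the Grassmannians, $[\GL_k]$ and \cref{lem:rank}. The only difference is that you spell out the two inverse morphisms and the appeal to \cref{lem:reduced} more explicitly than the paper does.
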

\begin{proof}
Let $U_0=S\cap U$ and $V_0=S\cap V$. Surjectivity of $S\to V$ gives $\dim U_0=\dim S-\dim V=u-k$, and symmetrically $\dim V_0=v-k$. So $k\le\min(u,v)$. The two projections induce an isomorphism $U/U_0\xrightarrow{\sim}V/V_0$ whose graph is $S/(U_0\oplus V_0)$. Conversely, the inverse image of the graph of any isomorphism $U/U_0\to V/V_0$ is a subspace of the required kind.

On $G_{u,v;k}$ the kernels $U_0,V_0$ of the projections have constant dimension, hence are subbundles, and over $\Gr(u-k,U)\times\Gr(v-k,V)$ the fibre is the variety of isomorphisms between the two universal rank-$k$ quotient bundles. Trivializing the bundles proves local triviality. The formula follows from the classes of the Grassmannians and $[\GL_k]=\prod_{i=0}^{k-1}(\LL^k-\LL^i)$, and the last equality is \cref{lem:rank}.
\end{proof}

\begin{theorem}[\cref{thmA}(b)]\label{thm:node-main}
For all $u,v\ge0$ and every field $\kk$, in $\Kvar{}[[Q]]$,
\begin{equation}\label{eq:node-quot}
\sum_{m\ge0}[\Quot^{R_{xy}}_m(E_{u,v})]\,Q^m=\frac{C_{u,v}(\LL,Q)}{(Q;\LL)_{u,v}}=F_{u,v}(\LL,Q).
\end{equation}
Moreover $\cE_k(E_{u,v})=G_{u,v;k}$, so that the numerator $C_{u,v}(\LL,Q)=\sum_k[\cE_k(E_{u,v})]Q^k$ is the generating polynomial of the extension fibres.
\end{theorem}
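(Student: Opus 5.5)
The plan is to specialize \cref{prop:extension} to the node and then identify the extension fibres with the Goursat varieties of \cref{lem:goursat}. For $R=R_{xy}$ we have $\widetilde R=A\times B$, $\cc=\mm=(x)\oplus(y)$, $c_1=c_2=1$ and $\bfA=(u,v)$. So \eqref{eq:extension-factorization} reads
\[
\Zs^{\Quot}_{E_{u,v}}(Q)=\frac{\sum_{k=0}^{u+v}[\cE_k(E_{u,v})]\,Q^k}{(Q;\LL)_u(Q;\LL)_v},
\]
and the denominator is already $(Q;\LL)_{u,v}$. What remains is the equality $\cE_k(E_{u,v})=G_{u,v;k}$ as varieties. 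Granting it, \eqref{eq:goursat-class} gives $[\cE_k]=R_k(u,v;\LL)$ for $k\le\min(u,v)$ and $\cE_k=\emptyset$ otherwise, so the numerator is $C_{u,v}(\LL,Q)$ by the definition \eqref{eq:F}.

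To identify the fibres, recall from the proof of \cref{prop:extension} that any $L$ with $\widetilde RL=E$ contains $\cc E=xA^u\oplus yB^v$. Moreover $\cE_k(E)$ is the locally closed subvariety of the Grassmannian of codimension-$k$ subspaces of $E/\cc E=\kk^u\oplus\kk^v$ cut out by two conditions: $R$-stability, and generation of $E/\cc E$ over $\widetilde R$. Both conditions simplify on $E/\cc E$.

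\emph{$R$-stability.} $R$ acts on $E/\cc E$ through $R/\mm=\kk$, since $x$ and $y$ act by zero there. Hence every subspace is an $R$-submodule, and the stability condition is vacuous.

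\emph{Generation.} $\widetilde R$ acts on $E/\cc E$ through $\widetilde R/\cc=\kk\times\kk$, whose two idempotents are the projections onto $\kk^u$ and $\kk^v$. So $\widetilde R\,\bar L=\mathrm{pr}_1(\bar L)\oplus\mathrm{pr}_2(\bar L)$, and $\bar L$ generates exactly when both projections are surjective.

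Thus $\cE_k(E_{u,v})$ and $G_{u,v;k}$ are the same locally closed subset of the same Grassmannian, both with reduced structure, hence equal as varieties. Equivalently, the identity map on points is an isomorphism by \cref{lem:reduced}.

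The main care point is bookkeeping rather than a genuine obstacle. First, the Grassmannian used to define $\cE_k$ inside \cref{prop:extension} is the Grassmannian of $E/\cc E$, matching the ambient space of $G_{u,v;k}$ in \cref{lem:goursat}, and not the larger space $E/E'_m$; the proof of \cref{prop:extension} already makes this reduction. Second, one should check the equality $\Zs^{\Quot}=F_{u,v}(\LL,Q)$ in $\Kvar{}[[Q]]$ without cancelling anything. This is fine, because $(Q;\LL)_{u,v}$ has constant term $1$ and is therefore invertible in $\Kvar{}[[Q]]$, despite $\LL$ being a zero divisor.
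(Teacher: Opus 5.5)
Your proposal is correct and follows essentially the same route as the paper. Like the paper, you specialize \cref{prop:extension} to the node, note that $R$ acts on $E/\cc E=\kk^u\oplus\kk^v$ through $\kk$ (so stability is vacuous) and that generation over $\widetilde R$ amounts to surjecting onto both summands, conclude $\cE_k(E_{u,v})=G_{u,v;k}$, and then read off the classes from \cref{lem:goursat}.
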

\begin{proof}
Since $R$ acts on $E/\cc E=\kk^u\oplus\kk^v$ through $\kk$, every subspace is $R$-stable, and a subspace generates $E/\cc E$ over $\widetilde R=A\times B$ if and only if it surjects onto both summands. So $\cE_k(E_{u,v})=G_{u,v;k}$, as varieties, and \cref{prop:extension} together with \cref{lem:goursat} gives
\[
\Zs^{\Quot}_{E_{u,v}}(Q)=\frac{\sum_{k=0}^{\min(u,v)}R_k(u,v;\LL)\,Q^k}{(Q;\LL)_{u,v}}=F_{u,v}(\LL,Q). \qedhere
\]
\end{proof}

Concretely, a submodule $L\subseteq E_{u,v}$ of finite colength projects onto lattices $M=\mathrm{pr}_{A^u}(L)\subseteq A^u$ and $N=\mathrm{pr}_{B^v}(L)\subseteq B^v$ (these projections are already $A$-, resp.\ $B$-lattices, because $x$ and $y$ act on $A^u$ and $B^v$ through $A$ and $B$), and since $xy=0$,
\[
xM\oplus yN\subseteq L\subseteq M\oplus N .
\]
Thus $L$ is the inverse image of a subspace $S=L/(xM\oplus yN)\subseteq(M/xM)\oplus(N/yN)$ surjecting onto both summands, and $\col L=\col M+\col N+\operatorname{codim}S$. This is the relative Goursat lemma \cite{Goursat} behind \eqref{eq:node-quot}.

By \cref{lem:goursat}, the class of the extension fibre $G_{u,v;k}$ equals the class of the locus of rank-$k$ matrices in $\Mat_{u\times v}$, although the two spaces are not canonically identified without a choice of dualities. Thus the numerator of \cref{thmA} has an unconditional geometric interpretation on the Quot side, as the generating polynomial of the (nonproper) extension fibres, whereas on the Hilbert side the rank expansion is so far only an identity of generating functions.

\subsection{The Hilb--Quot identity for the fat node}\label{subsec:hilb-quot}
Part (c) of \cref{thmA} needs no separate proof. By \cref{thm:node-main} the Quot series is $F_{u,v}(\LL,Q)$, and by \cref{thm:torus-all}(b), proved in \cref{sec:shadow}, the Hilbert series is $F_{u,v}(Q\LL,Q)$. Both are series in $Q$ whose coefficients are polynomials in $\LL$ (\cref{lem:goursat,thm:torus-all}), and $F_{u,v}(Q\LL,Q)$ is obtained from $F_{u,v}(\LL,Q)$ by substituting $Q\LL$ for $\LL$ in these coefficients, as the substitution is applied to the expansion of the denominator as well. This is \cref{thmA}(c).

The identity is a statement in the Grothendieck ring of varieties, for every field, and it is not a consequence of a bijection between the two moduli spaces. Both are projective varieties (\cref{def:hilb,def:quot}), but the substitution $\LL\mapsto Q\LL$ mixes colength and dimension, sending the term $c\,\LL^iQ^j$ of the Quot series to $c\,\LL^iQ^{i+j}$, so that the coefficient of $Q^n$ on the Hilbert side collects the terms of the Quot side with $i+j=n$, and the two varieties of a given colength are in general of different dimensions.

The denominators match termwise, each branch contributing its Bejleri factor $(Q;Q\LL)_a$ \cite{Bej} on the Hilbert side (\cref{thm:B}) and its lattice factor $(Q;\LL)_a$ on the Quot side (\cref{lem:dvr}). The numerators match because the fat-line distribution of \cref{thm:D} and the subdirect residue spaces of \cref{lem:goursat} are both governed by the rank polynomial $C_{u,v}$. For $u=v=1$ the identity is the conjecture of Kivinen--Trinh \cite{KT} for the node, and for $u=v$ it is \cite[Conj.~1.12]{HJO}.

\section{Elementary modifications along a branch}\label{sec:modification}

This section contains the main geometric ingredient of the paper. Let $D=\{x=0\}$ be one branch of the node. For a punctual subscheme $Z=\Spec R/I$, the ideal $(I:x)$ defines the residual subscheme of $Z$ with respect to $D$. By \cref{lem:colon} with $f=x$ there is an exact sequence
\begin{equation}\label{eq:residual}
0\longrightarrow R/(I:x)\xrightarrow{\ \cdot x\ }R/I\longrightarrow R/\big(I+(x)\big)\longrightarrow0,
\end{equation}
so passing from $I$ to $(I:x)$ removes the layer $Z\cap D$, of length $j_x(I)$, and lowers the colength by $j_x(I)$. We call the inverse passage, from $J=(I:x)$ back to $I$, an \emph{elementary modification along $D$}.

The results of the section are the following.
\begin{itemize}[leftmargin=2em]
\item \Cref{thm:A} describes the fibres of $I\mapsto(I:x)$ completely. For a fixed ideal $J$ and a
fixed contact length $j$, the ideals $I$ with $(I:x)=J$ and $j_x(I)=j$ form an affine space of dimension $j_x(J)$ if $j\ge j_x(J)$, and there are none otherwise. Each such $I$ is obtained from $J$ by adding a single generator $y^j+xc$.
\item \Cref{prop:family} shows that this description is algebraic in families. Over a locally closed
family of ideals $J$ of constant contact length, the ideals $I$ form a Zariski-locally trivial affine bundle. In particular classes in $\Kvar$ are multiplied by $\LL^{j_x(J)}$.
\item \Cref{thm:B}, the \emph{$x$-chain}, iterates the modification. Since $x^uy^v\in I$ if and only if
$x^{u-1}y^v\in(I:x)$, $u$ modifications along $D$ reduce the fat node $C_{u,v}$ to the fat line $\{y^v=0\}$, and the generating function of $C_{u,v}$ is expressed through the contact distribution $g_v$ of the fat line.
\end{itemize}
The significance of these results is that they replace the problem of describing the ideals that contain $x^uy^v$, for which a direct analysis leads to a rapidly growing case distinction (\cref{subsec:whyhard}), by a recursion in which the only invariant to be tracked is the pair (colength, contact length).

The proof of the Hilbert-side formula uses nothing about $\Hilb_n(C_{u,v},0)$ beyond this section and the same statements with $x$ and $y$ exchanged, together with, for the lift to $\Kvar$, the finer analysis of the contact length in \cref{sec:shadow}.

\subsection{The fibres of $I\mapsto(I:x)$}
This subsection determines the fibres of the map $I\mapsto(I:x)$ over an arbitrary field. \Cref{thm:A} shows that they are affine spaces whose dimension is the contact length of the image, and that every ideal in a fibre is obtained by adding a single explicit generator. We first explain where this description comes from, then state and prove the theorem, isolate the features of it that are used later, and work out the first examples.

Let $I$ be an ideal of finite colength, and put $J=(I:x)$ and $j=j_x(I)$. Two inclusions are automatic, namely
\[
xJ=x(I:x)\subseteq I\subseteq(I:x)=J .
\]
So $I$ is sandwiched between $xJ$ and $J$, and only its image in $J/xJ$ has to be determined.

Next, $I+(x)=(x,y^j)$ by \cref{def:contact}, so $I$ contains an element of the form $y^j+xc$ with $c\in R$. The ideal $xJ+R(y^j+xc)$ is contained in $I$, and a colength count based on \eqref{eq:residual} shows that it is all of $I$. Thus $I$ is recovered from $J$, from $j$, and from a single \emph{tail} $c$. The tail is determined modulo $J$. Replacing $c$ by $c+a$ with $a\in J$ changes $y^j+xc$ by $xa\in xJ\subseteq I$. And the tail is subject to exactly one condition, $y^j+xc\in J$, which expresses $I\subseteq J$.

Writing $X$ for multiplication by $x$ on $R/J$, the condition reads
\[
X\bar c=-\bar y^{\,j}\qquad\text{in }R/J,
\]
an inhomogeneous linear equation for $\bar c\in R/J$. Its solution set is either empty or a torsor under $\ker X=(J:x)/J$, whose dimension is $j_x(J)$ by \cref{lem:contact}(a), and it is non-empty exactly when $\bar y^{\,j}\in X(R/J)$, that is, when $y^j\in J+(x)$, that is, when $j\ge j_x(J)$. The theorem makes this precise and shows conversely that every solution produces an ideal with the prescribed $J$ and $j$.

\begin{theorem}[elementary modification along $\{x=0\}$]\label{thm:A}
Let $J\subseteq R$ be an ideal of finite colength and $j\ge0$. Put
\[
F(J,j):=\{I\subseteq R:\ (I:x)=J,\ I+(x)=(x,y^j)\},\qquad
\cA(J,j):=\{\bar c\in R/J:\ x\,\bar c=-\bar y^{\,j}\ \text{in }R/J\}.
\]
\begin{enumerate}[label=(\alph*)]
\item $F(J,j)\ne\varnothing$ if and only if $j\ge j_x(J)$.
\item If $j\ge j_x(J)$, then $\cA(J,j)$ is an affine space over $\kk$ of dimension $j_x(J)$, a torsor
under the vector space $(J:x)/J$. The map
\[
\cA(J,j)\longrightarrow F(J,j),\qquad \bar c\longmapsto I_c:=xJ+R\,(y^j+xc),
\]
is a well-defined bijection. Here $c\in R$ is any lift of $\bar c$.
\item Every ideal $I$ of finite colength lies in exactly one $F(J,j)$, namely for $J=(I:x)$ and
$j=j_x(I)$. Moreover $\col(I)=\col(J)+j_x(I)$.
\item For $f\in R$ one has $xf\in I$ if and only if $f\in(I:x)$. In particular, for $u\ge1$,
$x^uy^v\in I$ if and only if $x^{u-1}y^v\in(I:x)$.
\end{enumerate}
\end{theorem}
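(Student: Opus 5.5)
I would prove the four parts in the order (d), (c), (b), (a), since (b) is the substantive part and (a) follows from it together with (c).

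Part (d) is the definition of the colon ideal: $xf\in I$ if and only if $f\in(I:x)$. Taking $f=x^{u-1}y^v$ gives the second sentence.

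For (c), every $I$ determines $J=(I:x)$, and $I+(x)=(x,y^{j_x(I)})$ by \cref{def:contact}. The fibres $F(J,j)$ are therefore disjoint and cover all ideals. The colength formula is \cref{lem:colon} with $f=x$. Note that if $\col(J)<\infty$ and $I\in F(J,j)$, then $\col(I)=\col(J)+j<\infty$.

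For (b), I would argue as follows.

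\emph{Solution set of the linear equation.} The set $\cA(J,j)$ is the solution set of the linear equation $X\bar c=-\bar y^{\,j}$ on the finite-dimensional space $V=R/J$. Since $\operatorname{coker}X=R/(J+(x))=R/(x,y^{j_x(J)})$, the equation is solvable exactly when $y^j\in J+(x)$. By \cref{lem:contact}(d) this holds exactly when $j\ge j_x(J)$. When it is solvable, the solutions form a torsor under $\ker X=(J:x)/J$, which has dimension $j_x(J)$ by \cref{lem:contact}(a).

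\emph{Well-definedness of $\bar c\mapsto I_c$.} Changing the lift $c$ by $a\in J$ changes the generator $y^j+xc$ by $xa\in xJ$, so $I_c$ depends only on $\bar c$.

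\emph{$I_c$ lies in $F(J,j)$.} This is the main step.
\begin{itemize}
\item $I_c\subseteq J$ because $y^j+xc\in J$.
\item $I_c+(x)=(x,y^j)$, which is immediate from the generators.
\item $(I_c:x)=J$. The inclusion $J\subseteq(I_c:x)$ is clear since $xJ\subseteq I_c$.
\item For the reverse inclusion I would count colengths. Since $xJ\subseteq I_c\subseteq J$, \cref{lem:colon} gives $\col(I_c)=\col(I_c:x)+j$. So it suffices to show $\col(I_c)\ge\col(J)+j$, i.e.\ $\dim J/I_c\ge j$. By \cref{lem:colon} applied to $J$, $\dim J/xJ=\col(xJ)-\col(J)=j_x(J)$, which is the wrong quantity.
\item A cleaner route is direct. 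Suppose $xf\in I_c$, say $xf=xa+r(y^j+xc)$ with $a\in J$. Reducing mod $x$ gives $r\,y^j\in(x)$, so $r=xr'$ because $y^j$ is a nonzerodivisor on $R/(x)\cong\kk[[y]]$. Cancelling $x$ in the domain $R$ gives $f=a+r'(y^j+xc)\in J$.
\end{itemize}
I expect this cancellation argument to be the main obstacle, and it settles $(I_c:x)=J$ without any counting.

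\emph{Injectivity.} If $I_c=I_{c'}$, then $x(c-c')\in I_c\subseteq J$, so $c-c'\in(J:x)$. This alone does not force $\bar c=\bar c'$, so I would argue more precisely. Write $x(c-c')=xa+r(y^j+xc)$ with $a\in J$. As above, $r=xr'$, hence $c-c'=a+r'(y^j+xc)$. Reducing mod $J$ and using $y^j+xc\in J$ gives $\bar c=\bar c'$.

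\emph{Surjectivity.} Given $I\in F(J,j)$, choose $y^j+xc\in I$. Then $y^j+xc\in J$, so $\bar c\in\cA(J,j)$, and $I_c\subseteq I$. Both ideals have colength $\col(J)+j$: for $I$ this is (c), and for $I_c$ it is (c) applied once we know $I_c\in F(J,j)$. Hence $I_c=I$.

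Finally, (a) follows: by (b) the set $F(J,j)$ is nonempty exactly when $\cA(J,j)$ is nonempty, which happens exactly when $j\ge j_x(J)$.
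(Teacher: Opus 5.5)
Your proof is correct and is essentially the paper's own argument: the cancellation computation showing $(I_c:x)=J$, injectivity and surjectivity via the colon and a colength comparison, and the description of $\cA(J,j)$ as the solution set of $X\bar c=-\bar y^{\,j}$ on $R/J$ are exactly the steps of the paper's proof. Two small clean-ups: injectivity follows in one line from $x(c-c')\in I_c$ together with $(I_c:x)=J$, which you have already proved, and the abandoned counting detour should be deleted, since $J/xJ$ is infinite-dimensional (because $xJ\subseteq(x)$) rather than of dimension $j_x(J)$.
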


\begin{proof}
\emph{Step 1. The colon of $I_c$.} Let $c\in R$ be arbitrary. We claim
\begin{equation}\label{eq:colonIc}
(I_c:x)=J+R\,(y^j+xc),\qquad I_c+(x)=(x,y^j).
\end{equation}
The inclusion $\supseteq$ in the first equation is clear, since $xJ\subseteq I_c$ and $x(y^j+xc)\in I_c$. Conversely, let $xf\in I_c$ and write $xf=xa+r(y^j+xc)$ with $a\in J$ and $r\in R$. Reducing modulo $x$ gives $ry^j\in(x)$. Since $x$ is a prime element of the factorial ring $R$ and $x\nmid y^j$, we get $r=xr'$ for some $r'\in R$, and cancelling $x$ gives $f=a+r'(y^j+xc)$. The second equation is clear, because $xJ$ and $xc$ lie in $(x)$.

\emph{Step 2. Elements of $\cA(J,j)$ give elements of $F(J,j)$.} The condition $\bar c\in\cA(J,j)$ means $y^j+xc\in J$. Then \eqref{eq:colonIc} gives $(I_c:x)=J$ and $I_c+(x)=(x,y^j)$, i.e.\ $I_c\in F(J,j)$.

\emph{Step 3. The map is well defined and injective.} Let $c,c'$ satisfy $y^j+xc,\,y^j+xc'\in J$. If $c-c'\in J$, then $x(c-c')\in xJ\subseteq I_c$, so $y^j+xc'=(y^j+xc)-x(c-c')\in I_c$ and $I_{c'}\subseteq I_c$. By symmetry $I_c=I_{c'}$. Conversely, if $I_c=I_{c'}$, then $x(c-c')=(y^j+xc)-(y^j+xc')\in I_c$, so $c-c'\in(I_c:x)=J$.

\emph{Step 4. The map is surjective.} Let $I\in F(J,j)$. Since $y^j\in I+(x)$, there is $c\in R$ with $y^j+xc\in I$. As $I\subseteq(I:x)=J$, we get $y^j+xc\in J$, i.e.\ $\bar c\in\cA(J,j)$. Moreover $xJ=x(I:x)\subseteq I$, hence $I_c\subseteq I$. By \cref{lem:colon} and Step 2, $\col(I_c)=\col(I_c:x)+\col(I_c+(x))=\col(J)+j$, and in the same way $\col(I)=\col(J)+j$. An inclusion of ideals of the same finite colength is an equality, so $I=I_c$.

\emph{Step 5. Part (a) and the dimension.} $\cA(J,j)\ne\varnothing$ if and only if $\bar y^{\,j}$ lies in the image of $X$ on $R/J$, i.e.\ $y^j\in J+(x)$, i.e.\ $j\ge j_x(J)$ by \cref{lem:contact}(d). By Steps 2--4 this is equivalent to $F(J,j)\ne\varnothing$. When it is non-empty, $\cA(J,j)$ is a coset of $\ker(X\,|\,R/J)=(J:x)/J$, which has dimension $j_x(J)$ by \cref{lem:contact}(a).

\emph{Step 6. Parts (c) and (d).} For any $I$ put $J=(I:x)$ and $j=j_x(I)$. Then $I+(x)=(x,y^{j})$, so $I\in F(J,j)$. Uniqueness is clear from the definition. The colength formula is \cref{lem:colon} with $f=x$. Part (d) is the definition of the colon ideal.
\end{proof}

Three features of \cref{thm:A} are used repeatedly.

\emph{Uniformity.} For \emph{every} ideal $J$ and \emph{every} $j\ge j_x(J)$ the fibre $F(J,j)$ is an affine space of dimension $j_x(J)$, a torsor under $(J:x)/J$, independently of all other invariants of $J$, such as its colength, the Jordan types of $X$ and $Y$ on $R/J$, or its position in the Hilbert scheme. This is what makes the recursion of \cref{subsec:xchain} close on the pair (colength, contact length). To know the classes of the ideals $I$ obtained from a family of ideals $J$ it suffices to know the classes of the $J$, refined by contact length.

\emph{Linearity.} The tail is a solution of a linear equation on the module $R/J$. When $J$ varies, $R/J$ is the fibre of the tautological bundle, and the equation becomes an equation between sections of vector bundles. This is why the bijection of \cref{thm:A} is an isomorphism of varieties in families (\cref{prop:family}).

\emph{Compatibility with the curve.} By part (d), $I\mapsto(I:x)$ maps $\Hilb(C_{u,v},0)$ to $\Hilb(C_{u-1,v},0)$, and $F(J,j)\subseteq\Hilb(C_{u,v},0)$ as soon as $J\in\Hilb(C_{u-1,v},0)$. Whether $I$ contains $x^uy^v$ depends only on $J$, not on the tail. More generally, for any $g\in R$ one has $xg\in I$ if and only if $g\in J$. This is used in \cref{cor:F}.

\emph{Relation with standard bases and with Luan's filtration.} Descriptions of punctual Hilbert schemes by standard bases go back to Brian\c{c}on \cite{Briancon} and Iarrobino \cite{Iarrobino}. Luan's vertical filtration of an ideal $I$ is $W^k=I\cap(y^k)=y^k(I:y^k)$ \cite[Def.~4.1]{Luan}, and \cite[Lem.~5.1]{Luan} shows that $W^k$ is obtained from $W^{k+1}$ by adding one generator $y^kx^{i}+y^{k+1}\vartheta$. After division by $y^k$ this is the statement $(I:y^k)=y(I:y^{k+1})+R(x^i+y\vartheta)$. So the vertical filtration is the iteration of the mirror image of \cref{thm:A}, in which $x$ and $y$ are exchanged. What the arguments below rely on is the uniform description of the fibres.

\begin{example}[the first fibres]\label{ex:nodeA}
\begin{enumerate}[label=(\arabic*),leftmargin=2em]
\item Take $J=R$. Then $j_x(J)=0$, $\cA(R,j)$ is a point, and $F(R,j)=\{(x,y^j)\}$ for every $j\ge0$. Indeed,
an ideal $I$ with $(I:x)=R$ contains $x$, and is then determined by its contact length.
\item Take $J=\mm$. Then $j_x(J)=1$ and $(J:x)/J=R/\mm=\kk$. For $j\ge1$ the equation
$x\bar c=-\bar y^{\,j}$ holds for every $\bar c\in R/\mm=\kk$, and
\[
F(\mm,j)=\{(x^2,\,xy,\,y^j+c_0x):\ c_0\in\kk\}\cong\AAA^1 .
\]
For $j=1$ these are the ideals $(y+c_0x,\,x^2)$. Together with $F(R,2)=\{(x,y^2)\}$ they exhaust the ideals of colength $2$, and exhibit $\Hilb_2(\AAA^2,0)\cong\PP^1$ as an affine line and a point.
\item Take $J=(y,x^a)$ with $a\ge1$. Then $j_x(J)=1$, and $(J:x)/J$ is spanned by $x^{a-1}$. For $j\ge1$
the theorem gives
\[
F(J,j)=\{(xy,\,x^{a+1},\,y^j+c_0x^a):c_0\in\kk\}\cong\AAA^1 .
\]
These, together with the ideals $(x,y^j)$ of (1), are all the ideals containing $xy$, because $xy\in I$ if and only if $y\in(I:x)$, and the ideals containing $y$ are $R$ and the $(y,x^a)$. Summing,
\[
\sum_{n}[\Hilb_n(C_{1,1},0)]Q^n=\sum_{j\ge0}Q^j+\LL\sum_{a,j\ge1}Q^{a+j}=\frac{1-Q+Q^2\LL}{(1-Q)^2},
\]
so $[\Hilb_n(C_{1,1},0)]=1+(n-1)\LL$ for $n\ge1$. This agrees with the classical description of $\Hilb_n(\{xy=0\},0)$, $n\ge2$, as a chain of $n-1$ projective lines \cite[Ex.~3.2]{Luan}, \cite{KGS}.
\end{enumerate}
\end{example}

\subsection{The modification in families}\label{subsec:families}
The description of \cref{thm:A} is algebraic in families of ideals of constant contact length (\cref{prop:family}). This is what allows us to compute classes in $\Kvar$. Over such a family, the ideals obtained by an elementary modification form a Zariski-locally trivial affine bundle. The main tool is an explicit description of the module $R/I$ in terms of $R/(I:x)$ and the tail (\cref{lem:basis}).

\Cref{thm:A} is a statement about points. Over every field $K\supseteq\kk$ it gives a bijection $\cA(J,j)\to F(J,j)$. To compute classes in $\Kvar$ we need more, namely that this bijection is an isomorphism of varieties when $J$ moves in a family. Two things have to be checked.

First, the forward map $(J,\bar c)\mapsto I_c$ has to be a morphism. A morphism to the Hilbert scheme is the same as a family of cyclic modules (\cref{prop:adhm,lem:points}), so we must write down the module $R/I_c$, with its operators and its cyclic vector, in terms of data that vary algebraically with $(J,\bar c)$. \Cref{lem:basis} does this. It says that $R/I_c$ is the module $x\cdot R/J\cong R/J$ extended by the $j$ vectors $1,y,\dots,y^{j-1}$, and that the operators are given by explicit formulas in which only the operators of $R/J$, its cyclic vector and the tail $\bar c$ appear. The only formula involving the tail, $Y(y^{j-1})=-\iota(\bar c)$, is the relation $y^j\equiv-xc$ modulo $I_c$.

Second, the inverse map $I\mapsto\big((I:x),\bar c(I)\big)$ has to be a morphism. Here we use that the submodule $x\cdot R/I\subseteq R/I$ is isomorphic to $R/(I:x)$ and is generated by the class of $x$ (\cref{lem:rankdict}). On a locus where its dimension is constant it is a subbundle of the tautological bundle, so it defines a family of cyclic modules, that is, a morphism $I\mapsto(I:x)$. The tail is then read off from the relation $x\bar c=-\bar y^{\,j}$ in $R/I$.

Both constructions need certain ranks to be constant. The rank of $X$ on $R/J$ is $\col(J)-j_x(J)$, so $\ker X$ and $\operatorname{Im}X$ are vector bundles exactly over families of constant contact length. On the Hilbert scheme of colength $a+j$ the locus $\{j_x(I)=j\}$ is the locus $\{\rk X=a\}$. This is why \cref{prop:family} is formulated over strata of constant contact length. We first record the explicit module structure.

\begin{lemma}\label{lem:basis}
Let $J$ have finite colength $a$, let $j\ge j_x(J)$, $\bar c\in\cA(J,j)$ and $I=I_c$. Let $X_J,Y_J$ be the operators on $R/J$, $e_J=\bar1$, and let $\iota:R/J\to R/I$, $f\mapsto xf$. Then $\iota$ is injective, $R/I=\iota(R/J)\oplus\langle1,y,\dots,y^{j-1}\rangle$, and
\begin{align*}
X\,\iota(w)&=\iota(X_Jw), & Y\,\iota(w)&=\iota(Y_Jw), && w\in R/J,\\
X\,(y^l)&=\iota(Y_J^l\,e_J), & Y\,(y^l)&=y^{l+1}\ (l<j-1), & Y\,(y^{j-1})&=-\iota(\bar c).
\end{align*}
\end{lemma}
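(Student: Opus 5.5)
Injectivity of $\iota$ and the splitting come from the exact sequence \eqref{eq:residual} and \cref{thm:A}; the operator formulas are then direct reductions modulo $I$. First, since $(I:x)=J$ by \cref{thm:A}(b) (equivalently \eqref{eq:colonIc}), the map $R/J\to R/I$, $f\mapsto xf$, is well defined and injective; this is the first map in \eqref{eq:residual}. Its image is $xR/I=(xR+I)/I$. The cokernel is $R/(I+(x))=R/(x,y^j)$, which has basis the classes of $1,y,\dots,y^{j-1}$.

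Hence the classes of $1,y,\dots,y^{j-1}$ in $R/I$ span a complement to $\iota(R/J)$. Equivalently, if $\sum_{l<j}a_ly^l\in xR+I\subseteq(x,y^j)$, then all $a_l=0$. This gives the direct sum decomposition, and the dimensions match: $a+j=\col(I)$ by \cref{thm:A}(c).

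Next come the operator formulas. Since $\iota$ is $R$-linear (multiplication by $x$ commutes with the $R$-action), we get $X\iota(w)=\iota(X_Jw)$ and $Y\iota(w)=\iota(Y_Jw)$. For the basis vectors $y^l$ we have $X(y^l)=xy^l=\iota(\overline{y^l})=\iota(Y_J^le_J)$, because $\overline{y^l}=Y_J^l\bar 1$ in $R/J$. For $l<j-1$ the identity $Y(y^l)=y^{l+1}$ is a tautology.

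The last formula is the only one using the tail. Since $y^j+xc\in I$, we have $y\cdot y^{j-1}=y^j\equiv-xc\pmod I$, which is exactly $-\iota(\bar c)$. This is well defined because changing $c$ by an element of $J$ changes $xc$ by an element of $xJ\subseteq I$.

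No step is a real obstacle. The only point needing care is the injectivity of $\iota$, i.e.\ that $(I_c:x)$ is exactly $J$ and not larger. That is already Step 1 of the proof of \cref{thm:A}, which uses that $x$ is prime in $R$. I would cite it rather than reprove it.
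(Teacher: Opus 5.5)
Your proposal is correct and matches the paper's proof essentially step for step. Like the paper, you get injectivity of $\iota$ from $(I:x)=J$ and the splitting from the cokernel $R/(I+(x))=\kk[y]/(y^j)$. The operator formulas are then direct reductions modulo $I$, and only the last one uses $y^j+xc\in I$.
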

\begin{proof}
The map $\iota$ is well defined and injective because $xf\in I$ if and only if $f\in(I:x)=J$ (\cref{thm:A}(b),(d)), and its image is $xR/I$. The quotient $(R/I)/\iota(R/J)=R/(I+(x))=\kk[y]/(y^j)$ has basis the images of $1,y,\dots,y^{j-1}$, which gives the direct sum decomposition. The formulas follow from the definitions. For $w=\bar f$ one has $X\iota(w)=x\cdot xf=\iota(\overline{xf})$ and $Y\iota(w)=y\cdot xf=\iota(\overline{yf})$. Next $X(y^l)=x\,y^l=\iota(\overline{y^l})=\iota(Y_J^le_J)$. For $l<j-1$, $Y(y^l)=y^{l+1}$ is again a basis vector. Finally $Y(y^{j-1})=y^j=(y^j+xc)-xc\equiv-xc=-\iota(\bar c)$ modulo $I$, because $y^j+xc\in I$.
\end{proof}

With the module structure in hand we state and prove the family version of \cref{thm:A}. In the statement, $\mathcal T$ is the tautological bundle on the Hilbert scheme, with fibre $R/J$ over $J$, and $X,Y,e$ are its operators and its cyclic section (\cref{subsec:hilb}).

\begin{proposition}\label{prop:family}
Let $\kk$ be any field, let $a,j'\ge0$ and $j\ge j'$, and let $S\subseteq\Hilb_a(\AAA^2,0)$ be a locally closed subvariety such that $j_x(J)=j'$ for all $J\in S$. Put
\[
F_S:=\{I\in\Hilb_{a+j}(\AAA^2,0):\ (I:x)\in S,\ j_x(I)=j\},\qquad
\cA_S:=\{(J,\bar c):\ J\in S,\ \bar c\in\cA(J,j)\}.
\]
Here $\cA_S$ is the closed subscheme of the total space of $\mathcal T|_S$ defined by $X\bar c=-Y^je$.
\begin{enumerate}[label=(\alph*)]
\item $F_S$ is locally closed in $\Hilb_{a+j}(\AAA^2,0)$.
\item $\cA_S\to S$ is a Zariski-locally trivial torsor under the vector bundle $\ker X\subseteq
\mathcal T|_S$ of rank $j'$. In particular $\cA_S$ is reduced.
\item There is an isomorphism $\varphi:\cA_S\to F_S$ which on points is
$(J,\bar c)\mapsto I_c$. Its inverse is $I\mapsto((I:x),\bar c(I))$, where $\bar c(I)$ is the class of any $c$ with $y^j+xc\in I$.
\item $[F_S]=\LL^{j'}[S]$ in $\Kvar$. If $S\subseteq\Hilb_a(\{x^{u-1}y^v=0\},0)$, then
$F_S\subseteq\Hilb_{a+j}(C_{u,v},0)$.
\end{enumerate}
The same holds for the mirror modification $I\mapsto(I:y)$, with $j$ replaced by $i$.
\end{proposition}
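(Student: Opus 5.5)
Parts (a)–(c) are proved together by writing down both morphisms explicitly. For (a), the locus $\{j_x(I)=j\}$ in $\Hilb_{a+j}(\AAA^2,0)$ is the rank locus $\{\rk X=a\}$ of the tautological operator, hence locally closed. On it, the image $X\mathcal T$ is a subbundle of rank $a$, generated by the section $Xe$. It is stable under $X$ and $Y$, so by \cref{lem:points}(b) it defines a family of cyclic modules, i.e.\ a morphism $\pi:\{\rk X=a\}\to\Hilb_a(\AAA^2,0)$. By \cref{lem:rankdict} this morphism is $I\mapsto(I:x)$ on points. Then $F_S=\pi^{-1}(S)$ is locally closed, which gives (a), and $\pi$ restricts to a morphism $F_S\to S$ by \cref{lem:reduced}(b).

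For (b), note that over $S$ the operator $X$ on $\mathcal T|_S$ has constant rank $a-j'$. Hence $\ker X$ and $\operatorname{Im}X$ are subbundles, of ranks $j'$ and $a-j'$. The section $-Y^je$ lies in $\operatorname{Im}X$ at every point, by \cref{thm:A}(a). By \cref{lem:reduced}(c) it is then a section of the subbundle $\operatorname{Im}X$, since its image in the quotient $\mathcal T/\operatorname{Im}X$ vanishes pointwise. Locally on $S$ choose a splitting $\sigma$ of $X:\mathcal T\to\operatorname{Im}X$. Then $\bar c\mapsto \bar c-\sigma(-Y^je)$ identifies $\cA_S$ with the total space of $\ker X$. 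This shows that $\cA_S$ is a Zariski-locally trivial $\ker X$-torsor, and in particular reduced.

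For the forward map $\varphi$ in (c), we use \cref{lem:basis}. On $\cA_S$ take the bundle $\mathcal T\oplus\mathcal O^j$, with the operators given by the formulas of that lemma: $\iota$ is the inclusion of $\mathcal T$, $X(y^l)=Y^le$, and $Y(y^{j-1})=-\bar c$. The cyclic section is $y^0$. We must check that these operators commute and are nilpotent, and that the section is cyclic. We do this pointwise over every field: the fibre is exactly the module $R/I_c$, so the checks hold there. Commutation is an identity of sections, so \cref{lem:reduced}(c) globalises it, using that $\cA_S$ is reduced by (b). Cyclicity and the vanishing of degree-$(a+j)$ monomials are handled the same way. \cref{lem:points}(b) then gives a morphism to $\Hilb_{a+j}(\AAA^2,0)$, which on points is $I_c$ by \cref{thm:A}; its image lies in $F_S$, so it factors through $F_S$.

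For the inverse, over $F_S$ the relation $y^j\in I+(x)$ shows that $Y^je$ lies in $X\mathcal T$. Via the isomorphism $X\mathcal T\cong\pi^*\mathcal T$ (with $Xe\mapsto e$), the tail is the unique-modulo-$\ker$ preimage: $\bar c$ is determined by $\iota(\bar c)=-Y^je$ in $X\mathcal T$. Concretely $\iota(\bar c)$ is a section, $\bar c=\iota^{-1}(-Y^je)$, which is regular because $\iota$ is an isomorphism onto its subbundle image. Both composites are the identity on points over every field, by \cref{thm:A}(b), so they are inverse isomorphisms by \cref{lem:reduced}(a).

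Part (d) then follows. By (c) and (b), $[F_S]=[\cA_S]=\LL^{j'}[S]$. The inclusion $F_S\subseteq\Hilb_{a+j}(C_{u,v},0)$ holds because of \cref{thm:A}(d). The mirror statement for $I\mapsto(I:y)$ is obtained by exchanging $x$ and $y$ throughout. The main obstacle is the forward morphism: making the module structure of \cref{lem:basis} a genuine family over $\cA_S$, and verifying commutativity and cyclicity scheme-theoretically. The reducedness of $\cA_S$ is exactly what makes the pointwise verification suffice.
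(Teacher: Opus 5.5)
Your proposal is correct and follows essentially the same route as the paper: the morphism $I\mapsto(I:x)$ built on the rank locus from the subbundle $X\mathcal T$, the torsor structure from a local splitting of $X$ onto $\operatorname{Im}X$, the forward family on $\mathcal T\oplus\mathcal O^j$ from \cref{lem:basis}, the inverse $\bar c=\iota^{-1}(-Y^je)$, and reducedness to turn pointwise inverses into isomorphisms. The one deviation is that you check commutativity of the operators fibrewise and globalize using that $\cA_S$ is reduced, whereas the paper verifies it directly from $X_J\bar c=-Y_J^je_J$; both work. Two small checks are left implicit, each settled by the same pointwise-plus-reducedness argument you already use elsewhere: applying \cref{lem:points}(b) to define $\pi$ requires the degree-$a$ monomials to vanish on $X\mathcal T$, and the inverse map must land in $\cA_S$, which also follows from the one-line identity $\iota(X\bar c)=-XY^je=\iota(-Y^je)$.
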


\begin{proof}
If $j=0$, then $j'=0$, so $S\subseteq\{R\}$ by \cref{lem:contact}(c), and $F_S\subseteq\{R\}$ for the same reason. All assertions are then trivial. We assume $j\ge1$ from now on. The proof follows the plan explained at the beginning of this subsection. Steps 1 and 2 show that $\cA_S$ is a torsor, Step 3 shows that $I\mapsto(I:x)$ is a morphism, Steps 4 and 5 construct morphisms $\varphi:\cA_S\to F_S$ and $\psi:F_S\to\cA_S$, from \cref{lem:basis} and from the relation $y^j+xc\in I$ respectively, and Step 6 shows that they are inverse to each other.

\emph{Step 1. Subbundles.} On $S$ the endomorphism $X$ of $\mathcal T|_S$ has constant rank $a-j'$ (\cref{lem:contact}(a)). Its cokernel is a coherent sheaf with fibres of constant dimension $j'$ on the reduced scheme $S$, hence locally free \cite[Ex.~II.5.8]{Hartshorne}. Therefore $\operatorname{Im}X$ is locally a direct summand of $\mathcal T|_S$, locally free of rank $a-j'$, and $X:\mathcal T|_S\to\operatorname{Im}X$ is a surjection onto a locally free sheaf, so $\ker X$ is locally a direct summand of rank $j'$.

\emph{Step 2. Part (b).} Let $\sigma=-Y^je\in\Gamma(S,\mathcal T|_S)$. At every point $J$ of $S$ the value $\sigma(J)$ lies in $\operatorname{Im}X$, because $y^j\in J+(x)$ by \cref{lem:contact}(d). So the image of $\sigma$ in $\operatorname{coker}X$ vanishes at every point, and it is zero by \cref{lem:reduced}(c). That is, $\sigma$ is a section of $\operatorname{Im}X$. Locally on $S$ choose a complement $\mathcal K'$ of $\ker X$. Then $X|_{\mathcal K'}:\mathcal K'\to\operatorname{Im}X$ is a morphism of locally free sheaves of the same rank that is an isomorphism on every fibre, hence an isomorphism, and $\cA_S=(X|_{\mathcal K'})^{-1}(\sigma)+\ker X$ locally. Trivializing $\ker X$ shows that $\cA_S$ is locally isomorphic to $S\times\AAA^{j'}$, as a scheme. Hence $\cA_S$ is reduced and is a torsor under $\ker X$.

\emph{Step 3. Part (a), and $I\mapsto(I:x)$ as a morphism.} Let $Z_j\subseteq\Hilb_{a+j}(\AAA^2,0)$ be the locally closed subvariety $\{j_x(I)=j\}=\{\rk X=a\}$. On $Z_j$ let $(\mathcal T',X',Y',e')$ be the tautological data. As in Step 1, $\operatorname{Im}X'$ is a subbundle of rank $a$, stable under $X',Y'$, and it is generated by $X'e'$ as an $\mathcal O[X',Y']$-module, because $e'$ generates $\mathcal T'$. At a point $I$ the fibre is $xR/I\cong R/(I:x)$ (\cref{lem:rankdict}), which is killed by $\mm^a$ (\cref{lem:mn}). By \cref{lem:reduced}(c) all monomials of degree $a$ in $X',Y'$ vanish on $\operatorname{Im}X'$.

By \cref{lem:points}(b) the data $(\operatorname{Im}X',X',Y',X'e')$ define a morphism $\rho:Z_j\to\Hilb_a(\AAA^2,0)$, which on points is $I\mapsto(I:x)$, since the annihilator of $X'e'=\bar x$ is $(I:x)$. Then $F_S=\rho^{-1}(S)$ is locally closed in $Z_j$, hence in $\Hilb_{a+j}(\AAA^2,0)$. By construction there is an isomorphism $\mu:\rho^*\mathcal T\to\operatorname{Im}X'$ which intertwines $X,Y$ with $X',Y'$ and sends $e$ to $X'e'$. On fibres it is $R/(I:x)\to R/I$, $f\mapsto xf$.

\emph{Step 4. The morphism $\varphi$.} On $\cA_S$ consider the pulled-back data $(\mathcal T,X_J,Y_J,e_J)$, the tautological section $\bar c$ of $\mathcal T$, with $X_J\bar c=-Y_J^je_J$, and put $\mathcal V=\mathcal T\oplus\mathcal O^j$, with basis $\varepsilon_0,\dots,\varepsilon_{j-1}$ of $\mathcal O^j$. Define $X,Y$ on $\mathcal V$ by the formulas of \cref{lem:basis}, with $\iota$ the inclusion of $\mathcal T$ and $y^l$ replaced by $\varepsilon_l$, that is,
\[
X|_{\mathcal T}=X_J,\quad Y|_{\mathcal T}=Y_J,\quad X\varepsilon_l=Y_J^le_J,\quad
Y\varepsilon_l=\varepsilon_{l+1}\ (l<j-1),\quad Y\varepsilon_{j-1}=-\bar c .
\]
These operators commute. On $\mathcal T$ this holds because $X_J$ and $Y_J$ commute. On $\varepsilon_l$ with $l<j-1$ both composites give $Y_J^{l+1}e_J$. On $\varepsilon_{j-1}$ we have $XY\varepsilon_{j-1}=-X_J\bar c=Y_J^je_J=YX\varepsilon_{j-1}$. The section $e=\varepsilon_0$ generates $\mathcal V$. By Nakayama's lemma it suffices to check this on fibres, where $Y^le=\varepsilon_l$ and $X^pY^qXe=X_J^pY_J^qe_J$, and $e_J$ generates $\mathcal T$.

On each fibre $X$ and $Y$ are nilpotent, since $X$ maps $\mathcal V$ into $\mathcal T$ and $Y^j$ maps $\mathcal O^j$ into $\mathcal T$. So each fibre is a cyclic module $K[[x,y]]/I$ of length $a+j$ (\cref{prop:adhm}), killed by $\mm^{a+j}$ (\cref{lem:mn}). As $\cA_S$ is reduced, all monomials of degree $a+j$ in $X,Y$ vanish on $\mathcal V$ (\cref{lem:reduced}(c)). By \cref{lem:points}(b) we obtain a morphism $\cA_S\to\Hilb_{a+j}(\AAA^2,0)$. By \cref{lem:basis} it sends a point $(J,\bar c)$ to $I_c$, so by \cref{thm:A} its image lies in $F_S$, and it factors through a morphism $\varphi:\cA_S\to F_S$ (\cref{lem:reduced}(b)).

\emph{Step 5. The morphism $\psi$.} On $F_S$ we have $\rho:F_S\to S$ and $\mu$ from Step 3. The section $Y'^je'$ of $\mathcal T'$ lies in $\operatorname{Im}X'$ at every point, because $y^j\in I+(x)$. By \cref{lem:reduced}(c) it is a section of $\operatorname{Im}X'$. Put $\bar c:=-\mu^{-1}(Y'^je')\in\Gamma(F_S,\rho^*\mathcal T)$. Then $\mu(X\bar c)=-X'Y'^je'=-Y'^jX'e'=\mu(-Y^je)$, so $X\bar c=-Y^je$ and $(\rho,\bar c)$ is a morphism $\psi:F_S\to\cA_S$. On points, $\mu(\bar c)=x\bar c=-\bar y^{\,j}$ in $R/I$, so $\bar c$ is the class of any $c$ with $y^j+xc\in I$.

\emph{Step 6. Parts (c) and (d).} On $K$-points, for every field $K\supseteq\kk$, $\psi\circ\varphi$ and $\varphi\circ\psi$ are the identity, by Steps 3 and 4 of the proof of \cref{thm:A} over $K$. Since $\cA_S$ and $F_S$ are reduced and the Hilbert schemes are separated, \cref{lem:reduced}(a) shows that both composites are identities. Hence $\varphi$ is an isomorphism, and $[F_S]=[\cA_S]=\LL^{j'}[S]$. The last assertion is \cref{thm:A}(d). The mirror statement follows by exchanging $x$ and $y$.
\end{proof}

In other words, over a stratum $S$ of constant contact length $j'$, the ideals obtained from $S$ by an elementary modification of contact length $j\ge j'$ form a Zariski-locally trivial affine bundle of rank $j'$ over $S$. This is all that is needed for the multiplicativity $[F_S]=\LL^{j'}[S]$ in $\Kvar$, on which the $x$-chain below rests. In \cref{sec:shadow} the mirror statement, for $I\mapsto(I:y)$, is combined with a finer analysis of the fibres.

\subsection{The $x$-chain}\label{subsec:xchain}
Iterating the elementary modification along the branch $\{x=0\}$ gives \cref{thm:B}, which expresses the contact-refined generating function of $C_{u,v}$ through that of $C_{u-1,v}$, and hence, after $u$ steps, through the contact distribution $g_v$ of the fat line $\{y^v=0\}$. We also derive the mirror chain, and recover the formulas of Bejleri \cite{Bej} and of Ellingsrud--Str{\o}mme \cite{ES}, and Luan's formula for $u=1$ \cite[Thm.~1.2]{Luan}.

For an ideal $I$ containing $x^uy^v$ consider the chain
\[
I=I^{(u)}\longmapsto I^{(u-1)}=(I:x)\longmapsto I^{(u-2)}=(I:x^2)\longmapsto\cdots\longmapsto I^{(0)}=(I:x^u).
\]
By \cref{thm:A}(d), $I^{(c)}$ contains $x^cy^v$, so $I^{(0)}$ defines a punctual subscheme of the fat line $\{y^v=0\}$. Each step $I^{(c-1)}\mapsto I^{(c)}$ is an elementary modification, and by \cref{thm:A} the only datum of $I^{(c-1)}$ that matters for the fibre over it is its contact length.

The contact lengths can only increase along the chain, $j_x(I^{(0)})\le j_x(I^{(1)})\le\dots\le j_x(I^{(u)})=j_x(I)$, by \cref{thm:A}(a), and $\col(I)=\col(I^{(0)})+j_x(I^{(1)})+\dots+j_x(I^{(u)})$ by \cref{thm:A}(c). So the ideals $I$ with a prescribed fat-line ideal $I^{(0)}$ and prescribed contact lengths form an iterated affine space of dimension $j_x(I^{(0)})+\dots+j_x(I^{(u-1)})$, and in families the class of this set is $\LL^{j_x(I^{(0)})+\dots+j_x(I^{(u-1)})}$ times the class of the possible $I^{(0)}$ (\cref{prop:family}). The generating functions refined by contact length are the natural bookkeeping device for this recursion.

For $u,v,n,j\ge0$ let $\Hilb_n(C_{u,v},0)_{j_x=j}\subseteq\Hilb_n(C_{u,v},0)$ be the locally closed subvariety of ideals $I$ with $j_x(I)=j$, and define the \emph{contact-refined motivic zeta functions}
\[
G^x_{u,v}(Q,z):=\sum_{n,j\ge0}[\Hilb_n(C_{u,v},0)_{j_x=j}]\,Q^nz^j\in\Kvar{}[z][[Q]],\qquad
g_v(Q,z):=G^x_{0,v}(Q,z).
\]
Since $j_x(I)\le n$, the coefficient of each $Q^n$ is a polynomial in $z$. By \cref{lem:contact}(c), $g_v$ is a polynomial in $z$ of degree at most $v$ with coefficients in $\Kvar{}[[Q]]$, because the contact strata of the fat line,
\[
\Hilb_n(C_{0,v},0)_{j_x=k}:=\Hilb_n(\{y^v=0\},0)_{j_x=k}=\{I:\ y^v\in I,\ \col I=n,\ I+(x)=(x,y^k)\},
\]
are empty for $k>v$. Thus $g_v(Q,z)=\sum_{k=0}^v\sum_n[\Hilb_n(C_{0,v},0)_{j_x=k}]Q^nz^k$. We also write $\Zs_{u,v}(Q)=G^x_{u,v}(Q,1)=\sum_n[\Hilb_n(C_{u,v},0)]Q^n$.

\begin{theorem}[the $x$-chain]\label{thm:B}
For $u\ge1$ and $v\ge0$, in $\Kvar{}[z][[Q]]$,
\[
G^x_{u,v}(Q,z)=\frac{G^x_{u-1,v}(Q,Q\LL z)}{1-Qz}.
\]
Consequently
\[
G^x_{u,v}(Q,z)=\frac{g_v(Q,(Q\LL)^uz)}{\poch{Qz}{u}},\qquad
\Zs_{u,v}(Q)=\frac{g_v(Q,(Q\LL)^u)}{(Q;Q\LL)_u}.
\]
\end{theorem}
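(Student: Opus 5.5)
The plan is to stratify $\Hilb_n(C_{u,v},0)$ by the pair $(j_x(I),\,j_x((I:x)))$ together with the stratum of $(I:x)$, and then to apply \cref{prop:family} stratum by stratum.

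Fix $j'\ge0$ and $a\ge0$, and take $S=\Hilb_a(C_{u-1,v},0)_{j_x=j'}$. This is locally closed in $\Hilb_a(\AAA^2,0)$, since it is cut out by the rank condition $\rk X=a-j'$ inside a closed subvariety. For $j\ge j'$, \cref{prop:family} gives the locally closed set $F_S$ of ideals $I$ of colength $a+j$ with $(I:x)\in S$ and $j_x(I)=j$. By \cref{prop:family}(d), $F_S\subseteq\Hilb_{a+j}(C_{u,v},0)$ and $[F_S]=\LL^{j'}[S]$.

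Conversely, \cref{thm:A}(c),(d) show that every $I\in\Hilb_n(C_{u,v},0)_{j_x=j}$ lies in exactly one such $F_S$: take $a=n-j$ and $j'=j_x((I:x))$, which satisfies $j'\le j$ by \cref{thm:A}(a). Hence $\Hilb_n(C_{u,v},0)_{j_x=j}$ is a finite disjoint union of the locally closed pieces $F_S$. Finiteness holds because $j'\le a$, and the $F_S$ are the level sets of $j_x((I:x))$, which is locally closed through $\rk$ conditions via \cref{lem:rankdict}. Additivity in $\Kvar$ then gives
\[
[\Hilb_n(C_{u,v},0)_{j_x=j}]=\sum_{j'\le j}\LL^{j'}[\Hilb_{n-j}(C_{u-1,v},0)_{j_x=j'}].
\]

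The generating function manipulation is formal. Multiply by $Q^nz^j$, set $n=a+j$, and write $j=j'+t$ with $t\ge0$. The right side becomes
\[
\sum_{a,j'}[\Hilb_a(C_{u-1,v},0)_{j_x=j'}]\,Q^a(Q\LL z)^{j'}\sum_{t\ge0}(Qz)^t=\frac{G^x_{u-1,v}(Q,Q\LL z)}{1-Qz}.
\]
This is valid in $\Kvar{}[z][[Q]]$, because each power of $Q$ receives finitely many terms.

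Iterating $u$ times, the $c$-th step substitutes $z\mapsto(Q\LL)^cz$ into the factor $1/(1-Qz)$. This yields the denominator $\prod_{c=0}^{u-1}(1-Qz(Q\LL)^c)=\poch{Qz}{u}$ and the numerator $g_v(Q,(Q\LL)^uz)$. Setting $z=1$ gives $\Zs_{u,v}$.

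The only delicate point is the stratification step. We must check that the sets $F_S$, defined via $\rho^{-1}(S)$ in Step 3 of \cref{prop:family}, partition the stratum set-theoretically over all field points and are locally closed. Classes in $\Kvar$ depend only on this decomposition into locally closed reduced pieces, and that is exactly what \cref{prop:family}(a) provides. I expect no further obstacle.
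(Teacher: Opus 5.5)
Your proposal is correct and follows the paper's proof essentially verbatim. Like the paper, you decompose $\Hilb_n(C_{u,v},0)_{j_x=j}$ via \cref{thm:A}(c),(d) into the pieces $F_{S_{j'}}$ with $S_{j'}=\Hilb_{n-j}(C_{u-1,v},0)_{j_x=j'}$, use \cref{prop:family} for local closedness and for $[F_{S_{j'}}]=\LL^{j'}[S_{j'}]$, then sum the geometric series in $j-j'$ and iterate $u$ times. Your additional remarks, on the finiteness of the decomposition and on the well-definedness of the substitution $z\mapsto Q\LL z$ in $\Kvar{}[z][[Q]]$, are correct details that the paper leaves implicit.
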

\begin{proof}
Fix $n$ and $j$. By \cref{thm:A}(c),(d), $\Hilb_n(C_{u,v},0)_{j_x=j}$ is the disjoint union over $j'\le j$ of the sets $F_{S_{j'}}$ of \cref{prop:family}, where $S_{j'}=\Hilb_{n-j}(C_{u-1,v},0)_{j_x=j'}$. By \cref{prop:family} these are locally closed, with classes $\LL^{j'}[S_{j'}]$. Hence
\[
[\Hilb_n(C_{u,v},0)_{j_x=j}]=\sum_{j'\le j}\LL^{j'}\,[\Hilb_{n-j}(C_{u-1,v},0)_{j_x=j'}],
\]
and therefore
\begin{align*}
G^x_{u,v}(Q,z)&=\sum_{a,j'}[\Hilb_a(C_{u-1,v},0)_{j_x=j'}]\,Q^a\LL^{j'}\sum_{j\ge j'}(Qz)^j\\
&=\sum_{a,j'}[\Hilb_a(C_{u-1,v},0)_{j_x=j'}]\,Q^a\frac{(Q\LL z)^{j'}}{1-Qz}
=\frac{G^x_{u-1,v}(Q,Q\LL z)}{1-Qz}.
\end{align*}
Iterating $u$ times gives $G^x_{u,v}(Q,z)=G^x_{0,v}(Q,(Q\LL)^uz)\big/\prod_{i=0}^{u-1}(1-Q(Q\LL)^iz)$. Put $z=1$.
\end{proof}

Each step of the chain contributes one factor of Bejleri's series \eqref{eq:bejleri} \cite{Bej}. The $i$-th modification, $i=0,\dots,u-1$, contributes $(1-Q(Q\LL)^i)^{-1}=(1-Q^{i+1}\LL^{i})^{-1}$. After $u$ modifications along $\{x=0\}$ the fat branch $\{x^u=0\}$ has contributed its full factor $1/(Q;Q\LL)_u$, and what remains is the other branch, the fat line $\{y^v=0\}$, with its contact variable evaluated at $(Q\LL)^u$. Thus the whole interaction of the two branches is encoded in the single evaluation $g_v(Q,(Q\LL)^u)$.

\begin{remark}~
\begin{itemize}
    \item Theorem~\ref{thm:B} says the computation of $G^x_{u,v}(Q,z)$ can be reduced to the computation of $G^x_{0,v}(Q,z)$; however, since the contact length $j_x$ is recorded, the same method does not directly lead to a reduction from $G^x_{0,v}(Q,z)$ to $G^x_{0,0}(Q,z)$. Further peeling off the $\{y=0\}$ branch from $C_{0,v}=\{y^v=0\}$ requires a more careful treatment in Section~\ref{sec:shadow}.
    \item Since $\Zs_{u,v}(Q)$ records colength but not the contact length, we have the symmetry $\Zs_{u,v}(Q)=\Zs_{v,u}(Q)$, induced by the automorphism $x\leftrightarrow y$ of $R$, which maps ideals containing $x^uy^v$ to ideals containing $x^vy^u$ and preserves colength. This does not imply a relation between $G^x_{u,v}(Q,z)$ and $G^x_{v,u}(Q,z)$ other than their common $z=1$ specialization. 
\end{itemize}
\end{remark}
Applying Theorem~\ref{thm:B} to both sides of $\Zs_{u,v}(Q)=\Zs_{v,u}(Q)$, we get
\begin{corollary}[the mirror relation]\label{cor:mirror}
For $u,v\ge 0$, we have $g_v(Q,(Q\LL)^u)/(Q;Q\LL)_u=g_u(Q,(Q\LL)^v)/(Q;Q\LL)_v$.
\end{corollary}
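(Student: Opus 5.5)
The mirror relation follows by evaluating the symmetric series $\Zs_{u,v}(Q)$ in two ways. The plan is to combine \cref{thm:B} with the coordinate swap $x\leftrightarrow y$, working throughout in $\Kvar{}[[Q]]$.

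\emph{Step 1: the symmetry.} First establish $\Zs_{u,v}(Q)=\Zs_{v,u}(Q)$ as an identity of classes. The automorphism $\sigma$ of $R$ exchanging $x$ and $y$ induces a linear automorphism of $W_n=\kk[x,y]/(x,y)^n$. This in turn induces an automorphism of the Grassmannian $\mathrm{Gr}_n$ that preserves the closed subscheme $G_n$ of stable quotients. It carries $G_n(x^uy^v)$ isomorphically onto $G_n(x^vy^u)$, since $q(\overline{x^uy^v})=0$ becomes $q(\overline{x^vy^u})=0$ after the swap. Passing to reduced structures gives an isomorphism $\Hilb_n(C_{u,v},0)\cong\Hilb_n(C_{v,u},0)$ of varieties. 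Hence the classes agree for every $n$, and so the generating series agree.

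\emph{Step 2: apply the $x$-chain to both sides.} For $u\ge1$, \cref{thm:B} gives $\Zs_{u,v}(Q)=g_v(Q,(Q\LL)^u)/(Q;Q\LL)_u$. For $u=0$ the same formula holds trivially: $\Zs_{0,v}(Q)=G^x_{0,v}(Q,1)=g_v(Q,1)$, and $(Q;Q\LL)_0=1$.

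Swapping the roles of $u$ and $v$ gives $\Zs_{v,u}(Q)=g_u(Q,(Q\LL)^v)/(Q;Q\LL)_v$. Here $g_u$ is the contact distribution, with respect to the branch $\{x=0\}$, of ideals containing $y^u$.

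\emph{Step 3: equate.} Equating the two expressions from Step 2 via Step 1 gives the claimed identity.

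Two points need care. The substitutions $z=(Q\LL)^u$ and $z=(Q\LL)^v$ are legitimate because $g_v$ is a polynomial in $z$ with coefficients in $\Kvar{}[[Q]]$. The denominators are units in $\Kvar{}[[Q]]$, since they have constant term $1$.

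The only step that is more than bookkeeping is Step 1, and it is routine. It suffices that $\sigma$ acts on the Grassmannian compatibly with the closed conditions defining $G_n(g)$, so the isomorphism holds scheme-theoretically and not merely on points.
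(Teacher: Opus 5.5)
Your proposal is correct and follows the paper's own argument. The paper likewise applies \cref{thm:B} to both sides of the symmetry $\Zs_{u,v}(Q)=\Zs_{v,u}(Q)$ induced by the swap $x\leftrightarrow y$. Your scheme-theoretic treatment of the swap on $G_n(g)$ and your separate handling of the trivial case $u=0$ supply details that the paper leaves implicit.
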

This gives relations between various specializations of $g_v(Q,-)$, $v\ge 0$. In Section~\ref{sec:fatline}, these will be exploited to determine $g_v(Q,z)$ up to motives annihilated by a power of $\LL$.

\begin{example}[Bejleri and Ellingsrud--Str{\o}mme]\label{ex:bejleri}
For $u=0$, \cref{thm:B} and \cref{cor:mirror} give $\Zs_{0,v}(Q)=g_v(Q,1)=g_0(Q,(Q\LL)^v)/(Q;Q\LL)_v$. The only ideal containing $1$ is $R$, so $g_0=1$ and $\Zs_{0,v}(Q)=1/(Q;Q\LL)_v$ in $\Kvar{}[[Q]]$. This is Bejleri's formula \eqref{eq:bejleri} \cite{Bej}, obtained by peeling $y$ off $v$ times. For fixed $n$ and $N\ge n$ one has $\Hilb_n(\{y^N=0\},0)=\Hilb_n(\AAA^2,0)$ by \cref{lem:mn}. Letting $N\to\infty$ gives $\sum_n[\Hilb_n(\AAA^2,0)]Q^n=\prod_{i\ge1}(1-Q^i\LL^{i-1})^{-1}$, the class version of the formula of Ellingsrud and Str{\o}mme \cite{ES}.
\end{example}

\begin{example}[$u=1$]\label{ex:u1}
The ideals containing $y$ are $R$ and $(y,x^a)$, $a\ge1$, and $(y,x^a)+(x)=\mm$. So the contact distribution of the fat line $\{y=0\}$ is $g_1(Q,z)=G^x_{0,1}(Q,z)=1+\frac{Qz}{1-Q}$. The $x$-chain for $C_{v,1}$, which peels the branch $\{x=0\}$ $v$ times off $C_{v,1}$ and lands on $\{y=0\}$, gives $\Zs_{v,1}(Q)=G^x_{v,1}(Q,1)=g_1(Q,(Q\LL)^v)/(Q;Q\LL)_v=\frac1{(Q;Q\LL)_v}\big(1+\frac{Q^{v+1}\LL^v}{1-Q}\big)$, and $\Zs_{1,v}(Q)=\Zs_{v,1}(Q)$ by the swap of $x$ and $y$. This is \cite[Thm.~1.2]{Luan}. For $v=1$ it is the series computed by hand in \cref{ex:nodeA}(3).
\end{example}

For $u\ge2$, \cref{thm:B} reduces everything to the contact distribution $g_v$ of a fat line, or, via \cref{cor:mirror}, to $g_u$. The difficulty of computing $g_u$ directly is analysed in \cref{subsec:whyhard}. The next section determines $g_v$ without computing it directly.

\section{The bootstrap and the closed form for the fat line}\label{sec:fatline}

By \cref{thm:B}, the generating function $\Zs_{u,v}(Q)$ of the fat node is determined by the contact distribution $g_v(Q,z)=G^x_{0,v}(Q,z)$ of the fat line $\{y^v=0\}$. This section determines $g_v$, and hence proves the Hilbert-side formula, without describing a single ideal containing $y^v$ other than $R$. The idea (\cref{thm:C}) is that the curve $C_{u,v}$ can be peeled along either branch. \Cref{thm:B} and its mirror give two expressions for the same series, and these identities, for all pairs $u<v$, determine every $g_v$ by Lagrange interpolation. We then guess the solution from computer experiments (\cref{subsec:found}) and verify it by the $q$-binomial theorem (\cref{thm:D}); its symmetric form is the rank formula of \cref{thmA}(a).

Interpolation requires dividing by the differences of the interpolation nodes, which are powers of $Q\LL$ times units. We therefore work in the localization
\[
\Lambda:=\Kvar{}[\LL^{-1}]
\]
and in the rings $\Lambda[[Q]]$ and $\Lambda[[Q]][z]$. Since $\LL$ is a zero divisor in $\Kvar$ \cite{Borisov}, the results of this section determine $g_v$ in $\Lambda[[Q]][z]$, that is, up to coefficients annihilated by a power of $\LL$. They determine the point counts exactly, because the counting measure $\LL\mapsto q$ inverts $\LL$ (\cref{thm:main,thm:refined}). The classes themselves are identified in \cref{sec:shadow}, where it is shown that they lie in $\ZZ[\LL]$; once this is known, the formulas of this section hold exactly in $\Kvar$ (\cref{cor:gv-pieces}).

\subsection{Why we do not compute $g_v$ directly}\label{subsec:whyhard}
The condition that an ideal contains $x^u$ is a polynomial condition of degree $u$ in the tails of its generators, and this is what makes a direct computation of the contact distribution of a fat line difficult.

Luan's vertical method \cite[\S10]{Luan} is the mirror chain of \cref{cor:mirror}. It peels $y$ off $v$ times and is then left with the fat line $\{x^u=0\}$, i.e.\ with $g_u$. Computing $g_u$ directly means describing, in the coordinates given by the tails of the generators, which ideals contain $x^u$, and how this interacts with the contact length. If the ideal contains $q_2=x^2+\alpha x+\beta$ and $s_w=xy^w+y^w\sigma$ with $\alpha,\beta,\sigma\in\kk[[y]]$ and $w\le v$, then $x^uy^v\equiv y^v\Phi_u(\alpha,\beta,\sigma)$ modulo the ideal, where $\Phi_u$ is the remainder of $(-\sigma)^u$ modulo $\sigma^2-\alpha\sigma+\beta$, a polynomial of degree $u$ in the tails.

For $u=1$ the membership condition is linear and for $u=2$ it can be solved for $\beta$, which is why closed formulas are available in these cases. For $u\ge3$ it is a system of degree $u$ mixing the tails of different generators, and Luan remarks that the analysis then splits into a large number of cases \cite[p.~39]{Luan}. The bootstrap below avoids the computation altogether.

\subsection{The bootstrap}\label{subsec:bootstrap}
The contact distributions of the fat lines are determined by three properties, the essential one being the compatibility of the two ways of peeling the fat node (\cref{thm:C}). Recall that $(Q;Q\LL)_v$ is a unit of $\Kvar{}[[Q]]$, with constant term $1$.

\begin{theorem}[bootstrap]\label{thm:C}
Let $(h_v(z))_{v\ge0}$ be a family of polynomials with coefficients in $\Kvar{}[[Q]]$ such that
\begin{enumerate}[label=(\roman*)]
\item $h_0(z)=1$;
\item $\deg_z h_v\le v$ and $h_v(0)=1$;
\item $(Q;Q\LL)_v\,h_v\big(Q,(Q\LL)^u\big)=(Q;Q\LL)_u\,h_u\big(Q,(Q\LL)^v\big)$ for all $u,v\ge 0$.
\end{enumerate}
Then the images of $h_v(z)$ in $\Lambda[[Q]][z]$ are uniquely determined by (i)--(iii). Equivalently, if $(h'_v(z))$ is a second such family, then every coefficient of $h_v(z)-h'_v(z)$ is annihilated by a power of $\LL$. The family $h_v(z)=g_v(Q,z)$ of contact distributions of the fat lines satisfies (i)--(iii).
\end{theorem}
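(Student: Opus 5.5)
The statement has two parts: the family $g_v$ satisfies (i)--(iii), and (i)--(iii) determine the family in $\Lambda[[Q]][z]$.

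\emph{The family $g_v$ satisfies (i)--(iii).} Property (i) holds because the only ideal containing $1$ is $R$, which has colength $0$ and contact length $0$ (\cref{ex:bejleri}). For (ii), \cref{lem:contact}(c) shows that the contact strata of $\{y^v=0\}$ are empty for $j_x>v$, so $\deg_z g_v\le v$. The coefficient of $z^0$ collects the ideals with $j_x(I)=0$, and by \cref{lem:contact}(c) this is only $R$; hence $g_v(Q,0)=1$. Property (iii) is \cref{cor:mirror}, multiplied by the unit $(Q;Q\LL)_u(Q;Q\LL)_v$.

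\emph{Uniqueness.} Let $h,h'$ be two such families and set $d_v=h_v-h'_v$, viewed in $\Lambda[[Q]][z]$. We induct on $v$, the case $v=0$ being (i). Suppose $d_u=0$ for all $u<v$. For each $0\le u<v$, condition (iii) gives
\[
(Q;Q\LL)_v\,d_v\big((Q\LL)^u\big)=(Q;Q\LL)_u\,d_u\big((Q\LL)^v\big)=0 .
\]
Since $(Q;Q\LL)_v$ is a unit, $d_v$ vanishes at the $v$ points $z_u=(Q\LL)^u$, $u=0,\dots,v-1$. Condition (ii) gives $d_v(0)=0$ as well. So $d_v$ is a polynomial of degree $\le v$ vanishing at the $v+1$ points $0,1,Q\LL,\dots,(Q\LL)^{v-1}$.

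The main point is that the Vandermonde argument works over the non-domain $\Lambda[[Q]]$. The differences of the nodes are $(Q\LL)^a-(Q\LL)^b=(Q\LL)^b\big((Q\LL)^{a-b}-1\big)$ for $a>b$, and $0-(Q\LL)^b$. The factor $(Q\LL)^{a-b}-1$ is a unit in $\Lambda[[Q]]$, since its constant term is $-1$. The factor $\LL^b$ is a unit in $\Lambda$. Only the factor $Q^b$ is not a unit.

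To deal with the powers of $Q$, I would not invert anything. Instead I would peel off one node at a time. Vanishing at $0$ gives $d_v=z\,e(z)$. Vanishing at $1$ gives $e(1)=0$, so $e=(z-1)e_1$ by division by a monic linear factor. Vanishing at $Q\LL$ then gives $Q\LL(Q\LL-1)e_1(Q\LL)=0$.

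Here I use that $Q$ is a non-zero-divisor in $\Lambda[[Q]]$, because it is a formal power series variable. The factor $\LL$ is a unit and $Q\LL-1$ is a unit, so $e_1(Q\LL)=0$. Continuing in the same way, each evaluation at $(Q\LL)^b$ carries a factor $(Q\LL)^b\prod_{b'<b}\big((Q\LL)^{b}-(Q\LL)^{b'}\big)$. This factor is $Q^{\text{power}}$ times a unit, hence a non-zero-divisor.

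After all $v+1$ nodes, $d_v=c\,z\prod_{b<v}\big(z-(Q\LL)^b\big)$ must have degree $\le v$ while the product has degree $v+1$ and is monic, so $c=0$ and $d_v=0$.

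Translating back, $d_v=0$ in $\Lambda[[Q]][z]$ means that each coefficient of $h_v-h'_v$ in $\Kvar$ is killed by some power of $\LL$. This step needs care only because a coefficient of $Q^n$ in $\Kvar{}[[Q]]$ maps to $0$ in $\Lambda$ exactly when it is $\LL$-power torsion. I expect the non-zero-divisor bookkeeping above to be the only real obstacle; the rest is formal.
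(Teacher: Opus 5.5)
Your proof is correct and follows the paper's own argument. Existence is checked the same way, via \cref{lem:contact}(c) and \cref{cor:mirror}. Uniqueness uses the same induction on $v$, relying on the fact that the node differences are powers of $Q$ times units of $\Lambda[[Q]]$, hence non-zero-divisors. The one variation is that the paper invokes the Vandermonde determinant being a non-zero-divisor, while you peel off the nodes one at a time by the factor theorem; this spells out the ``at most one solution over a non-domain'' step explicitly.
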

\begin{proof}
\emph{The contact distributions satisfy (i)--(iii).} Let $h_v(z)=g_v(Q,z)$. For (i), the only ideal containing $1$ is $R$. For (ii), $\deg_z h_v\le v$ by \cref{lem:contact}(c), and the coefficient of $z^0$ is the class of the set of ideals with $j_x=0$, which is $\{R\}$. For (iii), both sides equal $(Q;Q\LL)_{u,v}\,\Zs_{u,v}(Q)$ by \cref{thm:B,cor:mirror}.

\emph{Uniqueness.} We argue by induction on $v$, the case $v=0$ being (i). Suppose that $h_0(z),\dots,h_{v-1}(z)$ are known in $\Lambda[[Q]][z]$. Condition (iii) with $u=0,\dots,v-1$ determines the values of $h_v(z)$ at the $v$ nodes $z_i=(Q\LL)^{i-1}$, $1\le i\le v$, and (ii) gives $h_v(z_0)=1$ for $z_0=0$. Write $h_v=\sum_{k=0}^{v}c_kz^k$. The values at the $v+1$ nodes $z_0,\dots,z_v$ form a linear system for $(c_0,\dots,c_v)$ whose matrix is the Vandermonde matrix of the nodes, with determinant $\prod_{0\le i<j\le v}(z_j-z_i)$. Each factor is a power of $Q\LL$, or $(Q\LL)^i\big((Q\LL)^d-1\big)$ with $d\ge1$. Since $1-(Q\LL)^d$ is a unit of $\Kvar{}[[Q]]$ and $Q$ is not a zero divisor of $\Kvar{}[[Q]]$, after inverting $\LL$, the determinant is not a zero divisor of $\Lambda[[Q]]$, and the system has at most one solution over $\Lambda[[Q]]$. 
\end{proof}

The idea behind \cref{thm:C} is that the two ways of peeling the \emph{same} curve $C_{u,v}$ give, for each pair $u<v$, a linear relation between the unknown functions $g_u$ and $g_v$. These relations, together with the trivial normalizations, already determine all $g_v$. In particular, no ideal containing $y^v$ other than $R$ is ever described explicitly. The only geometric input is \cref{thm:A}, through \cref{thm:B,cor:mirror}, and the normalization (ii) concerns the ideal $R$ alone.

\begin{example}[computing $g_2$ from \cref{thm:C}]\label{ex:g2}
Write $g_2(Q,z)=1+\alpha z+\beta z^2$ and $P_2:=(Q;Q\LL)_2=(1-Q)(1-Q^2\LL)$. Condition (iii) with $u=0,1$ gives, using $g_0=1$ and $g_1(Q,z)=1+Qz/(1-Q)$ (\cref{ex:u1}),
\[
1+\alpha+\beta=\frac1{P_2},\qquad 1+\alpha\,Q\LL+\beta\,Q^2\LL^2=\frac{(1-Q)\,g_1(Q,Q^2\LL^2)}{P_2}=\frac{1-Q+Q^3\LL^2}{P_2}.
\]
Subtracting $1$ from both equations gives $\alpha+\beta=(Q+Q^2\LL-Q^3\LL)/P_2$ and $Q\LL(\alpha+\beta Q\LL)=(Q^2\LL-Q^3\LL+Q^3\LL^2)/P_2$. Dividing the second equation by $Q\LL$, which is possible in $\Lambda[[Q]]$, and subtracting it from the first, $\beta(1-Q\LL)=Q^2(1-Q\LL)/P_2$, so
\[
\beta=\frac{Q^2}{(1-Q)(1-Q^2\LL)},\qquad\alpha=\frac{Q(1+Q\LL)}{1-Q^2\LL}
\]
in $\Lambda[[Q]]$. As a check in low colength, note that in colength $1$ the only ideal is $\mm$, with $j_x=1$, matching the $Q$-coefficient $1$ of $\alpha$. In colength $2$, the ideals are $(ax+by)+\mm^2$. They have $j_x=1$ if $b\neq0$, a family $\AAA^1$, and $j_x=2$ for $(x,y^2)$, matching the $Q^2$-coefficients $\LL$ of $\alpha$ and $1$ of $\beta$. Finally \cref{thm:B} and \cref{cor:mirror} give $\Zs_{2,v}(Q)=g_2(Q,(Q\LL)^v)/(Q;Q\LL)_v$, and expanding,
\[
(Q;Q\LL)_{2,v}\,\Zs_{2,v}(Q)=(1-Q)(1-Q^2\LL)+(1-Q)(1+Q\LL)Q^{v+1}\LL^v+Q^{2v+2}\LL^{2v},
\]
which is \cite[Thm.~1.4]{Luan} at $T^2=\LL$.
\end{example}

\subsection{How the closed form was found}\label{subsec:found}
\Cref{thm:C} is effective. The interpolation can be carried out over the field $\QQ(Q,\LL)$, with $\LL$ a formal variable, and the uniqueness argument applies verbatim since the nodes $0,1,Q\LL,\dots,(Q\LL)^{v-1}$ are distinct. We did this by computer for $v\le6$. In each case the numerator $(Q;Q\LL)_v\,g_v(Q,z)$ turned out to be a polynomial in $(Q,\LL,z)$ whose coefficients factor completely. For instance
\begin{align*}
(Q;Q\LL)_3\,g_3(Q,z)={}&(1-Q)(1-Q^2\LL)(1-Q^3\LL^2)+Q(1-Q)(1-Q^2\LL)(1+Q\LL+Q^2\LL^2)\,z\\
&+Q^2(1-Q)(1+Q\LL+Q^2\LL^2)\,z^2+Q^3z^3 .
\end{align*}
Here $1+Q\LL+Q^2\LL^2=\qbin{3}{1}{Q\LL}=\qbin{3}{2}{Q\LL}$ and $(1-Q)(1-Q^2\LL)=(Q;Q\LL)_2$, so that
\[
(Q;Q\LL)_3\,g_3(Q,z)=\sum_{k=0}^{3}Q^k\qbin{3}{k}{Q\LL}(Q;Q\LL)_{3-k}\,z^k .
\]
The same pattern holds for all $v\le6$ and suggests \cref{thm:D}. That the guess is correct is then a statement about $q$-series, which we now prove.

\subsection{The closed form}\label{subsec:closedform}
The closed formula for the contact distribution of a fat line (\cref{thm:D}) is proved by verifying the conditions of \cref{thm:C} with the $q$-binomial theorem. Following the convention of \cref{subsec:conventions}, we first work with the formal variables $q,Q$ and substitute $q=Q\LL$ afterwards.

\begin{proposition}\label{prop:symmetric}
For $v\ge0$ put
\begin{gather*}
\widehat g_v(q;Q,z):=\sum_{k=0}^{v}\qbin{v}{k}{q}\frac{(Qz)^k}{(Qq^{v-k};q)_k}\in\ZZ[q][[Q]][z],\\(Q;q)_v\,\widehat g_v(q;Q,z)=\sum_{k=0}^{v}\qbin{v}{k}{q}(Q;q)_{v-k}\,(Qz)^k\in\ZZ[q,Q,z].
\end{gather*}
Then for all $u,v\ge0$
\[
(Q;q)_v\,\widehat g_v(q;Q,q^u)=\sum_{m=0}^{\min(u,v)}(-1)^m\,q^{\binom m2}\,(q;q)_m\qbin{u}{m}{q}\qbin{v}{m}{q}\,Q^m=C_{u,v}(q,Q).
\]
In particular $(Q;q)_v\,\widehat g_v(q;Q,q^u)$ is symmetric in $u$ and $v$.
\end{proposition}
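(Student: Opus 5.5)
First I would verify the second displayed identity. Since $(Q;q)_v=(Q;q)_{v-k}\,(Qq^{v-k};q)_k$, multiplying the defining sum of $\widehat g_v$ by $(Q;q)_v$ gives $\sum_k\qbin{v}{k}{q}(Q;q)_{v-k}(Qz)^k$. This is a polynomial, and the identity holds termwise. Setting $z=q^u$, the task is to show
\[
\sum_{k=0}^{v}\qbin{v}{k}{q}(Q;q)_{v-k}\,Q^kq^{uk}=\sum_{m}(-1)^mq^{\binom m2}(q;q)_m\qbin{u}{m}{q}\qbin{v}{m}{q}Q^m .
\]
The second equality with $C_{u,v}(q,Q)$ is already \eqref{eq:NP}, that is, \cref{lem:rank} read as a polynomial identity. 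The symmetry in $u,v$ is then visible from the right-hand side.

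To prove the displayed identity, I would expand $(Q;q)_{v-k}$ by the finite $q$-binomial theorem (\cref{lem:qbinomial}) as $\sum_i\qbin{v-k}{i}{q}(-1)^iq^{\binom i2}Q^i$. Combining the two Gaussian binomials with \cref{lem:trinomial} as $\qbin{v}{m}{q}\qbin{m}{k}{q}$, where $m=k+i$, the coefficient of $Q^m$ on the left becomes
\[
\qbin{v}{m}{q}\sum_{k=0}^{m}\qbin{m}{k}{q}(-1)^{m-k}q^{\binom{m-k}2}q^{uk}.
\]
It remains to identify the inner sum with $(-1)^mq^{\binom m2}(q;q)_m\qbin{u}{m}{q}=\prod_{i=0}^{m-1}(q^u-q^i)$.

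For this I would apply \cref{lem:qbinomial} once more, in the reversed form: $\prod_{i=0}^{m-1}(w-q^i)$ with $w=q^u$. Writing $\prod_{i=0}^{m-1}(w-q^i)=(-1)^mq^{\binom m2}\prod_{i=0}^{m-1}(1-wq^{-i})$ and reindexing, this is the standard identity $\prod_{i=0}^{m-1}(w-q^i)=\sum_k\qbin{m}{k}{q}(-1)^{m-k}q^{\binom{m-k}2}w^k$. To get it cleanly from \cref{lem:qbinomial}, I would expand $(w^{-1};q)_m$ and multiply by $w^m$ after the substitution $j=m-k$, using the symmetry $\qbin{m}{k}{q}=\qbin{m}{m-k}{q}$. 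Alternatively, I would prove it directly by induction on $m$ using \cref{lem:pascal}, exactly as in the proof of \cref{lem:qbinomial}. For $m>u$ the product vanishes, since it contains the factor $q^u-q^u$; this matches $\qbin{u}{m}{q}=0$, so the sum stops at $\min(u,v)$.

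The main obstacle is the bookkeeping of the exponents $q^{\binom i2}$ in this last identification, namely getting the sign and power conventions of the reversed $q$-binomial theorem right. I would sanity-check the result at $m=1$, where both sides give $q^u-1$, and at $m=2$.
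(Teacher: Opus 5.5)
Your proposal is correct and follows the paper's proof essentially step for step. Like the paper, you expand $(Q;q)_{v-k}$ by \cref{lem:qbinomial}, regroup via \cref{lem:trinomial} with $m=k+i$, evaluate the inner sum at $z=q^u$ as $q^{um}(q^{-u};q)_m=\prod_{i=0}^{m-1}(q^u-q^i)$ (vanishing for $m>u$), and conclude with \cref{lem:rank}.
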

\begin{proof}
The two expressions for $(Q;q)_v\widehat g_v$ agree because $(Q;q)_v=(Q;q)_{v-k}\,(Qq^{v-k};q)_k$. Expand $(Q;q)_{v-k}$ by \cref{lem:qbinomial} and use \cref{lem:trinomial} with $m=k+i$. This gives
\[
(Q;q)_v\,\widehat g_v(q;Q,z)=\sum_{k,i}\qbin{v}{k}{q}\qbin{v-k}{i}{q}(-1)^iq^{\binom i2}Q^{k+i}z^k=\sum_{m=0}^{v}\qbin{v}{m}{q}Q^m\sum_{i=0}^{m}\qbin{m}{i}{q}(-1)^iq^{\binom i2}z^{m-i}.
\]
At $z=q^u$ the inner sum equals
\[
q^{um}\sum_{i=0}^{m}\qbin{m}{i}{q}(-1)^iq^{\binom i2}(q^{-u})^i=q^{um}\,(q^{-u};q)_m=\prod_{i=0}^{m-1}(q^u-q^i),
\]
by \cref{lem:qbinomial} with $Q$ replaced by $q^{-u}$, an identity in $\ZZ[q,q^{-1}]$. This product vanishes for $m>u$, because of the factor $i=u$. For $m\le u$, using $q^u-q^i=-q^i(1-q^{u-i})$ and $\prod_{i=0}^{m-1}(1-q^{u-i})=(q;q)_u/(q;q)_{u-m}$,
\[
\prod_{i=0}^{m-1}(q^u-q^i)=(-1)^mq^{\binom m2}\frac{(q;q)_u}{(q;q)_{u-m}}=(-1)^mq^{\binom m2}(q;q)_m\qbin{u}{m}{q}.
\]
The last equality with $C_{u,v}(q,Q)$ is \cref{lem:rank}.
\end{proof}

\begin{theorem}[closed form of the fat-line distribution]\label{thm:D}
For every $v\ge0$, in $\Lambda[[Q]][z]$,
\[
g_v(Q,z)=\widehat g_v(Q\LL;Q,z)=\sum_{k=0}^{v}\qbin{v}{k}{Q\LL}\frac{(Qz)^k}{\big(Q(Q\LL)^{v-k};Q\LL\big)_k} .
\]
In $\Kvar{}[[Q]][z]$ the two sides differ by a polynomial each of whose coefficients is annihilated by a power of $\LL$; they are in fact equal, by \cref{cor:gv-pieces}.
\end{theorem}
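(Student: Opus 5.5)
The plan is to apply the uniqueness statement of \cref{thm:C} to the candidate family $h_v(z):=\widehat g_v(Q\LL;Q,z)$. Since the contact distributions $g_v(Q,z)$ satisfy (i)--(iii) by \cref{thm:C}, it suffices to check that the candidate family does too. Uniqueness then gives equality of the images in $\Lambda[[Q]][z]$. In $\Kvar{}[[Q]][z]$, this is the same as saying that each coefficient of the difference is killed by a power of $\LL$. The candidate has coefficients in $\ZZ[\LL][[Q]]$, which map to $\Kvar{}[[Q]]$, because $(Q(Q\LL)^{v-k};Q\LL)_k$ has constant term $1$ and is therefore a unit.

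Conditions (i) and (ii) are read off directly from the definition in \cref{prop:symmetric}. For $v=0$ only $k=0$ occurs, and the term equals $1$. In general $\deg_z\le v$. At $z=0$ only the $k=0$ term survives, and it is $\qbin{v}{0}{Q\LL}/(Q(Q\LL)^v;Q\LL)_0=1$.

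For (iii), I would invoke \cref{prop:symmetric}. In the formal variables it gives $(Q;q)_v\,\widehat g_v(q;Q,q^u)=C_{u,v}(q,Q)$, which is symmetric in $u,v$ because $C_{u,v}=C_{v,u}$ (\cref{lem:rank}). Specializing $q=Q\LL$, which is legitimate by the remark in \cref{subsec:nodefunction} that $q\mapsto Q\LL$ defines a ring map $\ZZ[q,Q]\to\ZZ[\LL,Q]$ (extended to power series in $Q$ since all denominators have constant term $1$), yields
\[
(Q;Q\LL)_v\,h_v\big((Q\LL)^u\big)=C_{u,v}(Q\LL,Q)=C_{v,u}(Q\LL,Q)=(Q;Q\LL)_u\,h_u\big((Q\LL)^v\big).
\]
This is exactly (iii). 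The images in $\Kvar{}[[Q]]$ under $\ZZ[\LL]\to\Kvar$ then satisfy the same identities.

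The only delicate point is to make sure the specialization commutes with evaluating $z=q^u$. This holds because $z$ is substituted into a polynomial, so the two operations commute. Everything else is formal. The stronger equality in $\Kvar$ is deferred to \cref{cor:gv-pieces} and is not part of this argument.
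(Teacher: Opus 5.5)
Your proposal is correct and follows the paper's proof essentially verbatim. You verify (i)--(iii) of \cref{thm:C} for the family $\widehat g_v(Q\LL;Q,z)$, obtaining (iii) from \cref{prop:symmetric} at $q=Q\LL$ together with $C_{u,v}=C_{v,u}$, and then conclude by the uniqueness part of \cref{thm:C}; your remarks on the legitimacy of the specialization $q\mapsto Q\LL$ and on its commuting with $z=q^u$ only spell out what the paper leaves implicit.
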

\begin{proof}
We check that the family $\widehat g_v(Q\LL;Q,z)$ satisfies (i)--(iii) of \cref{thm:C}, as identities in $\ZZ[\LL][[Q]][z]$. (i) $\widehat g_0=1$. (ii) $\deg_z\widehat g_v=v$, the term $k=0$ gives $\widehat g_v(Q\LL;Q,0)=1$, and the coefficients lie in $\ZZ[\LL][[Q]]$ after expanding the factors $1/(1-Q(Q\LL)^i)$ as geometric series. (iii) By \cref{prop:symmetric} with $q=Q\LL$, both sides of (iii) equal $C_{u,v}(Q\LL,Q)$. Now apply the uniqueness in \cref{thm:C}.
\end{proof}

Since $q$ and $Q$ are independent variables, \cref{prop:symmetric} contains the following identities, which are used on the Quot side and in \cref{sec:positivity}.

\begin{corollary}[two expansions of $C_{u,v}$]\label{cor:positive-C}
For all $u,v\ge0$, in $\ZZ[q,Q]$ and in $\ZZ[q][[Q]]$ respectively,
\begin{equation}\label{eq:positive-C}
C_{u,v}(q,Q)=\sum_{k=0}^{v}\qbin{v}{k}{q}\,(Q;q)_{v-k}\,Q^kq^{uk},\qquad\frac{C_{u,v}(q,Q)}{(Q;q)_v}=\sum_{k=0}^{v}\qbin{v}{k}{q}\,\frac{Q^kq^{uk}}{(Qq^{v-k};q)_k}\in\NN[q][[Q]].
\end{equation}
\end{corollary}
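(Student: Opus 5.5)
The first identity is \cref{prop:symmetric} read in the independent variables $q,Q$. That proposition shows $(Q;q)_v\,\widehat g_v(q;Q,q^u)=C_{u,v}(q,Q)$ as an identity in $\ZZ[q,Q]$. The second displayed form of $(Q;q)_v\widehat g_v$ in the proposition is $\sum_k\qbin{v}{k}{q}(Q;q)_{v-k}(Qz)^k$, and substituting $z=q^u$ there gives exactly $\sum_{k}\qbin{v}{k}{q}(Q;q)_{v-k}Q^kq^{uk}$. The only point to note is that the proof of \cref{prop:symmetric} never specialized $q$. It used \cref{lem:qbinomial} (once with $Q\mapsto q^{-u}$, in $\ZZ[q,q^{-1}]$), \cref{lem:trinomial} and \cref{lem:rank}, all formal identities. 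So the left expansion holds in $\ZZ[q,Q]$, and I would simply cite it.

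For the second identity I divide the first by $(Q;q)_v$, a unit in $\ZZ[q][[Q]]$ since its constant term is $1$. Then I use $(Q;q)_v=(Q;q)_{v-k}\,(Qq^{v-k};q)_k$ termwise, which cancels $(Q;q)_{v-k}$ in the $k$-th term and leaves $\qbin{v}{k}{q}Q^kq^{uk}/(Qq^{v-k};q)_k$.

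It remains to check positivity. By \cref{lem:pascal}, $\qbin{v}{k}{q}\in\NN[q]$, and $Q^kq^{uk}$ is a monomial. Also $1/(Qq^{v-k};q)_k=\prod_{i=v-k}^{v-1}(1-Qq^i)^{-1}$ expands as a product of geometric series $\sum_{n}Q^nq^{in}$, each with coefficients in $\NN[q]$. Products and sums of such series stay in $\NN[q][[Q]]$.

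There is no real obstacle here. The only care needed is to state that the identities are formal in $q$, so that they specialize both to $q=\LL$ (Quot side) and to $q=Q\LL$. At $q=Q\LL$ the specialization is legitimate by the algebraic independence remark in \cref{subsec:nodefunction}.
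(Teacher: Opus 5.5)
Your proposal is correct and follows the paper's proof: the first identity is \cref{prop:symmetric} at $z=q^u$, and the second comes from dividing termwise using $(Q;q)_v=(Q;q)_{v-k}(Qq^{v-k};q)_k$. Positivity follows from $\qbin{v}{k}{q}\in\NN[q]$ (\cref{lem:pascal}) and the expansion of each factor $(1-Qq^i)^{-1}$ as a geometric series.
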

\begin{proof}
The first identity is \cref{prop:symmetric} at $z=q^u$. Dividing by $(Q;q)_v=(Q;q)_{v-k}(Qq^{v-k};q)_k$ gives the second, whose terms have nonnegative series expansions because $\qbin{v}{k}{q}\in\NN[q]$ (\cref{lem:pascal}).
\end{proof}

\begin{corollary}[\cref{thmA}(a) for point counts]\label{thm:main}
Let $q$ be a prime power and $\kk=\FF_q$. For all $u,v\ge0$,
\[
\sum_{n\ge0}\#\Hilb_n(C_{u,v},0)(\FF_q)\,Q^n=F_{u,v}(qQ,Q)=\frac{C_{u,v}(qQ,Q)}{(Q;qQ)_{u,v}} .
\]
Consequently, writing $M_n(\LL)\in\ZZ[\LL]$ for the coefficient of $Q^n$ in $F_{u,v}(Q\LL,Q)$, one has $\#\Hilb_n(C_{u,v},0)(\FF_q)=M_n(q)$ for every prime power $q$.
\end{corollary}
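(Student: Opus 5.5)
The plan is to pass from the identity in $\Lambda[[Q]]$, which we already have, to point counts by applying the counting measure. Fix $\kk=\FF_q$ and let $\#:\Kvar[\FF_q]\to\ZZ$ be the point-counting homomorphism, which sends $\LL\mapsto q$. Since $q$ is invertible in $\ZZ$ (indeed in $\QQ$), $\#$ extends to a ring homomorphism $\Lambda=\Kvar[\FF_q][\LL^{-1}]\to\ZZ[q^{-1}]$, and coefficientwise to $\Lambda[[Q]][z]\to\ZZ[q^{-1}][[Q]][z]$. Any class annihilated by a power of $\LL$ is sent to an element killed by a power of $q$ in $\ZZ[q^{-1}]$, hence to $0$. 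The point counts of all the strata are therefore determined by their images in $\Lambda$.

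\textbf{Step 1.} By \cref{thm:B}, in $\Kvar{}[[Q]]$ we have $\Zs_{u,v}(Q)=g_v(Q,(Q\LL)^u)/(Q;Q\LL)_u$. This holds before localization, so it also holds after applying $\#$. Substituting $z=(Q\LL)^u$ commutes with $\#$, because $(Q\LL)^u\mapsto(qQ)^u$.

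\textbf{Step 2.} By \cref{thm:D}, $g_v(Q,z)=\widehat g_v(Q\LL;Q,z)$ in $\Lambda[[Q]][z]$. Applying $\#$ gives $\#g_v(Q,z)=\widehat g_v(qQ;Q,z)$, as an identity in $\ZZ[q^{-1}][[Q]][z]$.

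\textbf{Step 3.} Combining the two steps,
\[
\sum_n\#\Hilb_n(C_{u,v},0)(\FF_q)\,Q^n=\frac{\widehat g_v(qQ;Q,(qQ)^u)}{(Q;qQ)_u}.
\]
Now invoke \cref{prop:symmetric} at the formal value $q\mapsto qQ$, which is allowed by the specialization remark in \cref{subsec:nodefunction}. This gives $(Q;qQ)_v\,\widehat g_v(qQ;Q,(qQ)^u)=C_{u,v}(qQ,Q)$, so the right side equals $C_{u,v}(qQ,Q)/(Q;qQ)_{u,v}=F_{u,v}(qQ,Q)$. Both sides lie in $\ZZ[[Q]]$, so the identity in $\ZZ[q^{-1}][[Q]]$ is an honest identity of integer series.

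\textbf{Step 4.} For the final claim, expand $F_{u,v}(Q\LL,Q)=\sum_nM_n(\LL)Q^n$ with $M_n\in\ZZ[\LL]$. The coefficients are polynomials because the denominator factors $1-Q(Q\LL)^i$ expand as geometric series with polynomial coefficients. Substituting $\LL=q$ commutes with this expansion, since each $Q^n$-coefficient involves only finitely many terms. Comparing coefficients of $Q^n$ in Step 3 then gives $\#\Hilb_n(C_{u,v},0)(\FF_q)=M_n(q)$.

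The only delicate point is Step 0, namely checking that $\#$ genuinely factors through $\Lambda$ and that this is compatible with the power series in $Q$ and with the evaluation $z=(Q\LL)^u$. Once that is in place, everything else is bookkeeping with results already stated.
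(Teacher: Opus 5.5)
Your proposal is correct and follows the paper's proof essentially step for step. You apply the counting measure, which kills every class annihilated by a power of $\LL$, to the $x$-chain identity of \cref{thm:B}; you use \cref{thm:D} and \cref{prop:symmetric} to identify $(Q;Q\LL)_v\,g_v(Q,(Q\LL)^u)$ with $C_{u,v}(Q\LL,Q)$; and you note that the $Q^n$-coefficients of $F_{u,v}(Q\LL,Q)$ lie in $\ZZ[\LL]$. Your observation that the extended measure lands in $\ZZ[q^{-1}]$ rather than in $\ZZ$ is a slightly more careful phrasing of the paper's remark that $\#$ factors through $\Lambda$.
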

\begin{proof}
The counting measure $\#:\Kvar[\FF_q]\to\ZZ$ sends $\LL$ to $q$, so it factors through $\Lambda$ and kills every element annihilated by a power of $\LL$. Apply it to $\Zs_{u,v}(Q)=g_v(Q,(Q\LL)^u)/(Q;Q\LL)_u$ (\cref{thm:B}) and use \cref{thm:D} and \cref{prop:symmetric}, which give $(Q;Q\LL)_v\,g_v(Q,(Q\LL)^u)=C_{u,v}(Q\LL,Q)$ in $\Lambda[[Q]]$. The coefficient of $Q^n$ in $F_{u,v}(Q\LL,Q)$ is a polynomial in $\LL$ with integer coefficients, since the numerator and the expansions of $1/(Q;Q\LL)_u$ and $1/(Q;Q\LL)_v$ have coefficients in $\ZZ[\LL]$.
\end{proof}

\begin{corollary}[contact-refined point counts]\label{thm:refined}
Let $q$ be a prime power and $\kk=\FF_q$. For all $u,v\ge0$,
\[
\sum_{I\ni x^uy^v}Q^{\col I}z^{j_x(I)}=\frac{1}{(Qz;qQ)_u}\sum_{k=0}^{v}\qbin{v}{k}{qQ}\frac{\big(Q(qQ)^uz\big)^k}{\big(Q(qQ)^{v-k};qQ\big)_k},
\]
the sum running over the ideals of $\FF_q[[x,y]]$ of finite colength containing $x^uy^v$. In particular, for the fat line $\{y^v=0\}$ and $0\le k\le v$,
\[
\sum_n\#\Hilb_n(C_{0,v},0)_{j_x=k}(\FF_q)\,Q^n=Q^k\qbin{v}{k}{qQ}\prod_{i=v-k}^{v-1}\frac1{1-Q(qQ)^{i}} .
\]
\end{corollary}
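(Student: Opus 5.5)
The plan is to combine \cref{thm:B} at the level of point counts with the closed form of \cref{thm:D}, pushed forward along the counting measure. First, apply the counting homomorphism $\#:\Kvar[\FF_q]\to\ZZ$, $\LL\mapsto q$, to the identity $G^x_{u,v}(Q,z)=g_v(Q,(Q\LL)^uz)/\poch{Qz}{u}$ of \cref{thm:B}. This identity holds in $\Kvar{}[z][[Q]]$, and the homomorphism extends coefficientwise to $\Kvar{}[z][[Q]]\to\ZZ[z][[Q]]$. It also factors through $\Lambda[z][[Q]]$, because $q$ is invertible in $\QQ$ and elements annihilated by a power of $\LL$ map to elements of $\ZZ$ annihilated by a power of $q$, hence to $0$. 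On the left side the image of $G^x_{u,v}$ is exactly $\sum_{I\ni x^uy^v}Q^{\col I}z^{j_x(I)}$, since the $\FF_q$-points of $\Hilb_n(C_{u,v},0)_{j_x=j}$ are the ideals of $\FF_q[[x,y]]$ of colength $n$ containing $x^uy^v$ with contact length $j$ (\cref{lem:points}(a)).

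Second, substitute \cref{thm:D}. Its identity $g_v(Q,z)=\widehat g_v(Q\LL;Q,z)$ holds in $\Lambda[[Q]][z]$, so after counting it becomes $\#g_v(Q,z)=\widehat g_v(qQ;Q,z)$ in $\ZZ[[Q]][z]$. Since $g_v$ is a polynomial in $z$ of degree at most $v$, evaluating at $z\mapsto (qQ)^uz$ commutes with counting, and we obtain
\[
\#g_v(Q,(qQ)^uz)=\sum_{k=0}^{v}\qbin{v}{k}{qQ}\frac{(Q(qQ)^uz)^k}{(Q(qQ)^{v-k};qQ)_k}.
\]
Dividing by $(Qz;qQ)_u$, a unit in $\ZZ[z][[Q]]$ with constant term $1$, gives the first formula.

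Third, for the fat line take $u=0$. Then the left side is $\sum_k\sum_n\#\Hilb_n(C_{0,v},0)_{j_x=k}(\FF_q)Q^nz^k$, and the right side is $\widehat g_v(qQ;Q,z)$. Comparing coefficients of $z^k$ gives
\[
Q^k\qbin{v}{k}{qQ}\big/(Q(qQ)^{v-k};qQ)_k,
\]
and expanding the Pochhammer symbol yields $\prod_{i=v-k}^{v-1}(1-Q(qQ)^i)^{-1}$.

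The only point needing care is the first step: justifying that the counting measure is compatible with the power-series and polynomial structures and kills $\LL$-torsion. Everything else is substitution.
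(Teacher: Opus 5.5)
Your proposal is correct and follows the paper's own proof: apply the counting measure to the $x$-chain identity of \cref{thm:B}, substitute the closed form of \cref{thm:D}, and read off the $z^k$-coefficient at $u=0$. The point you single out, that the counting measure is compatible with the power-series structure and kills elements annihilated by a power of $\LL$, is exactly what the paper handles by referring to the proof of \cref{thm:main}.
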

\begin{proof}
Combine $G^x_{u,v}(Q,z)=g_v(Q,(Q\LL)^uz)/(Qz;Q\LL)_u$ (\cref{thm:B}) with \cref{thm:D} and apply the counting measure as in the proof of \cref{thm:main}. The second statement is the coefficient of $z^k$ for $u=0$.
\end{proof}

These two corollaries need nothing beyond \cref{thm:A} and $q$-calculus. \Cref{sec:shadow} upgrades them from point counts to classes.

\begin{example}[the numerators for $v\le2$]\label{ex:Nv}
\begin{gather*}
  (Q;Q\LL)_2\,g_2(Q,z)=(1-Q)(1-Q^2\LL)+Q(1-Q)(1+Q\LL)\,z+Q^2z^2 ,\\
  (Q;Q\LL)_1\,g_1(Q,z)=(1-Q)+Q\,z,
\end{gather*}
and $(Q;Q\LL)_3\,g_3(Q,z)$ was displayed in \cref{subsec:found}. For example, the contact strata of $\{y^2=0\}$ have the generating functions $\frac{Q(1+Q\LL)}{1-Q^2\LL}$ for $k=1$ and $\frac{Q^2}{(1-Q)(1-Q^2\LL)}$ for $k=2$, in agreement with \cref{ex:g2}.
\end{example}

\section{Consequences of the formula}\label{sec:consequences}

This section collects consequences of the Hilbert-scheme formula, namely structural properties of the numerator $C_{u,v}(Q\LL,Q)$, the positivity of the coefficients, the Euler-characteristic specialization, which is an identity for partitions avoiding a box, the range of colengths in which the Hilbert scheme of $C_{u,v}$ coincides with that of the plane, and the extension of the $x$-chain to arbitrary germs.

Throughout, $C_{u,v}(Q\LL,Q)$ denotes the polynomial of \eqref{eq:main}, and $M_n(\LL)$ the coefficient of $Q^n$ in $C_{u,v}(Q\LL,Q)/(Q;Q\LL)_{u,v}$. By \cref{thm:torus-all} below, $[\Hilb_n(C_{u,v},0)]=M_n(\LL)$ in $\Kvar$ for every field $\kk$. The statements of this section about $M_n$ therefore apply to classes and to all their realizations.

\subsection{The numerator}
The basic properties of the numerator $C_{u,v}(Q\LL,Q)$ are integrality and symmetry, its degree and leading term, its value at $Q=1$, and its limit as $u,v\to\infty$.

\begin{corollary}\label{cor:structure}
For all $u,v\ge0$ the following hold.
\begin{enumerate}[label=(\arabic*)]
\item $C_{u,v}(Q\LL,Q)\in\ZZ[Q,\LL]$ and $C_{u,v}(Q\LL,Q)=C_{v,u}(Q\LL,Q)$. Moreover $C_{0,v}(Q\LL,Q)=1$,
$C_{1,v}(Q\LL,Q)=1-Q+Q^{v+1}\LL^v$, and $C_{2,v}(Q\LL,Q)=(1-Q)(1-Q^2\LL)+(1-Q)(1+Q\LL)Q^{v+1}\LL^v+Q^{2v+2}\LL^{2v}$.
\item $\deg_QC_{u,v}(Q\LL,Q)=uv+\min(u,v)$, and the monomial of top $Q$-degree is $Q^{uv+\min(u,v)}\LL^{uv}$,
with coefficient $1$.
\item $C_{u,v}(\LL,1)=\LL^{uv}$. If $u,v\ge1$, then $(Q;Q\LL)_{u,v}$ vanishes to order exactly $2$ at
$Q=1$, as a polynomial in $Q$ over $\QQ(\LL)$.
\item As $u,v\to\infty$, coefficientwise in $Q$, $C_{u,v}(Q\LL,Q)\to\poch{Q}{\infty}$ and
$\Zs_{u,v}(Q)\to1/\poch{Q}{\infty}=\prod_{i\ge1}(1-Q^i\LL^{i-1})^{-1}$.
\end{enumerate}
\end{corollary}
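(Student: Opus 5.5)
The plan is to read each item off one of the two expansions of $C_{u,v}(q,Q)$, namely the rank expansion \eqref{eq:NP} and the positive expansion of \cref{cor:positive-C}, and only then specialize $q=Q\LL$.

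For (1), integrality is clear: $R_m(u,v;q)\in\ZZ[q]$ by \cref{lem:rank}, and specializing $q=Q\LL$ gives an element of $\ZZ[Q,\LL]$. Symmetry holds because $R_m(u,v;q)=R_m(v,u;q)$. For $u=0$ only $m=0$ contributes, so $C_{0,v}=1$. For $u=1,2$ we will use the first identity of \cref{cor:positive-C} with the roles of $u$ and $v$ swapped (allowed by symmetry):
\[
C_{u,v}(q,Q)=\sum_{k=0}^{u}\qbin{u}{k}{q}(Q;q)_{u-k}Q^kq^{vk}.
\]
At $q=Q\LL$ and $u=1$ this gives $(1-Q)+Q\cdot Q^v\LL^v$. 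For $u=2$ it gives $(1-Q)(1-Q^2\LL)+(1+Q\LL)(1-Q)Q^{v+1}\LL^v+Q^{2v+2}\LL^{2v}$. This agrees with \cref{ex:g2}.

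For (2), we will work in the rank expansion. The $m$-th term $R_m(u,v;Q\LL)Q^m$ has $Q$-degree $m+\deg_qR_m$. Since $R_m$ is a product of $m$ factors of degree $u$ and $m$ of degree $v$, divided by a product of degree $m^2$, we get $\deg_q R_m=m(u+v-m)$. The total $m(u+v-m+1)$ is strictly increasing in $m$ for $m\le\min(u,v)$, because its increment is $u+v-2m\ge u+v-2\min(u,v)+\dots>0$; more precisely, the increment from $m-1$ to $m$ is $u+v-2m+2>0$. So the top degree comes from $m=\min(u,v)$ alone. Say $u\le v$. Then the degree is $u(v+1)=uv+\min(u,v)$. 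Moreover $R_u$ is monic in $q$ of degree $uv$, since the leading terms give $q^{u^2}q^{uv}/q^{u^2}$. After substituting $q=Q\LL$, the leading monomial of that term is $Q^{uv}\LL^{uv}\cdot Q^u$ with coefficient $1$, and no other term reaches this degree.

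For (3), the identity $C_{u,v}(\LL,1)=\sum_mR_m(u,v;\LL)=\LL^{uv}$ is \cref{cor:allmatrices}. For the order of vanishing, over $\QQ(\LL)$ the factor $1-Q(Q\LL)^i=1-Q^{i+1}\LL^i$ vanishes at $Q=1$ only when $i=0$. Indeed, for $i\ge1$ its value at $Q=1$ is $1-\LL^i\ne0$. Each of $(Q;Q\LL)_u$ and $(Q;Q\LL)_v$ contains the factor $1-Q$ exactly once, so the product vanishes to order exactly $2$.

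For (4), we will use the positive expansion with $u\to\infty$ and then $v\to\infty$, coefficientwise in $Q$. In $\sum_k\qbin{v}{k}{Q\LL}\poch{Q}{v-k}Q^k(Q\LL)^{uk}$, every term with $k\ge1$ has $Q$-order at least $1+u$. For fixed $Q$-degree and $u$ large, only $k=0$ survives, giving $\poch{Q}{v}$. As $v\to\infty$ this tends to $\poch{Q}{\infty}$, since the factors $1-Q^{i+1}\LL^i$ with $i\ge v$ are $1$ modulo $Q^{v+1}$. The statement for $\Zs_{u,v}$ then follows by dividing by the denominators, or directly from \cref{ex:bejleri} together with $\Hilb_n(C_{u,v},0)=\Hilb_n(\AAA^2,0)$ for $n\le\min(u,v)$ by \cref{lem:mn}. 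Using $\Zs_{u,v}=F_{u,v}(Q\LL,Q)$ needs \cref{thm:torus-all}, which we take as given.

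The main obstacle is the uniqueness of the top term in (2), which rests on the degree bookkeeping for $R_m$. Everything else is routine substitution.
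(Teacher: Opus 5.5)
Your proposal is correct. Parts (2) and (3) are the paper's argument: the degree bookkeeping $d(m)=m(u+v+1-m)$ in the rank expansion, monicity of $R_{\min(u,v)}$, \cref{cor:allmatrices}, and the single factor $1-Q$ in each of $(Q;Q\LL)_u$ and $(Q;Q\LL)_v$. Your first inequality in (2) is garbled, but the corrected increment $u+v+2-2m\ge2$ is what is needed.

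You differ from the paper in (1) and (4).

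\textbf{Part (1).} The paper reads $C_{1,v}$ off the $m=1$ term of the rank expansion and refers to \cref{ex:g2} for $C_{2,v}$. There that polynomial appears as the numerator of the Hilbert series. You instead evaluate the positive expansion of \cref{cor:positive-C} directly. This gives both formulas for all $v$ at once, including $v<2$, and does not route through the geometry.

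\textbf{Part (4).} The paper lets $\qb{u}{m}\to1/\poch{Q\LL}{m}$ in the rank expansion and resums with Euler's identity (\cref{cor:euler-id}). Your route uses the positive expansion, in which the terms with $k\ge1$ have $Q$-order at least $u+1$, uniformly in $v$. Since $\poch{Q}{v}\equiv\poch{Q}{\infty}$ modulo $Q^{v+1}$, you get
\[
C_{u,v}(Q\LL,Q)\equiv\poch{Q}{\infty}\pmod{Q^{\min(u,v)+1}}.
\]
This is more elementary, needs no Euler identity, and gives an explicit stabilization range. Because your bound is uniform, it yields joint convergence, so you need not phrase it as an iterated limit.

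Your geometric alternative for $\Zs_{u,v}$, via \cref{lem:mn} and \cref{ex:bejleri}, avoids \cref{thm:torus-all} entirely. The equality $\Hilb_n(C_{u,v},0)=\Hilb_n(\AAA^2,0)$ in fact holds for $n\le u+v$ (\cref{prop:threshold}(a)), not just $n\le\min(u,v)$, though your weaker range suffices.
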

\begin{proof}
(1) Integrality and symmetry are visible in \eqref{eq:main}. For $u=0$ only $m=0$ contributes. For $u=1$, $C_{1,v}(Q\LL,Q)=1-Q\poch{Q\LL}{1}\qb{v}{1}=1-Q(1-(Q\LL)^v)$. For $u=2$ see \cref{ex:g2}.

(2) By \cref{lem:rank}, $R_m(u,v;q)$ is a monic polynomial in $q$ of degree $\sum_{i<m}(u+v-m)=m(u+v-m)$. So the $m$-th summand $Q^mR_m(u,v;Q\LL)$ has top $Q$-degree $d(m)=m(u+v+1-m)$, attained by the monomial $Q^{d(m)}\LL^{m(u+v-m)}$ with coefficient $1$. For $0\le m<\min(u,v)$ we have $d(m+1)-d(m)=u+v-2m>0$. Hence the maximum is attained only at $m=\min(u,v)$, where $d=\min(u,v)(\max(u,v)+1)=uv+\min(u,v)$ and the $\LL$-degree is $uv$.

(3) At $Q=1$ the specialization $q=Q\LL$ becomes $q=\LL$, so $C_{u,v}(\LL,1)=\sum_mR_m(u,v;\LL)=\LL^{uv}$ by \cref{cor:allmatrices}. The factor $1-Q$ occurs once in each of $(Q;Q\LL)_u$ and $(Q;Q\LL)_v$, and $1-Q^i\LL^{i-1}$ does not vanish at $Q=1$ for $i\ge2$.

(4) By the proof of \cref{cor:euler-id}, $\qb{u}{m}\to1/\poch{Q\LL}{m}$ coefficientwise in $Q$ as $u\to\infty$. Since the $m$-th term of $C_{u,v}(Q\LL,Q)$ is divisible by $Q^m$, we get \[C_{u,v}(Q\LL,Q)\to\sum_m(-1)^m(Q\LL)^{\binom m2}Q^m/\poch{Q\LL}{m}=\poch{Q}{\infty}\] by \cref{cor:euler-id}, and $\Zs_{u,v}(Q)\to\poch{Q}{\infty}/\poch{Q}{\infty}^2$.
\end{proof}

Since $uv=\dim_{\kk}R/(x^u,y^v)$ is the length of the intersection of the two branches, (3) says that at $Q=1$ the numerator becomes the class of the space $\Mat_{u\times v}$, of dimension $uv$, stratified by rank. The pole of order $2$ at $Q=1$ matches the number of branches, as for the reduced node (\cref{ex:nodeA}(3)).

\subsection{Positivity and Euler characteristics}
All coefficients of the Hilbert series are polynomials in $\LL$ with nonnegative coefficients (\cref{cor:positivity}), and at $\LL=1$ the formula becomes an identity for partitions avoiding a box. One side of that identity has a bijective proof, which we give.

\begin{corollary}[positivity]\label{cor:positivity}
All coefficients of $C_{u,v}(Q\LL,Q)/(Q;Q\LL)_{u,v}$, and of the right-hand side of \cref{thm:refined}, are polynomials in $\LL$ (resp.\ $\LL$ and $z$) with non-negative integer coefficients. In particular $[\Hilb_n(C_{u,v},0)_{j_x=j}]$ is a polynomial in $\LL$ with non-negative coefficients for all $n,j$.
\end{corollary}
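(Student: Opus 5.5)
The plan is to prove the positivity at the level of formal power series, using the two expansions already established. By \cref{thm:B} and \cref{thm:D}, the contact-refined series equals, as a formal expression in $\ZZ[\LL][z][[Q]]$,
\[
\frac{1}{\poch{Qz}{u}}\sum_{k=0}^{v}\qb{v}{k}\frac{\big(Q(Q\LL)^uz\big)^k}{\poch{Q(Q\LL)^{v-k}}{k}} .
\]
This is the right-hand side of \cref{thm:refined} with $q$ replaced by $\LL$. Each factor here is manifestly nonnegative. The prefactor $1/\poch{Qz}{u}=\prod_{i<u}(1-Qz(Q\LL)^i)^{-1}$ expands as a product of geometric series in monomials $Q^{i+1}\LL^iz$, so it has coefficients in $\NN[\LL,z]$. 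The Gaussian binomial $\qb{v}{k}$ lies in $\NN[Q\LL]$ by \cref{lem:pascal}. Finally, $1/\poch{Q(Q\LL)^{v-k}}{k}$ is again a product of inverses of $1-(\text{monomial})$, hence a product of geometric series with nonnegative coefficients. Since products and sums of series with coefficients in $\NN[\LL,z]$ stay in that semiring, and each coefficient of $Q^n$ involves only finitely many terms, every coefficient is a polynomial in $\LL$ and $z$ with nonnegative integer coefficients. This settles the claim for the right-hand side of \cref{thm:refined}.

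For the unrefined series, I would set $z=1$, or argue directly with \cref{cor:positive-C} at $q=Q\LL$. That corollary gives
\[
\frac{C_{u,v}(Q\LL,Q)}{(Q;Q\LL)_{u,v}}=\frac{1}{(Q;Q\LL)_u}\sum_{k=0}^{v}\qb{v}{k}\frac{Q^k(Q\LL)^{uk}}{\poch{Q(Q\LL)^{v-k}}{k}} ,
\]
and the same termwise argument applies. The one point to verify is that the substitution $q=Q\LL$ preserves nonnegativity. It does: substituting a monomial for $q$ sends $\NN[q][[Q]]$ into $\NN[\LL][[Q]]$, and the $Q$-adic convergence is fine because each factor $1-Q(Q\LL)^i$ has a monomial of positive $Q$-degree.

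The last sentence of the corollary, about the classes $[\Hilb_n(C_{u,v},0)_{j_x=j}]$, is the only nonformal step, and I expect it to be the main obstacle. The identities above hold a priori only in $\Lambda[[Q]]$, or for point counts. To conclude that the classes themselves are these nonnegative polynomials, I would invoke the identification in $\Kvar$ from \cref{sec:shadow} (\cref{cor:gv-pieces}, \cref{thm:torus-all}), which shows that $g_v$ equals its closed form exactly in $\Kvar{}[[Q]][z]$. Then \cref{thm:B}, which holds in $\Kvar$, transports the equality to $G^x_{u,v}$. Reading off the coefficient of $Q^nz^j$ gives the claim. If one only wants the statement for point counts or E-polynomials, \cref{thm:refined} alone suffices.
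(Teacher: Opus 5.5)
Your proposal is correct and is essentially the paper's proof: every factor of the right-hand side of \cref{thm:refined} has nonnegative coefficients (Gaussian binomials in $\NN[Q\LL]$, a monomial, products of geometric series), setting $z=1$ gives the unrefined series, and the statement about the classes follows from the exact identification in $\Kvar$ in \cref{thm:torus-all}. You are also right that the positivity of the explicit series is purely formal, and that only the last sentence needs the geometry of \cref{sec:shadow}.
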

\begin{proof}
In \cref{thm:refined} every factor has non-negative coefficients as a series in $Q,\LL,z$. Indeed, the Gaussian binomials lie in $\NN[Q\LL]$ (\cref{lem:pascal}), $(Q(Q\LL)^uz)^k$ is a monomial, and each factor $(1-Q(Q\LL)^iz^{\epsilon})^{-1}$ is a geometric series. Set $z=1$ for the first statement. The last statement follows from \cref{thm:torus-all}.
\end{proof}

\begin{corollary}[Euler characteristics]\label{cor:euler}
For all $u,v\ge0$,
\[
\sum_{\lambda:\ (u,v)\notin\lambda}Q^{|\lambda|}
=\frac{\sum_{m}(-1)^mQ^{\binom{m+1}2}(Q;Q)_m\qbin{u}{m}{Q}\qbin{v}{m}{Q}}{(Q;Q)_u\,(Q;Q)_v}
=\frac1{(Q;Q)_u}\sum_{k=0}^{v}\qbin{v}{k}{Q}\frac{Q^{(u+1)k}}{(Q^{v-k+1};Q)_k},
\]
where the sum on the left runs over all partitions not containing the box $(u,v)$.
\end{corollary}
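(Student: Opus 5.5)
The plan is to take the Euler-characteristic realization of \cref{thmA}(a) and \cref{thmB}, that is, to specialize $\LL\mapsto1$, and to identify the left-hand side through the torus action. Because the Euler characteristic is only available for $\kk=\CC$ and we prefer to avoid $\chi$ of non-proper strata, we argue through point counts. \Cref{thm:main} gives $\#\Hilb_n(C_{u,v},0)(\FF_q)=M_n(q)$, where $M_n\in\ZZ[\LL]$ is the coefficient of $Q^n$ in $F_{u,v}(Q\LL,Q)$. We claim that the number of partitions $\lambda$ with $|\lambda|=n$ and $(u,v)\notin\lambda$ equals $M_n(1)$. Once this is known, the first equality follows by substituting $\LL=1$, i.e.\ $q=Q$, in \eqref{eq:main} together with \eqref{eq:NP}: the factor $Q^m(Q;Q)_m\dots$ combined with $q^{\binom m2}=Q^{\binom m2}$ gives $Q^{\binom{m+1}2}$. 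The second equality is \cref{cor:positive-C} at $q=Q$, divided by $(Q;Q)_u$. There we use $(Qq^{v-k};q)_k=(Q^{v-k+1};Q)_k$ and $Q^kq^{uk}=Q^{(u+1)k}$.

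To prove the claim, we work over $\kk=\CC$ and use $\chi_c$ of the stratification of \cref{thmC}(c). Each piece $\AAA^\delta\times\GG_m^\tau$ has $\chi_c$ equal to $1$ if $\tau=0$ and $0$ otherwise, and summing gives $\chi(\Hilb_n(C_{u,v},0))=M_n(1)$, since $[\Hilb_n]=M_n(\LL)$ by \cref{thm:torus-all}. Alternatively, we can avoid \cref{sec:shadow} altogether. The variety $\Hilb_n(C_{u,v},0)$ is projective and carries the action of $\mathbf T$, so $\chi$ equals the number of fixed points. By \cref{lem:fixed} these fixed points are the $I_\lambda$ with $|\lambda|=n$ and $(u,v)\notin\lambda$. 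It then remains to check that $\chi$ equals $M_n(1)$. For that we use that the E-polynomial realization factors through $\Kvar[\CC]\to\ZZ[u,v]$, under which $\LL\mapsto uv$ is not a zero divisor. Hence the identity in $\Lambda[[Q]]$ from \cref{thm:D} already determines the E-polynomial, and evaluating at $u=v=1$ gives $\chi=M_n(1)$.

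The main obstacle is passing from $\Lambda=\Kvar[\LL^{-1}]$, where \cref{thm:D} holds, to a realization that does not invert $\LL$ in a harmful way. The Euler characteristic sends $\LL$ to $1$, so it does factor through $\Lambda$, and for $\kk=\CC$ this settles the issue directly. We would still note that the resulting identity is a purely combinatorial one in $\ZZ[[Q]]$, independent of the field.
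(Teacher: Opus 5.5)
Your argument is correct and is essentially the paper's. Both specialize the class identity at $\LL\mapsto1$ to get $\chi(\Hilb_n(C_{u,v},0)_\CC)=M_n(1)$, identify the Euler characteristic with the number of torus-fixed monomial ideals via \cref{lem:fixed}, and then set $\LL=1$ in \eqref{eq:main} and in \cref{cor:positive-C}; the paper uses \cref{thm:refined} at $z=1$, which is the same expression.

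The one genuine refinement is your observation that $\chi_c$ (or the E-polynomial) kills every class annihilated by a power of $\LL$. So the $\Lambda$-level identity of \cref{thm:D} already suffices and \cref{sec:shadow} is not needed, whereas the paper invokes \cref{thm:torus-all}. Two minor points: your opening appeal to point counts is never actually used, and your stratification route on its own only gives $\chi=M_n(1)$, with the link to partitions still coming from the fixed points (or from \cref{cor:pieces-node}).
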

\begin{proof}
For a complex variety the topological Euler characteristic equals the Euler characteristic with compact supports, which factors through $K_0(\mathrm{Var}_{\CC})$ with $\LL\mapsto1$. So by \cref{thm:torus-all}, $\chi_{\mathrm{top}}(\Hilb_n(C_{u,v},0)_{\CC})=M_n(1)$. On the other hand, a one-parameter subgroup of $\mathbf T$ whose weight separates the finitely many characters of $W_n$ has the monomial ideals as its fixed points on $\Hilb_n(C_{u,v},0)$ (\cref{lem:fixed}), and $\chi_{\mathrm{top}}(Z)=\chi_{\mathrm{top}}(Z^{\GG_m})$ for a complex variety with a $\GG_m$-action \cite{BB}. Hence $M_n(1)$ is the number of partitions of $n$ avoiding $(u,v)$. Now put $\LL=1$, so that $Q\LL=Q$, in \eqref{eq:main} and in \cref{thm:refined} at $z=1$.
\end{proof}

\begin{example}
For $u=v=1$ the partitions avoiding $(1,1)$ are the hooks, with generating function $1+\frac{Q}{(1-Q)^2}=\frac{1-Q+Q^2}{(1-Q)^2}$, which is the middle expression $\frac{1-Q(1-Q)}{(1-Q)^2}$.
\end{example}

The partitions avoiding $(u,v)$ are those contained in the ``fat hook'' $\{(a,b):a<u\ \text{or}\ b<v\}$. The last expression in \cref{cor:euler} has a bijective proof. Since $\qbin{v}{k}{Q}/(Q^{v-k+1};Q)_k=1/(Q;Q)_k$, it equals
\[
\frac1{(Q;Q)_u}\sum_{k=0}^{v}\frac{Q^{(u+1)k}}{(Q;Q)_k}.
\]
In the conventions of \cref{subsec:conventions}, $(u,v)\in\lambda$ if and only if row $v$ has length at least $u+1$. So $\lambda$ avoids $(u,v)$ if and only if at most $v$ of its rows have length $\ge u+1$. If exactly $k$ rows do, they are the first $k$ rows. Removing $u+1$ boxes from each of them leaves a partition with at most $k$ parts, with generating function $1/(Q;Q)_k$, and the remaining rows form a partition with parts of size at most $u$, with generating function $1/(Q;Q)_u$.

Conversely, prefixing $u+1$ boxes to each of at most $k$ rows and appending rows of length $\le u$ gives a partition with exactly $k$ rows of length $\ge u+1$. This proves the identity between the left side and the last expression, with the weight $Q^{(u+1)k}$ accounting for the removed boxes. We do not know a bijective proof of the alternating rank expansion in the middle of \cref{cor:euler}.

\subsection{Where the curve becomes visible}
At which colength does the Hilbert scheme of $C_{u,v}$ begin to differ from that of the plane? The two agree for $n\le u+v$, and the first difference consists of curvilinear ideals. This happens much earlier than the torus-fixed points can detect.

\begin{lemma}[curvilinear ideals]\label{lem:curvilinear}
Let $I$ have colength $n\ge2$ with $\mm^{n-1}\not\subseteq I$. Then exactly one of the following holds:
\begin{enumerate}[label=(\roman*)]
\item $I=(y-f(x),x^n)$ with $f=\sum_{i=1}^{n-1}c_ix^i$ uniquely determined by $I$.
\item $I=(x-g(y),y^n)$ with $g=\sum_{i=2}^{n-1}d_iy^i$ uniquely determined by $I$.
\end{enumerate}
\end{lemma}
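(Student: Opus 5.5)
The plan is to pass to the cyclic module $V=R/I$ with operators $X,Y$ and cyclic vector $e=\bar 1$ (\cref{prop:adhm}), and to show that the hypothesis $\mm^{n-1}\not\subseteq I$ forces $V$ to be cyclic over a single operator. Consider the filtration $M_k=(\mm^k+I)/I$ from the proof of \cref{lem:mn}. It strictly decreases until it reaches $0$, and $\dim M_0=n$. If $M_{n-1}\neq0$, then every step $M_k\supsetneq M_{k+1}$ for $0\le k\le n-1$ has codimension exactly one. In particular $\dim\mm V/\mm^2V=1$. Hence the image of $\mm$ in $\mm/(\mm^2+I)$ is one-dimensional, and some linear form $\ell=ax+by$ spans it modulo $\mm^2+I$.

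\emph{Splitting into cases.} If $a\ne0$, so $x\notin\mm^2+I$, then $y\equiv c_1x\pmod{\mm^2+I}$ for a unique $c_1\in\kk$. Otherwise $x\in\mm^2+I$, and then $y\notin \mm^2+I$ since the image of $\mm$ is nonzero. These two conditions are mutually exclusive, which already gives the ``exactly one'' part once each case is identified with the displayed form. In each case the chosen generator, $x$ or $y$, generates $\mm V$ modulo $\mm^2V$. By Nakayama's lemma for the $\kk[[t]]$-module structure, $V$ is then cyclic over $\kk[[x]]$ (resp.\ $\kk[[y]]$). Since $\dim V=n$, the classes $1,x,\dots,x^{n-1}$ form a basis of $V$, and $x^n\in I$.

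\emph{Identifying the form.} In case (i), write $\bar y=\sum_{i=0}^{n-1}c_ix^i$ in this basis. Here $c_0=0$ because $y\in\mm$ and $V$ is local. Then $y-f(x)\in I$ and $x^n\in I$. The ideal $(y-f,x^n)$ has colength $n$, since $R/(y-f)\cong\kk[[x]]$, and it is contained in $I$, so it equals $I$. The uniqueness of $f$ follows from the uniqueness of coordinates in the basis $1,\dots,x^{n-1}$. Case (ii) runs the same way with the basis $1,y,\dots,y^{n-1}$, giving $x-g(y)\in I$ with $g$ having no constant term. Moreover $d_1=0$ exactly because $x\in\mm^2+I$ in this case: the coefficient of $y$ in $\bar x$ is its image in $\mm V/\mm^2V$, which vanishes.

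\emph{Main obstacle.} The only delicate point is the initial step: that $\mm^{n-1}\not\subseteq I$ forces $\dim\mm V/\mm^2V=1$. This is precisely the counting argument of \cref{lem:mn}, namely $n$ strict drops in dimension $n$. The remaining steps are routine.
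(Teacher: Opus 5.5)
Your proof is correct. It shares its first step with the paper's proof, namely counting the strict drops in $M_k=(\mm^k+I)/I$ to get $\dim\mm/(\mm^2+I)=1$, and then takes a different route.

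The paper picks $\ell\in I$ whose linear part $ax+by$ spans the image of $I$ in $\mm/\mm^2$ and splits on $b\ne0$ versus $b=0$. It applies the Weierstrass preparation theorem to write $\ell=U\,(y-\tilde f(x))$, and identifies $I/(\ell)$ with the unique colength-$n$ ideal $(x^n)$ of the discrete valuation ring $R/(\ell)\cong\kk[[x]]$. Uniqueness then comes from $(y-f,x^n)\cap\kk[[x]]=(x^n)$.

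You stay inside $V=R/I$ instead. Nakayama gives $V=\kk[X]\bar1$ with basis $1,\bar x,\dots,\bar x^{\,n-1}$, and the generator $y-f(x)$ is just the coordinate vector of $\bar y$. So existence, uniqueness, and the vanishing of $c_0$ and $d_1$ become pure linear algebra, with no Weierstrass preparation. The paper's version is shorter given Weierstrass, and it reads the tangent line directly off the generator $\ell$.

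Three small repairs are needed.

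\begin{enumerate}
\item The inference ``if $a\ne0$, so $x\notin\mm^2+I$'' is a non sequitur for a \emph{spanning} form. If $x\in\mm^2+I$, then $x+y$ also spans $\mm/(\mm^2+I)$. Split directly on whether $x\in\mm^2+I$, which is what you actually use afterwards.
\item For ``exactly one'' you also need the converse direction. An ideal of form (i) has $I+\mm^2=(y-c_1x)+\mm^2\not\ni x$, and one of form (ii) has $I+\mm^2=(x)+\mm^2$. Hence no ideal admits both forms.
\item Your Nakayama step tacitly applies Nakayama over $R$ first. From $\mm V=\kk\bar x+\mm^2V$ you get $\mm V=XV+\mm(\mm V)$, hence $\mm V=XV$ and $V=\kk\bar1+XV$. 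Only then does nilpotency of $X$ (or Nakayama over $\kk[[x]]$) give cyclicity.
\end{enumerate}
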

\begin{proof}
Put $h_k=\dim(\mm^k+I)/(\mm^{k+1}+I)$. As in the proof of \cref{lem:mn}, the spaces $M_k=(\mm^k+I)/I$ decrease strictly until they vanish, and $\sum_kh_k=n$. Since $M_{n-1}\ne0$, there are $n$ strict inclusions $M_0\supsetneq\dots\supsetneq M_{n-1}\supsetneq M_n=0$ with total drop $n$, so $h_k=1$ for $0\le k\le n-1$. In particular the image of $I$ in $\mm/\mm^2$ is a line, spanned by the linear part of some $\ell=ax+by+(\text{order}\ge2)\in I$.

If $b\ne0$, the Weierstrass preparation theorem gives $\ell=U\cdot(y-\tilde f(x))$ with $U$ a unit and $\tilde f(0)=0$. Then $R/(\ell)\cong \kk[[x]]$, and $I/(\ell)$ is an ideal of colength $n$ of $\kk[[x]]$, i.e.\ $(x^n)$. So $I=(y-f,x^n)$ with $f=\tilde f\bmod x^n$. Two such ideals coincide iff the polynomials $f$ coincide, since $(y-f,x^n)\cap \kk[[x]]=(x^n)$. If $b=0$, the same argument with $x$ and $y$ exchanged gives $I=(x-g(y),y^n)$, and $g$ has no linear term because the image of $I$ in $\mm/\mm^2$ is spanned by $x$. The two cases are exclusive, because in case (i) this image is spanned by $y-c_1x$. In other words, case (ii) consists of the curvilinear ideals whose tangent line is $\{x=0\}$, and case (i) of all the others, the tangent line being $\{y=c_1x\}$.
\end{proof}

\begin{proposition}[threshold]\label{prop:threshold}
Let $u,v\ge1$.
\begin{enumerate}[label=(\alph*)]
\item $\Hilb_n(C_{u,v},0)=\Hilb_n(\AAA^2,0)$ for $n\le u+v$.
\item For $n=u+v+1$, the complement $\Hilb_n(\AAA^2,0)\setminus\Hilb_n(C_{u,v},0)$ is
$\{(y-f(x),x^n):\ f=\sum_{i=1}^{n-1}c_ix^i,\ c_1\ne0\}\cong\GG_m\times\AAA^{n-2}$, the set of curvilinear ideals whose tangent line is neither coordinate axis. Its class is $(\LL-1)\LL^{u+v-1}$. The set is symmetric in $x$ and $y$: by the Weierstrass preparation theorem each of its ideals is also of the form $(x-\tilde f(y),y^n)$ with $\tilde f=\sum_{i=1}^{n-1}\tilde d_iy^i$ and $\tilde d_1=c_1^{-1}\ne0$; it falls under case (i) of \cref{lem:curvilinear} only because that lemma places the ideals with tangent line $\{x=0\}$ in case (ii) and all others in case (i).
\end{enumerate}
\end{proposition}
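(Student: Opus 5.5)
The plan is a case analysis on whether $\mm^{n-1}\subseteq I$, using \cref{lem:mn} and \cref{lem:curvilinear}. The first step is to show that any ideal with $\mm^{n-1}\subseteq I$ contains $x^uy^v$ as soon as $u+v\ge n-1$. This is immediate, since $x^uy^v\in\mm^{u+v}\subseteq\mm^{n-1}$. For (a), take $n\le u+v$. If $\mm^{n-1}\subseteq I$ we are done by the above. Otherwise $n\ge2$ and $I$ is curvilinear, so we check the two cases of \cref{lem:curvilinear} directly.

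In case (i), $I=(y-f(x),x^n)$, and modulo $I$ we have $x^uy^v\equiv x^uf(x)^v$. Since $x\mid f$, this lies in $(x^{u+v})\subseteq(x^n)$. So $x^uy^v\in I$, using only that $f$ has no constant term. Case (ii) is symmetric: $x^uy^v\equiv g(y)^uy^v\in(y^{u+v})$. Hence every ideal of colength $n\le u+v$ contains $x^uy^v$, which proves (a). The equality is of reduced closed subschemes with the same points over every field, so it is an equality of varieties.

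For (b), put $n=u+v+1$. Ideals with $\mm^{n-1}\subseteq I$ contain $x^uy^v$ because $u+v=n-1$. In case (i) the residue is $x^uf^v\bmod x^n$. Its $x^{u+v}$-coefficient is $c_1^v$ and its lower terms vanish, so $x^uy^v\in I$ iff $c_1=0$. In case (ii) we have $g\in(y^2)$, so $g^uy^v\in(y^{2u+v})\subseteq(y^n)$ since $u\ge1$; these ideals always contain $x^uy^v$. The complement is therefore the set of $(y-f,x^n)$ with $c_1\neq0$. The coefficients $c_1,\dots,c_{n-1}$ give a bijection with $\GG_m\times\AAA^{n-2}$, and its class is $(\LL-1)\LL^{n-2}=(\LL-1)\LL^{u+v-1}$.

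The main obstacle is upgrading this bijection on points to an isomorphism of varieties. I would build the morphism $\GG_m\times\AAA^{n-2}\to\Hilb_n$ from the cyclic module $\kk[x]/(x^n)$, with $Y$ acting by $f(X)$, via \cref{lem:points}(b). The inverse would read off the $c_i$ from the relation $Y e=f(X)e$ in the basis $e,Xe,\dots,X^{n-1}e$. This basis is valid on the locus where $X$ is regular nilpotent, which holds on the complement because $c_1\ne0$ forces tangent direction not $\{x=0\}$. We then conclude with \cref{lem:reduced}.

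The symmetry remark follows by applying Weierstrass preparation to $y-f(x)$ with $c_1\ne0$, solving for $x$ as a series in $y$ with linear coefficient $c_1^{-1}$.
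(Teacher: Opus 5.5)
Your proposal is correct, and for (b) it is essentially the paper's argument: you reduce to \cref{lem:curvilinear} via $x^uy^v\in\mm^{n-1}$, compare $x$-order $u+v\ord(f)$ and $y$-order $2u+v$ with $n$, and obtain the isomorphism on the open locus where $1,x,\dots,x^{n-1}$ is a basis of $R/I$. For (a) the case split is unnecessary, since \cref{lem:mn} already gives $x^uy^v\in\mm^{u+v}\subseteq\mm^n\subseteq I$; this also avoids your slip at $n=1$, where $\mm^{0}=R\not\subseteq I$ but \cref{lem:curvilinear} (which requires $n\ge2$) does not apply, so "otherwise $n\ge2$" is false there, though that case is trivial.
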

\begin{proof}
(a) By \cref{lem:mn}, $x^uy^v\in\mm^{u+v}\subseteq\mm^n\subseteq I$.

(b) Let $n=u+v+1$ and $x^uy^v\notin I$. As $x^uy^v\in\mm^{n-1}$, we have $\mm^{n-1}\not\subseteq I$, so $I$ is as in \cref{lem:curvilinear}. In case (ii), $x^uy^v\equiv g(y)^uy^v$ modulo $x-g$, of $y$-order at least $2u+v\ge n$, so $x^uy^v\in I$, a contradiction. In case (i), $x^uy^v\equiv x^uf(x)^v$ has $x$-order $u+v\ord(f)$, which is less than $n$ if and only if $\ord f=1$, i.e.\ $c_1\ne0$.

Conversely, each $(y-f,x^n)$ with $c_1\ne0$ does not contain $x^uy^v$. The family $(c_1,\dots,c_{n-1})\mapsto(y-\sum c_ix^i,x^n)$ is a morphism from $\GG_m\times\AAA^{n-2}$ to the open subset $U\subseteq\Hilb_n(\AAA^2,0)$ on which $1,x,\dots,x^{n-1}$ is a basis of $R/I$. On $U$ the coordinates of $\bar y$ in this basis are regular functions, and on the image they are the $c_i$. So the family is an isomorphism onto the complement, which is bijective on points over every field by \cref{lem:curvilinear}.
\end{proof}

\begin{corollary}\label{cor:jump}
For $u,v\ge1$, as polynomials in $\LL$,
\[
[Q^n]\Big(\frac1{\poch{Q}{\infty}}-\frac{C_{u,v}(Q\LL,Q)}{(Q;Q\LL)_{u,v}}\Big)=
\begin{cases}0,& n\le u+v,\\ (\LL-1)\LL^{u+v-1},& n=u+v+1.\end{cases}
\]
\end{corollary}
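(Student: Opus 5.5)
The plan is to read off the coefficients of the difference from the geometry, rather than expanding the rational functions. By \cref{ex:bejleri}, $1/\poch{Q}{\infty}=\sum_n[\Hilb_n(\AAA^2,0)]Q^n$ coefficientwise. The coefficient of $Q^n$ there is the polynomial $\sum_{\lambda\vdash n}\LL^{n-\ell(\lambda)}$, or more simply the coefficient of the product $\prod_{i\ge1}(1-Q^i\LL^{i-1})^{-1}$. By \cref{thm:torus-all} (or, to avoid forward references, by \cref{thm:main}), the coefficient $M_n(\LL)$ of $F_{u,v}(Q\LL,Q)$ is the class, respectively the point count, of $\Hilb_n(C_{u,v},0)$. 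Since $\Hilb_n(C_{u,v},0)$ is closed in $\Hilb_n(\AAA^2,0)$, the difference of coefficients is the class of the complement $\Hilb_n(\AAA^2,0)\setminus\Hilb_n(C_{u,v},0)$.

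Then \cref{prop:threshold} finishes the argument. By part (a) the complement is empty for $n\le u+v$. By part (b) the complement at $n=u+v+1$ is isomorphic to $\GG_m\times\AAA^{u+v-1}$, with class $(\LL-1)\LL^{u+v-1}$.

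The only subtle point is that the statement is an identity of polynomials in $\LL$, whereas the geometric argument gives identities of classes. I would avoid depending on \cref{sec:shadow} by working with point counts. Over $\FF_q$, \cref{thm:main} gives $\#\Hilb_n(C_{u,v},0)(\FF_q)=M_n(q)$. \cref{ex:bejleri} gives the point count of $\Hilb_n(\AAA^2,0)$ as the coefficient evaluated at $q$. Point counts are additive over the closed/open decomposition, and the complement contributes $(q-1)q^{u+v-1}$ or $0$. Both sides of the claimed identity are then polynomials in $\LL$ agreeing at every prime power $q$, so they coincide as polynomials.

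I do not expect a real obstacle. The steps needing care are the additivity of point counts over the decomposition, which is immediate on points, and the bookkeeping that each coefficient involved is a genuine polynomial in $\ZZ[\LL]$, which is recorded in \cref{thm:main}.
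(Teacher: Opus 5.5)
Your proposal is correct and follows the paper's own proof: both compute the $\FF_q$-point count of $\Hilb_n(\AAA^2,0)\setminus\Hilb_n(C_{u,v},0)$ using \cref{ex:bejleri} and \cref{thm:main}, evaluate it with \cref{prop:threshold}, and conclude because polynomials that agree at all prime powers are equal. Your choice to rely on point counts rather than on \cref{thm:torus-all} matches the paper.
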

\begin{proof}
By \cref{ex:bejleri} and \cref{thm:main}, the left-hand side evaluated at $\LL=q$ counts the $\FF_q$-points of $\Hilb_n(\AAA^2,0)\setminus\Hilb_n(C_{u,v},0)$, which \cref{prop:threshold} computes. Polynomials that agree at all prime powers are equal.
\end{proof}

Monomial ideals detect the curve only from $n=(u+1)(v+1)$ on. By \cref{lem:fixed}, $I_\lambda$ fails to contain $x^uy^v$ exactly when $(u,v)\in\lambda$, and then $\lambda$ contains the whole $(u+1)\times(v+1)$ rectangle. For example, $\Hilb_n(C_{3,6},0)$ differs from $\Hilb_n(\AAA^2,0)$ from $n=10$ on, not from $n=28$ on.

\subsection{Adding a fat branch to an arbitrary germ}
The $x$-chain works for an arbitrary germ $\{g=0\}$: adding a fat branch $\{x^u=0\}$ multiplies the zeta function by $1/(Q;Q\LL)_u$ and evaluates the contact-refined zeta function of $g$ at $(Q\LL)^u$. No transversality is needed, as the example of a smooth branch tangent to the fat line shows.

The proof of \cref{thm:B} uses nothing about the fat line $\{y^v=0\}$.

\begin{corollary}\label{cor:F}
Let $g\in\mm$ and $u\ge0$, and put $G^x_g(Q,z):=\sum_{n,k}[\Hilb_n(\{g=0\},0)_{j_x=k}]\,Q^nz^k$, where $\Hilb_n(\{g=0\},0)_{j_x=k}$ is the locally closed subvariety of ideals with $j_x=k$. Then, in $\Kvar{}[z][[Q]]$,
\[
\sum_{n,j}[\Hilb_n(\{x^ug=0\},0)_{j_x=j}]\,Q^nz^j=\frac{G^x_g(Q,(Q\LL)^uz)}{\poch{Qz}{u}}.
\]
\end{corollary}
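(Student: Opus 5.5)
The plan is to repeat the proof of \cref{thm:B} verbatim, replacing the fat line $\{y^v=0\}$ by the germ $\{g=0\}$ and inducting on $u$. The case $u=0$ is a tautology, since both sides equal $G^x_g(Q,z)$. For $u\ge1$ we aim to prove the one-step recursion
\[
G^x_{x^ug}(Q,z)=\frac{G^x_{x^{u-1}g}(Q,Q\LL z)}{1-Qz},
\]
where $G^x_{h}$ denotes the contact-refined zeta function of $\{h=0\}$. Iterating it $u$ times gives the product $\prod_{i=0}^{u-1}(1-Q(Q\LL)^iz)=\poch{Qz}{u}$ in the denominator and the argument $(Q\LL)^uz$ in the numerator, as in the last line of the proof of \cref{thm:B}.

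For the recursion, fix $n$ and $j$. By \cref{thm:A}(c) every ideal $I$ of colength $n$ with $j_x(I)=j$ lies in exactly one $F(J,j)$ with $J=(I:x)$ of colength $n-j$. By \cref{thm:A}(d) applied to $f=x^{u-1}g$, we have $x^ug\in I$ if and only if $x^{u-1}g\in J$. This is the only place where the curve enters, and it uses nothing about $g$ beyond $g\in R$. We also need $g\in\mm$ only to ensure that the Hilbert schemes are the punctual ones of \cref{def:hilb}.

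Hence $\Hilb_n(\{x^ug=0\},0)_{j_x=j}$ is the disjoint union, over $j'\le j$, of the sets $F_{S_{j'}}$ with
\[
S_{j'}=\Hilb_{n-j}(\{x^{u-1}g=0\},0)_{j_x=j'}.
\]
These $S_{j'}$ are locally closed subvarieties of $\Hilb_{n-j}(\AAA^2,0)$ of constant contact length $j'$, because $\{j_x=j'\}=\{\rk X=n-j-j'\}$ is locally closed and the curve condition is closed. \Cref{prop:family} then applies: each $F_{S_{j'}}$ is locally closed in $\Hilb_n(\AAA^2,0)$, lies in $\Hilb_n(\{x^ug=0\},0)$, and has class $\LL^{j'}[S_{j'}]$. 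Only finitely many $j'$ occur, and additivity in $\Kvar$ gives
\[
[\Hilb_n(\{x^ug=0\},0)_{j_x=j}]=\sum_{j'\le j}\LL^{j'}[\Hilb_{n-j}(\{x^{u-1}g=0\},0)_{j_x=j'}].
\]
The generating-function manipulation then follows exactly as in \cref{thm:B}. For fixed $(a,j')$, summing $(Qz)^j$ over $j\ge j'$ gives $(Qz)^{j'}/(1-Qz)$. Multiplied by $\LL^{j'}$, this becomes $(Q\LL z)^{j'}/(1-Qz)$.

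The main point to check, and the only potential obstacle, is that the decomposition into the $F_{S_{j'}}$ is a finite decomposition into locally closed pieces whose reduced structures agree with those induced from $\Hilb_n(\{x^ug=0\},0)$. \Cref{prop:family}(a),(c) handles this, since it identifies $F_{S_{j'}}$ with its reduced induced structure. We should also remark that no transversality between $\{x=0\}$ and $\{g=0\}$ is used. If $x\mid g$, the statement simply describes $\{x^{u+1}g'=0\}$ and remains valid, since \cref{thm:A}(d) and \cref{prop:family} hold for arbitrary ideals.
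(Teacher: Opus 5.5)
Your proposal is correct and follows the paper's argument exactly: the paper's proof applies \cref{thm:A}(d) with $f=x^{u-1}g$ to get $x^ug\in I\iff x^{u-1}g\in(I:x)$, and then runs the proof of \cref{thm:B} verbatim. The checks you add are the details the paper leaves implicit: the local closedness of the strata $S_{j'}$, and the containment $F_{S_{j'}}\subseteq\Hilb_n(\{x^ug=0\},0)$ via \cref{thm:A}(d), which replaces \cref{prop:family}(d).
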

\begin{proof}
By \cref{thm:A}(d) with $f=x^{u-1}g$, $x^ug\in I$ if and only if $x^{u-1}g\in(I:x)$. The proof of \cref{thm:B} applies verbatim.
\end{proof}

So adding a fat line $\{x^u=0\}$ as a branch to \emph{any} germ $\{g=0\}$ multiplies the zeta function by the Bejleri factor $1/(Q;Q\LL)_u$ \cite{Bej} and evaluates the contact-refined zeta function of $g$ at $z=(Q\LL)^u$. No transversality is needed, and $g$ may itself contain the branch $\{x=0\}$ or be tangent to it.

\begin{example}[a smooth branch tangent to the fat line]\label{ex:tangent}
Let $g=x-y^m$ with $m\ge1$. Since $R/(g)\cong \kk[[y]]$, the ideals containing $g$ are the $I_b=(x-y^m,y^b)$, $b\ge0$, and $I_b+(x)=(x,y^{\min(b,m)})$. Hence
\begin{gather*}
G^x_g(Q,z)=\sum_{b=0}^{m-1}(Qz)^b+\frac{(Qz)^m}{1-Q},\\
\sum_n[\Hilb_n(\{x^u(x-y^m)=0\},0)]\,Q^n=\frac1{(Q;Q\LL)_u}\Big(\sum_{b=0}^{m-1}Q^b(Q\LL)^{ub}+\frac{Q^m(Q\LL)^{um}}{1-Q}\Big).
\end{gather*}
For $m=1$ the curve is isomorphic to $C_{u,1}$, and we recover \cref{ex:u1}. For $u=1$ the curve $x(x-y^m)$ has two smooth branches with intersection multiplicity $m$. If $\operatorname{char}\kk\ne2$ it is the $A_{2m-1}$ singularity, and the formula becomes $\big(\sum_{b<m}Q^{2b}\LL^b+\frac{Q^{2m}\LL^m}{1-Q}\big)/(1-Q)$.

As a check, take the tacnode $x(x-y^2)$ and $n=3$. The formula gives $[Q^3]=1+\LL$. Directly, $[\Hilb_3(\AAA^2,0)]=1+\LL+\LL^2$ and $f=x^2-xy^2\in\mm^2$. The ideal $\mm^2$ contains $f$. The curvilinear ideals $(x-d_2y^2,y^3)$ contain $f\equiv(d_2^2-d_2)y^4\equiv0$. The curvilinear ideals $(y-c_1x-c_2x^2,x^3)$, which form an $\AAA^2$, do not, since $f\equiv x^2$ modulo them. Hence $[\Hilb_3]=1+\LL+\LL^2-\LL^2=1+\LL$.
\end{example}

\section{Torus stratifications and the motive}\label{sec:shadow}

Here is where the argument stands. \Cref{thm:B} reduces $C_{u,v}$ with the statistic $j_x$ to the fat line $C_{0,v}$ with the statistic $j_x$, and the same method reduces $C_{u,v}$ with all the statistics $j_x((I:x^k))$, $k\ge0$, to $C_{0,v}$ with $j_x$. Applied along the other branch, the method reduces $C_{0,v}$ with the statistic $j_y$ to the trivial input $C_{0,0}$, so $C_{0,v}$ with all the statistics $j_y((I:y^k))$ is understood as well: these are the rows of \cref{subsec:statistics}, and the strata are Bejleri's cells. What is left is $C_{0,v}$ with the statistic $j_x$, the stage reached in \cref{sec:modification}. \Cref{sec:fatline} settles it up to coefficients annihilated by a power of $\LL$, and exactly for point counts, but not in $\Kvar$, where $\LL$ is a zero divisor \cite{Borisov}.

The new input of this section is that $C_{0,v}$ can be understood with all the statistics $j_x((I:y^k))$ and $j_y((I:y^k))$ at the same time. The point is that along the $y$-chain the contact length $j_x((I:y^k))$ jumps by $0$ or $1$ at each step (\cref{prop:jump}), and that the jumps occur on fully understood strata: on each fibre of $I\mapsto(I:y)$, which is an affine space by \cref{thm:A}, the jump $1$ occurs on an affine hyperplane, on the whole fibre, or nowhere (\cref{prop:fibre}). We call the ideals with jump $1$ split and those with jump $0$ glued. The jump vector $\varepsilon(I)\in\{0,1\}^r$ and the contact lengths $j_x((I:y^k))$ determine each other, and the level sets of the jump vector together with the rows are the pieces of the stratification. The complement of a hyperplane in $\AAA^m$ is $\AAA^{m-1}\times\GG_m$, and iterating produces pieces $\AAA^\delta\times\GG_m^\tau$ (\cref{thm:torus-fatline}). Hence every coefficient of $g_v(Q,z)$ lies in $\ZZ[\LL]$ (\cref{cor:gv-pieces}), which makes the closed form of \cref{thm:D} exact in $\Kvar$. Running the $x$-chain over the pieces then stratifies every $\Hilb_n(C_{u,v},0)$ (\cref{thm:torus-all}) and proves \cref{thmA}(a) and \cref{thmB} in $\Kvar$. The image of an ideal in the reduced node $\{xy=0\}$, which explains the torus coordinates as gluing parameters, is discussed in \cref{subsec:gluing} and is not needed for the proofs.

\subsection{Statistics along the $y$-chain}\label{subsec:statistics}
The stratification is indexed by the following invariants. Let $I$ have finite colength $n$ and put $I_i:=(I:y^i)$ for $i\ge0$, so that $I_0=I$ and $I_i=R$ for $i\ge r(I)$, where $r(I)$ is the smallest $r$ with $y^r\in I$. The \emph{rows} of $I$ are $a_i(I):=j_y(I_i)$. By \eqref{eq:rankdict}, $a_i(I)=\rk Y^i-\rk Y^{i+1}$, which is the number of Jordan blocks of $Y$ of size $>i$. So $a(I)=(a_0\ge a_1\ge\cdots)$ is a partition of $n$ with $r(I)$ non-zero parts, and $y^v\in I$ if and only if $r(I)\le v$. For a monomial ideal $I_\lambda$ the rows are the rows of the Young diagram $\lambda$, in the sense of \cref{subsec:conventions}. Geometrically, peeling $Z=\Spec R/I$ along $\{y=0\}$ removes the layer $Z\cap\{y=0\}$, of length $a_0$, and leaves $\Spec R/I_1$; the rows are the lengths of the successive layers.

The second statistic is the contact length $j_x(I_i)$ of each $I_i$ with the other branch. Its behaviour along the chain is governed by the following lemma.

\begin{proposition}[contact jumps]\label{prop:jump}
Let $I\ne R$ have finite colength $n$. Then
\[
j_x(I)-j_x\big((I:y)\big)=j_y(I)-j_y\big((I:x)\big)=n-\rk X-\rk Y+\rk XY\in\{0,1\},
\]
and this number equals $1$ if and only if $y^{j_x(I)}\in I+(xy)$, if and only if $x^{j_y(I)}\in I+(xy)$.
\end{proposition}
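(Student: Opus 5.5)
The plan is to express every quantity through ranks of $X$, $Y$, $XY$ on $V=R/I$ by the rank dictionary (\cref{lem:rankdict}). First, $j_x(I)=n-\rk X$ by \cref{lem:contact}(a). Next, $j_x((I:y))=\rk Y-\rk XY$ by \eqref{eq:rankdict} with $k=1$. Subtracting gives
\[
j_x(I)-j_x((I:y))=n-\rk X-\rk Y+\rk XY.
\]
The same computation with $x$ and $y$ exchanged uses $j_y(I)=n-\rk Y$ and $j_y((I:x))=\rk X-\rk XY$. It gives the same expression, which proves the equality of the two jumps.

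To see that the number lies in $\{0,1\}$, I would interpret it as a dimension. We have $n-\rk X-\rk Y+\rk XY=\dim V-\dim(XV+YV)+\big(\dim XV+\dim YV-\dim(XV+YV)\big)-\dots$; more cleanly, $\dim(XV\cap YV)\ge\dim XYV$ always, since $XYV\subseteq XV\cap YV$. Then
\[
n-\rk X-\rk Y+\rk XY=\big(n-\dim(XV+YV)\big)-\big(\dim(XV\cap YV)-\rk XY\big).
\]
Cyclicity (\cref{prop:adhm}(c)) gives $n-\dim(XV+YV)=1$ when $I\ne R$. Hence the jump equals $1-\delta$ with $\delta=\dim\big((XV\cap YV)/XYV\big)\ge0$. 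Nonnegativity of the jump is clear because $j_x$ can only drop under colon: $(I:y)\supseteq I$ gives $(I:y)+(x)\supseteq I+(x)$. So $\delta\in\{0,1\}$.

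For the characterization, I work in $\bar V=V/XYV=R/(I+(xy))$, where the images of $XV$ and $YV$ meet in $(XV\cap YV)/XYV$, of dimension $\delta$. This intersection is spanned by any nonzero element. I would show that the element $y^{j_x(I)}$ satisfies the following: by \cref{lem:contact}(d), $y^{j}\in I+(x)$ with $j=j_x(I)$, so $\bar y^{\,j}\in XV\cap YV$ (it lies in $YV$ since $j\ge1$). Thus $\bar y^{\,j}$ maps into the $\delta$-dimensional space $(XV\cap YV)/XYV$.

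If the jump is $1$, then $\delta=0$ and $y^j\in I+(xy)$. Conversely, suppose $y^j\in I+(xy)$ and $\delta=1$; I must derive a contradiction. This is the main obstacle: it requires showing that the class of $\bar y^{\,j}$ generates $(XV\cap YV)/XYV$ when $\delta=1$. I would argue in $\bar V$, which is a module over $\kk[[x,y]]/(xy)$, writing $\bar V\cap$-structure as $\kk[x]$-part plus $\kk[y]$-part glued at most along one socle-type vector. Any element of $X\bar V\cap Y\bar V$ is killed by $\mm$ there, since $x\cdot Y\bar V=0=y\cdot X\bar V$. The element $y^{j}$ generates $Y\bar V\cap(\text{image of }(x))$ in the sense that $Y\bar V\cap X\bar V\subseteq (I+(x))/\dots$ is spanned by $\bar y^{\,j}$. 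Concretely, an element $y h(y)\in X\bar V$ modulo $XY$ means $yh(y)\in I+(x)=(x,y^j)$, so $y^j\mid yh$, and in $\bar V$ such an element is a multiple of $\bar y^{\,j}$. Hence the intersection is spanned by $\bar y^{\,j}$, and $\bar y^{\,j}=0$ forces $\delta=0$. The $x$-statement then follows by symmetry.
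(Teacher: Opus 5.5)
Your proof is correct. The rank computation is the same as the paper's, but you obtain the bound and the criterion differently.

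The paper stays with ideals. From $(I:y)+(x)\subseteq\big((I+(x)):y\big)=(x,y^{j-1})$ it gets that the jump is $0$ or $1$, with value $1$ if and only if $y^{j-1}\in(I:y)+(x)$. A short element chase then turns this into $y^{j}\in I+(xy)$.

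You instead use Nakayama, $\dim V/(XV+YV)=1$, to write the jump as $1-\delta$ with $\delta=\dim\big((XV\cap YV)/XYV\big)$. You then identify $(XV\cap YV)/XYV=X\bar V\cap Y\bar V$ inside $\bar V=R/(I+(xy))$ with the line $\kk\,\bar y^{\,j}$, possibly zero. This is slightly longer, but it gives the defect an intrinsic meaning: it is the overlap of the two branch images in the module over the reduced node, which is exactly the gluing of \cref{lem:node}. Since the same overlap is also spanned by $\bar x^{\,j_y(I)}$, the equivalence of the two criteria becomes transparent rather than being ``the same argument with $x,y$ exchanged.'' Your last step already proves $\delta\le1$ on its own, so the separate nonnegativity argument is not needed.

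When you write it up, replace the garbled sentences about a $\kk[x]$-part and a $\kk[y]$-part with the two facts the argument actually uses.
\begin{itemize}
\item Every element of $Y\bar V$ is the class of some $y\,h(y)$ with $h\in\kk[[y]]$, because $y\big(f-f(0,y)\big)\in(xy)$.
\item If such a class lies in $X\bar V$, then $y\,h(y)\in\big(I+(x)\big)\cap\kk[[y]]=y^{j}\kk[[y]]$. So the class equals $m(y)\,\bar y^{\,j}=m(0)\,\bar y^{\,j}$. The last equality holds because $x$ kills $Y\bar V$ and $y$ kills $X\bar V$, so $\mm$ kills $X\bar V\cap Y\bar V\ni\bar y^{\,j}$.
\end{itemize}
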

\begin{proof}
By \cref{lem:contact}(a) and \eqref{eq:rankdict}, $j_x(I)-j_x((I:y))=(n-\rk X)-(\rk Y-\rk XY)$, and exchanging $X$ and $Y$ gives the same expression for $j_y(I)-j_y((I:x))$. For the remaining assertions put $j:=j_x(I)$. Since $I\ne R$ we have $j\ge1$, and $(I:y)+(x)\subseteq\big((I+(x)):y\big)$. The quotient $\big((I+(x)):y\big)/(I+(x))$ is the kernel of $y$ on $R/(I+(x))=\kk[y]/(y^j)$, which is the line spanned by $y^{j-1}$. Hence $0\le j_x(I)-j_x((I:y))\le1$, with equality if and only if $y^{j-1}\in(I:y)+(x)$. If $y^{j-1}=f+xg$ with $yf\in I$, then $y^j=yf+xyg\in I+(xy)$. Conversely, if $y^j=f+xyg$ with $f\in I$, then $y(y^{j-1}-xg)=f\in I$, so $y^{j-1}\in(I:y)+(x)$. Thus the jump is $1$ if and only if $y^{j_x(I)}\in I+(xy)$, and by the same argument with $x$ and $y$ exchanged, if and only if $x^{j_y(I)}\in I+(xy)$.
\end{proof}

\begin{definition}[split and glued ideals, jump vectors]\label{def:split}
Let $I\ne R$ have finite colength. We call $I$ \emph{split} if the jump of \cref{prop:jump} equals $1$, and \emph{glued} if it equals $0$. For $I$ of finite colength with $r=r(I)$ rows, the \emph{jump vector} of $I$ is
\[
\varepsilon(I)=(\varepsilon_0,\dots,\varepsilon_{r-1})\in\{0,1\}^r,\qquad\varepsilon_i:=j_x(I_i)-j_x(I_{i+1}),
\]
so that $\varepsilon_i=1$ if $I_i$ is split and $\varepsilon_i=0$ if $I_i$ is glued. Since $j_x(I_r)=j_x(R)=0$,
\begin{equation}\label{eq:contact-from-type}
j_x(I_i)=\sum_{i'\ge i}\varepsilon_{i'}\qquad(0\le i\le r),
\end{equation}
so the jump vector and the contact lengths $j_x(I_i)$ determine each other. We write $|\varepsilon|:=\sum_i\varepsilon_i=j_x(I)$. For a partition $a$ of $n$ with $r$ non-zero parts and a vector $\varepsilon\in\{0,1\}^r$ put
\[
C_a:=\{I\in\Hilb_n(\AAA^2,0):\ a(I)=a\},\qquad C_{a,\varepsilon}:=\{I\in C_a:\ \varepsilon(I)=\varepsilon\}.
\]
\end{definition}

Every monomial ideal is split, since $y^{j_x(I_\lambda)}\in I_\lambda$. The ideal $(x+\gamma y,y^2)$ with $\gamma\ne0$ is glued, with $j_x=j_y=1$, because $y\notin(x+\gamma y,\,y^2,\,xy)$.

By \eqref{eq:rankdict}, $a_i(I)=\rk Y^i-\rk Y^{i+1}$ and $j_x(I_i)=\rk Y^i-\rk XY^i$, so the pieces $C_{a,\varepsilon}$ are the non-empty level sets of $I\mapsto(\rk Y^i,\rk XY^i)_{i\ge0}$. Level sets of ranks of morphisms of vector bundles are locally closed, so the $C_{a,\varepsilon}$ are locally closed subvarieties of $\Hilb_n(\AAA^2,0)$, and those with $r\le v$ partition $\Hilb_n(\{y^v=0\},0)$. This is the only use we make of the rank interpretation; the pieces themselves are defined by contact lengths.

The sets $C_a$ with $r\le v$ are the cells of Bejleri's paving of the fat line \cite{Bej}, \cite[Ex.~3.3]{Luan}. By the mirror form of \cref{thm:A} and \cref{prop:family}, $I\mapsto I_1$ maps $C_a$ onto $C_{(a_1,a_2,\dots)}$ and makes it an affine bundle of rank $a_1$, so by induction $C_a$ is an iterated affine bundle over a point, and $C_a\cong\AAA^{n-a_0}$ by Quillen--Suslin \cite{Quillen,Suslin} and the vanishing of $H^1$ on affine schemes. Summing $Q^{|a|}\LL^{|a|-a_0}$ over the partitions with at most $v$ rows recovers Bejleri's series $1/(Q;Q\LL)_v$ \eqref{eq:bejleri} \cite{Bej}. The contact length $j_x$ is not constant on the cells of positive dimension (\cref{ex:pieces-v2}), which is why the finer pieces $C_{a,\varepsilon}$ are needed.

\subsection{The fibre rule}\label{subsec:ychain}
How does the jump vector behave in the fibres of $I\mapsto(I:y)$? By the mirror form of \cref{thm:A}, every $I$ with $(I:y)=J$ and $j_y(I)=a$ is of the form $I_{\bar c}=yJ+R(x^a+yc)$ for a unique point  $\bar c$ in the affine space
\[
\cA_y(J,a)=\{\bar c\in R/J:\ y\bar c=-\bar x^{\,a}\}\cong\AAA^{j_y(J)},\qquad a\ge j_y(J).
\]

\begin{proposition}[fibre rule]\label{prop:fibre}
Let $J$ have finite colength, let $a\ge\max(j_y(J),1)$, and let $\bar c\in\cA_y(J,a)$.
\begin{enumerate}[label=(\alph*)]
\item If $J=R$, then $I_{\bar c}=(y,x^a)$ is split.
\item Let $J\ne R$ and $j':=j_x(J)$. The image of $c$ in $R/(J+(x))=\kk[y]/(y^{j'})$ equals $\ell(\bar c)\,y^{j'-1}$ for an affine-linear function $\ell:\cA_y(J,a)\to\kk$, and
\[
I_{\bar c}+(x)=\big(x,\ y^{j'+1},\ \ell(\bar c)\,y^{j'}\big).
\]
Hence $I_{\bar c}$ is split, with $j_x(I_{\bar c})=j'+1$, if $\ell(\bar c)=0$, and glued, with $j_x(I_{\bar c})=j'$, otherwise.
\item If $J\ne R$ is split, the linear part of $\ell$ is non-zero.
\item If $J$ is glued, then $\ell$ is constant, and $I_{\bar c}$ is split for all $\bar c$ if $a>j_y(J)$, and glued for all $\bar c$ if $a=j_y(J)$.
\end{enumerate}
\end{proposition}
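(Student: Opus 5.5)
The plan is a direct computation modulo $x$, starting from the explicit form $I_{\bar c}=yJ+R(x^a+yc)$ given by the mirror of \cref{thm:A}, where $x^a+yc\in J$. The split/glued dichotomy is then read off through \cref{prop:jump}. Since $(I_{\bar c}:y)=J$, the jump of $I_{\bar c}$ is $j_x(I_{\bar c})-j_x(J)$.

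\emph{Part (a).} If $J=R$, the fibre is a point and $I=(y,x^a)$. Then $j_x(I)=1$ and $j_x(R)=0$, so the jump is $1$ and $I$ is split.

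\emph{Part (b).} Let $J\ne R$ and write $J+(x)=(x,y^{j'})$. Reducing modulo $x$, the ideal $yJ$ contributes $(y^{j'+1})$ and $x^a$ contributes nothing, because $a\ge1$. So $I_{\bar c}+(x)=(x,y^{j'+1},yc)$, and only the image of $c$ in $\kk[y]/(y^{j'})$ matters. This image is well defined on $\bar c\in R/J$, since changing $c$ by an element of $J$ changes $yc$ by an element of $yJ$. The defining condition $x^a+yc\in J$ gives $yc\in J+(x)$. Hence $c\in\big((J+(x)):y\big)=(x,y^{j'-1})$, so the image of $c$ is $\ell(\bar c)\,y^{j'-1}$. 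Here $\ell$ is the restriction of a linear functional on $R/J$ to the affine space $\cA_y(J,a)$, hence affine-linear. This gives the displayed formula for $I_{\bar c}+(x)$. Reading off $j_x$ and applying \cref{prop:jump} gives the split/glued statement.

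\emph{Parts (c) and (d).} The linear part of $\ell$ is the same functional restricted to the direction space $\ker Y=(J:y)/J$. The proof of \cref{prop:jump} shows that $J$ is split if and only if $y^{j'-1}\in(J:y)+(x)$.
\begin{itemize}
\item If $J$ is split, write $y^{j'-1}=w+xg$ with $w\in(J:y)$. Then $\ell$ takes the value $1$ on $\bar w$, so the linear part of $\ell$ is nonzero. This proves (c).
\item If $J$ is glued, the image of $(J:y)$ in $\kk[y]/(y^{j'})$ lies in $(y^{j'-1})$, since $y(J:y)\subseteq J$. It cannot contain $y^{j'-1}$, so it is zero. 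Hence $\ell$ is constant on $\cA_y(J,a)$.
\end{itemize}

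\emph{Evaluating the constant in (d).} I exhibit one particular tail. Write $b=j_y(J)$ and $x^{b}=f+yh$ with $f\in J$.
\begin{itemize}
\item If $a>b$, take $c=-x^{a-b}h$. Then $x^a+yc=x^{a-b}f\in J$, and $c\in(x)$, so $\ell=0$ and every $I_{\bar c}$ is split.
\item If $a=b$, take $c=-h$. Suppose $\ell(\bar c)=0$. Then $h\in(x,y^{j'})\subseteq(x)+J$, so $yh\in(xy)+J$, and therefore $x^{b}\in J+(xy)$. By \cref{prop:jump} this would make $J$ split, a contradiction. So $\ell\ne0$ and every $I_{\bar c}$ is glued.
\end{itemize}

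The main point needing care is the well-definedness of $\ell$ and its affine-linearity. It rests on the containment $c\in(x,y^{j'-1})$, and this has to hold for every lift of $\bar c$, not just for a chosen one.
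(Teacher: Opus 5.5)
Your proposal is correct and follows the paper's proof essentially step by step. It uses the same reduction modulo $x$ in (b), and in (c) it identifies the linear part of $\ell$ with the map $(J:y)/J\to R/(J+(x))$, whose image is detected by the jump criterion of \cref{prop:jump}. The only variation is in (d): you evaluate the already-constant $\ell$ at an explicit tail built from $x^{b}=f+yh$, whereas the paper works with an arbitrary tail modulo $K=J+(xy)$ and uses that neither $x^{j_y(J)}$ nor $y^{j_x(J)}$ lies in $K$. Both arguments are equally short.
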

\begin{proof}
(a) We have $(y,x^a)+(x)=(x,y)$, so $j_x=1$, and $y\in(y,x^a)+(xy)$.

(b) Let $c_0\in\kk[y]/(y^{j'})$ be the image of $c$. From $yc\equiv-x^a$ modulo $J$ and $a\ge1$ we get $yc_0=0$, so $c_0\in\kk\,y^{j'-1}$. This defines $\ell$, which is affine-linear because $R/J\to R/(J+(x))$ is linear. Now $I_{\bar c}+(x)=(x)+y\big(J+(x)\big)+R\,yc$ and $y(J+(x))=(xy,y^{j'+1})$, while $yc\equiv\ell(\bar c)\,y^{j'}$ modulo $(x,y^{j'+1})$. This gives the displayed formula, so $j_x(I_{\bar c})=j'+1$ if $\ell(\bar c)=0$ and $j_x(I_{\bar c})=j'$ otherwise. Since $j_x((I_{\bar c}:y))=j_x(J)=j'$, these are the split and the glued case.

(c) The linear part of $\ell$ is the map $(J:y)/J=\ker(Y|R/J)\to R/(J+(x))$ induced by the inclusion. Its image is $\big((J:y)+(x)\big)/(J+(x))$, of dimension $j_x(J)-j_x((J:y))$, which is $1$ if $J$ is split and $0$ if $J$ is glued (\cref{prop:jump}). This proves (c) and the first claim of (d).

(d) Let $J$ be glued, so that $\ell$ is constant, and put $i':=j_y(J)$ and $K:=J+(xy)$. By (b), $yc\equiv\ell\,y^{j'}$ modulo $(x,y^{j'+1})$, and $y(J+(x))=(xy,y^{j'+1})\subseteq K$, so $yc\equiv\ell\,y^{j'}$ modulo $K$. Also $yc\equiv-x^a$ modulo $K$. Hence $\ell\,y^{j'}\equiv-x^a$ modulo $K$. Since $J$ is glued, neither $y^{j'}$ nor $x^{i'}$ lies in $K$ (\cref{prop:jump}). If $a>i'$, then $x^{a-1}\in J+(y)=(y,x^{i'})$ gives $x^a\in xJ+(xy)\subseteq K$, so $\ell\,y^{j'}\in K$ and $\ell=0$. If $a=i'$, then $\ell\,y^{j'}\equiv-x^{i'}\not\equiv0$ modulo $K$, so $\ell\ne0$.
\end{proof}

The fibre rule describes the fibre of $I\mapsto(I:y)$ over $J$. If $J$ is split, the fibre is the disjoint union of the hyperplane $\{\ell=0\}$ of split ideals and its complement, consisting of glued ideals. If $J$ is glued, all ideals in the fibre are of the same kind, split when the new row $a$ is longer than $j_y(J)$ and glued when $a=j_y(J)$. In particular a vector $\varepsilon$ can occur as the jump vector of an ideal with rows $a$ only if $\varepsilon_{r-1}=1$, by (a), and, whenever $\varepsilon_{i+1}=0$, $\varepsilon_i=1$ exactly when $a_i>a_{i+1}$, by (d).

\subsection{The stratification of a fat line}\label{subsec:fatline-strat}
Each piece is an iterated fibration with fibres $\AAA^m$ or $\AAA^m\times\GG_m$, which gives its class.

\begin{theorem}\label{thm:torus-fatline}
Let $a=(a_0\ge\dots\ge a_{r-1}\ge1)$ be a partition of $n$ and $\varepsilon\in\{0,1\}^r$. Put $a'=(a_1,\dots,a_{r-1})$ and $\varepsilon'=(\varepsilon_1,\dots,\varepsilon_{r-1})$.
\begin{enumerate}[label=(\alph*)]
\item $C_{a,\varepsilon}\ne\varnothing$ if and only if $\varepsilon_{r-1}=1$ and, for every $i\le r-2$ with $\varepsilon_{i+1}=0$, one has $\varepsilon_i=1\iff a_i>a_{i+1}$.
\item Let $\varepsilon$ be as in (a) and $r\ge2$. The map $I\mapsto(I:y)$ restricts to a Zariski-locally trivial morphism $C_{a,\varepsilon}\to C_{a',\varepsilon'}$, with fibre $\AAA^{a_1-1}$ if $(\varepsilon_0,\varepsilon_1)=(1,1)$, with fibre $\AAA^{a_1-1}\times\GG_m$ if $(\varepsilon_0,\varepsilon_1)=(0,1)$, and with fibre $\AAA^{a_1}$ if $\varepsilon_1=0$.
\item Consequently $[C_{a,\varepsilon}]=\LL^{\delta}(\LL-1)^{\tau}$ in $\Kvar$, where
\[
\delta=\delta(a,\varepsilon)=\sum_{i=0}^{r-2}\big(a_{i+1}-\varepsilon_{i+1}\big),\qquad\tau=\tau(a,\varepsilon)=\#\{i\le r-2:\ \varepsilon_{i+1}=1,\ \varepsilon_i=0\}.
\]
On $C_{a,\varepsilon}$ the contact lengths are constant, $j_x(I_i)=\sum_{i'\ge i}\varepsilon_{i'}$; in particular $j_x(I)=|\varepsilon|$.
\item Moreover $C_{a,\varepsilon}\cong\AAA^{\delta}\times\GG_m^{\tau}$.
\end{enumerate}
\end{theorem}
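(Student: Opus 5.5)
Everything goes by induction on the number $r$ of rows, with the $y$-chain $I\mapsto I_1=(I:y)$. By the mirror of \cref{thm:A}, $I$ is in the fibre over $J=I_1$ with $j_y(I)=a_0$, and the rows of $J$ are $a'$, so $j_y(J)=a_1$. The fibre is $\cA_y(J,a_0)\cong\AAA^{a_1}$, and $a_0\ge a_1$ holds automatically. The jump vector of $I$ is $(\varepsilon_0,\varepsilon(J))$, since $I_{i+1}=J_i$. For $r=1$ we have $J=R$, so $C_{(a_0)}=\{(y,x^{a_0})\}$, which is split by \cref{prop:fibre}(a). Hence $C_{(a_0),(1)}$ is a point and $C_{(a_0),(0)}=\varnothing$. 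This gives the base of (a), and (c), (d) hold with $\delta=\tau=0$.

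\emph{Part (a).} Necessity of the conditions is the remark after \cref{prop:fibre}. For sufficiency, assume $C_{a',\varepsilon'}\ne\varnothing$ by induction and pick $J$ in it; then $J\ne R$. If $\varepsilon_1=1$, then $J$ is split, and by \cref{prop:fibre}(b),(c) the fibre over $J$ contains both split ideals (on the hyperplane $\ell=0$) and glued ones (off it), so either value of $\varepsilon_0$ occurs. If $\varepsilon_1=0$, then $J$ is glued, and \cref{prop:fibre}(d) forces $\varepsilon_0=1$ exactly when $a_0>a_1$. This is the stated condition. The inductive step also carries the stated conditions on $\varepsilon'$, so $C_{a,\varepsilon}\ne\varnothing$ if and only if the conditions hold.

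\emph{Part (b), which is the main point.} Put $S=C_{a',\varepsilon'}$. It is locally closed, and on it $j_y=a_1$ is constant. By \cref{prop:family} in its mirror form, $C_a\cap\{I_1\in S\}$ is isomorphic to the torsor $\cA_S\to S$ under $\ker Y$, a bundle of rank $a_1$, and is Zariski-locally trivial. It remains to show that the split locus inside $\cA_S$ is cut out by a single affine-linear function in a way that is uniform over $S$.

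Here is how to globalize $\ell$. On $S$ the contact length $j'=j_x(J)=|\varepsilon'|$ is constant, so the bundle $\mathcal T/X\mathcal T$ is locally free of rank $j'$; this uses the rank argument of Step 1 of \cref{prop:family}. The map $\cA_S\to\mathcal T/X\mathcal T$ sending $\bar c$ to the class of $\bar c$ is then an affine-linear morphism of bundles. By part (b) of \cref{prop:fibre}, its image lies in the line subbundle $Y^{j'-1}(\mathcal T/X\mathcal T)$, which is the image of $Y^{j'-1}$ and has constant rank $1$. That image is a subbundle, and locally it has the trivializing section $Y^{j'-1}\bar e$. So $\ell$ becomes a regular function on $\cA_S$ that is affine-linear on fibres, and I take $\ell$ to be this function.

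Now split into cases. If $\varepsilon_1=1$, the linear part of $\ell$ is a nowhere-vanishing map $\ker Y\to\mathcal O$ by \cref{prop:fibre}(c), so it is surjective as a bundle map. After local trivialization, $\ell$ becomes a coordinate on $\AAA^{a_1}$. Then $\{\ell=0\}\cong\AAA^{a_1-1}$ is the locus $\varepsilon_0=1$, and $\{\ell\ne0\}\cong\AAA^{a_1-1}\times\GG_m$ is the locus $\varepsilon_0=0$. If $\varepsilon_1=0$, the whole fibre $\AAA^{a_1}$ lies in the single admissible piece. Throughout, I identify points with reduced subschemes using \cref{lem:reduced}.

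\emph{Parts (c) and (d).} For (c), multiply the fibre classes along the induction. Each step contributes $a_{i+1}-\varepsilon_{i+1}$ to $\delta$, and it contributes one factor $\GG_m$ exactly when $(\varepsilon_i,\varepsilon_{i+1})=(0,1)$. The contact lengths are constant by \eqref{eq:contact-from-type}. For (d), local triviality gives only an iterated bundle. To get a product, first use the trivializations over the base: an affine bundle over $\AAA^{\delta'}\times\GG_m^{\tau'}$ is trivial, because vector bundles on this base are trivial (Quillen--Suslin together with Swan's extension to Laurent polynomial rings) and $H^1$ vanishes on affine schemes, so torsors are trivial. Then take the linear coordinate $\ell$ globally, so that the complement of the hyperplane is globally $\AAA^{a_1-1}\times\GG_m$. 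I expect (d), the global triviality over tori, to be the main obstacle. If it fails, (c) alone already suffices for the class computation.
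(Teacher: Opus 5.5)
Your proposal is correct and follows the paper's proof essentially step for step: induction on $r$ along the $y$-chain with the fibre rule for (a), the mirror form of \cref{prop:family} plus a globally regular affine-linear function $\ell$ read off from the bundle $\mathcal T/X\mathcal T$ for (b), multiplicativity of classes for (c), and Swan's theorem together with the vanishing of $H^1$ on affine schemes for (d). The differences are cosmetic. The paper takes $\ell$ directly as the last coordinate in the global frame $e,Ye,\dots,Y^{j'-1}e$ of $\mathcal T/X\mathcal T$, so it needs no reducedness argument to land in the line subbundle. In (d) it states explicitly that $\lambda$ splits globally with free kernel, which your appeal to the triviality of vector bundles on $\AAA^{\delta'}\times\GG_m^{\tau'}$ already supplies.
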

\begin{proof}
Write $I_i=(I:y^i)$. Then $I_r=R$, and $I_{r-1}=(y,x^{a_{r-1}})$ is split by \cref{prop:fibre}(a). Part (a) follows from the fibre rule by induction on $r$. Over a point $J$ of $C_{a',\varepsilon'}$ the fibre of $C_a\to C_{a'}$ is $\cA_y(J,a_0)\cong\AAA^{a_1}$, and \cref{prop:fibre}(b),(d) say which of its points are split: both kinds occur when $\varepsilon_1=1$, because $a_1\ge1$, and exactly one kind occurs when $\varepsilon_1=0$.

For (b), let $\Sigma:=C_{a',\varepsilon'}$, a locally closed subvariety on which $j_y(J)=a_1$ and $j_x(J)=j'\ge1$ are constant. By the mirror form of \cref{prop:family}, the set $F_\Sigma$ of all $I$ with $(I:y)\in\Sigma$ and $j_y(I)=a_0$ is locally closed and isomorphic to the torsor $\cA_\Sigma\to\Sigma$ under the vector bundle $\mathcal K:=\ker Y\subseteq\mathcal T|_\Sigma$ of rank $a_1$, where $\cA_\Sigma=\{Y\bar c=-X^{a_0}e\}$ is a closed subvariety of the total space of $\mathcal T|_\Sigma$. We first show that $\ell$ is a regular function on $\cA_\Sigma$. Since $\rk X=\col(J)-j'$ is constant on $\Sigma$, $\mathcal Q:=\mathcal T/X\mathcal T$ is a vector bundle of rank $j'$ (Step 1 of the proof of \cref{prop:family}), with fibre $R/(J+(x))=\kk[y]/(y^{j'})$ at $J$, and the images of $e,Ye,\dots,Y^{j'-1}e$ form a global frame of it. Let $\pi:\mathcal T|_\Sigma\to\mathcal O_\Sigma$ be the composite of $\mathcal T\to\mathcal Q$ with the last coordinate in this frame. It is a morphism of vector bundles, hence a regular function on the total space of $\mathcal T|_\Sigma$, linear on the fibres, and by \cref{prop:fibre}(b) its restriction to $\cA_\Sigma$ is $\ell$. So $\ell$ is regular and affine-linear on the fibres of $\cA_\Sigma$, with linear part $\lambda:=\pi|_{\mathcal K}:\mathcal K\to\mathcal O_\Sigma$.

If $\varepsilon_1=0$, then $\ell$ is constant on each fibre by \cref{prop:fibre}(d), so $C_{a,\varepsilon}=F_\Sigma\cong\cA_\Sigma$, which is Zariski-locally trivial over $\Sigma$ with fibre $\AAA^{a_1}$, being a torsor under a vector bundle. If $\varepsilon_1=1$, then $\lambda$ is surjective on every fibre by \cref{prop:fibre}(c), so its kernel $\mathcal K'$ is a subbundle of rank $a_1-1$, and locally on $\Sigma$ there is a line subbundle $\mathcal L\subseteq\mathcal K$ with $\mathcal K=\mathcal K'\oplus\mathcal L$ and $\lambda|_{\mathcal L}$ an isomorphism. On an open $U\subseteq\Sigma$ over which $\mathcal K'$ and $\mathcal L$ are trivial and $\cA_\Sigma$ has a section $\sigma_0$, the map
\[
U\times\AAA^{a_1-1}\times\AAA^1\to\cA_\Sigma|_U,\qquad(J,\kappa,s)\mapsto\sigma_0(J)+\kappa+(\lambda|_{\mathcal L})^{-1}\big(s-\ell(\sigma_0(J))\big),
\]
with $\kappa$ in the fibre of $\mathcal K'$, is an isomorphism under which $\ell$ becomes the coordinate $s$. Therefore $C_{a,(1,\varepsilon')}=\{\ell=0\}$ and $C_{a,(0,\varepsilon')}=\{\ell\ne0\}$ are Zariski-locally isomorphic over $\Sigma$ to $\Sigma\times\AAA^{a_1-1}$ and to $\Sigma\times\AAA^{a_1-1}\times\GG_m$.

(c) follows from (b) by induction on $r$ and the multiplicativity of classes in Zariski-locally trivial fibrations; the formula for $j_x(I_i)$ is \eqref{eq:contact-from-type}.

(d) By induction on $r$ we show that each piece is isomorphic to \[\AAA^{\delta'}\times\GG_m^{\tau'}=\Spec\kk[t_1,\dots,t_{\delta'},s_1^{\pm1},\dots,s_{\tau'}^{\pm1}].\] If $\Sigma$ is of this form, then $\mathcal K$ is trivial by Swan's theorem \cite{Swan}, which says that finitely generated projective modules over this ring are free, and $\cA_\Sigma$ has a global section because $H^1$ of a quasi-coherent sheaf on the affine scheme $\Sigma$ vanishes. So $\cA_\Sigma\cong\Sigma\times\AAA^{a_1}$. If $\varepsilon_1=1$, the surjection $\lambda:\mathcal O_\Sigma^{a_1}\to\mathcal O_\Sigma$ splits globally, its kernel is projective, hence free, and the open $U$ above can be taken to be $\Sigma$. Part (d) is not needed for the classes, for which the local structure of (b) suffices.
\end{proof}

\begin{corollary}[the contact distribution of a fat line, piece by piece]\label{cor:gv-pieces}
In $\Kvar{}[z][[Q]]$,
\[
g_v(Q,z)=\sum_{(a,\varepsilon)}Q^{|a|}\,z^{|\varepsilon|}\,\LL^{\delta(a,\varepsilon)}(\LL-1)^{\tau(a,\varepsilon)},
\]
the sum over the partitions $a$ with at most $v$ rows and the jump vectors $\varepsilon$ allowed by \cref{thm:torus-fatline}(a), the empty partition contributing $1$. In particular every coefficient of $g_v(Q,z)$ lies in $\ZZ[\LL]$, and the closed form of \cref{thm:D} holds exactly in $\Kvar{}[z][[Q]]$,
\[
g_v(Q,z)=\sum_{k=0}^{v}\qbin{v}{k}{Q\LL}\frac{(Qz)^k}{\big(Q(Q\LL)^{v-k};Q\LL\big)_k}.
\]
\end{corollary}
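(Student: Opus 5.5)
The plan has two parts: first the piecewise formula from the stratification, then the closed form obtained by comparing point counts of two polynomials in $\LL$.

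\textbf{The piecewise formula.} By definition,
\[
g_v(Q,z)=\sum_{n,k}[\Hilb_n(C_{0,v},0)_{j_x=k}]\,Q^nz^k.
\]
Fix $n$. As noted after \cref{def:split}, the pieces $C_{a,\varepsilon}$ with $|a|=n$ and at most $v$ rows are locally closed and partition $\Hilb_n(\{y^v=0\},0)$. By \cref{thm:torus-fatline}(c), the contact length $j_x$ is constant on each piece and equals $|\varepsilon|$. So for each $k$, $\Hilb_n(C_{0,v},0)_{j_x=k}$ is the finite disjoint union of the non-empty $C_{a,\varepsilon}$ with $|\varepsilon|=k$. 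Additivity in $\Kvar$ together with $[C_{a,\varepsilon}]=\LL^{\delta}(\LL-1)^{\tau}$ then gives the displayed sum. The empty partition corresponds to $I=R$ and contributes $1$. Each coefficient of $Q^nz^k$ is a finite sum of such terms, so it lies in the image of $\ZZ[\LL]\to\Kvar$.

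\textbf{The closed form.} Let $P_{n,k}(\LL)\in\ZZ[\LL]$ be the integer polynomial given by the piece sum (it is a formal polynomial, independent of $\kk$). Let $P'_{n,k}(\LL)\in\ZZ[\LL]$ be the coefficient of $Q^nz^k$ in $\widehat g_v(Q\LL;Q,z)$; its coefficients are integral after expanding the geometric series. We must show $P_{n,k}=P'_{n,k}$ in $\ZZ[\LL]$, since then the images in $\Kvar$ agree for every $\kk$.

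Both polynomials are independent of the field. Over $\kk=\FF_q$, apply the counting measure to the contact strata. By \cref{thm:refined}, the count is $P'_{n,k}(q)$. By the piece decomposition, the count is $P_{n,k}(q)$; here we use that the piece classes are universal polynomials valid over any field, and that $\#\FF_q$-points is additive. So $P_{n,k}(q)=P'_{n,k}(q)$ for all prime powers $q$, and hence $P_{n,k}=P'_{n,k}$ as integer polynomials. Mapping into $\Kvar{}[z][[Q]]$ for the given $\kk$ yields the exact identity.

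\textbf{Main obstacle.} The delicate point is to avoid the zero-divisor issue: we must not argue in $\Kvar$ that two classes agreeing after inverting $\LL$ are equal. The argument above sidesteps this because both sides are images of integer polynomials, and the comparison is made in $\ZZ[\LL]$, where point counts at infinitely many $q$ determine the polynomial.

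One also has to check that the enumeration of pieces (the combinatorial conditions of \cref{thm:torus-fatline}(a) and the formulas for $\delta,\tau$) is independent of $\kk$. This holds because those statements were proved over an arbitrary field, so the same $P_{n,k}$ serves for all fields.
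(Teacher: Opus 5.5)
Your proposal is correct and follows essentially the same route as the paper. You decompose each contact stratum into the pieces $C_{a,\varepsilon}$ to get a field-independent integer polynomial. You then compare it with the coefficient of the closed form through the point counts of \cref{thm:refined} at all prime powers, and conclude equality in $\ZZ[\LL]$, hence in $\Kvar$.
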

\begin{proof}
The pieces $C_{a,\varepsilon}$ with $r(a)\le v$ are locally closed, pairwise disjoint and cover $\Hilb_n(\{y^v=0\},0)$, their classes are $\LL^\delta(\LL-1)^\tau$ by \cref{thm:torus-fatline}(c), and $j_x$ is constant on them. This gives the first formula, and it shows that the coefficient $\Pi_{n,k}(\LL)$ of $Q^nz^k$ in $g_v(Q,z)$ is a polynomial in $\LL$ with integer coefficients which does not depend on the field $\kk$, since the index set and the exponents do not. The coefficient $c_{n,k}(\LL)$ of $Q^nz^k$ in the closed form also lies in $\ZZ[\LL]$. For $\kk=\FF_q$ both specialize under the counting measure to the number of $\FF_q$-points of $\Hilb_n(\{y^v=0\},0)_{j_x=k}$, by \cref{thm:refined}. Two polynomials in $\ZZ[\LL]$ that agree at every prime power are equal, so $\Pi_{n,k}=c_{n,k}$.
\end{proof}

The two formulas of \cref{cor:gv-pieces} are different expansions of the same series. The identity between them is proved only through the point counts.
 At $\LL=1$ both become $\sum_aQ^{|a|}z^{r(a)}$, the generating function of partitions with at most $v$ rows: only the pieces with $\tau=0$ survive, and these are the pieces with $\varepsilon=(1,\dots,1)$, since $\varepsilon_{r-1}=1$ and the last $0$ of any other vector is followed by a $1$ and contributes to $\tau$.

\begin{example}[the fat line $\{y^2=0\}$ in small colength]\label{ex:pieces-v2}
For $n=2$, $\Hilb_2(\AAA^2,0)\cong\PP^1$, and the cells are $C_{(2)}=\{(y,x^2)\}$ and $C_{(1,1)}=\{(x+\gamma y,\,y^2):\gamma\in\kk\}\cong\AAA^1$. In $C_{(1,1)}$ the piece $\varepsilon=(1,1)$ is $\{\gamma=0\}=\{(x,y^2)\}$, with $j_x=2$, and the piece $(0,1)$ is $\{\gamma\ne0\}\cong\GG_m$, with $j_x=1$. The contact strata are $\Hilb_2(C_{0,2},0)_{j_x=1}=\{(y,x^2)\}\sqcup\GG_m\cong\AAA^1$ and $\Hilb_2(C_{0,2},0)_{j_x=2}=\{(x,y^2)\}$. Neither is a union of cells. For $n=3$ the cells with at most two rows are $C_{(3)}=\{(y,x^3)\}$ and $C_{(2,1)}=\{(xy,\,y^2,\,x^2+c_0y):c_0\in\kk\}\cong\AAA^1$, the latter being the fibre $\cA_y(\mm,2)$ over $J=\mm$. Here $j_x(\mm)=1$ and $\ell(\bar c)=c_0$, so $\{c_0=0\}$ is the piece $(1,1)$, with $j_x=2$, and $\{c_0\ne0\}\cong\GG_m$ is the piece $(0,1)$, with $j_x=1$. For $n=4$ the cells $(4)$, $(3,1)$ and $(2,2)$ contribute $z$, $z^2+(\LL-1)z$ and $\LL z^2+\LL(\LL-1)z$ to the coefficient of $Q^4$. In the cell $(2,2)$ the base is $J=(y,x^2)$, the fibre is $\cA_y((y,x^2),2)=R/(y,x^2)\cong\AAA^2$ with coordinates $c=c_0+c_1x$, and $\ell=c_0$. Altogether
\[
g_2(Q,z)=1+Qz+Q^2(z^2+\LL z)+Q^3(z^2+\LL z)+Q^4\big((1+\LL)z^2+\LL^2z\big)+\cdots,
\]
which agrees with the expansion of the formula of \cref{ex:g2}.

In general, a cell $C_a$ with $r\ge2$ rows meets the contact stratum $\Hilb_n(C_{0,v},0)_{j_x=r}$, in the piece $\varepsilon=(1,\dots,1)$, and also strata $\Hilb_n(C_{0,v},0)_{j_x=k}$ with $k<r$, obtained by choosing $\varepsilon_{r-2}=0$, by \cref{thm:torus-fatline}(a),(c). So the contact length is not constant on the cells of positive dimension, and the $x$-chain of \cref{thm:B}, whose base needs constant contact length, cannot be run over the cells directly. The pieces $C_{a,\varepsilon}$ are the correct base.
\end{example}

\subsection{All fat nodes, and the motive}\label{subsec:K0}
It remains to run the $x$-chain over the pieces of the fat line. For $I\in\Hilb_n(C_{u,v},0)$ put $I^{(m)}:=(I:x^{u-m})$ and $j_m:=j_x(I^{(m)})$ for $0\le m\le u$, so that $I^{(u)}=I$ and $I^{(0)}=(I:x^u)\in\Hilb(\{y^v=0\},0)$ by \cref{thm:A}(d).

\begin{theorem}\label{thm:torus-all}
Let $\kk$ be any field and $u,v,n\ge0$.
\begin{enumerate}[label=(\alph*)]
\item The non-empty level sets of $I\mapsto\big(a(I^{(0)}),\varepsilon(I^{(0)});j_1,\dots,j_u\big)$ on $\Hilb_n(C_{u,v},0)$ are locally closed subvarieties isomorphic to $\AAA^d\times\GG_m^\tau$, with $d=\delta(a,\varepsilon)+j_0+\dots+j_{u-1}$ and $\tau=\tau(a,\varepsilon)$, where $j_0:=|\varepsilon|=j_x(I^{(0)})$. The data that occur are the pairs $(a,\varepsilon)$ of \cref{thm:torus-fatline}(a) with at most $v$ rows together with integers $j_0\le j_1\le\dots\le j_u$; the corresponding level set has $n=|a|+j_1+\dots+j_u$ and $j_x(I)=j_u$. The empty partition is allowed and corresponds to $I^{(0)}=R$, with $\delta=\tau=j_0=0$.
\item In $\Kvar{}[z][[Q]]$,
\begin{gather*}
\sum_{n,j}[\Hilb_n(C_{u,v},0)_{j_x=j}]\,Q^nz^j=\frac{1}{(Qz;Q\LL)_u}\sum_{k=0}^{v}\qbin{v}{k}{Q\LL}\frac{\big(Q(Q\LL)^uz\big)^k}{\big(Q(Q\LL)^{v-k};Q\LL\big)_k},\\  \sum_n[\Hilb_n(C_{u,v},0)]\,Q^n=\frac{C_{u,v}(Q\LL,Q)}{(Q;Q\LL)_{u,v}}.
\end{gather*}
In particular $[\Hilb_n(C_{u,v},0)]=M_n(\LL)\in\ZZ[\LL]$, with $M_n$ the coefficient of $Q^n$ in $F_{u,v}(Q\LL,Q)$.
\end{enumerate}
\end{theorem}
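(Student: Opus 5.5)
The plan is to run the $x$-chain of \cref{thm:B} over the pieces $C_{a,\varepsilon}$ of the fat line, using the family version \cref{prop:family} at each step, and then to sum the classes. I would prove (a) by induction on $u$, with the statement at $u=0$ being \cref{thm:torus-fatline}. There the level sets are exactly the pieces $C_{a,\varepsilon}$ with $r(a)\le v$; they are isomorphic to $\AAA^{\delta}\times\GG_m^{\tau}$ by part (d), and the contact length $j_0=|\varepsilon|$ is constant on them.

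\emph{Inductive step.} Let $\Sigma\subseteq\Hilb_{n'}(C_{u-1,v},0)$ be a level set of the data $(a,\varepsilon;j_1,\dots,j_{u-1})$. Since $(I:x)^{(m)}=I^{(m)}$ for $m<u$, the data of $I$ restricted to these coordinates are the data of $(I:x)$. By induction $\Sigma\cong\AAA^{d'}\times\GG_m^{\tau}$, and on it the contact length $j_x=j_{u-1}$ is constant. Fix $j_u\ge j_{u-1}$. By \cref{thm:A}(c),(d), the level set of the full data $(a,\varepsilon;j_1,\dots,j_u)$ in $\Hilb_n(C_{u,v},0)$ is exactly the set $F_\Sigma$ of \cref{prop:family}, with $j=j_u$ and $j'=j_{u-1}$. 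It is nonempty precisely when $j_u\ge j_{u-1}$, and $n=n'+j_u$. By \cref{prop:family}, $F_\Sigma$ is locally closed and isomorphic to $\cA_\Sigma$, which is a torsor under the rank-$j_{u-1}$ bundle $\ker X$ over $\Sigma$. To get a global product I would copy the argument of \cref{thm:torus-fatline}(d). Since $\Sigma$ is affine of the form $\Spec\kk[t,s^{\pm1}]$, Swan's theorem makes $\ker X$ trivial, and the vanishing of $H^1$ on an affine scheme gives a global section of the torsor. Hence $F_\Sigma\cong\Sigma\times\AAA^{j_{u-1}}$, and $d=d'+j_{u-1}$ accumulates to $\delta+j_0+\dots+j_{u-1}$. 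The remaining bookkeeping then follows: $j_x(I)=j_u$, and $n=|a|+j_1+\dots+j_u$ by iterating $\col I=\col(I:x)+j_x(I)$. The level sets are pairwise disjoint and cover $\Hilb_n(C_{u,v},0)$.

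For (b), the first formula follows directly. By \cref{cor:gv-pieces}, $g_v(Q,z)$ equals the closed form in $\Kvar{}[z][[Q]]$. Moreover \cref{thm:B} already holds in $\Kvar{}[z][[Q]]$, since its proof uses only \cref{prop:family}. Combining the two gives $G^x_{u,v}=g_v(Q,(Q\LL)^uz)/(Qz;Q\LL)_u$ exactly. Alternatively, one can sum the classes $\LL^{d}(\LL-1)^\tau Q^nz^{j_u}$ from (a), which reproduces the same recursion. For the second formula, set $z=1$ and apply \cref{prop:symmetric} at $q=Q\LL$, as an identity in $\ZZ[\LL][[Q]]$ mapped into $\Kvar{}[[Q]]$. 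This gives $(Q;Q\LL)_v\,g_v(Q,(Q\LL)^u)=C_{u,v}(Q\LL,Q)$ with no division by $\LL$. The coefficientwise statement $[\Hilb_n]=M_n(\LL)$ is then immediate.

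The main obstacle is to make sure that no step secretly uses the localization $\Lambda$. The exactness in $\Kvar$ rests entirely on \cref{cor:gv-pieces}, which is already established. The only delicate point in (a) is the global triviality needed for the isomorphism with $\AAA^d\times\GG_m^\tau$, as opposed to the equality of classes. For that I rely on the Swan/$H^1$ argument, which applies because each inductive base is affine of the required form.
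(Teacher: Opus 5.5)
Your proposal is correct and follows essentially the same route as the paper. You induct along the $x$-chain starting from the pieces $C_{a,\varepsilon}$ of \cref{thm:torus-fatline}, identify each new level set with $F_\Sigma$ of \cref{prop:family}, trivialize the torsor by Swan's theorem and the vanishing of $H^1$, and deduce (b) from \cref{thm:B}, \cref{cor:gv-pieces} and \cref{prop:symmetric}; the one difference is that the paper proves local closedness directly from the ranks of $X^b$, $X^uY^i$ and $X^{u+1}Y^i$, while you obtain it inductively from \cref{prop:family}(a), which is equally valid.
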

\begin{proof}
(a) The level sets are locally closed, because the data are determined by ranks: by \eqref{eq:rankdict} the ranks of $X^b$, $b\le u+1$, record the colengths and contact lengths of the $I^{(m)}$, and since $R/I^{(0)}\cong X^uV$ (\cref{lem:rankdict}), the ranks of $X^uY^i$ and $X^{u+1}Y^i$ are those of $Y^i$ and $XY^i$ on $R/I^{(0)}$, which determine $a(I^{(0)})$ and $\varepsilon(I^{(0)})$. Since $I^{(m-1)}=(I^{(m)}:x)$, \cref{thm:A}(d) shows that $I^{(m)}\in\Hilb(C_{m,v},0)$ if and only if $I^{(m-1)}\in\Hilb(C_{m-1,v},0)$. By induction on $m$, the set of ideals $I^{(m)}$ with prescribed data $(a,\varepsilon;j_1,\dots,j_m)$ is $F_S$ in the notation of \cref{prop:family}, where $S$ is the set of ideals $I^{(m-1)}$ with data $(a,\varepsilon;j_1,\dots,j_{m-1})$, and $j=j_m$, $j'=j_{m-1}$. It is non-empty if and only if $j_m\ge j_{m-1}$ (\cref{thm:A}(a)), and by \cref{prop:family} it is an affine bundle of rank $j_{m-1}$ over $S\cong\AAA^{d'}\times\GG_m^{\tau'}$, which is trivial by Swan's theorem \cite{Swan} and the vanishing of $H^1$ on affine schemes. This proves (a).

(b) By \cref{thm:B}, the left-hand side of the first identity is $g_v(Q,(Q\LL)^uz)/(Qz;Q\LL)_u$. Substituting $z\mapsto(Q\LL)^uz$ in the exact closed form of \cref{cor:gv-pieces} gives the first identity. Setting $z=1$ and using \cref{prop:symmetric} gives the second. Finally, by (a) the class $[\Hilb_n(C_{u,v},0)]$ is a sum of terms $\LL^d(\LL-1)^\tau$, so it lies in $\ZZ[\LL]$.
\end{proof}

\begin{corollary}\label{cor:pieces-node}
Let $\mathcal D_n$ be the set of data $(a,\varepsilon;j_1,\dots,j_u)$ of \cref{thm:torus-all}(a) with $|a|+j_1+\dots+j_u=n$, and write $d$ and $\tau$ for the exponents attached to a datum. Then
\[
M_n(\LL)=[\Hilb_n(C_{u,v},0)]=\sum_{\mathcal D_n}\LL^{d}(\LL-1)^{\tau}.
\]
Consequently $M_n(1+s)\in\NN[s]$, and $M_n(1)$ is the number of data with $\tau=0$. These are exactly the level sets containing a monomial ideal, and each of them contains exactly one. Hence $M_n(1)$ is the number of monomial ideals in $\Hilb_n(C_{u,v},0)$, that is, the number of partitions of $n$ not containing the box $(u,v)$.
\end{corollary}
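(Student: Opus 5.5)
The first formula comes directly from summing classes over the stratification of \cref{thm:torus-all}(a). The level sets indexed by $\mathcal D_n$ are locally closed, pairwise disjoint, and cover $\Hilb_n(C_{u,v},0)$. Each has class $\LL^d(\LL-1)^\tau$. So additivity in $\Kvar$, together with \cref{thm:torus-all}(b) identifying the total class with $M_n(\LL)$, gives $M_n(\LL)=\sum_{\mathcal D_n}\LL^d(\LL-1)^\tau$. Substituting $\LL=1+s$ turns each term into $(1+s)^ds^\tau\in\NN[s]$, hence $M_n(1+s)\in\NN[s]$. At $s=0$ a term survives exactly when $\tau=0$, and then it contributes $1$. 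Thus $M_n(1)=\#\{\text{data with }\tau=0\}$.

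The substantive step is to match the data with $\tau=0$ with monomial ideals, one to one. I would use the torus action of \cref{subsec:torus}, working over an infinite extension of $\kk$ if necessary; this is harmless because the data and the counts do not depend on the field.

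All the statistics are ranks of monomial operators $X^bY^i$. These ranks are $\mathbf T$-invariant, so every level set is $\mathbf T$-stable. A monomial ideal $I_\lambda$ has all its colon ideals monomial, and monomial ideals are split (\cref{def:split}). Hence the jump vector of $I_\lambda^{(0)}$ is $(1,\dots,1)$, which gives $\tau=0$. So every monomial ideal lies in a level set with $\tau=0$.

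Conversely, a level set with $\tau=0$ is isomorphic to $\AAA^d$ and carries a $\mathbf T$-action. I expect the main obstacle to be the claim that this set contains exactly one fixed point. An abstract torus action on $\AAA^d$ need not be linear, so I would avoid appealing to such a general fact. Instead I would count directly, by induction along the two chains. The key observation is that the fibres in \cref{prop:family} and \cref{prop:fibre} are affine spaces whose $\mathbf T$-fixed tails are forced. Over a monomial base $J$, the equation $X\bar c=-\bar y^{\,j}$ has exactly one torus-eigen solution, namely the monomial one with $\bar c=0$ in the appropriate graded piece, because $y^j\in J+(x)$ makes $y^j$ equal to a monomial of $J$ or to $x$ times a standard monomial. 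The same holds for the $y$-fibres. Moreover, $\ell=0$ at that solution, so the monomial ideal falls in the split piece. This shows that each level set with $\tau=0$ contains exactly one monomial ideal, and that level sets with $\tau>0$ contain none, since there $\ell\neq0$ is forced.

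An alternative route, which avoids the fixed-point analysis, is a counting argument. By \cref{cor:euler} together with \cref{lem:fixed}, $M_n(1)$ equals the number of partitions of $n$ avoiding the box $(u,v)$, which by \cref{lem:fixed} is the number of monomial ideals in $\Hilb_n(C_{u,v},0)$. Each monomial ideal lies in some $\tau=0$ level set by the splitness argument above. Distinct monomial ideals lie in distinct level sets, because the data $(a(I^{(0)}),\varepsilon;j_1,\dots,j_u)$ recover the row lengths of $\lambda$: the rows of $I^{(0)}$ together with the successive contact lengths determine the diagram. Injectivity plus equality of cardinalities then gives the bijection without any fixed-point analysis. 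I would present this counting route as the primary proof and keep the fixed-point description as a remark.
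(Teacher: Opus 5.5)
Your argument is correct. The first half, additivity over the level sets of \cref{thm:torus-all}(a) followed by the substitution $\LL=1+s$, is exactly the paper's. For matching the $\tau=0$ data with monomial ideals you take a different route.

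The paper argues directly. Given a datum with $\varepsilon=(1,\dots,1)$, it takes the monomial ideal $I^{(0)}$ with rows $a$ and sets $I^{(m)}=xI^{(m-1)}+(y^{j_m})$. This is the modification of \cref{thm:A} with tail $\bar c=0$, which is allowed because $y^{j_m}\in I^{(m-1)}$ when $j_m\ge j_{m-1}$. So every $\tau=0$ level set contains a monomial ideal. Uniqueness holds because the datum pins down the monomial $I^{(0)}$, and a monomial $I^{(m)}$ with contact length $j_m$ contains $y^{j_m}$, which forces its tail to be $0$. In this way $M_n(1)=\#\{\text{monomial ideals}\}$ is an output of the argument. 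That is why the paper can say the corollary reproves the first equality of \cref{cor:euler} over every field, without the Bia\l{}ynicki-Birula theorem.

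Your counting route instead imports $M_n(1)=\#\{\lambda:(u,v)\notin\lambda\}$ from \cref{cor:euler} and proves only injectivity. This is not circular, since \cref{cor:euler} depends only on \cref{thm:torus-all}. But the last sentence of the corollary then merely restates your input, and the proof rests on a fixed-point theorem over $\CC$, or on the bijection following \cref{cor:euler}.

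Your injectivity check already contains the inverse map. From $a$ and the jumps $j_m-j_{m-1}$ you can write down $\lambda$: rows $u+a_b$, then $j_m-j_{m-1}$ rows of length $u-m+1$ for $m=1,\dots,u$. Then $|\lambda|=|a|+j_1+\dots+j_u$, $\lambda$ avoids $(u,v)$ because $a$ has at most $v$ rows, and one checks directly that $I_\lambda$ has the given datum. This gives surjectivity at no cost and makes the cardinality input unnecessary.

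The torus sketch can be dropped, and its justification is muddled as written. For monomial $J$, the condition $y^j\in J+(x)$ simply means $y^j\in J$. The clean reason the fixed tail is forced is that a monomial ideal with contact length $j$ contains $y^j$, so its tail is $0$.
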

\begin{proof}
The identity is \cref{thm:torus-all}(a) and the additivity of classes, and the data with $\tau=0$ are those with $\varepsilon=(1,\dots,1)$, as noted after \cref{cor:gv-pieces}. A monomial ideal $I$ has $(I:x^by^i)$ monomial for all $b,i$, hence split, so it lies in a level set with $\tau=0$. Conversely, let a datum with $\varepsilon=(1,\dots,1)$ be given. Let $I^{(0)}$ be the monomial ideal with rows $a$, and put $I^{(m)}:=xI^{(m-1)}+(y^{j_m})$ for $m=1,\dots,u$. Since $j_m\ge j_{m-1}$ we have $y^{j_m}\in I^{(m-1)}$, so $I^{(m)}$ is the ideal of \cref{thm:A} with tail $\bar c=0$, and it is monomial. Thus the level set contains the monomial ideal $I^{(u)}$. It contains no other, because the datum determines $I^{(0)}$, and a monomial ideal $I^{(m)}$ with $(I^{(m)}:x)=I^{(m-1)}$ and $j_x(I^{(m)})=j_m$ contains $y^{j_m}$ and therefore has tail $0$. The last assertion is \cref{lem:fixed}.
\end{proof}

\Cref{thm:torus-all} contains \cref{thmA,thmB}. \Cref{cor:pieces-node} recovers the first statement of \cref{cor:euler} without the fixed-point theorem of \cite{BB}, and it shows that the positivity of \cref{cor:positivity} holds in the stronger form $M_n(\LL)\in\NN[\LL-1]$. The theorem also extends Luan's \cite[Thm.~1.5]{Luan}, which gives stratifications into pieces $\GG_m^{m_1}\times\AAA^{m_2}$ for $u\in\{v,v-1,v-2\}$, to all $(u,v)$; our pieces are different from Luan's diagonal strata \cite[Thm.~1.5]{Luan}.

\subsection{The torus coordinates}\label{subsec:gluing}
This subsection explains the meaning of the torus factors; nothing in it is used elsewhere. Both contact lengths of $I$ are computed from $I+(x)$ and $I+(y)$, which contain $xy$, and so is the jump of \cref{prop:jump}. So everything above depends only on the image $I+(xy)$ of $I$ in the reduced node, and the ideals of the reduced node have a transparent classification.

\begin{lemma}[ideals of the node]\label{lem:node}
Let $J\subsetneq R$ be an ideal of finite colength with $xy\in J$, and put $i=j_y(J)$, $j=j_x(J)$. Then exactly one of the following holds:
\begin{enumerate}[label=(\alph*)]
\item $J=(xy,x^i,y^j)$, and $\col J=i+j-1$.
\item $J=(xy,\,x^i+\gamma y^j)$ for a unique $\gamma\in\kk^\times$, and $\col J=i+j$.
\end{enumerate}
\end{lemma}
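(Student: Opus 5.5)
The plan is to work in the reduced node $A:=R/(xy)$ and split according to the jump of \cref{prop:jump}. Since $xy\in J$, we have $J+(xy)=J$. Moreover $J+(x)=(x,y^j)$ and $J+(y)=(y,x^i)$ with $i,j\ge1$, because $J\ne R$. By \cref{prop:jump}, $J$ is split exactly when $y^j\in J$, which happens exactly when $x^i\in J$. So the dichotomy (a)/(b) will be the split/glued dichotomy, and the two cases are automatically exclusive once we know that in case (b) neither $x^i$ nor $y^j$ lies in $J$.

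For the split case, $J\supseteq(xy,x^i,y^j)=:K$. Every element of $A$ is uniquely a constant plus a power series in $x$ without constant term plus one in $y$ without constant term. Hence $A/K$ has basis $1,x,\dots,x^{i-1},y,\dots,y^{j-1}$, and $\col K=i+j-1$. To show $J=K$, take $f\in J$ and write $f\equiv c+p(x)+q(y)$ modulo $xy$, with $p,q$ of degree less than $i$ and $j$ after reducing modulo $K$. Reducing modulo $(x)$ gives $c+q(y)\in(y^j)$ in $\kk[[y]]$, and reducing modulo $(y)$ gives $c+p(x)\in(x^i)$, so $c=p=q=0$. Thus $J=K$ and $\col J=i+j-1$.

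For the glued case, $y^j\notin J$ but $y^j\in J+(x)$, so some $y^j+xh\in J$. Modulo $xy$ we may take $h=h(x)$. Multiplying by $x$ and using $xy\in J$ gives $x^2h(x)\in J$. Since $J+(y)=(y,x^i)$, any pure-$x$ series in $J$ has $x$-order at least $i$, so $xh$ has order at least $i-1$ and $xh=x^{i-1}\cdot$(series). I want to pin this down to order exactly $i$. If $\mathrm{ord}(xh)\ge i$ were impossible to realize as $\gamma x^i\cdot$unit, we would need to argue as follows: take $g=x^i+\dots+\beta y+\dots\in J$ lifting $x^i\in J+(y)$, and combine it with $y^j+xh$. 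Multiplying by $x$ and by $y$ shows that $x^{i+1}$ and $y^{j+1}$ lie in $J$, which bounds the series. Then I would normalize: any element of $J$ modulo $(xy,x^{i+1},y^{j+1})$ is a combination of $x^a$ ($a\le i$) and $y^b$ ($b\le j$). The constraints $J+(x)=(x,y^j)$ and $J+(y)=(y,x^i)$ force the relevant elements to be $\alpha x^i+\beta y^j$ with $\alpha,\beta\ne0$, since a single $x^i$ or $y^j$ would make $J$ split. This produces the generator $x^i+\gamma y^j$ with $\gamma\ne0$.

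The colength count then goes as follows: $(xy,x^i+\gamma y^j)$ contains $x^{i+1}$ and $y^{j+1}$, and a quotient basis is $1,x,\dots,x^i$ together with $y,\dots,y^{j-1}$ (or symmetrically), giving $i+j$. Equality $J=(xy,x^i+\gamma y^j)$ follows from the same unique-decomposition argument used in the split case. Uniqueness of $\gamma$ holds because two distinct values would put $y^j\in J$. The main obstacle is the normalization step in the glued case: showing that the tail of the element $y^j+xh$ is exactly a nonzero scalar times $x^i$ modulo $J\cap(x)$-terms. I would handle this by first proving $x^{i+1},y^{j+1}\in J$ and then working inside the finite-dimensional space spanned by $x^a,y^b$.
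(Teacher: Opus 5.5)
Your proposal is correct and, once the detour about the order of $xh$ is dropped, it is essentially the paper's proof, which runs as follows. Show $J\subseteq(xy,x^i,y^j)$ by reducing modulo $x$ and modulo $y$. Then show $x^{i+1},y^{j+1}\in J$ by multiplying lifts of $x^i$ and $y^j$ by $x$ and $y$. Finally, locate $J$ inside the two-dimensional quotient spanned by the classes of $x^i$ and $y^j$.

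The only difference is how the dichotomy arises. You import the split/glued dichotomy from \cref{prop:jump}. The paper reads it off directly from whether the image of $J$ in that two-dimensional space is the whole plane or a line. In the line case both coordinates are forced to be nonzero, because the image contains vectors of the form $(1,\ast)$ and $(\ast,1)$.
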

\begin{proof}
Modulo $(xy)$ every $f\in R$ is uniquely $f\equiv f_0+\sum_{a\ge1}\alpha_ax^a+\sum_{b\ge1}\beta_by^b$. If $f\in J$, then $f(0,y)\in J+(x)=(x,y^j)$ forces $f_0=0$ and $\beta_b=0$ for $b<j$, and symmetrically $\alpha_a=0$ for $a<i$. Hence $J\subseteq M:=(xy,x^i,y^j)$. Since $x^i\in J+(y)$, there is $g\in J$ with $g\equiv x^i+\sum_{a>i}\alpha'_ax^a+\sum_{b\ge j}\beta'_by^b$, and similarly $h\in J$ with $h\equiv y^j+\sum_{a\ge i}\alpha''_ax^a+\sum_{b>j}\beta''_by^b$. Now $xg\equiv x^{i+1}\cdot(\text{unit})$ modulo $(xy)$, so $x^{i+1}\in J$, and likewise $y^{j+1}\in J$. Thus $M':=(xy,x^{i+1},y^{j+1})\subseteq J\subseteq M$, and $M/M'$ has basis $x^i,y^j$. The subspace $U=J/M'$ contains the images $(1,\beta'_j)$ of $g$ and $(\alpha''_i,1)$ of $h$, so $U\ne0$. If $U=M/M'$ we are in case (a). Otherwise $\dim U=1$, so $\alpha''_i\beta'_j=1$ and $U=\kk\cdot(1,\gamma)$ with $\gamma=\beta'_j\ne0$. Then $J=M'+(x^i+\gamma y^j)=(xy,x^i+\gamma y^j)$, because $x(x^i+\gamma y^j)\equiv x^{i+1}$ and $y(x^i+\gamma y^j)\equiv\gamma y^{j+1}$ modulo $(xy)$. This is case (b).
\end{proof}

By \cref{prop:jump}, $I$ is split exactly when $I+(xy)$ is the monomial ideal of case (a), and glued exactly when it is of the form (b); in the glued case we call $\gamma(I):=\gamma$ the \emph{gluing parameter} of $I$. In the proof of \cref{prop:fibre}(b), the image of $I_{\bar c}$ in the reduced node is $(xy,y^{j'+1},x^a+\ell(\bar c)y^{j'})$, so the coordinate $\ell$ on the complement of the hyperplane is the gluing parameter of the new ideal, and \cref{prop:fibre}(d) says that over a glued $J$ with $a=j_y(J)$ the gluing parameter is transported unchanged, $\gamma(I_{\bar c})=\gamma(J)$. Thus a factor $\GG_m$ appears in $C_{a,\varepsilon}$ exactly at the steps $1\to0$, read from $I_{i+1}$ to $I_i$, its coordinate is the new gluing parameter, and a step $0\to1$ forgets the parameter. The torus $\mathbf T$ acts on $\gamma(I)$ through the character $(\alpha,\beta)\mapsto\alpha^{-j_y(I)}\beta^{j_x(I)}$, so the pieces with $\tau\ge1$ contain no fixed points, in accordance with \cref{cor:pieces-node}.

For the reduced node itself this is the chain of projective lines of \cref{ex:nodeA}(3). The ideals of colength $n$ containing $xy$ are the $n$ monomial ideals $(xy,x^i,y^{n+1-i})$, which are the $\mathbf T$-fixed points, and the $n-1$ families $(xy,x^i+\gamma y^{n-i})$, $\gamma\in\kk^\times$, which are the open $\mathbf T$-orbits; as $\gamma\to0$ or $\gamma\to\infty$ the glued ideal degenerates to one of the two neighbouring monomial ideals. The class $n+(n-1)(\LL-1)=1+(n-1)\LL$ is the one computed in \cref{ex:nodeA}(3).

\section{Denominators and positivity of the node function}\label{sec:positivity}

All statements in this section concern the explicit rational function
\[
F_{u,v}(q,Q)=\frac{C_{u,v}(q,Q)}{(Q;q)_{u,v}}
\]
of \eqref{eq:F}, independently of its geometric and link-theoretic interpretations. These statements constitute \cref{thmE}, and they provide the node evidence for the two normalization conjectures of \cref{sec:conjecture}. Full normalization gives the signed polynomial $C_{u,v}$, while \cref{prop:positive-max} proves positivity after normalization at either component and determines the largest positive partial normalization. In short, the node already exhibits the phenomenon that distinguishes links from knots. The full numerator is a signed polynomial, while normalization at a single component gives an infinite positive series.

The starting point is \cref{cor:positive-C}, which we restate with the roles of $u$ and $v$ in the form used below. For all $u,v\ge0$,
\begin{equation}\label{eq:positive-partial}
\frac{C_{u,v}(q,Q)}{(Q;q)_u}=\sum_{k=0}^{u}\pbin{u}{k}\,\frac{Q^kq^{vk}}{(Qq^{u-k};q)_k}\in\NN[q][[Q]],
\qquad
C_{u,v}(q,Q)=\sum_{k=0}^{u}\pbin{u}{k}(Q;q)_{u-k}\,Q^kq^{vk}.
\end{equation}

\begin{proposition}[exact denominator]\label{prop:minimal}
For $u,v>0$, the reduced denominator of $F_{u,v}$ in $\QQ(q)(Q)$, normalized to have constant term one in $Q$, is exactly $(Q;q)_{u,v}$.
\end{proposition}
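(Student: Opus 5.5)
Assume $u\le v$; this loses nothing because $C_{u,v}=C_{v,u}$ and $(Q;q)_{u,v}=(Q;q)_{v,u}$. The plan is to show that the numerator $C_{u,v}(q,Q)$ has no common factor with the denominator in $\QQ(q)[Q]$.

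\emph{Reduction to non-vanishing.} The denominator $(Q;q)_{u,v}=\prod_{j<u}(1-q^jQ)\prod_{j<v}(1-q^jQ)$ is a product of the linear factors $1-q^jQ$ with $0\le j<v$. These factors are pairwise non-associate irreducibles of $\QQ(q)[Q]$, and the ones with $j<u$ occur squared. Since $C_{u,v}(q,0)=1$, the fraction is already normalized to have constant term one. So it suffices to show that no factor $1-q^jQ$ with $0\le j<v$ divides $C_{u,v}$, that is,
\[
C_{u,v}(q,q^{-j})\neq0\quad\text{in }\ZZ[q,q^{-1}]\qquad(0\le j<v).
\]
Once this holds, nothing cancels, including the squared factors, and the reduced denominator is exactly $(Q;q)_{u,v}$.

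\emph{Which expansion to use.} I would not use the rank expansion $\sum_mR_m(u,v;q)Q^m$. At $Q=q^{-j}$ its lowest-order terms come from $m=j$ and $m=j+1$, and they have opposite signs, so they may cancel. Instead I would use the positive form \eqref{eq:positive-partial},
\[
C_{u,v}(q,Q)=\sum_{k=0}^{u}\pbin{u}{k}(Q;q)_{u-k}\,Q^kq^{vk}.
\]
At $Q=q^{-j}$ the factor $(q^{-j};q)_{u-k}=\prod_{i<u-k}(1-q^{i-j})$ vanishes exactly when $u-k>j$. Hence only the indices $k\ge k_0:=\max(0,u-j)$ survive, and for these $u-k\le j$.

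\emph{Lowest-degree term.} For each surviving $k$, every factor $1-q^{i-j}$ has $i-j<0$, so its lowest-degree term is $-q^{i-j}$. The Gaussian binomial $\pbin{u}{k}$ has constant term $1$ (\cref{lem:pascal}). So the $k$-th term has lowest $q$-degree
\[
e(k)=(v-j)k-\sum_{i<u-k}(j-i),
\]
with coefficient $(-1)^{u-k}$. Passing from $k$ to $k+1$ changes this degree by $e(k+1)-e(k)=(v-j)+(j-u+k+1)=v-u+k+1$. This is positive because $u\le v$. Therefore the minimum of $e$ is attained only at $k=k_0$. The lowest-degree term of $C_{u,v}(q,q^{-j})$ is then $\pm q^{e(k_0)}$, which cannot cancel, so $C_{u,v}(q,q^{-j})\ne0$.

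The only delicate point is the choice of expansion. In the rank expansion the two minimal-degree contributions cancel, whereas the positive expansion gives a unique minimal term. Checking the increment computation, in particular the boundary case $k_0=0$ when $j\ge u$, is the step where I would be most careful.
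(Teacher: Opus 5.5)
Your proof is correct, but it takes a different route from the paper's.

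\textbf{The paper's route.} The paper stays with the rank expansion and evaluates at a real number $q>1$. There each factor $(q^u-q^i)(q^v-q^i)/(q^m-q^i)$ with $i<m\le\min(u,v)$ is positive, so every $R_m(u,v;q)>0$. With $Q=q^{-j}>0$ this gives $C_{u,v}(q,q^{-j})\ge1$, so no factor $1-q^jQ$ divides the numerator. The cancellation you correctly detect between the terms $m=j$ and $m=j+1$ only obstructs a lowest-degree argument for that expansion. It does not obstruct using the expansion itself.

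\textbf{Your route.} You give a valuation argument in $\ZZ[q,q^{-1}]$. After reducing to $u\le v$, the expansion \eqref{eq:positive-partial} at $Q=q^{-j}$ keeps only the terms $k\ge k_0$. Their lowest degrees increase strictly, since $e(k+1)-e(k)=v-u+k+1>0$. I checked the vanishing criterion $u-k>j$, the exponent $e(k)$, the increment, and the boundary cases $k_0=0$ and $u-k=0$; they are all right. For example, for $u=v=2$, $j=1$ you get $C_{2,2}(q,q^{-1})=2q^2-1$, as predicted.

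\textbf{Comparison.} The paper's proof is shorter. It needs neither the symmetry reduction nor the $q$-binomial identity behind \eqref{eq:positive-partial}. Yours yields more: it gives the exact lowest term $(-1)^{u-k_0}q^{e(k_0)}$ of $C_{u,v}(q,q^{-j})$. Because that coefficient is $\pm1$, the same non-divisibility holds over $\FF_p(q)$ for every prime $p$, where an argument by real positivity says nothing.

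A minor point: \cref{lem:pascal} states monicity, not constant term $1$. The constant term $1$ of $\pbin{u}{k}$ follows by setting $q=0$ in the product formula.
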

\begin{proof}
Its only possible factors are the $1-q^jQ$. If such a factor divided $C_{u,v}$, then $C_{u,v}(q,q^{-j})$ would vanish identically. For every real $q>1$, every coefficient $R_m(u,v;q)$ in the rank expansion of $C_{u,v}$ is positive, since each factor $(q^u-q^i)(q^v-q^i)/(q^m-q^i)$ with $i<m\le\min(u,v)$ is, and the constant coefficient is one. Therefore $C_{u,v}(q,q^{-j})>0$, a contradiction.
\end{proof}

\begin{proposition}[largest positive partial normalization]\label{prop:positive-max}
Suppose $1\le u\le v$. Let $A(q,Q)$ be a product of factors $1-q^jQ$, with multiplicities at most those in $(Q;q)_{u,v}$, and with constant term one. Then
\begin{equation}\label{eq:positive-iff}
A(q,Q)\,F_{u,v}(q,Q)\in\NN[q][[Q]]
\quad\Longleftrightarrow\quad A\mid (Q;q)_v .
\end{equation}
Thus $(Q;q)_v$ is the unique largest factor of the natural denominator that can be cleared while preserving coefficientwise positivity. Clearing $(Q;q)_v$ leaves an infinite series, whereas clearing $(Q;q)_{u,v}$ leaves the signed polynomial $C_{u,v}$.
\end{proposition}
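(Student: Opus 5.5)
The plan is to prove the two directions of \eqref{eq:positive-iff} separately. Throughout I use the factorization $(Q;q)_{u,v}=(Q;q)_u(Q;q)_v$. Since $u\le v$, a factor $1-q^iQ$ occurs in $(Q;q)_{u,v}$ twice if $i<u$, once if $u\le i<v$, and not at all if $i\ge v$. So the hypothesis on multiplicities says exactly this: $A\nmid(Q;q)_v$ if and only if $A$ contains $(1-q^jQ)^2$ for some $j<u$.

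\emph{The direction $\Leftarrow$.} Suppose $A\mid(Q;q)_v$ and write $(Q;q)_v=AB$, where $B$ is again a product of factors $1-q^iQ$. Then
\[
AF_{u,v}=\frac{C_{u,v}}{(Q;q)_u}\cdot\frac1B .
\]
The first factor lies in $\NN[q][[Q]]$ by \eqref{eq:positive-partial}, which is \cref{cor:positive-C} with $u$ and $v$ exchanged. The second is a product of geometric series $\sum_N q^{iN}Q^N$. Hence $AF_{u,v}\in\NN[q][[Q]]$.

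The same formula with $A=(Q;q)_v$ shows that clearing $(Q;q)_v$ leaves $C_{u,v}/(Q;q)_u$. This is an infinite series: by \cref{prop:minimal} the factor $(Q;q)_u$ does not cancel against $C_{u,v}$. Clearing all of $(Q;q)_{u,v}$ leaves $C_{u,v}$, which is signed, since its $Q^1$ coefficient is $R_1(u,v;q)=-(1-q^u)(1-q^v)/(1-q)$. These remarks give the final two sentences of the statement.

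\emph{The direction $\Rightarrow$.} Suppose $A$ contains $(1-q^jQ)^2$ with $j<u$. Then $f:=AF_{u,v}=C_{u,v}\cdot A/(Q;q)_{u,v}$ has numerator divisible by $1-q^jQ$, with the two copies of that factor in the denominator cancelled. I want to show that $f$ has a coefficient not in $\NN[q]$.

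First I would treat $j=0$ by specializing $q=0$. Specialization at $q=0$ maps $\NN[q][[Q]]$ into $\NN[[Q]]$, so it suffices to find a negative coefficient there. At $q=0$ every factor $1-q^iQ$ becomes $1$ for $i\ge1$ and $1-Q$ for $i=0$. Moreover $R_1(u,v;0)=-1$, and $R_m(u,v;0)=0$ for $m\ge2$, because the factors with $1\le i<m$ vanish. Hence $C_{u,v}(0,Q)=1-Q$ and $F_{u,v}(0,Q)=1/(1-Q)$. If $A$ contains $(1-Q)^2$, then $f(0,Q)=1-Q$, which has a negative coefficient, a contradiction.

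For general $j$ I would run the same argument after the rescaling $Q=tq^{-j}$ and extract leading terms as $q\to0$. Under this rescaling each factor $1-q^iQ=1-q^{i-j}t$ behaves as $1$ for $i>j$, as $1-t$ for $i=j$, and as $-q^{i-j}t$ for $i<j$. The numerator $C_{u,v}(q,tq^{-j})$ I would control through the Gaussian expansion \eqref{eq:NP}, isolating its lowest $q$-order part as a polynomial in $t$. The expected conclusion is that the leading part of $f$ equals a monomial in $q$ times $(1-t)$ times a series in $t$ that visibly has a negative coefficient, mirroring the $j=0$ computation. Positivity would forbid this, because the leading $q$-coefficient of any element of $\NN[q]$ is positive.

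\emph{Main obstacle.} The rescaling step is where I expect the difficulty. The substitution $Q=tq^{-j}$ mixes the $q$-degrees of different $Q^N$ coefficients, so the lowest $q$-order across all $N$ need not be attained uniformly. If that fails, the fallback is a specialization to a real value $q>1$ combined with Pringsheim's theorem. With nonnegative real coefficients, $f(q,Q)$ would have to be positive and increasing on $(0,\rho)$. I would compare the behaviour of the rational function $f$ near $Q=q^{-j}$, where the cancelled double factor leaves a simple zero coming from $A$, with this monotonicity. The point $Q=q^{-j}$ lies inside $(0,\rho)$ when $j$ is the largest index still present in the denominator, and I would reduce to that case first by multiplying $f$ by geometric series $1/(1-q^iQ)$, which preserves positivity.
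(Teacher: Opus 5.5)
Your direction $\Leftarrow$, the multiplicity bookkeeping ($A\nmid(Q;q)_v$ iff $A$ contains $(1-q^jQ)^2$ for some $j<u$), the case $j=0$ via $q=0$, and the last two sentences of the statement are all correct. The gap is the direction $\Rightarrow$ for $1\le j<u$, which is left unproved, and neither of your routes would close it.

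\begin{itemize}
\item \emph{Rescaling.} Under $Q=tq^{-j}$, each factor $1/(1-q^{i-j}t)$ with $i<j$ expands as $\sum_N q^{(i-j)N}t^N$. So the $q$-orders are unbounded below and there is no well-defined leading part. Even where a leading part exists, it is controlled by the indices $i<j$, not by the missing factor at $j$.
\item \emph{Pringsheim.} The premise is false. Put $E:=(Q;q)_{u,v}/A$, so $f=C_{u,v}/E$. Since $A$ contains $1-q^jQ$ at most twice, both copies cancel and $f$ has no zero at $Q=q^{-j}$ coming from $A$. Moreover $C_{u,v}(q,q^{-j})>0$ for real $q>1$ (proof of \cref{prop:minimal}), so $f$ has neither a zero nor a pole there.
\item \emph{Reduction step.} Multiplying by $1/(1-q^iQ)$ adds poles rather than removing those of larger index, so it cannot make $j$ the largest index in the denominator.
\end{itemize}

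The missing idea is that the coefficient of $Q^1$ already suffices; your $q=0$ computation is exactly its $q^0$-part. Write $E=\prod_i(1-q^iQ)^{e_i}$. Then $1/E=1+Q\sum_ie_iq^i+O(Q^2)$, and $[Q]\,C_{u,v}=R_1(u,v;q)=(q^v-1)(1+q+\dots+q^{u-1})$. Hence
\[
[Q]\,\big(AF_{u,v}\big)=(q^v-1)(1+q+\dots+q^{u-1})+\sum_ie_iq^i .
\]
Since $v\ge u$, the part $q^v(1+\dots+q^{u-1})$ only involves $q^v,\dots,q^{v+u-1}$. So for $0\le j<u$ the coefficient of $q^j$ is $e_j-1$. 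If $A$ contains $(1-q^jQ)^2$, then $e_j=0$ and this coefficient is $-1$, contradicting $AF_{u,v}\in\NN[q][[Q]]$. This is the paper's argument, and it treats all $j<u$ at once with no analytic input.
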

\begin{proof}
Write $E=(Q;q)_{u,v}/A=\prod_j(1-q^jQ)^{e_j}$. From the rank expansion,
\[
[Q]\,C_{u,v}=R_1(u,v;q)=(q^v-1)\,[u]_q,\qquad [u]_q=1+q+\dots+q^{u-1},
\]
and $1/E=1+Q\sum_je_jq^j+O(Q^2)$. Consequently
\[
[Q]\,\frac{C_{u,v}}{E}=q^v[u]_q-[u]_q+\sum_je_jq^j .
\]
Since $v\ge u$, the monomials $q^j$ with $0\le j<u$ occur in this expression with coefficient $-1+e_j$, so nonnegativity forces $e_j\ge1$ for every $0\le j<u$. This is exactly $(Q;q)_u\mid E$, i.e.\ $A\mid (Q;q)_v$. Conversely, if $E=(Q;q)_uE'$, then \eqref{eq:positive-partial} shows that $C_{u,v}(q,Q)/(Q;q)_u$ is coefficientwise nonnegative, and $1/E'$ is a product of geometric series. Their product is $AF_{u,v}$.
\end{proof}

In particular $F_{u,v}\in\NN[q][[Q]]$, which at $q=Q\LL$ is the positivity of \cref{cor:positivity}, and at $q=\LL$ gives the coefficientwise nonnegativity of the Quot series of \cref{thm:node-main}. These are statements about the coefficients of an explicit series, not about the cohomology of the varieties.

\begin{example}\label{ex:11}
For $u=v=1$,
\[
F_{1,1}(q,Q)=\frac{1-Q+qQ}{(1-Q)^2},\qquad (1-Q)F_{1,1}=1+\frac{qQ}{1-Q}.
\]
The partial normalization is positive but infinite. The full numerator has a negative coefficient. In fact $F_{1,1}$ cannot be written as a quotient with a nonzero numerator in $\NN[q,Q]$ and a polynomial denominator in $\QQ[q,Q]$. Such a numerator would be divisible by the irreducible polynomial $1-Q+qQ$, which is coprime to $(1-Q)^2$. For $0<q<1$ it would then vanish at $Q=(1-q)^{-1}>0$, which is impossible for a nonzero polynomial with nonnegative coefficients.

On the geometric side, $F_{1,1}(\LL,Q)=\sum_m(1+m\LL)Q^m$ is the Quot series of the normalization of the node, and $F_{1,1}(Q\LL,Q)=1+\sum_{n\ge1}(1+(n-1)\LL)Q^n$ is the Hilbert series of the node (\cref{ex:nodeA}). This compares a chain of $n-1$ projective lines on the Hilbert side with the extension fibres $\{E\}\sqcup\GG_m$ over the lattices on the Quot side.
\end{example}

\section{The branchwise Hilb--Quot--HOMFLY conjecture}\label{sec:conjecture}

This section places \cref{thmA}(c) in a general conjectural framework. For a reduced plane curve germ with branches $g_1,\dots,g_\ell$ and ranks $a_1,\dots,a_\ell$ we compare three generating series, namely the Hilbert series of the thickened curve $\{\prod_ig_i^{a_i}=0\}$, the Quot series of the normalization module $E_{\bfA}$, and the bottom part of the row-colored HOMFLY homology of the link.

\Cref{conj:main} predicts that they agree after the substitutions dictated by the node. For the node, the Hilbert--Quot part is \cref{thmA}(c). The link interpretation also raises normalization questions that are invisible for knots. We formulate them as \cref{conj:full-normalization,conj:component-positivity} and verify them for the node using \cref{sec:positivity}. The geometric results of the paper do not depend on this section.

In this section $\kk=\CC$. Let $g=g_1\cdots g_\ell\in\CC[[x,y]]$ be reduced, with distinct irreducible factors, and put
\[
R_g=\CC[[x,y]]/(g),\qquad
\widetilde R_g=\prod_{i=1}^{\ell}\widetilde R_i,\qquad \widetilde R_i\simeq\CC[[u_i]].
\]
For $\bfA=(a_1,\dots,a_\ell)\in\ZZ_{>0}^{\ell}$ the two geometric objects are
\[
E_{\bfA}=\bigoplus_{i=1}^{\ell}\widetilde R_i^{\,a_i}\quad\text{as an $R_g$-module},\qquad
R_{g;\bfA}=\CC[[x,y]]/\Big(\prod_ig_i^{a_i}\Big),
\]
and the link $L_g$ of the germ has its $i$-th component colored by the one-row partition $(a_i)$. For the node, $g=xy$, $\bfA=(u,v)$, $E_{\bfA}=E_{u,v}$, $R_{g;\bfA}=\mathcal O_{C_{u,v},0}$, and $L_g$ is the Hopf link. Equal ranks recover the Hilb--Quot specialization of \cite[Conj.~1.12]{HJO}. All ranks equal to one recover the usual Hilb--Quot and bottom-row ORS predictions \cite{KT,ORS}.

The correspondence between thickened Hilbert schemes and row-colored link homology is already stated in \cite[Conj.~1.1 and \S1.2]{Luan}, with the branch multiplicities giving the row lengths. We use that existing colored ORS prediction. The additional conjectural identification here is with the normalization-module Quot series, allowing independent branch ranks. The colored stable-pair theorem of \cite[Thm.~1.2]{Maulik} is an earlier decategorified result with partition boundary data. It is distinct from the two-variable homological identity used here.

\subsection{Geometric series and their denominators}\label{subsec:geometric-conventions}
The comparison of geometry with link homology uses virtual weight polynomials, which are defined first, and the geometric generating series of the conjecture with their natural normalizations. The main point is that the reduced denominator of every Quot series is exactly the product of the branch factors (\cref{cor:general-denominator}).

To compare with link homology we need a numerical realization of classes. For a complex variety $Z$ let
\[
W_c(Z;q):=\sum_{i,w}(-1)^{i}\dim\mathrm{Gr}^W_wH^i_c(Z;\QQ)\,q^{w/2}\in\ZZ[q^{1/2}]
\]
be its compactly supported virtual weight polynomial, defined using Deligne's weight filtration \cite{DeligneIII}. It is additive and multiplicative, hence factors through $K_0(\mathrm{Var}_\CC)$, with $W_c(\AAA^1;q)=q$. For a variety whose class is a polynomial $M(\LL)$ it equals $M(q)$.

For a non-empty variety $Z$ of dimension $d$ the leading term of $W_c(Z;q)$ is $m\,q^d$, where $m>0$ is the number of top-dimensional irreducible components of $Z$. This holds because compactly supported cohomology has weights at most its cohomological degree, and $H^{2d}_c(Z)$ is pure of weight $2d$ \cite{DeligneIII}.

Put
\begin{align*}
Z^{\Quot}_{E_{\bfA}}(q,Q)&=\sum_{m\ge0}W_c\big(\Quot^{R_g}_m(E_{\bfA});q\big)\,Q^m,\\
Z^{\Hilb}_{g;\bfA}(q,Q)&=\sum_{n\ge0}W_c\big(\Hilb_n(\Spec R_{g;\bfA},0);q\big)\,Q^n,
\end{align*}
where $\Hilb_n(\Spec R_{g;\bfA},0)=\Hilb_n(\{\prod_ig_i^{a_i}=0\},0)$ in the notation of \cref{def:hilb}. Put
\begin{align}
(Q;q)_{\bfA}&=\prod_{i=1}^{\ell}(Q;q)_{a_i}=\prod_{i=1}^{\ell}(Q;q)_{a_i},\label{eq:branch-factors}\\
N_{E_{\bfA}}(q,Q)&=(Q;q)_{\bfA}\,Z^{\Quot}_{E_{\bfA}}(q,Q),\label{eq:quot-numerator}\\
\mathscr H_{g;\bfA}(q,Q)&=(Q;qQ)_{\bfA}\,Z^{\Hilb}_{g;\bfA}(q,Q).\label{eq:hilb-numerator}
\end{align}
\begin{corollary}[the exact denominator of a Quot series]\label{cor:general-denominator}
The series $N_{E_{\bfA}}(q,Q)=\sum_kW_c(\cE_k(E_{\bfA});q)\,Q^k$ is a polynomial in $Q$ with coefficients in $\ZZ[q^{1/2}]$, and the reduced denominator of $Z^{\Quot}_{E_{\bfA}}(q,Q)$ over $\QQ(q^{1/2})$, normalized to have constant term one in $Q$, is exactly $(Q;q)_{\bfA}$. The same holds, with the same proof, for any reduced complete local $\CC$-algebra with split normalization, as in \cref{subsec:extension}.
\end{corollary}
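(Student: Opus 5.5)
We apply $W_c$ to the extension-fibre factorization of \cref{prop:extension}. Since $W_c$ is a ring homomorphism $K_0(\mathrm{Var}_\CC)\to\ZZ[q^{1/2}]$ with $\LL\mapsto q$, applying it coefficientwise to \eqref{eq:extension-factorization} gives $Z^{\Quot}_{E_{\bfA}}(q,Q)=\sum_{k=0}^{c(\bfA)}W_c(\cE_k(E_{\bfA});q)Q^k/(Q;q)_{\bfA}$. Hence $N_{E_{\bfA}}$ is the stated polynomial of degree at most $c(\bfA)$ in $Q$. What remains is to show that no factor $1-q^jQ$ with $0\le j<a_i$ cancels, with its full multiplicity, against the numerator. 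The multiplicity of $1-q^jQ$ in $(Q;q)_{\bfA}$ is $\mu_j:=\#\{i:a_i>j\}$.

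For the positivity input we use the product structure of the proof of \cref{prop:extension}. Each $\Quot_m$ is stratified into pieces $\Sigma_\lambda\times\cE_k$. Rather than controlling the zeros of the numerator directly, we argue with leading coefficients. Suppose the numerator vanished to order at least $\mu_j$ along $Q=q^{-j}$. Then the reduced denominator would have a pole of order less than $\mu_j$ at $Q=q^{-j}$. We compare this with the growth of the coefficients of $Z^{\Quot}$ in $m$, measured by the top-degree terms in $q$.

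Concretely, use the leading-term fact stated above: for a non-empty $Z$ of dimension $d$, $W_c(Z;q)=m_Z q^d+\dots$ with $m_Z>0$. Evaluate at real $q>1$ and study the radius and order of the dominant singularity. The terms with $\lambda$ concentrated on the cells of maximal exponent $\sum_j(j-1)\lambda_j$ give, for each branch, a contribution with poles $\prod_{j<a_i}(1-q^jQ)^{-1}$. All these contributions have nonnegative leading coefficients, so no cancellation can occur between different strata at the level of top degree in $q$.

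A cleaner route, which we would try first, specializes to the extreme value. Fix $j$ and consider the sum over only those strata where exactly $\mu_j$ branches contribute a $\lambda$ with a part in position $j+1$ growing. The coefficient of $Q^m$ in $Z^{\Quot}$ is a sum of $W_c$'s of varieties, so we look at the degree in $q$ and the polynomial growth in $m$. Take the strata with $\lambda^{(i)}=(0,\dots,0,N_i,0,\dots)$ at position $j+1$. These have dimension $jN_i$ and contribute $\prod_i\sum_{N}q^{jN}Q^N$ times $W_c(\cE_0)=1$, since $\cE_0$ is a point. After the substitution $Q=q^{-j}t$, the coefficient of $t^m$ then has top $q$-degree $0$ with a positive coefficient growing like $m^{\mu_j-1}$. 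No other stratum can reach $q$-degree $0$ at this rate with a negative sign, because all leading coefficients are positive and the dimension bound $\sum(i-1)\lambda_i\le j|\lambda|$ holds for partitions supported in positions at most $j+1$. Positions beyond $j+1$ are absent when the relevant $a_i\le j+1$; in general we must handle the larger $j'$ by a descending induction on $j$. This forces a pole of exact order $\mu_j$ at $q^{-j}$.

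The main obstacle is this last growth comparison. We must make sure that strata with larger $k$, or with weights below the top, cannot cancel the polynomial growth in $m$ of the leading $q$-coefficient. Here we would use that $W_c$ of an extension fibre has degree at most $\dim\cE_k\le\dim E/\cc E$, which is bounded independently of $m$, so these strata shift degrees only by bounded amounts. This is the mechanism already used for the node in \cref{prop:minimal}.
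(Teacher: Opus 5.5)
Your first step, applying $W_c$ to \eqref{eq:extension-factorization} to get the polynomial $N_{E_{\bfA}}$, is correct and is what the paper does. The denominator step has a genuine gap, in two places.

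First, the target is misstated. For the reduced denominator to be exactly $(Q;q)_{\bfA}$, with every multiplicity $\mu_j$ intact, you must show that $1-q^jQ$ does not divide $N_{E_{\bfA}}$ at all. Ruling out vanishing to order $\ge\mu_j$ only shows that some pole survives.

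Second, your growth criterion is not just unproved but false. Take the node with $u=v=1$ and $j=0$, so $\mu_0=2$. All lattice cells are points and $\cE_1(E_{1,1})\cong\GG_m$, so the coefficient of $Q^m$ in $Z^{\Quot}$ is $(m+1)\cdot 1+m\,W_c(\GG_m;q)=1+mq$ (compare \cref{ex:11}). Its $q^0$-part is $1$, which does not grow like $m^{\mu_0-1}=m$. The $m+1$ contributions of $\cE_0$ are cancelled by the constant term $-1$ of $W_c(\cE_1;q)=q-1$, yet the pole at $Q=1$ has exact order $2$. So positivity of leading coefficients does not prevent cancellation in a fixed $q$-degree, and the bound on $\dim\cE_k$ does not help either, since here $\dim\cE_1=1$.

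There are further problems with this route. For real $q>1$ the poles $q^{-j'}$ with $j'>j$ lie nearer the origin and dominate the coefficients, and the descending induction meant to remove them is never specified. Nor do you connect growth of a $q$-degree slice to the order of a pole of a rational function over $\QQ(q^{1/2})$.

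The missing idea is that the zeros of the numerator are easy to control directly. This is exactly the mechanism of \cref{prop:minimal}, which you cite but do not use. Since $1-q^jQ$ is irreducible in $\QQ(q^{1/2})[Q]$ and $N_{E_{\bfA}}\in\ZZ[q^{1/2}][Q]$, the factor divides $N_{E_{\bfA}}$ if and only if $N_{E_{\bfA}}(q,q^{-j})=\sum_k W_c(\cE_k(E_{\bfA});q)\,q^{-jk}$ vanishes identically. By the leading-term property, every nonzero $W_c(\cE_k(E_{\bfA});q)$ is positive for all large real $q$. Also $W_c(\cE_0(E_{\bfA});q)=1$, because $\cE_0(E_{\bfA})=\{E_{\bfA}\}$. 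Hence $N_{E_{\bfA}}(q,q^{-j})>0$ for large $q$, so $N_{E_{\bfA}}$ is coprime to $(Q;q)_{\bfA}$ and no multiplicity drops. The stratification by $\Sigma_\lambda\times\cE_k$ is needed only for the factorization, not for this step.
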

\begin{proof}
The first assertion is \eqref{eq:extension-factorization} after applying $W_c$.

The irreducible factors of $(Q;q)_{\bfA}$ in $\QQ(q^{1/2})[Q]$ are the $1-q^jQ$, and such a factor divides $N_{E_{\bfA}}$ if and only if $N_{E_{\bfA}}(q,q^{-j})=0$ identically. By the leading-term property of $W_c$, every nonzero coefficient $W_c(\cE_k(E_{\bfA});q)$ is positive for all sufficiently large real $q$, and the constant coefficient is $W_c(\cE_0(E_{\bfA});q)=1$, since $\cE_0(E_{\bfA})=\{E_{\bfA}\}$. Hence $N_{E_{\bfA}}(q,q^{-j})>0$ for all sufficiently large $q$, for each $j\ge0$, and no factor cancels.
\end{proof}

For the node, $N_{E_{u,v}}=C_{u,v}(q,Q)$ and $\mathscr H_{xy;(u,v)}=C_{u,v}(qQ,Q)$ by \cref{thmA}.

\subsection{The bottom-row normalization}\label{subsec:bottom-normalization}
On the link side we use the bottom Hochschild part of the unreduced row-colored HOMFLY homology, divided by its leading monomial, which gives the unknot the value $1/(Q;q)_r$.

Let $L_g$ be the oriented algebraic link of the germ. We use the unreduced row-projector HOMFLY homology of $L_g$, with component $i$ colored by $S^{a_i}$. See \cite{Hog,HM}. Write its Poincar\'e series as
\[
\mathcal P^{L_g,\mathrm{unr}}_{\bfA}(\alpha;Q,T).
\]
Here $\alpha$ records Hochschild degree, and $Q,T$ are the variables of the link-homological formulas of \cite{Luan}. In particular the even generators of an $r$-colored unknot have degrees $QT^{-j}$, $0\le j<r$.

For a nonzero series, $\Bot_\alpha$ denotes the extraction of its smallest $\alpha$-degree. We remove the remaining overall grading monomial by the following convention. If the lowest $Q$-term is a single monomial $Q^\mu T^\nu$, divide by that monomial. Denote this operation by $\Nor_Q$ and put
\begin{equation}\label{eq:B}
B^{L_g}_{\bfA}(Q,T)=\Nor_Q\Bot_\alpha\,\mathcal P^{L_g,\mathrm{unr}}_{\bfA}(\alpha;Q,T),
\qquad
\mathscr B^{L_g}_{\bfA}(q,Q):=B^{L_g}_{\bfA}(Q,q^{-1}).
\end{equation}
The existence of the indicated single leading monomial is included in the link part of the conjecture. This is an intrinsic grading convention, independent of the geometric generating series. It fixes the unknot value
\begin{equation}\label{eq:unknot}
B^{\bigcirc}_{(r)}(Q,T)=\frac1{(Q;T^{-1})_r},\qquad \mathscr B^{\bigcirc}_{(r)}(q,Q)=\frac1{(Q;q)_r},
\end{equation}
and it removes the monomial shift in Luan's computation for colors $((1),(v))$ \cite[Cor.~1.3]{Luan}.

\subsection{The conjectures and the positive-model question}\label{subsec:conjecture-package}
Here are the branchwise Hilb--Quot--HOMFLY conjecture (\cref{conj:main}), the two normalization conjectures for row-colored algebraic links (\cref{conj:full-normalization,conj:component-positivity}) and how they are related, and the question of a positive model (\cref{question:reduced-row}).

\begin{conjecture}[branchwise Hilb--Quot--HOMFLY]\label{conj:main}
The numerators $N_{E_{\bfA}}(q,Q)$ and $\mathscr H_{g;\bfA}(q,Q)$ lie in $\ZZ[q,Q]$, the normalized bottom series $B^{L_g}_{\bfA}$ is defined, and
\begin{equation}\label{eq:threeway}
\mathscr H_{g;\bfA}(q,Q)=N_{E_{\bfA}}(qQ,Q)=(Q;qQ)_{\bfA}\,B^{L_g}_{\bfA}\big(Q,(qQ)^{-1}\big).
\end{equation}
The motivic version additionally asserts that the classes $[\Quot^{R_g}_m(E_{\bfA})]$ and $[\Hilb_n(\Spec R_{g;\bfA},0)]$ are polynomials in $\LL$ and that their polynomial representatives satisfy the same identities with $q=\LL$.
\end{conjecture}

Before normalization, \cref{conj:main} says
\begin{equation}\label{eq:rawthreeway}
Z^{\Hilb}_{g;\bfA}(q,Q)=Z^{\Quot}_{E_{\bfA}}(qQ,Q)=B^{L_g}_{\bfA}\big(Q,(qQ)^{-1}\big).
\end{equation}
The equality is a statement about virtual weights. Purity and evenness, if available, give an additional cohomological interpretation. They do not follow merely from a polynomial with nonnegative coefficients. The equality of the Hilbert and link series is the two-variable colored ORS prediction used in \cite{Luan}. For the node it is the statement $(\mathrm{ORS}_{u,v})$ of \cref{sec:hopf}. The branchwise Quot identification is the additional part of \cref{conj:main}, and for the node it is \cref{thmA}(c), in its motivic form, for every $(u,v)$.

\begin{conjecture}[full normalization]\label{conj:full-normalization}
The fully normalized bottom series is a polynomial, that is,
\begin{equation}\label{eq:all-branch-poly}
\mathscr N^{L_g}_{\bfA}(q,Q):=(Q;q)_{\bfA}\,\mathscr B^{L_g}_{\bfA}(q,Q)\in\ZZ[q,Q].
\end{equation}
\end{conjecture}

\begin{conjecture}[normalization at a component]\label{conj:component-positivity}
For every component $i$, the series normalized at that component is coefficientwise nonnegative, that is,
\begin{equation}\label{eq:one-branch-positive}
R^{L_g}_{\bfA;i}(q,Q):=(Q;q)_{a_i}\,\mathscr B^{L_g}_{\bfA}(q,Q)\in\NN[q][[Q]].
\end{equation}
\end{conjecture}

Full normalization is a consequence of \cref{conj:main}. Under \eqref{eq:threeway}, $\mathscr N^{L_g}_{\bfA}(q,Q)=N_{E_{\bfA}}(q,Q)$, which is a polynomial by \cref{prop:extension} (and lies in $\ZZ[q,Q]$ by the first assertion of \cref{conj:main}). Componentwise positivity is an additional assertion. For a knot the two normalizations coincide, predicting a positive polynomial. For a link, the full numerator may be signed, whereas normalization at one component may give an infinite positive series. Neither normalization conjecture asserts minimality or maximality of a denominator. \Cref{prop:positive-max} verifies the positivity prediction for the node and, in that example, determines the largest factor preserving positivity (\cref{subsec:normalization-evidence}).

\begin{question}[a positive model]\label{question:reduced-row}
For a marked component $i$ of a row-colored algebraic link, what natural bigraded object has Hilbert series $R^{L_g}_{\bfA;i}(q,Q)$? Can it be realized as bottom HOMFLY homology reduced at component $i$, compatibly with the $\Bot_\alpha\widetilde{\mathcal P}$ candidate in the unibranched case?
\end{question}

For knots, the HJO/GGS framework supplies the candidate in \eqref{eq:knot-convention} below. For links, reduction at a marked component is a candidate in the framework of \cite{GNSSS}. The general comparison with row-projector homology remains to be established. The rigorous factorization theorem of \cite{ConReduced} concerns column colors. \Cref{subsec:positive-models} explains this distinction and gives an explicit candidate graded vector space for the node normalization.

\subsection{Compatibility with the knot convention in HJO}\label{subsec:knot-convention}
For knots, \cref{conj:main} is compatible with the conventions of the thickened Hilb--Quot conjecture of \cite{HJO} and with the reduced quadruply graded framework of \cite{GGS}, as we now check.

Suppose $\ell=1$, write $a_1=n$, and let $\delta=\dim_\CC(\widetilde R_g/R_g)$. In the reduced GGS framework used in \cite{HJO}, the expected reduced/unreduced relation gives
\begin{equation}\label{eq:knot-convention}
(Q;q)_n\,B^K_{(n)}(Q,q^{-1})=Q^{n\delta}\,\Bot_\alpha\widetilde{\mathcal P}^{K}_{S^n}\big(Q^{1/2},Q^{-1/2},q^{1/2}\big).
\end{equation}
This is a compatibility statement with that framework, not a theorem identifying every rigorously constructed row-colored invariant with the reduced quadruply graded GGS object. The reduced/unreduced relation appears in \cite[Appendix B]{GGS}.

GGS self-symmetry for a row of length $n$ has, in the variables $(\mathsf Q,t_r,t_c)$ of \cite{GGS}, the form
\[
\widetilde{\mathcal P}(\mathsf Q,t_r,t_c)=\widetilde{\mathcal P}(\mathsf Q^{-1}t_r^{-1}t_c^{-n},t_r,t_c).
\]
It replaces the arguments on the right of \eqref{eq:knot-convention} by $(q^{-n/2},Q^{-1/2},q^{1/2})$. Substituting $qQ$ for $q$ gives precisely the thickened-Hilbert slice in \cite[Conj.~1.12]{HJO}. Thus no fourth grading on an independently colored link is required to formulate \cref{conj:main}.

Equation~\eqref{eq:knot-convention} supplies the unibranched candidate in \cref{question:reduced-row}. If it is realized by a finite-dimensional reduced homology, it gives a coefficientwise nonnegative \emph{Laurent} polynomial after the monomial regrading. The stronger exponent support in \cref{conj:full-normalization,conj:component-positivity} is an additional assertion. For links, the two normalizations differ. The positive-model question is taken up in \cref{subsec:positive-models}.

\subsection{Evidence from the node}\label{subsec:normalization-evidence}
The node supplies the following evidence for the conjectures. The Hilbert--Quot part is a theorem, both numerators are polynomials, and the node rational function satisfies both normalization conjectures, with an explicit positive formula for the normalization at one component.

For the node, the Hilbert--Quot equality in \eqref{eq:threeway} is \cref{thmA}(c), and the polynomiality of both numerators holds, with $N_{E_{u,v}}=C_{u,v}(q,Q)$ and $\mathscr H_{xy;(u,v)}=C_{u,v}(qQ,Q)$ in $\ZZ[q,Q]$. The motivic version holds as well, by \cref{thm:torus-all} and \cref{thm:node-main}. The Hilbert--link equality is known for $\min(u,v)=1$ \cite[Cor.~1.3]{Luan}. For all $u,v$ the predicted link series coincides with the bottom specialization of the Hopf-link formula of \cite{GNSSS} (\cref{prop:gnsss}), but the identification of that formula with row-projector homology is open for two nontrivial colors (\cref{sec:hopf}).

\Cref{conj:main} identifies the full numerator with $\mathscr N^{L_g}_{\bfA}(q,Q)=N_{E_{\bfA}}(q,Q)$, and hence implies \cref{conj:full-normalization}. Normalization at a component makes the additional positivity prediction of \cref{conj:component-positivity}. Under the three-way correspondence its Quot formulation is
\[
(Q;q)_{a_i}\,Z^{\Quot}_{E_{\bfA}}(q,Q)\in\NN[q][[Q]],
\]
and it implies the analogous statement $(Q;qQ)_{a_i}Z^{\Hilb}_{g;\bfA}(q,Q)\in\NN[q][[Q]]$. At $\ell=1$ the two normalizations coincide, predicting a positive polynomial as in \eqref{eq:knot-convention}. If \cref{conj:main} holds, \cref{cor:general-denominator} additionally determines the exact denominator and makes each one-component normalization infinite when $\ell>1$.

Since $(Q;q)_r\mid (Q;q)_s$ for $r\le s$, and $(Q;q)_r/(Q;q)_s$ has a nonnegative power-series expansion, \cref{conj:component-positivity} is equivalent to its assertion for a component of maximal color. This equivalence concerns graded dimensions. A reduced homological object could still carry additional structure depending on the marked component. For arbitrary row-colored links, one can ask the same polynomiality and positivity questions with Laurent grading shifts. The conjectures above fix the stronger degree support supplied by the algebraic-link setting.

\begin{corollary}\label{cor:node-positive-both}
The node rational function satisfies the polynomiality and positivity assertions of \cref{conj:full-normalization,conj:component-positivity}, independently of its link interpretation. In particular,
\[
(Q;q)_vF_{u,v}(q,Q)=\frac{C_{u,v}(q,Q)}{(Q;q)_u}=\sum_{k=0}^{u}\pbin{u}{k}\frac{Q^kq^{vk}}{(Qq^{u-k};q)_k}\in\NN[q][[Q]],
\]
and $(Q;q)_uF_{u,v}(q,Q)\in\NN[q][[Q]]$ by symmetry.
\end{corollary}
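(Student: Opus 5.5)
The statement is about the explicit rational function $F_{u,v}$ alone, so the plan is to assemble results already proved rather than introduce anything new. The two normalizations of \cref{conj:full-normalization,conj:component-positivity} become, for the node with $\bfA=(u,v)$, the assertions that $(Q;q)_{u,v}F_{u,v}\in\ZZ[q,Q]$, and that $(Q;q)_uF_{u,v}$ and $(Q;q)_vF_{u,v}$ lie in $\NN[q][[Q]]$.

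\emph{Polynomiality.} We have $(Q;q)_{u,v}F_{u,v}=C_{u,v}(q,Q)$ by the definition \eqref{eq:F}. This is $\sum_m R_m(u,v;q)Q^m$, and each $R_m$ lies in $\ZZ[q]$ by the Gaussian-binomial form in \cref{lem:rank}. So full normalization holds.

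\emph{Positivity at the $v$-component.} Cancel $(Q;q)_v$ to get $(Q;q)_vF_{u,v}=C_{u,v}/(Q;q)_u$. By \eqref{eq:positive-partial} this equals $\sum_{k\le u}\pbin{u}{k}Q^kq^{vk}/(Qq^{u-k};q)_k$, which is the instance of \cref{cor:positive-C} with $u$ and $v$ exchanged, using $C_{u,v}=C_{v,u}$. Each summand has nonnegative coefficients for three reasons:
\begin{itemize}[leftmargin=2em]
\item $\pbin{u}{k}\in\NN[q]$ by \cref{lem:pascal};
\item $Q^kq^{vk}$ is a monomial;
\item $1/(Qq^{u-k};q)_k$ is a finite product of geometric series $\sum_{n\ge0}(Qq^{i})^n$.
\end{itemize}
Hence this normalization lies in $\NN[q][[Q]]$.

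\emph{Positivity at the $u$-component.} The same argument applies with $u$ and $v$ interchanged, since $F_{u,v}=F_{v,u}$. This gives $(Q;q)_uF_{u,v}=C_{u,v}/(Q;q)_v=\sum_{k\le v}\pbin{v}{k}Q^kq^{uk}/(Qq^{v-k};q)_k\in\NN[q][[Q]]$, which is literally the second identity of \cref{cor:positive-C}.

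There is no genuine obstacle. The only point needing care is that the exchange $u\leftrightarrow v$ is legitimate in the second identity. Here one uses the symmetry of $C_{u,v}$ from the rank expansion (\cref{lem:rank}), rather than any symmetry of the non-symmetric $k$-sums themselves. Alternatively, both positivity statements follow at once from \cref{prop:positive-max}, since $(Q;q)_{\min(u,v)}$ divides $(Q;q)_{\max(u,v)}$; the direct argument above avoids needing $u\le v$.
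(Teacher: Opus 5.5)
Your proposal is correct and follows the paper's own proof. Polynomiality holds because the full numerator is $C_{u,v}\in\ZZ[q,Q]$, the displayed formula is \eqref{eq:positive-partial} (that is, \cref{cor:positive-C} with $u$ and $v$ exchanged via $C_{u,v}=C_{v,u}$), and the other component normalization follows by symmetry.
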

\begin{proof}
The full numerator is $C_{u,v}\in\ZZ[q,Q]$. The displayed formula is \eqref{eq:positive-partial}, and symmetry gives the other component normalization.
\end{proof}

For $u\le v$, \cref{prop:positive-max} strengthens this evidence by showing that $(Q;q)_v$ is the largest divisor of $(Q;q)_{u,v}$ preserving positivity. This maximality describes an additional feature of the node calculation. It is not part of \cref{conj:component-positivity}, and no analogous maximality assertion is included for general links. For example, an unlink has $\mathscr B=1/(Q;q)_{\bfA}$, so clearing every factor gives the positive polynomial $1$.

\subsection{Positive models and what a numerator measures}\label{subsec:positive-models}
Finally let us discuss what kind of object could explain the positivity predicted by \cref{conj:component-positivity}. We distinguish the three positivity questions involved, review the available factorization results for colored link homology, and give an explicit candidate graded vector space for the node. Most of the general discussion in this subsection is not original and can be found in the cited sources; it is included for the reader who is not immersed in link homology.

\Cref{question:reduced-row} asks for a natural model explaining \cref{conj:component-positivity}. There are three different positivity questions. They concern the coefficients of the unreduced series, those of a one-component reduction, and those of the numerator obtained by clearing every branch factor.

The first is automatic for the actual homology in its specified expansion, since dimensions are nonnegative. The third is not automatic, as \cref{ex:11} shows. For knots, a finite-dimensional reduced theory and an unknot-factorization theorem give a positive finite numerator. For links, reduction at one component can leave an infinite-dimensional theory.

In the unibranched case, the candidate is the regraded $\Bot_\alpha\widetilde{\mathcal P}$ expression in \eqref{eq:knot-convention}. \Cref{subsec:knot-convention} explains its status in the HJO/GGS framework. For a multibranched link, the expected positive model for \eqref{eq:one-branch-positive} is the bottom part of homology reduced at component $i$. This is already addressed by the colored-homology framework of \cite[\S\S3.2, 3.4, Eq.~(3.17)]{GNSSS}. It postulates an unknot-factorization at the chosen component and allows the reduced link homology to be infinite-dimensional. Taking the bottom degree gives the one-component normalization above, after the grading change.

There is now a rigorous factorization theorem for intrinsic \emph{column}-colored homology. Conners constructs reduction at any marked component and proves
\[
HHH(L)\cong HHH(\bigcirc_{a_i})\otimes\overline{HHH}(L,i)
\]
in that theory \cite[Thms.~1.1 and 3.50]{ConReduced}. This extends Wedrich's construction at a component of minimal color for nonsplit links \cite{Wedrich}. Conners explicitly separates column colors from the unresolved reduced construction for other partitions \cite[\S1.3.2]{ConReduced}. Thus these results support the proposed positive model, but do not establish the row-projector statement in full generality.

A structural answer to \cref{question:reduced-row} would construct a natural bigraded reduced bottom homology $\overline M_i$, with Hilbert series $R^{L_g}_{\bfA;i}$, and a compatible factorization
\[
M\cong S_i\otimes\overline M_i,\qquad S_i=\CC[u_{i,0},\dots,u_{i,a_i-1}],\quad \deg u_{i,j}=Qq^j,
\]
where $M$ is the unreduced bottom homology. An $S_i$-linear factorization would say that $M$ is free over the polynomial algebra for component $i$. This is stronger than positivity of its Hilbert series. Freeness over each $S_i$ separately would not imply freeness over their tensor product, so it is compatible with a signed full numerator.

For a simple algebraic illustration, let $S=\CC[u_1,u_2]$, with both variables of degree $Q$. The bigraded module
\[
S/(u_1-u_2)\ \oplus\ S\{qQ\}
\]
is free over each $\CC[u_i]$ separately and has Hilbert series $F_{1,1}$. Here braces denote a grading shift. Its full $S$-numerator is nevertheless $1-Q+qQ$. This illustrates consistency of the two assertions, without identifying this module with a particular link-homology construction.

The positive node expression also supplies an explicit candidate graded vector space for reduction at the $v$-colored component, namely
\[
\mathcal V^{(v)}_{u,v}=\bigoplus_{k=0}^{u}H^{2\bullet}(\Gr(k,u),\CC)\otimes\CC[w_{u-k},\dots,w_{u-1}]\{Q^kq^{vk}\},
\qquad \deg w_j=Qq^j .
\]
Here $H^{2d}$ has degree $q^d$, and the polynomial ring is $\CC$ when $k=0$. Its Hilbert series is $(Q;q)_vF_{u,v}(q,Q)$, by \cref{cor:node-positive-both}. This gives a concrete target for a reduced model. No canonical identification or compatible link-homology action is constructed here.

\section{The colored Hopf link}\label{sec:hopf}

This section specializes to the node, whose link is the Hopf link with components colored by $\mathrm{Sym}^u$ and $\mathrm{Sym}^v$. Assuming the colored ORS conjecture, \cref{thmA} predicts the bottom part of the colored Khovanov--Rozansky homology of the Hopf link (\cref{cor:hopf}). We compare this prediction with Luan's established case $\min(u,v)=1$ \cite[Cor.~1.3]{Luan}, and with the independently colored Hopf-link formula of Gukov--Nawata--Saberi--Sto\v{s}i\'c--Su{\l}kowski \cite{GNSSS}, whose bottom specialization agrees with it (\cref{prop:gnsss}). This is the main link-theoretic evidence for \cref{conj:main}.

\subsection{The conjecture of Oblomkov--Rasmussen--Shende}
Recall the conjecture of Oblomkov, Rasmussen and Shende \cite{ORS} relating the Hilbert schemes of a plane curve singularity to the triply graded homology of its link, its colored version for non-reduced curves, and Luan's proof of the colored version for $C_{1,v}$ \cite[Cor.~1.3]{Luan}.

For a plane curve germ $C$ with link $L_C=C\cap S^3_\epsilon$, Oblomkov, Rasmussen and Shende conjectured that the cohomology of the punctual Hilbert schemes of $C$, suitably graded, is the $a=0$ part of the triply graded Khovanov--Rozansky homology $\mathrm{HHH}(L_C)$ \cite{ORS}. See \cite[Conj.~1.1]{Luan} for the form used below. The Euler-characteristic version, a statement about HOMFLY-PT polynomials, was proved by Maulik \cite{Maulik}, following Oblomkov and Shende \cite{OS}.

For a non-reduced curve whose components $C_i$ have multiplicities $k_i$, the components of the link are to be colored by the symmetric powers $\mathrm{Sym}^{k_i}$, i.e.\ by one-row Young diagrams \cite[\S1.2]{Luan}. See \cite{Cautis,HM,Conners} for colored triply graded homology. For $C_{u,v}$ the link is the Hopf link $T(2,2)$ with components colored by $\mathrm{Sym}^u$ and $\mathrm{Sym}^v$.

Luan proved this colored conjecture for $C_{1,v}$ \cite[Cor.~1.3]{Luan}, by comparing the Poincar\'e polynomials of $\Hilb_n(C_{1,v},0)$ with the computation of the $(\mathrm{Sym}^v,\mathrm{Sym}^1)$-colored Hopf link by Hogancamp--Mellit and Conners \cite{HM,Conners}.

To keep the two gradings apart we write $T_{\Hilb}$ for the homological variable on the Hilbert scheme side and $T$ for the one on the link side, and $\operatorname{Poin}(Z;T_{\Hilb})=\sum_ib_i(Z)T_{\Hilb}^i$ for the Poincar\'e polynomial. Then \cite[Cor.~1.3, Cor.~2.2]{Luan} state that, up to the factor $\prod_{i=1}^{v-1}T^i$,
\begin{equation}\label{eq:luan-hopf}
\begin{gathered}
\sum_{n\ge0}\operatorname{Poin}\big(\Hilb_n(C_{1,v},0);T_{\Hilb}\big)Q^n
=\mathcal P_{a=0}\big(T(2,2);\mathrm{Sym}^v,\mathrm{Sym}^1\big)\Big|_{T=(QT_{\Hilb}^2)^{-1}},
\qquad\text{where}\\
\mathcal P_{a=0}\big(T(2,2);\mathrm{Sym}^v,\mathrm{Sym}^1\big)\doteq\Big(1+\frac{QT^{-v}}{1-Q}\Big)\prod_{i=1}^{v}\frac1{1-QT^{1-i}} .
\end{gathered}
\end{equation}
Here the spaces are paved, so $\operatorname{Poin}$ is also their virtual Poincar\'e polynomial, and $\doteq$ means equality up to a monomial factor. Indeed, under $T=(QT_{\Hilb}^2)^{-1}$ one has $QT^{1-i}=Q^iT_{\Hilb}^{2(i-1)}$ and $QT^{-v}=Q^{v+1}T_{\Hilb}^{2v}$, and the right-hand side becomes the series of \cite[Thm.~1.2]{Luan}. For $u=2$ the link homology is not known, and Luan states the resulting prediction as a conjecture \cite[\S1.5.2, Conj.~10.4]{Luan}.

\subsection{The prediction}
The colored conjecture for the fat node $C_{u,v}$, combined with \cref{thmA}, yields a prediction for the bottom part of the $(\mathrm{Sym}^u,\mathrm{Sym}^v)$-colored homology of the Hopf link (\cref{cor:hopf}); examples follow.

We formulate the colored conjecture for $C_{u,v}$ in the normalization of \eqref{eq:luan-hopf}.

\smallskip\noindent $(\mathrm{ORS}_{u,v})$ \emph{Up to a monomial factor,
\[
\sum_{n\ge0}\operatorname{Poin}\big(\Hilb_n(C_{u,v},0);T_{\Hilb}\big)\,Q^n
=\mathcal P_{a=0}\big(T(2,2);\mathrm{Sym}^u,\mathrm{Sym}^v\big)\Big|_{T=(QT_{\Hilb}^2)^{-1}},
\]
where $\mathcal P_{a=0}(T(2,2);\mathrm{Sym}^u,\mathrm{Sym}^v)$ is the Poincar\'e series of the $a=0$ part of the $(\mathrm{Sym}^u,\mathrm{Sym}^v)$-colored Khovanov--Rozansky homology of the Hopf link.}

\smallskip Its virtual form $(\mathrm{ORS}^{\mathrm{vir}}_{u,v})$ is the same statement with $\operatorname{Poin}$ replaced by the virtual Poincar\'e polynomial of \cite{BP}, \cite[Def.~3.4]{Luan}, the additive invariant of complex varieties that sends $\LL$ to $T_{\Hilb}^2$. It is the Hilbert--link part of \eqref{eq:rawthreeway} for the node. The two forms agree whenever the compactly supported cohomology of $\Hilb_n(C_{u,v},0)$ vanishes in odd degrees and is pure of weight $2i$ in degree $2i$, since then no cancellation occurs in the virtual Poincar\'e polynomial. This holds for $\min(u,v)\le2$ by Luan's affine pavings \cite[Thms.~1.2, 1.4]{Luan}.

\begin{corollary}\label{cor:hopf}
Assume $(\mathrm{ORS}^{\mathrm{vir}}_{u,v})$. Then
\[
\mathcal P_{a=0}\big(T(2,2);\mathrm{Sym}^u,\mathrm{Sym}^v\big)\ \doteq\
\frac{\displaystyle\sum_{m=0}^{\min(u,v)}Q^m\,R_m\big(u,v;T^{-1}\big)}
{\displaystyle\prod_{i=0}^{u-1}\big(1-QT^{-i}\big)\prod_{i=0}^{v-1}\big(1-QT^{-i}\big)},
\]
with $R_m(u,v;q)=\prod_{i=0}^{m-1}\frac{(q^u-q^i)(q^v-q^i)}{q^m-q^i}$ as in \cref{lem:rank}.
\end{corollary}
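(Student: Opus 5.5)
The corollary follows by inverting the substitution $T=(QT_{\Hilb}^2)^{-1}$ in $(\mathrm{ORS}^{\mathrm{vir}}_{u,v})$ and inserting \cref{thmA}(a). By \cref{thm:torus-all}(b), $[\Hilb_n(C_{u,v},0)]=M_n(\LL)\in\ZZ[\LL]$, where $M_n$ is the coefficient of $Q^n$ in $F_{u,v}(Q\LL,Q)$. The virtual Poincar\'e polynomial sends $\LL\mapsto T_{\Hilb}^2$ and factors through $\Kvar$, so the left-hand side of $(\mathrm{ORS}^{\mathrm{vir}}_{u,v})$ equals $F_{u,v}(QT_{\Hilb}^2,Q)$ as a series in $Q$ with coefficients in $\ZZ[T_{\Hilb}^2]$.

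Next, the variables. Put $q:=QT_{\Hilb}^2$. The substitution then reads $T=q^{-1}$, so that $T^{-1}=q$. The hypothesis therefore says that, up to a monomial,
\[
\mathcal P_{a=0}\big(T(2,2);\mathrm{Sym}^u,\mathrm{Sym}^v\big)\big|_{T^{-1}=q}\doteq F_{u,v}(q,Q),
\]
with $q=QT_{\Hilb}^2$. By the remark at the end of \cref{subsec:nodefunction}, $Q$ and $QT_{\Hilb}^2$ are algebraically independent. Hence the pair $(Q,T_{\Hilb}^2)$ can be recovered from $(Q,q)$, and an identity of series in these variables is equivalent to the corresponding identity in the independent variables $(Q,T)$. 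Unwinding \eqref{eq:F} with $q=T^{-1}$ gives the numerator $\sum_mQ^mR_m(u,v;T^{-1})$ and the denominator $(Q;T^{-1})_{u,v}=\prod_{i<u}(1-QT^{-i})\prod_{i<v}(1-QT^{-i})$. This is exactly the displayed formula.

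The only delicate point is making the inversion of the substitution rigorous, because the link series lives in a different completion. The link series is a series in $Q$ whose coefficients are Laurent in $T$, while the Hilbert series is a series in $Q$ with coefficients polynomial in $T_{\Hilb}^2$. My plan is to regard both sides as elements of $\ZZ[q^{\pm1}][[Q]]$ after the change of variables. The map $\ZZ[T^{\pm1}][[Q]]\to\ZZ[T_{\Hilb}^{\pm2}][[Q]]$ given by $T\mapsto(QT_{\Hilb}^2)^{-1}$ is injective on series whose $T$-degrees in each $Q$-coefficient are suitably bounded, and the right-hand side satisfies that bound because $F_{u,v}(q,Q)\in\ZZ[q][[Q]]$. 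The monomial ambiguity passes through the substitution as a monomial, so it causes no trouble. As a sanity check, I would verify that the case $u=1$ reproduces \eqref{eq:luan-hopf}: here $C_{1,v}(q,Q)=1-Q+Qq^v$, and this matches $(1+QT^{-v}/(1-Q))(1-Q)$.
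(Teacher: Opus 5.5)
Your proposal is correct and follows the paper's proof. Both use \cref{thm:torus-all} to identify the left side of $(\mathrm{ORS}^{\mathrm{vir}}_{u,v})$ with $F_{u,v}(Q\LL,Q)$ at $\LL=T_{\Hilb}^2$, and then invert the substitution through $Q\LL=T^{-1}$, i.e.\ $q=T^{-1}$; your extra care about completions is harmless and can be simplified, since $Q^nT^k\mapsto Q^{n-k}T_{\Hilb}^{-2k}$ is a bijective relabeling of monomials, so injectivity needs no degree bound.
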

\begin{proof}
By \cref{thm:torus-all}, $[\Hilb_n(C_{u,v},0)]=M_n(\LL)$, so the virtual Poincar\'e polynomial of $\Hilb_n(C_{u,v},0)$ is $M_n(T_{\Hilb}^2)$, and the left-hand side of $(\mathrm{ORS}^{\mathrm{vir}}_{u,v})$ is $C_{u,v}(Q\LL,Q)/(Q;Q\LL)_{u,v}$ at $\LL=T_{\Hilb}^2$. The substitution $T=(QT_{\Hilb}^2)^{-1}$ is equivalent to $T_{\Hilb}^2=(QT)^{-1}$. So $\LL=(QT)^{-1}$, hence $Q\LL=T^{-1}$ and $Q^i\LL^{i-1}=QT^{1-i}$. Therefore $(Q;Q\LL)_u$ becomes $(Q;T^{-1})_u=\prod_{i=0}^{u-1}(1-QT^{-i})$, likewise for $(Q;Q\LL)_v$, and $C_{u,v}(Q\LL,Q)=C_{u,v}(Q\LL,Q)$ becomes $C_{u,v}(T^{-1},Q)=\sum_mQ^mR_m(u,v;T^{-1})$, by \eqref{eq:NP}.
\end{proof}

\begin{example}\label{ex:hopf}
\begin{enumerate}[label=(\arabic*)]
\item For $u=1$ we have $R_1(1,v;q)=q^v-1$, so the right-hand side is
\[
\frac{1-Q+QT^{-v}}{(1-Q)\prod_{i=0}^{v-1}(1-QT^{-i})}=\Big(1+\frac{QT^{-v}}{1-Q}\Big)\prod_{i=1}^{v}\frac1{1-QT^{1-i}} .
\]
This is \eqref{eq:luan-hopf}, as it must be.
\item For $u=v=2$ and $s=T^{-1}$ the right-hand side is
\[
\frac{1+(s-1)(s+1)^2\,Q+s(s-1)^2(s+1)\,Q^2}{(1-Q)^2(1-sQ)^2}.
\]
For $u=2$ purity holds by Luan's affine pavings \cite[Thm.~1.4]{Luan}, so the prediction depends only on $(\mathrm{ORS}_{2,v})$. Luan states the case $u=2$ as \cite[Conj.~10.4]{Luan}, in the variables $(Q,T)$ of the Hilbert scheme. The formula above is the same prediction after the substitution of \cref{cor:hopf}.
\item For $u=v=3$ and $s=T^{-1}$ the numerator is
\[
1+(s-1)(s^2+s+1)^2Q+s(s-1)^2(s+1)(s^2+s+1)^2Q^2+s^3(s-1)^3(s+1)(s^2+s+1)Q^3,
\]
and the denominator is $(1-Q)^2(1-sQ)^2(1-s^2Q)^2$. The coefficients of $Q$, $Q^2$, $Q^3$ are the numbers of $3\times3$ matrices of rank $1,2,3$ over $\FF_s$.
\end{enumerate}
\end{example}

The prediction is symmetric in $u$ and $v$, as it must be, since the two components of the Hopf link play symmetric roles. In the notation of \eqref{eq:F} it is $F_{u,v}(T^{-1},Q)$, the third specialization of the node function, at $q=T^{-1}$. In the normalization of \cref{subsec:bottom-normalization} it reads as follows.

\subsection{The row-colored Hopf conjecture and the GNSSS formula}\label{subsec:gnsss}
In the normalization of \cref{sec:conjecture} the prediction coincides with the bottom specialization of the independently colored Hopf-link formula of Gukov, Nawata, Saberi, Sto\v{s}i\'c and Su{\l}kowski \cite{GNSSS} (\cref{prop:gnsss}).

\begin{conjecture}[row-colored Hopf bottom series; cf.~\cite{GNSSS}]\label{conj:hopf}
For the positive Hopf link and all $u,v\ge1$,
\begin{equation}\label{eq:hopf}
B^{\mathrm{Hopf}}_{(u),(v)}(Q,T)=\frac{C_{u,v}(T^{-1},Q)}{(Q;T^{-1})_{u,v}}=F_{u,v}(T^{-1},Q),
\qquad\text{i.e.}\qquad
\mathscr B^{\mathrm{Hopf}}_{(u),(v)}(q,Q)=F_{u,v}(q,Q).
\end{equation}
\end{conjecture}

This is the specialization of \cref{conj:main} supplied by \cref{thmA}. Given the Hilbert--Quot equality \cref{thmA}(c), the link part of \eqref{eq:threeway} for the node is exactly \eqref{eq:hopf}. It is also the bottom specialization of an existing independently colored Hopf formula. The following proposition identifies the expressions. Its assertion is algebraic and does not assume that the proposed expression computes row-projector homology.

\begin{proposition}\label{prop:gnsss}
The bottom Hochschild specialization of the expression in \cite[Eq.~(7.8)]{GNSSS}, in its displayed monomial normalization, equals $F_{u,v}(q,Q)$ under
\[
r_1=v,\qquad r_2=u,\qquad Q=\mathsf q^2,\qquad q=\mathsf q^2t_c^2 .
\]
\end{proposition}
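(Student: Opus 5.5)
The identification is purely algebraic, so I would work directly with the expression of \cite[Eq.~(7.8)]{GNSSS} for the Hopf link with colors $S^{r_1}$, $S^{r_2}$. That expression is a finite sum over an index $k$ (or over a pair of indices), with summands built from $q$-Pochhammer symbols in the variables $(a,\mathsf q,t_r,t_c)$ of \cite{GNSSS}. The plan is to bring it into one of the two $q$-binomial forms for $C_{u,v}$ from \cref{cor:positive-C} and \eqref{eq:positive-partial}, and then match.

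\textbf{Step 1: bottom Hochschild specialization.} I would take the lowest $a$-degree part, and in the reduced-to-bottom convention also extract the $t_r$-bottom, i.e.\ set $a=0$ after the appropriate monomial rescaling. I expect every factor of the form $(-a^2\mathsf q^{\cdots}t_r^{\cdots};\mathsf q^2)$ to collapse to $1$. The surviving factors should be Pochhammers of the shape $(\mathsf q^2;\mathsf q^2t_c^2)_j$, Gaussian binomials $\qbin{r}{k}{\mathsf q^2t_c^2}$, and monomials $(\mathsf q^2)^k(\mathsf q^2t_c^2)^{r_1k}$. Under the substitution $Q=\mathsf q^2$, $q=\mathsf q^2t_c^2$, these become exactly the building blocks $\pbin{u}{k}$, $(Q;q)_{u-k}$ and $Q^kq^{vk}$ appearing in \eqref{eq:positive-partial}.

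\textbf{Step 2: normalization and matching.} Next I would divide by the displayed monomial normalization and by the unknot-type denominators $(Q;q)_{r_1}(Q;q)_{r_2}$. I would then check that the summand for index $k$ equals $\pbin{u}{k}(Q;q)_{u-k}Q^kq^{vk}/(Q;q)_{u,v}$, with $r_2=u$ playing the role of the summation bound and $r_1=v$ appearing in the exponent. Summing over $k$ and applying \cref{cor:positive-C} then gives $C_{u,v}(q,Q)/(Q;q)_{u,v}=F_{u,v}(q,Q)$.

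\textbf{Fallback if the shapes differ.} If the GNSSS summand is instead a double sum, I would reduce it to the single sum above. The tools are \cref{lem:qbinomial}, which expands a Pochhammer, and \cref{lem:trinomial}, which recombines products of Gaussian binomials. This is the same manipulation as in the proof of \cref{prop:symmetric}, run in reverse. As a sanity check I would test the result at $u=1$ against \cref{ex:hopf}(1), and against the unknot value \eqref{eq:unknot} at $u=0$.

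\textbf{Main obstacle.} The hard part is Step 1, specifically the bookkeeping of conventions. The GNSSS expression uses framing factors, $\mathsf q$ versus $\mathsf q^2$ exponents, and signs $(-t_c)$, and the overall monomial must come out as a single monomial matching the convention $\Nor_Q$. I would handle this by fixing the $k=0$ term first: it should reduce to $1$ after normalization, times the denominator. That pins down the overall monomial, and the remaining terms can then be compared as ratios of consecutive summands.
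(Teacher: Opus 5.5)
Your plan is essentially the paper's argument. Every Pochhammer factor containing the Hochschild variable has constant term one, so the bottom part is obtained by replacing those factors by $1$; after $Q=\mathsf q^2$, $q=\mathsf q^2t_c^2$, the surviving single sum over $0\le k\le u$ is exactly the series side of \eqref{eq:positive-partial}, which equals $C_{u,v}(q,Q)/(Q;q)_{u,v}$.

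The only adjustment is that no further division by unknot denominators or by a normalizing monomial is needed, and the double-sum fallback does not arise. The bottom part already carries $1/(Q;q)_v$ and the factors $1/(Qq^{u-k};q)_k$, and these are your summand $\pbin{u}{k}(Q;q)_{u-k}Q^kq^{vk}/(Q;q)_{u,v}$ rewritten via $(Q;q)_u=(Q;q)_{u-k}(Qq^{u-k};q)_k$.
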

\begin{proof}
Every Pochhammer factor involving the Hochschild variable in that expression has constant term one. Its bottom part is therefore
\[
\frac1{(\mathsf q^2;\mathsf q^2t_c^2)_v}
\sum_{k=0}^{u}\qbin{u}{k}{\mathsf q^2t_c^2}
\frac{\mathsf q^{2(v+1)k}\,t_c^{2vk}}{(\mathsf q^{2(u-k+1)}t_c^{2(u-k)};\mathsf q^2t_c^2)_k}.
\]
The indicated substitution turns this into
\begin{equation}\label{eq:gnsss-bottom}
\frac1{(Q;q)_v}\sum_{k=0}^{u}\pbin{u}{k}\frac{Q^kq^{vk}}{(Qq^{u-k};q)_k}
=\frac{C_{u,v}(q,Q)}{(Q;q)_{u,v}},
\end{equation}
where the equality is \eqref{eq:positive-partial}.
\end{proof}

GNSSS obtain their expression using conjectural colored-homology structure, including color sliding. Agreement with that expression is consequently evidence for the link identification, not a proof of it. \Cref{ex:hopf}(1) is Luan's established comparison \cite[Cor.~1.3]{Luan} when one color is $(1)$. Thus in the phrase ``the link side is unknown,'' the missing result is the identification with rigorously constructed row-projector homology.

An explicit conjectural expression is available for every $u,v$, and by \cref{thmA} it is now the \emph{theorem} on both geometric sides. For example, the proposal for colors $((2),(v))$, $v\ge2$, is
\[
B^{\mathrm{Hopf}}_{(2),(v)}(Q,T)=\frac{(1-Q)(1-qQ)+(1+q)(1-Q)Qq^v+Q^2q^{2v}}{(Q;q)_2\,(Q;q)_v},
\qquad q=T^{-1},
\]
which is \cref{ex:hopf}(2) for $v=2$. The numerator and its geometric comparisons are explicit. The general row-colored link identification is still an assertion to be proved.

It would be interesting to compare the prediction with a direct computation. The $(\wedge^a,\wedge^b)$-colored Hopf link, colored by one-column diagrams, was computed by Hogancamp, Rose and Wedrich in terms of ideals generated by Haiman determinants \cite{HRW}, and Conners' row--column mirror symmetry \cite{Conners} relates row-colored and column-colored invariants of positive torus knots. An extension of the latter to the Hopf link, combined with \cite{HRW}, might give access to the $(\mathrm{Sym}^u,\mathrm{Sym}^v)$-colored Hopf link. A theorem transferring the column-colored results to the row-projector normalization \eqref{eq:B} is a separate input, and Conners' reduced-homology theorem \cite{ConReduced} is also a theorem for column colors.

\section{Questions and further directions}\label{sec:questions}

We close with two kinds of further directions. In \cref{subsec:higher-rank} we state a formula for the Quot schemes of free modules of arbitrary rank over the fat node, which extends the point-count form of \cref{thmA}(a) and will be proved in the sequel \cite{FN2}. In \cref{subsec:questions} we list open questions raised by the results of the paper.

\subsection{The fat node in higher rank}\label{subsec:higher-rank}
The following formula for the punctual Quot schemes of the free module of rank $r$ over the fat node will be proved in \cite{FN2}. For $r=1$ these Quot schemes are the Hilbert schemes $\Hilb_n(C_{u,v},0)$ studied in this paper, and the formula reduces to the point-count form of \cref{thmA}(a). For general $r$ the numerator is a plethystic evaluation of the modified Macdonald polynomial of a rectangle. After the geometric definition of the generating function and the notation for Macdonald polynomials, we state the theorem and check that for $r=1$ it agrees with \cref{thmA}(a).

\emph{The generating function.} Let $\kk=\FF_q$, $R=\FF_q[[x,y]]$, let $r\ge1$, and let $\mathcal F=R^{\oplus r}$. For $u,v\ge0$ put
\begin{equation}\label{eq:Zr}
Z^{(r)}_{u,v}(Q):=\sum_{N\ge0}\#\bigl\{L\subseteq\mathcal F\ \text{an $R$-submodule}:\ x^uy^v\mathcal F\subseteq L,\
\dim_{\FF_q}\mathcal F/L=N\bigr\}\,Q^N .
\end{equation}
The submodules counted by the coefficient of $Q^N$ are exactly the submodules of colength $N$ of the free module $\mathcal O_{C_{u,v},0}^{\oplus r}$ over the local ring of the fat node, that is, the quotients $\mathcal O_{C_{u,v},0}^{\oplus r}\twoheadrightarrow M$ of length $N$. So $Z^{(r)}_{u,v}$ is the generating function of the numbers of $\FF_q$-points of the punctual Quot schemes $\Quot_N(\mathcal O_{C_{u,v}}^{\oplus r},0)$.

These are projective varieties, defined exactly as the Hilbert schemes of \cref{subsec:hilb}. By the argument of \cref{lem:mn}, applied to the chain $(\mm^k\mathcal F+L)/L$, every such $L$ contains $\mm^N\mathcal F$, so $\Quot_N(\mathcal O_{C_{u,v}}^{\oplus r},0)$ is the reduced closed subscheme of the Grassmannian of codimension-$N$ subspaces of $\mathcal F/\mm^N\mathcal F$ that are stable under $x$ and $y$ and contain the image of $x^uy^v\mathcal F$.

As in \cref{prop:adhm}, a point is the same as a pair of commuting nilpotent operators $X,Y$ on $\kk^N$ with $X^uY^v=0$, together with $r$ vectors that generate $\kk^N$ as a $\kk[X,Y]$-module, up to the free action of $\GL_N$. For $r=1$ we recover $\Hilb_N(C_{u,v},0)$, and for $u=0$ we obtain the Quot schemes of the free module of rank $r$ over the fat line $\{y^v=0\}$.

\emph{Notation.} Put $s:=q^rQ$, so that $s=qQ$ for $r=1$, and
\[
\Pi^{(r)}_v(Q):=\prod_{i=0}^{v-1}\prod_{k=0}^{r-1}\frac1{1-q^{ri+k}Q^{i+1}} .
\]
For $r=1$ this is $1/(Q;qQ)_v$, the denominator of \cref{thmA}(a) at $\LL=q$.

Let $\widetilde H_\mu[X;q,t]$ be the modified Macdonald polynomial of Garsia and Haiman \cite{GH}, normalized as in \cite[Def.~2.5]{Mellit}. It is the unique symmetric function such that $\widetilde H_\mu[(t-1)X]$ lies in the span of the monomial symmetric functions $m_\nu$ with $\nu\trianglelefteq\mu$, $\widetilde H_\mu[(q-1)X]$ lies in the span of the $m_\nu$ with $\nu\trianglelefteq\mu'$, and $\widetilde H_\mu[1]=1$. In this normalization $\widetilde H_{(n)}[X;q,t]=(q;q)_n\,h_n[X/(1-q)]$ and $\widetilde H_{(1^n)}[X;q,t]=(t;t)_n\,h_n[X/(1-t)]$ \cite[Rem.~2.6]{Mellit}.

We use plethystic notation. For a symmetric function $f$ and an expression $A$ in monomials, $f[A]$ is obtained by writing $f$ as a polynomial in the power sums and substituting $p_k\mapsto p_k[A]$, where $p_k[a]=a^k$ for a monomial $a$ and $p_k$ is additive. For instance $f[1-Q+Qw]$ is obtained by the substitution $p_k\mapsto1-Q^k+Q^kw^k$. Finally, $(r^v)$ denotes the rectangular partition with $v$ parts equal to $r$.

\begin{theorem}[to be proved in \cite{FN2}]\label{thm:higher-rank}
For all $r\ge1$, $u,v\ge0$ and every prime power $q$, with $s=q^rQ$,
\begin{equation}\label{eq:higher-rank}
Z^{(r)}_{u,v}(Q)=\Pi^{(r)}_u(Q)\,\Pi^{(r)}_v(Q)\;\widetilde H_{(r^v)}\bigl[1-Q+Qs^u;\,q,\,s\bigr]
=\frac{\widetilde H_{(r^v)}[1-Q+Qs^u;\,q,s]}{\widetilde H_{(r^u)}[1-Q;\,q,s]\;\widetilde H_{(r^v)}[1-Q;\,q,s]} .
\end{equation}
\end{theorem}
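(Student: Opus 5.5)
The plan is to run the same two-step scheme as in rank one. First, an elementary modification for submodules of $\mathcal F=R^{\oplus r}$ reduces the fat node to the fat line. Second, a bootstrap determines the contact distribution of the fat line. The new ingredient is the identification of that distribution with a plethystic Macdonald evaluation.

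\emph{Step 1: the $x$-chain in rank $r$.} For a submodule $L\subseteq\mathcal F$ of finite colength, put $(L:x)=\{f\in\mathcal F:xf\in L\}$. There is the exact sequence
\[
0\to\mathcal F/(L:x)\to\mathcal F/L\to\mathcal F/(L+x\mathcal F)\to0 .
\]
Here $\mathcal F/(L+x\mathcal F)$ is a quotient of $\kk[[y]]^r$, i.e.\ a $\kk[[y]]$-lattice, classified by \cref{lem:dvr}. Its Hermite normal form $\mu$ plays the role of the contact length $j$. Imitating \cref{thm:A}, for fixed $J=(L:x)$ and a fixed lattice $\Lambda\subseteq\kk[[y]]^r$, the $L$ with $(L:x)=J$ and $L+x\mathcal F$ corresponding to $\Lambda$ are
\[
L=xJ+\textstyle\sum_i R\,(b_i+xc_i),
\]
with $b_i$ the triangular basis of $\Lambda$. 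The tails $c_i\in\mathcal F/J$ are subject to the linear equations $Xc_i=-\bar b_i$. So the fibre is a torsor under $\operatorname{Hom}$ from the span of the $b_i$ into $\ker X$, and it is nonempty iff $\Lambda\subseteq J+x\mathcal F$. Its point count depends only on the pair of lattices $(\Lambda,\ J+x\mathcal F)$. Summing the lattice cells of $\Lambda$ over those containing a given lattice gives a $q$-analogue of the geometric series in \cref{thm:B}. This is a Hall-algebra type operator, which I expect to act on generating functions refined by the lattice type of $L+x\mathcal F$ as a plethystic substitution. Iterating $u$ times yields the factor $\Pi^{(r)}_u(Q)$ times the refined fat-line distribution $g^{(r)}_v$, evaluated at the plethystic point $s^u$. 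That is why the argument $1-Q+Qs^u$ appears.

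\emph{Step 2: the bootstrap.} As in \cref{thm:C}, the $x\leftrightarrow y$ symmetry gives the mirror relations $\Pi_u\,g^{(r)}_v[\,\cdot\,s^u]=\Pi_v\,g^{(r)}_u[\,\cdot\,s^v]$. In rank one these relations were enough to determine $g_v$ by interpolation. Here, however, $g^{(r)}_v$ is a symmetric function of the lattice-type variables of degree bounded by $v$ rows. Evaluations at the nodes $s^0,\dots,s^{v-1}$ no longer suffice, and I would have to add triangularity information.

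The candidate for that information is the defining characterization of $\widetilde H_{(r^v)}$ recalled above. Triangularity of $\widetilde H[(t-1)X]$ should correspond to the bound on the lattice type coming from $y^v\in$ annihilator. Triangularity of $\widetilde H[(q-1)X]$ should correspond to the dual bound coming from the rank $r$. Together with the normalization $\widetilde H[1]=1$, which is the count for $L=\mathcal F$, this would pin the distribution down uniquely.

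Verification then reduces to checking that $\widetilde H_{(r^v)}[1-Q+Qs^u]$ satisfies the mirror relation. This is a symmetry $u\leftrightarrow v$ of a rectangle evaluation, which I would derive from the known Cauchy/reciprocity identities for modified Macdonald polynomials, for instance Garsia--Haiman--Tesler evaluations at $1-u$ specialized to rectangles. As a check, the denominator identity $\Pi_v^{(r)}=1/\widetilde H_{(r^v)}[1-Q]$ should follow from the principal specialization of $\widetilde H$ at a single variable.

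\emph{Main obstacle.} The hardest step is Step 2, the identification of the fat-line distribution with the Macdonald evaluation. In rank one this was settled by interpolation plus the $q$-binomial theorem. In higher rank I would first try to prove that the recursion of Step 1, viewed as an operator, coincides with Mellit's or the Garsia--Haiman $\nabla$/$\Delta$-type operators, whose eigenfunctions are the $\widetilde H_\mu$. The rectangle $(r^v)$ would then appear as the unique eigenvector compatible with both triangularities. Only point counts are claimed, so the $\Kvar$ issues of \cref{sec:shadow} can be avoided.
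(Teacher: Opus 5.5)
The paper does not prove this statement. It is deferred to the sequel \cite{FN2}, and the only check given here is that for $r=1$ it reduces, via $\widetilde H_{(1^v)}[1-Q+Qt^u;q,t]=C_{u,v}(t,Q)$, to \cref{thm:main}. So there is no argument to compare yours with, and your proposal has to stand on its own. Your Step 1 is sound, and it is simpler than you make it. Since $\Lambda_J:=(J+x\mathcal F)/x\mathcal F$ is a free $\kk[[y]]$-module of rank $r$, the number of sublattices $\Lambda\subseteq\Lambda_J$ of relative colength $m$ does not depend on $J$; it is the coefficient of $t^m$ in $1/(t;q)_r$ by \cref{lem:dvr}. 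Each fibre is a torsor under $(\ker X)^r$ and so has $q^{r\,\col(J+x\mathcal F)}$ points. No Hall-algebra operator or plethysm enters. With $G^{(r)}_{u,v}(Q,z)=\sum_L Q^{\col L}z^{\col(L+x\mathcal F)}$ one gets $G^{(r)}_{u,v}(Q,z)=G^{(r)}_{u-1,v}(Q,q^rQz)/(Qz;q)_r$, hence $Z^{(r)}_{u,v}=\Pi^{(r)}_u(Q)\,g^{(r)}_v(Q,s^u)$. This is an ordinary evaluation of a polynomial $g^{(r)}_v(Q,z)$ of degree at most $rv$ in the single variable $z$. For $v=1$, a direct count of lattices in $\kk[[x]]^r$ by their number of non-zero elementary divisors gives $g^{(r)}_1=\Pi^{(r)}_1\widetilde H_{(r)}[1-Q+Qz;q,s]$ by the $q$-binomial theorem, so that case does follow. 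For $v\ge2$, the entire content of the theorem is the identification of the rank-$r$ fat-line distribution $g^{(r)}_v$.

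That identification is where your argument has a genuine gap. The mirror relations fix $g^{(r)}_v$ only at the $v+1$ points $0,1,s,\dots,s^{v-1}$, which leaves $(r-1)v$ coefficients free. The relations with $u>v$ only constrain the higher $g^{(r)}_u$, whose own free coefficients absorb them. You recognise this, but the substitute you offer is not an argument. The triangularity conditions characterising $\widetilde H_{(r^v)}$ concern a symmetric function in infinitely many variables under the plethysms $X\mapsto(t-1)X$ and $X\mapsto(q-1)X$. The geometry, by contrast, only produces the two-variable series $g^{(r)}_v(Q,z)$, conjecturally the specialisation at the alphabet $1-Q+Qz$ with $t=q^rQ$ tied to the colength variable. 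Your lattice-type refinement instead gives data indexed by partitions with at most $r$ parts. Neither object determines $\widetilde H_{(r^v)}$, so the triangularities cannot even be stated for the fat-line distribution until you construct a genuinely finer geometric symmetric-function lift. Even then, you would still have to prove both triangularities for it, and neither step is sketched. Likewise, the recursion you actually obtain, $z\mapsto q^rQz$ followed by division by $(Qz;q)_r$, acts on polynomials in one variable. Nothing in your sketch connects it to $\nabla$- or $\Delta$-type operators with the $\widetilde H_\mu$ as eigenfunctions. What is missing is an independent computation of $g^{(r)}_v(Q,z)$ for $v\ge2$. One possible route is a rank-$r$ version of the analysis along the $y$-chain in \cref{sec:shadow}, where the jump of $\col(L+x\mathcal F)$ can now take any value between $0$ and $r$. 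Once such a computation exists, the $u\leftrightarrow v$ symmetry you propose to verify would be a consistency check rather than a proof.
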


The formula is compatible with \cref{thmA}(a). For $r=1$ we have $s=qQ$ and $\Pi^{(1)}_v=1/(Q;qQ)_v$. By the $q$-binomial theorem and \cref{cor:positive-C}, for a formal variable $t$,
\[
\widetilde H_{(1^v)}[1-Q+Qt^u;q,t]=(t;t)_v\,h_v\Big[\frac{1-Q+Qt^u}{1-t}\Big]=\sum_{k=0}^{v}\qbin{v}{k}{t}(Q;t)_{v-k}\,Q^kt^{uk}=C_{u,v}(t,Q),
\]
and at $t=s=qQ$ the right-hand side is $C_{u,v}(qQ,Q)$. Thus \cref{thm:higher-rank} for $r=1$ is \cref{thm:main}, the point-count form of \cref{thmA}(a).

\subsection{Questions}\label{subsec:questions}
Let us list open questions raised by the results of the paper.


\begin{question}[pavings and Betti numbers]\label{q:paving}
Does $\Hilb_n(C_{u,v},0)$ admit an affine paving, or at least pure cohomology concentrated in even degrees, for all $u,v,n$? Either would identify the coefficients of \cref{thmA}(a) at $\LL=T^2$ with the Betti numbers, as is the case for $\min(u,v)\le2$ \cite{Luan}, and would make $(\mathrm{ORS}^{\mathrm{vir}}_{u,v})$ in \cref{sec:hopf} equivalent to $(\mathrm{ORS}_{u,v})$. The stratification of \cref{thm:torus-all} has pieces $\AAA^\delta\times\GG_m^\tau$ rather than affine spaces.

The termwise positive expression of \cref{thm:refined}, a sum of products of Gaussian binomials $\qb{v}{k}=\sum_{\lambda\subseteq k\times(v-k)}t^{|\lambda|}$ \cite[Thm.~3.1]{Andrews} and geometric series, suggests cells indexed by $\big(k,\lambda,(m_i)_{v-k\le i<v},(\ell_i)_{0\le i<u}\big)$ with $\lambda\subseteq k\times(v-k)$, of colength $(u+1)k+|\lambda|+\sum(i+1)m_i+\sum(i+1)\ell_i$ and dimension $uk+|\lambda|+\sum im_i+\sum i\ell_i$, on which the contact length is $k+\sum\ell_i$. By \cref{prop:family} it would suffice to construct such cells for the contact strata of a fat line ($u=0$).
\end{question}



\begin{question}[the colored Hopf link]
Compute the $a=0$ part of the $(\mathrm{Sym}^u,\mathrm{Sym}^v)$-colored Khovanov--Rozansky homology of the Hopf link and compare with \cref{cor:hopf}. The first case not covered by \cite{Luan} is $u=v=2$.
\end{question}

\begin{question}[other germs]
By \cref{cor:F}, the zeta function of $\{x^ug=0\}$ is determined by the contact-refined zeta function $G^x_g$ of $g$. Compute $G_g$ for unibranch germs such as $y^p=x^q$, and for germs with several branches. Is there an analogue of the bootstrap when two fat branches are not transverse, for instance for $x^u(x-y^m)^v$?
\end{question}


\begin{question}[the Quot side of the geometry]
The torus stratification of \cref{sec:shadow} has no counterpart yet on the Quot side. Do the Quot schemes $\Quot^{R_{xy}}_m(E_{u,v})$ admit affine pavings, or at least pure cohomology, and is there a relation between the extension fibres $G_{u,v;k}$ and the contact strata of the fat line that explains the identity \cref{thmA}(c) geometrically rather than through the function $F_{u,v}$?
\end{question}


\begin{question}[higher rank]\label{q:higher-rank}
Is there a motivic version of \cref{thm:higher-rank}, with $q$ replaced by $\LL$? Do the elementary modification and the contact-length stratification of \cref{sec:modification,sec:shadow} give stratifications of the Quot schemes $\Quot_N(\mathcal O_{C_{u,v}}^{\oplus r},0)$ into pieces $\AAA^\delta\times\GG_m^\tau$, as they do for $r=1$? Is there a counterpart of the Hilb--Quot identity \cref{thmA}(c) in rank $r$?
\end{question}

\end{document}